 \newtheorem{Theorem}{Theorem}[section]
 \newtheorem{Corollary}[Theorem]{Corollary}
 \newtheorem{Lemma}[Theorem]{Lemma}
 \newtheorem{Proposition}[Theorem]{Proposition}
 \newtheorem{Definition}[Theorem]{Definition}
\newtheorem{Problem}[Theorem]{Problem}
 \newtheorem{Conjecture}[Theorem]{Conjecture}
 \newtheorem{Remark}[Theorem]{Remark}
 \numberwithin{equation}{section}
\begin{document}

\title[Concavity property of minimal $L^2$ integrals \uppercase\expandafter{\romannumeral4}]
{Concavity property of minimal $L^2$ integrals with Lebesgue measurable gain \uppercase\expandafter{\romannumeral4}: product of open Riemann surfaces}

\author{Qi'an Guan}
\address{Qi'an Guan: School of
Mathematical Sciences, Peking University, Beijing 100871, China.}
\email{guanqian@math.pku.edu.cn}

\author{Zheng Yuan}
\address{Zheng Yuan: School of
Mathematical Sciences, Peking University, Beijing 100871, China.}
\email{zyuan@pku.edu.cn}

\thanks{}

\subjclass[2010]{32D15, 32E10, 32L10, 32U05, 32W05}

\keywords{plurisubharmonic function, multiplier ideal sheaf, minimal $L^{2}$ integral, concavity, optimal $L^{2}$ extension theorem}

\date{\today}

\dedicatory{}

\commby{}

%%% ----------------------------------------------------------------------

\begin{abstract}
In this article, we present characterizations of the concavity property of minimal $L^2$ integrals degenerating to linearity in the case of products of analytic subsets on products of open Riemann surfaces.
As applications, we obtain characterizations of the holding of equality in optimal jets $L^2$ extension problem from products of analytic subsets to products of open Riemann surfaces,
which implies characterizations of the product versions of the equality parts of Suita conjecture and extended Suita conjecture, and the equality holding of a conjecture of Ohsawa for products of open Riemann surfaces.
\end{abstract}

%%% ----------------------------------------------------------------------
\maketitle
%%% ----------------------------------------------------------------------

\section{Introduction}\label{introduction}

The strong openness property of multiplier ideal sheaves (i.e. $\mathcal{I}(\varphi)=\mathcal{I}_+(\varphi):=\mathop{\cup} \limits_{\epsilon>0}\mathcal{I}((1+\epsilon)\varphi)$)
conjectured by Demailly \cite{DemaillySoc} and proved by Guan-Zhou \cite{GZSOC} (Jonsson-Musta\c{t}$\breve{a}$ \cite{JonssonMustata} proved 2-dim case) has opened the door to new types of approximation techniques
in the study of several complex variables, complex algebraic geometry and complex differential geometry
(see e.g. \cite{GZSOC,K16,cao17,cdM17,FoW18,DEL18,ZZ2018,GZ20,berndtsson20,ZZ2019,ZhouZhu20siu's,FoW20,KS20,DEL21}),
where $\varphi$ is a plurisubharmonic function of a complex manifold $M$ (see \cite{Demaillybook}), and multiplier ideal sheaf $\mathcal{I}(\varphi)$ is the sheaf of germs of holomorphic functions $f$ such that $|f|^2e^{-\varphi}$ is locally integrable (see e.g. \cite{Tian,Nadel,Siu96,DEL,DK01,DemaillySoc,DP03,Lazarsfeld,Siu05,Siu09,DemaillyAG,Guenancia}).

When $\mathcal{I}(\varphi)=\mathcal{O}$, the strong openness property degenerates to the openness property conjectured by Demailly-Koll\'ar \cite{DK01}.
Berndtsson \cite{Berndtsson2} proved the openness property (the 2-dimensional case was proved by Favre-Jonsson in \cite{FavreJonsson}) by establishing an effectiveness result of the openness property.
Stimulated by Berndtsson's effectiveness result, and continuing the solution of the strong openness property \cite{GZSOC},
Guan-Zhou \cite{GZeff} established an effectiveness result of the strong openness property by considering the minimal $L^{2}$ integral on the pseudoconvex domain $D$.

Considering the minimal $L^{2}$ integrals on the sublevels of the weight $\varphi$,
Guan \cite{G16} obtained a sharp version of Guan-Zhou's effectiveness result,
and established a concavity property of the minimal $L^2$ integrals on the sublevels of the weight $\varphi$,
which deduces a proof of Saitoh's conjecture for conjugate Hardy $H^2$ kernels \cite{Guan2019},
and the sufficient and necessary condition of the existence of decreasing equisingular approximations with analytic singularities for the multiplier ideal sheaves with weights $\log(|z_{1}|^{a_{1}}+\cdots+|z_{n}|^{a_{n}})$ \cite{guan-20}.

In \cite{G2018} (see also \cite{GM}), Guan gave the concavity property for smooth gain on Stein manifolds (the case of weakly pseudoconvex K\"{a}hler manifolds was obtained by Guan-Mi\cite{GM_Sci}),
which deduces an optimal support function related to the strong openness property (obtained by Guan-Yuan) \cite{GY-support} and an effectiveness result of the strong openness property in $L^p$ (obtained by Guan-Yuan) \cite{GY-lp-effe}.
In \cite{GY-concavity}, Guan-Yuan obtained the concavity property with Lebesgue measurable gain on Stein manifolds (the case of weakly pseudoconvex K\"{a}hler manifolds was obtained by Guan-Mi-Yuan \cite{GMY-concavity2}),
which deduces a twisted version of the strong openness property in $L^p$ \cite{GY-twisted}.

Note that a linear function is a degenerate case of a concave function.
A natural problem was posed in \cite{GY-concavity3}:

\begin{Problem}\label{Q:chara}\cite{GY-concavity3}
How to characterize the concavity property degenerating to linearity?
\end{Problem}

For 1-dim case, Guan-Yuan \cite{GY-concavity} gave an answer to Problem \ref{Q:chara} for single point, i.e. for weights may not be subharmonic (the case of subharmonic weights was answered by Guan-Mi \cite{GM}),
and Guan-Yuan \cite{GY-concavity3} gave an answer to Problem \ref{Q:chara} for finite points.

In the present article, we give answers to Problem \ref{Q:chara} for some high dimensional cases, i.e., for the cases of products of open Riemann surfaces.

Let $\Omega_j$  be an open Riemann surface, which admits a nontrivial Green function $G_{\Omega_j}$ for any  $1\le j\le n$. Let $M=\prod_{1\le j\le n}\Omega_j$ be an $n-$dimensional complex manifold, and let $\pi_j$ be the natural projection from $M$ to $\Omega_j$. Let $K_M$ be the canonical (holomorphic) line bundle on $M$, and Let $K_{\Omega_j}$ be the canonical (holomorphic) line bundle on $\Omega_j$.

 Let $Z_j$ be a (closed) analytic subset of $\Omega_j$ for any $j\in\{1,2,...,n\}$, denote that $Z_0:=\Pi_{1\le j\le n}Z_j\subset M$. For any $j\in\{1,2,...,n\}$, let $\varphi_j$ be a subharmonic function on $\Omega_j$ such that $\varphi_j(z)>-\infty$ for any $z\in Z_j$, denote that $\varphi:=\sum_{1\le j\le n}\pi_j^*(\varphi_j)$. Let $\psi$ be a plurisubharmonic function on $M$ such that $\psi(z)=-\infty$ for any $z\in Z_0$.
Let $c$ be a positive function on $(0,+\infty)$ such that $\int_{0}^{+\infty}c(t)e^{-t}dt<+\infty$, $c(t)e^{-t}$ is decreasing on $(0,+\infty)$ and $c(-\psi)$ has a positive lower bound on any compact subset of $M\backslash Z_0$. Let $f$ be a holomorphic $(n,0)$ form on a neighborhood of $Z_0$.
Denote
\begin{equation*}
\begin{split}
\inf\{\int_{\{\psi<-t\}}|\tilde{f}|^{2}e^{-\varphi}c(-\psi):(\tilde{f}-f,z)\in&(\mathcal{O}(K_M)\otimes\mathcal{I}(\psi))_{z}\mbox{ for any $z\in Z_0$} \\&\&{\,}\tilde{f}\in H^{0}(\{\psi<-t\},\mathcal{O}(K_{M}))\},
\end{split}
\end{equation*}
by $G(t;c)$ (without misunderstanding, we denote $G(t;c)$ by $G(t)$),  where $t\in[T,+\infty)$ and
$|f|^{2}:=\sqrt{-1}^{n^{2}}f\wedge\bar{f}$ for any $(n,0)$ form $f$.

Recall that $G(h^{-1}(r))$ is concave with respect to $r$ (see \cite{GY-concavity}, see also \cite{GMY-concavity2}), where $h(t)=\int_{t}^{+\infty}c(s)e^{-s}ds$ for any $t\ge0$. In the present article, we discuss the characterization of the concavity of $G(h^{-1}(r))$ degenerating to linearity.

\subsection{Main results: characterizations of the concavity property of minimal $L^2$ integrals degenerating to linearity}
\

In this section,  we present characterizations of the concavity property of minimal $L^2$ integrals degenerating to linearity.

 We recall some notations (see \cite{OF81}, see also \cite{guan-zhou13ap,GY-concavity,GMY-concavity2}).
 Let $P_j:\Delta\rightarrow\Omega_j$ be the universal covering from unit disc $\Delta$ to $\Omega_j$.
 we call the holomorphic function $f$ (resp. holomorphic $(1,0)$ form $F$) on $\Delta$ a multiplicative function (resp. multiplicative differential (Prym differential)),
 if there is a character $\chi$, which is the representation of the fundamental group of $\Omega_j$, such that $g^{*}f=\chi(g)f$ (resp. $g^{*}(F)=\chi(g)F$),
 where $|\chi|=1$ and $g$ is an element of the fundamental group of $\Omega$. Denote the set of such kinds of $f$ (resp. $F$) by $\mathcal{O}^{\chi}(\Omega_j)$ (resp. $\Gamma^{\chi}(\Omega_j)$).

It is known that for any harmonic function $u$ on $\Omega_j$,
there exists a $\chi_{j,u}$ (called  character associate to $u$) and a multiplicative function $f_u\in\mathcal{O}^{\chi_{j,u}}(\Omega_j)$,
such that $|f_u|=P_j^{*}e^{u}$.
If $u_1-u_2=\log|f|$, then $\chi_{j,u_1}=\chi_{j,u_2}$,
where $u_1$ and $u_2$ are harmonic functions on $\Omega_j$ and $f$ is a holomorphic function on $\Omega_j$. Let $z_j\in \Omega_j$.
Recall that for the Green function $G_{\Omega_j}(z,z_j)$,
there exist a $\chi_{j,z_j}$ and a multiplicative function $f_{z_j}\in\mathcal{O}^{\chi_{j,z_j}}(\Omega_j)$, such that $|f_{z_j}(z)|=P_j^{*}e^{G_{\Omega_j}(z,z_j)}$ (see \cite{suita72}).

Let $Z_0=\{z_0\}=\{(z_1,z_2,...,z_n)\}\subset M$.
Let $\psi=\max_{1\le j\le n}\{2p_j\pi_j^{*}(G_{\Omega_j}(\cdot,z_j))\}$, where $p_j$ is positive real number for $1\le j\le n$.
Let $w_j$ be a local coordinate on a neighborhood $V_{z_j}$ of $z_j\in\Omega_j$ satisfying $w_j(z_j)=0$. Denote that $V_0:=\prod_{1\le j\le n}V_{z_j}$, and $w:=(w_1,w_2,...,w_n)$ is a local coordinate on $V_0$ of $z_0\in M$.
Let $f$ be a holomorphic $(n,0)$ form on $V_0$.

We present a characterization of the concavity of $G(h^{-1}(r))$ degenerating to linearity for the case $Z_0$ is a single point set as follows.

\begin{Theorem}
	\label{thm:linear-2d}
	Assume that $G(0)\in(0,+\infty)$.  $G(h^{-1}(r))$ is linear with respect to $r\in(0,\int_{0}^{+\infty}c(t)e^{-t}dt]$  if and only if the  following statements hold:
	
	$(1)$ $f=(\sum_{\alpha\in E}d_{\alpha}w^{\alpha}+g_0)dw_1\wedge dw_2\wedge...\wedge dw_n$ on $V_0$, where $E=\{(\alpha_1,\alpha_2,...,\alpha_n):\sum_{1\le j\le n}\frac{\alpha_j+1}{p_j}=1\,\&\,\alpha_j\in\mathbb{Z}_{\ge0}\}\not=\emptyset$, $d_{\alpha}\in\mathbb{C}$ such that $\sum_{\alpha\in E}|d_{\alpha}|\not=0$ and $g_0$ is a holomorphic function on $V_0$ such that $(g_0,z_0)\in\mathcal{I}(\psi)_{z_0}$;
	
	$(2)$ $\varphi_j=2\log|g_j|+2u_j$, where $g_j$ is a holomorphic function on $\Omega_j$ such that $g_j(z_j)\not=0$ and $u_j$ is a harmonic function on $\Omega_j$ for any $1\le j\le n$;

    $(3)$ $\chi_{j,z_j}^{\alpha_j+1}=\chi_{j,-u_j}$ for any $j\in\{1,2,...,n\}$ and $\alpha\in E$ satisfying $d_{\alpha}\not=0$.
\end{Theorem}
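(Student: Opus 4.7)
The overall strategy is to connect linearity of $G(h^{-1}(r))$ with the equality case of an optimal jets $L^2$ extension theorem from $\{z_0\}$ to sublevel sets $\{\psi<-t\}$, and then to reduce the product situation to the one-dimensional characterizations already obtained by Guan--Yuan. Recall that concavity of $G(h^{-1}(r))$ is proved via the optimal $L^2$ extension (see \cite{GY-concavity,GMY-concavity2}); linearity is therefore equivalent to saturating the $L^2$ extension bound on every sublevel. My first step is to translate ``$G(h^{-1}(r))$ is linear on $(0,\int_0^{+\infty}c(t)e^{-t}dt]$'' into the assertion that the unique minimizer $F_t\in H^0(\{\psi<-t\},\mathcal{O}(K_M))$ with prescribed jet $f$ modulo $\mathcal{I}(\psi)_{z_0}$ depends holomorphically (indeed, consistently) on $t$, and its weighted norm realizes equality in the extension inequality. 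Using the ODE-type relation between $G(t)$ and $c$, I would then read off the local form of $F_t$ near $z_0$ by comparing the leading asymptotics of $\int_{\{\psi<-t\}}|F_t|^2e^{-\varphi}c(-\psi)$ as $t\to+\infty$ with the linear growth.

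The second step is the structural reduction. Since $\psi=\max_j 2p_j\pi_j^*G_{\Omega_j}(\cdot,z_j)$ and $\varphi=\sum_j\pi_j^*\varphi_j$, the weights split under $\pi_j$, while $\psi$'s sublevel sets take the form $\prod_j\{2p_jG_{\Omega_j}(\cdot,z_j)<-t\}$ locally near $z_0$. This product structure, combined with the representation $|f_{z_j}|=P_j^*e^{G_{\Omega_j}(\cdot,z_j)}$ from \cite{suita72}, lets me build trial extensions as sums over $\alpha\in E$ of products $\bigotimes_j F^{(j)}_{\alpha_j}$, where each $F^{(j)}_{\alpha_j}$ is the one-dimensional extremal $(1,0)$-form supplied by the Guan--Yuan characterization of the 1-dim equality case. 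The weights $\alpha\in E$ with $\sum_j(\alpha_j+1)/p_j=1$ are precisely the multi-indices for which $|w^\alpha dw|^2 e^{-\psi}$ sits on the integrability borderline relative to $\psi$, so that their contributions drive the leading linear term; higher-order monomials are absorbed into $g_0dw_1\wedge\cdots\wedge dw_n$ with $(g_0,z_0)\in\mathcal{I}(\psi)_{z_0}$, justifying clause (1). On each factor, the 1-dim equality case forces $\varphi_j=2\log|g_j|+2u_j$ with $g_j(z_j)\neq0$ and $u_j$ harmonic, giving (2), while the character compatibility needed for each tensor factor to descend from $\Delta$ to $\Omega_j$ as a single-valued $(1,0)$-form reads $\chi_{j,z_j}^{\alpha_j+1}=\chi_{j,-u_j}$ whenever $d_\alpha\neq 0$, which is exactly (3).

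For the sufficiency direction, once (1)--(3) are in hand, I would construct the explicit holomorphic $(n,0)$-form $F=\sum_{\alpha\in E}d_\alpha\bigotimes_j \bigl(f_{z_j}^{\alpha_j+1}/g_j\bigr)\cdot(\text{corrections})$ on each $\{\psi<-t\}$, where the Prym-differential factors on $\Delta$ descend by (3). Then $\int_{\{\psi<-t\}}|F|^2e^{-\varphi}c(-\psi)$ factors (via Fubini on the max-sublevel decomposition) into one-dimensional integrals whose values have been computed in the 1-dim linearity papers; summing gives exactly $h(t)$ up to a constant, confirming $G(h^{-1}(r))$ is linear. A uniqueness argument shows $F$ equals the minimizer, so the candidate really computes $G(t)$.

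The main obstacle I foresee is the tensor-product reduction of the extremal extension in the necessity direction. Because $\psi$ is a maximum rather than a sum, the sublevels $\{\psi<-t\}$ are not products, and decomposing the minimizer $F_t$ as a sum of products requires a rigidity argument: one must show that every contribution to $G(t)$ that is not of ``extremal tensor-product type'' introduces strict gain in the optimal $L^2$ extension, and hence strict concavity. This is where the exponent condition $\sum_j(\alpha_j+1)/p_j=1$ cutting out $E$ enters, and where the character compatibility (3) becomes forced; executing it requires delicate manipulation of the multiplier ideal sheaf $\mathcal{I}(\psi)_{z_0}$ and the subharmonic factorizations of $\varphi_j$ to upgrade the one-dim equality data into the joint characterization.
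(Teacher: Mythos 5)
Your overall strategy coincides with the paper's: reduce to the one-dimensional characterizations by decomposing the extremal extension into tensor products of the one-variable extremal $(1,0)$-forms, with the borderline set $E$ singled out by the exponent condition $\sum_j(\alpha_j+1)/p_j=1$. However, the step you yourself identify as ``the main obstacle'' --- showing that the minimizer decomposes as $\sum_{\alpha\in E}d_\alpha\wedge_j\pi_j^*(f_{j,\alpha_j})$ and that $G(t;\tilde c\equiv 1)=\sum_{\alpha\in E}|d_\alpha|^2\prod_jG_{j,\alpha_j}\bigl(\tfrac{(\alpha_j+1)t}{p_j}\bigr)$ --- is precisely the content of the paper's Lemma \ref{l:orth2} and Lemma \ref{l:orth1}, and you leave it unproved. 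The paper's resolution is an orthogonality argument by induction on $n$: integrating $\tilde f(\cdot,y_n)$ against the $(n-1)$-fold product of one-dimensional extremals produces a holomorphic form $\tilde F$ on $\Omega_n$ lying in $H_n$ (this uses Lemma \ref{l:1} to control the jets of the slices), whence the product extremals are orthogonal to every admissible competitor and the minimal $L^2$ norm splits as a sum of products. Without this, neither direction of your argument closes: in the sufficiency direction you cannot identify your candidate $F$ with the minimizer, and in the necessity direction you cannot pass from linearity of $G$ to linearity of each one-dimensional $G_{j,\alpha_j}$. Also note a factual slip: the sublevels $\{\psi<-t\}$ \emph{are} products (a maximum is $<-t$ iff every entry is); the genuine obstruction is that $c(-\psi)$ and $e^{-\psi}$ do not factor, which the paper circumvents by first using strong openness and Corollary \ref{c:linear} to replace $c$ by $\tilde c\equiv 1$.

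A second gap concerns statement $(1)$ in the necessity direction. Your remark that higher-order monomials are ``absorbed into $g_0$'' only disposes of exponents with $\sum_j(\alpha_j+1)/p_j>1$; the substantive claim is that no monomial with $\sum_j(\alpha_j+1)/p_j<1$ can occur when $G(h^{-1}(r))$ is linear. The paper proves this by contradiction: if $s_0=\inf\{\sum_j(\alpha_j+1)/p_j:\,d_\alpha\neq0\}<1$, one forms the auxiliary functional $G_2$ with the rescaled weight $s_0\psi$, whose concavity (Theorem \ref{thm:general_concave}) forces $\lim_{r\to0+}G(-\log r;\tilde c)/r\ge\lim_{r\to0+}G_2(-\tfrac{\log r^{s_0}}{s_0})r^{s_0-1}/r^{s_0}=+\infty$, contradicting linearity. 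Finally, to pin down statements $(2)$ and $(3)$ the paper needs a two-sided squeeze on each $G_{j,\alpha_j}$: the upper bound from the one-dimensional jet extension (Theorem \ref{c:L2-1d-char}) and a matching lower bound obtained from the asymptotics $\liminf_{t\to+\infty}e^tG(t;\tilde c)$ computed via Lemma \ref{l:m2}; your proposal gestures at ``comparing leading asymptotics'' but does not supply the lower bound. These three items would need to be filled in for the argument to be complete.
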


Let $c_j(z)$ be the logarithmic capacity (see \cite{S-O69}) on $\Omega_j$, which is locally defined by
$$c_j(z_j):=\exp\lim_{z\rightarrow z_j}(G_{\Omega_j}(z,z_j)-\log|w_j(z)|).$$
\begin{Remark}
	\label{r:1.1}When the three statements in Theorem \ref{thm:linear-2d} hold,
$$\sum_{\alpha\in E}\tilde{d}_{\alpha}\wedge_{1\le j\le n}\pi_j^*(g_j(P_j)_*(f_{u_j}f_{z_j}^{\alpha_j}df_{z_j}))$$
 is the unique holomorphic $(n,0)$ form $F$ on $M$ such that $(F-f,z_0)\in(\mathcal{O}(K_{M}))_{z_0}\otimes\mathcal{I}(\psi)_{z_0}$ and
	$$G(t)=\int_{\{\psi<-t\}}|F|^2e^{-\varphi}c(-\psi)=(\int_t^{+\infty}c(s)e^{-s}ds)\sum_{\alpha\in E}\frac{|d_{\alpha}|^2(2\pi)^ne^{-\varphi(z_{0})}}{\Pi_{1\le j\le n}(\alpha_j+1)c_{j}(z_j)^{2\alpha_{j}+2}}$$
	 for any $t\ge0$, where $f_{u_j}$ is a holomorphic function on $\Delta$ such that $|f_{u_j}|=P_j^*(e^{u_j})$ for any $j\in\{1,2,...,n\}$, $f_{z_j}$ is a holomorphic function on $\Delta$ such that $|f_{z_j}|=P_j^*(e^{G_{\Omega_j}(\cdot,z_j)})$ for any $j\in\{1,2,...,n\}$ and $\tilde{d}_{\alpha}$ is a constant such that $\tilde{d}_{\alpha}=\lim_{z\rightarrow z_0}\frac{d_{\alpha}w^{\alpha}dw_1\wedge dw_2\wedge...\wedge dw_n}{\wedge_{1\le j\le n}\pi_j^*(g_j(P_j)_*(f_{u_j}f_{z_j}^{\alpha_j}df_{z_j}))}$ for any $\alpha\in E$. We prove the remark in Section \ref{sec:pf-r1.1}.
\end{Remark}

 Let $Z_j=\{z_{j,1},z_{j,2},...,z_{j,m_j}\}\subset\Omega_j$ for any  $j\in\{1,2,...,n\}$, where $m_j$ is a positive integer.
Let $\psi=\max_{1\le j\le n}\{\pi_j^*(2\sum_{1\le k\le m_j}p_{j,k}G_{\Omega_j}(\cdot,z_{j,k}))\}$.

Let $w_{j,k}$ be a local coordinate on a neighborhood $V_{z_{j,k}}\Subset\Omega_{j}$ of $z_{j,k}\in\Omega_j$ satisfying $w_{j,k}(z_{j,k})=0$ for any $j\in\{1,2,...,n\}$ and $k\in\{1,2,...,m_j\}$, where $V_{z_{j,k}}\cap V_{z_{j,k'}}=\emptyset$ for any $j$ and $k\not=k'$. Denote that $I_1:=\{(\beta_1,\beta_2,...,\beta_n):1\le \beta_j\le m_j$ for any $j\in\{1,2,...,n\}\}$, $V_{\beta}:=\prod_{1\le j\le n}V_{z_{j,\beta_j}}$ for any $\beta=(\beta_1,\beta_2,...,\beta_n)\in I_1$ and $w_{\beta}:=(w_{1,\beta_1},w_{2,\beta_2},...,w_{n,\beta_n})$ is a local coordinate on $V_{\beta}$ of $z_{\beta}:=(z_{1,\beta_1},z_{2,\beta_2},...,z_{n,\beta_n})\in M$.
Let $f$ be a holomorphic $(n,0)$ form on $\cup_{\beta\in I_1}V_{\beta}$ such that $f=w_{\beta^*}^{\alpha_{\beta_*}}dw_{1,1}\wedge dw_{2,1}\wedge...\wedge dw_{n,1}$ on $V_{\beta^*}$, where $\beta^*=(1,1,...,1)\in I_1$.

We present a characterization of the concavity of $G(h^{-1}(r))$ degenerating to linearity for the case $Z_j$ is a  set of finite points as follows.

\begin{Theorem}
	\label{thm:prod-finite-point}Assume that $G(0)\in(0,+\infty)$.  $G(h^{-1}(r))$ is linear with respect to $r\in(0,\int_0^{+\infty} c(s)e^{-s}ds]$ if and only if the following statements hold:

	$(1)$ $\varphi_j=2\log|g_j|+2u_j$ for any $j\in\{1,2,...,n\}$, where $u_j$ is a harmonic function on $\Omega_j$ and $g_j$ is a holomorphic function on $\Omega_j$ satisfying $g_j(z_{j,k})\not=0$ for any $k\in\{1,2,...,m_j\}$;
	
	$(2)$ There exists a nonnegative integer $\gamma_{j,k}$ for any $j\in\{1,2,...,n\}$ and $k\in\{1,2,...,m_j\}$, which satisfies that $\Pi_{1\le k\leq m_j}\chi_{j,z_{j,k}}^{\gamma_{j,k}+1}=\chi_{j,-u_j}$ and $\sum_{1\le j\le n}\frac{\gamma_{j,\beta_j}+1}{p_{j,\beta_j}}=1$ for any $\beta\in I_1$;
	
	$(3)$ $f=(c_{\beta}\Pi_{1\le j\le n}w_{j,\beta_j}^{\gamma_{j,\beta_j}}+g_{\beta})dw_{1,\beta_1}\wedge dw_{2,\beta_2}\wedge...\wedge dw_{n,\beta_n}$ on $V_{\beta}$ for any $\beta\in I_1$, where $c_{\beta}$ is a constant and $g_{\beta}$ is a holomorphic function on $V_{\beta}$ such that $(g_{\beta},z_{\beta})\in\mathcal{I}(\psi)_{z_{\beta}}$;
	
	$(4)$ $\lim_{z\rightarrow z_{\beta}}\frac{c_{\beta}\Pi_{1\le j\le n}w_{j,\beta_j}^{\gamma_{j,\beta_j}}dw_{1,\beta_1}\wedge dw_{2,\beta_2}\wedge...\wedge dw_{n,\beta_n}}{\wedge_{1\le j\le n}\pi_{j}^*(g_j(P_{j})_*(f_{u_j}(\Pi_{1\le k\le m_j}f_{z_{j,k}}^{\gamma_{j,k}+1})(\sum_{1\le k\le m_j}p_{j,k}\frac{df_{z_{j,k}}}{f_{z_{j,k}}})))}=c_0$ for any $\beta\in I_1$, where $c_0\in\mathbb{C}\backslash\{0\}$ is a constant independent of $\beta$, $f_{u_j}$ is a holomorphic function $\Delta$ such that $|f_{u_j}|=P_j^*(e^{u_j})$ and $f_{z_{j,k}}$ is a holomorphic function on $\Delta$ such that $|f_{z_{j,k}}|=P_j^*(e^{G_{\Omega_j}(\cdot,z_{j,k})})$ for any $j\in\{1,2,...,n\}$ and $k\in\{1,2,...,m_j\}$.
\end{Theorem}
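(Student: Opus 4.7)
The plan is to prove Theorem \ref{thm:prod-finite-point} as a natural product-version of Theorem \ref{thm:linear-2d}, with sufficiency proved by explicit construction of the minimizer and necessity by local reduction to the single-point case.

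For the sufficiency (``if'') direction, I would take as the candidate minimizer the global form
\[
F := c_0\wedge_{1\le j\le n}\pi_j^*(g_j(P_j)_*(f_{u_j}(\Pi_{1\le k\le m_j}f_{z_{j,k}}^{\gamma_{j,k}+1})(\sum_{1\le k\le m_j}p_{j,k}\tfrac{df_{z_{j,k}}}{f_{z_{j,k}}}))).
\]
The character compatibility in $(2)$, namely $\Pi_{k}\chi_{j,z_{j,k}}^{\gamma_{j,k}+1}=\chi_{j,-u_j}$, guarantees that the argument of each $(P_j)_*$ descends from $\Delta$ to a meromorphic $(1,0)$ form on $\Omega_j$, so $F$ is globally defined on $M$. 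Conditions $(3)$ and $(4)$ together with the local Laurent expansions of the multiplicative functions $f_{z_{j,k}}$ at $z_{j,\beta_j}$ yield $(F-f,z_\beta)\in(\mathcal{O}(K_M)\otimes\mathcal{I}(\psi))_{z_\beta}$ for every $\beta\in I_1$. Using that $\{\psi<-t\}$ is (for $t$ large) a disjoint union of product polydiscs indexed by $\beta\in I_1$, the product structure of $\varphi$ reduces $\int_{\{\psi<-t\}}|F|^2e^{-\varphi}c(-\psi)$ to a product of one-variable integrals, each proportional to $\int_t^{+\infty}c(s)e^{-s}ds$. An optimality argument mirroring Remark \ref{r:1.1} identifies $F$ as the unique minimizer and delivers linearity of $G(h^{-1}(r))$.

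For the necessity (``only if'') direction, the main idea is localization. For $t$ sufficiently large the max structure of $\psi$ forces $\{\psi<-t\}$ to split into a disjoint union of product neighborhoods $U_{\beta,t}$ around the points $z_\beta\in Z_0$, and on each $U_{\beta,t}$ the weight $\psi$ coincides with $\psi_\beta:=2\sum_j\pi_j^*(p_{j,\beta_j}G_{\Omega_j}(\cdot,z_{j,\beta_j}))$. Writing $G_\beta$ for the corresponding one-point minimal $L^2$ integral, one has $G(t)=\sum_\beta G_\beta(t)$ for $t$ large; since each $G_\beta(h^{-1}(r))$ is already known to be concave, linearity of the sum forces linearity of every summand. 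Applying Theorem \ref{thm:linear-2d} at each $z_\beta$ then produces local versions of $(1)$, $(2)$, $(3)$. Compatibility of the decomposition $\varphi_j=2\log|g_j|+2u_j$ across the different $z_{j,k}\in Z_j$ uses that $\varphi_j$ is a single subharmonic function on $\Omega_j$, so the decomposition is globally unique up to the usual $g_j\leftrightarrow e^{\text{holomorphic}}$ ambiguity; analogous unique-choice arguments yield $(2)$ and the exponent data in $(3)$.

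The main obstacle, and heart of the proof, is extracting condition $(4)$: conditions $(1)$--$(3)$ produce at each $z_\beta$ only \emph{some} leading coefficient $c_\beta$, whereas $(4)$ asserts these all come from a single global constant $c_0$. To obtain this I would use that the minimizer $F$ achieving $G(0)$ is one global holomorphic $(n,0)$ form on $M$; by Remark \ref{r:1.1} applied near each $z_\beta$, the germ of $F$ at $z_\beta$ must equal a scalar $c_\beta$ times the corresponding Prym-differential expression. Since the factor $\wedge_{1\le j\le n}\pi_j^*(g_j(P_j)_*(\cdots))$ is itself a single globally defined meromorphic $(n,0)$ form on $M$, independent of $\beta$, the scalars $c_\beta$ must all agree, giving the common constant $c_0$. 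Equivalently, the equality cases in the chain of optimal $L^2$ extension inequalities used to establish concavity must hold simultaneously across every component $U_{\beta,t}$, which rigidly pins down a single normalization. This simultaneous global rigidity across the product-indexed family of points is the genuinely new ingredient beyond the one-point Theorem \ref{thm:linear-2d}, and is where I expect the technical work to concentrate.
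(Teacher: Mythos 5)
Your overall architecture matches the paper's: the same explicit candidate minimizer for sufficiency, and localization of $\{\psi<-t\}$ into product components with $G(t)=\sum_\beta G_\beta(t)$ plus concavity of each summand for necessity. However, there is a genuine gap in how you propose to obtain the global conclusions $(1)$, $(2)$ and $(4)$ from purely local data. Applying Theorem \ref{thm:linear-2d} on each small product component $U_{\beta,t}$ only yields a decomposition of $\varphi_j$ \emph{near} each $z_{j,k}$ and, since those components are simply connected, carries no character information whatsoever; ``compatibility across the different $z_{j,k}$'' of local decompositions does not produce a single harmonic $u_j$ on all of $\Omega_j$, nor the identity $\Pi_k\chi_{j,z_{j,k}}^{\gamma_{j,k}+1}=\chi_{j,-u_j}$. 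Likewise, your argument for $(4)$ --- ``the factor is a single globally defined form, so the scalars $c_\beta$ agree'' --- presupposes that the extremal $F$ is globally proportional to that factor, which is exactly what must be proved. The paper's mechanism, which your plan omits, is to \emph{globalize first}: the local application of Theorem \ref{thm:linear-2d} and Remark \ref{r:1.1} shows $F$ splits as a product of one-variable forms on one component, Lemma \ref{l:decom} promotes this to a global factorization $F=\wedge_j\pi_j^*(h_{j,1})$ with $h_{j,1}$ holomorphic on all of $\Omega_j$, and then the product formula $G(t;\tilde c\equiv 1)=\Pi_jG_j(\tfrac{\gamma_{j,1}+1}{p_{j,1}}t)$ for the one-dimensional \emph{finite-point} minimal integrals, squeezed between the upper bound from Proposition \ref{p:exten-pro-finite} and the lower bound from Lemma \ref{l:m2}, forces each $G_j$ to be linear. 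Only at that point does the one-dimensional finite-point characterization (Theorem \ref{c:L2-1d-char} and Remark \ref{rem:1.2}) deliver $(1)$, $(2)$, and the identification $h_{j,1}=b_jg_j(P_j)_*(\cdots)$ that makes $c_0=\Pi_jb_j$ a single constant. Without this chain your proof of the necessity direction does not close.

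Two smaller points. In the sufficiency direction, the integral $\int_M|F|^2e^{-\varphi}c(-\psi)$ does \emph{not} factor into one-variable integrals, because $c(-\psi)$ with $\psi$ a maximum of pullbacks is not a product; the paper first computes with $\tilde c\equiv 1$ (where Fubini and the orthogonality Lemma \ref{l:orth2} apply), establishes linearity there, and then passes to general $c$ via Corollary \ref{c:linear} after checking the membership $\mathcal H^2(c,t)\subset\mathcal H^2(\tilde c,t)$ type conditions. Also, on $U_{\beta,t}$ the weight $\psi$ does not coincide with $\max_j 2p_{j,\beta_j}\pi_j^*(G_{\Omega_j}(\cdot,z_{j,\beta_j}))$; the correct statement is that $\tfrac{1}{2p_{j,\beta_j}}(2\sum_kp_{j,k}G_{\Omega_j}(\cdot,z_{j,k})+t_0)$ is the Green function of the component of the sublevel set, and Theorem \ref{thm:linear-2d} must be applied with that component as the ambient Riemann surface.
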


Denote that
\begin{equation*}
c_{j,k}:=\exp\lim_{z\rightarrow z_{j,k}}(\frac{\sum_{1\le k_1\le m_j}p_{j,k_1}G_{\Omega_j}(z,z_{j,k_1})}{p_{j,k}}-\log|w_{j,k}(z)|)
\end{equation*}
 for any $j\in\{1,2,...,n\}$ and $k\in\{1,2,...,m_j\}$.
\begin{Remark}
	\label{r:1.2}When the four statements in Theorem \ref{thm:prod-finite-point} hold,
$$c_0\wedge_{1\le j\le n}\pi_{j}^*(g_j(P_{j})_*(f_{u_j}(\Pi_{1\le k\le m_j}f_{z_{j,k}}^{\gamma_{j,k}+1})(\sum_{1\le k\le m_j}p_{j,k}\frac{df_{z_{j,k}}}{f_{z_{j,k}}})))$$
 is the unique holomorphic $(n,0)$ form $F$ on $M$ such that $(F-f,z_\beta)\in(\mathcal{O}(K_{M}))_{z_\beta}\otimes\mathcal{I}(\psi)_{z_\beta}$ for any $\beta\in I_1$ and
	$$G(t)=\int_{\{\psi<-t\}}|F|^2e^{-\varphi}c(-\psi)=(\int_{t}^{+\infty}c(s)e^{-s}ds)\sum_{\beta\in I_1}\frac{|c_{\beta}|^2(2\pi)^ne^{-\varphi(z_{\beta})}}{\Pi_{1\le j\le n}(\gamma_{j,\beta_j}+1)c_{j,\beta_j}^{2\gamma_{j,\beta_j}+2}}$$
	 for any $t\ge0$. We prove the remark in Section \ref{sec:pf-r1.2}.
\end{Remark}

 Let ${Z}_j=\{z_{j,k}:1\le k<\tilde m_j\}$ be a discrete subset of $\Omega_j$ for any  $j\in\{1,2,...,n\}$, where $\tilde{m}_j\in\mathbb{Z}_{\ge2}\cup\{+\infty\}$.
Let $p_{j,k}$ be a positive number such that $\sum_{1\le k<\tilde{m}_j}p_{j,k}G_{\Omega_j}(\cdot,z_{j,k})\not\equiv-\infty$ for any $j$.
Let $\psi=\max_{1\le j\le n}\{\pi_j^*(2\sum_{1\le k<\tilde{m}_j}p_{j,k}G_{\Omega_j}(\cdot,z_{j,k}))\}$. Assume that $\limsup_{t\rightarrow+\infty}c(t)<+\infty$.

Let $w_{j,k}$ be a local coordinate on a neighborhood $V_{z_{j,k}}\Subset\Omega_{j}$ of $z_{j,k}\in\Omega_j$ satisfying $w_{j,k}(z_{j,k})=0$ for any $j\in\{1,2,...,n\}$ and $1\le k<\tilde{m}_j$, where $V_{z_{j,k}}\cap V_{z_{j,k'}}=\emptyset$ for any $j$ and $k\not=k'$. Denote that $\tilde I_1:=\{(\beta_1,\beta_2,...,\beta_n):1\le \beta_j< \tilde m_j$ for any $j\in\{1,2,...,n\}\}$, $V_{\beta}:=\prod_{1\le j\le n}V_{z_{j,\beta_j}}$ for any $\beta=(\beta_1,\beta_2,...,\beta_n)\in\tilde I_1$ and $w_{\beta}:=(w_{1,\beta_1},w_{2,\beta_2},...,w_{n,\beta_n})$ is a local coordinate on $V_{\beta}$ of $z_{\beta}:=(z_{1,\beta_1},z_{2,\beta_2},...,z_{n,\beta_n})\in M$.
Let $f$ be a holomorphic $(n,0)$ form on $\cup_{\beta\in \tilde I_1}V_{\beta}$ such that $f=w_{\beta^*}^{\alpha_{\beta_*}}dw_{1,1}\wedge dw_{2,1}\wedge...\wedge dw_{n,1}$ on $V_{\beta^*}$, where $\beta^*=(1,1,...,1)\in \tilde I_1$.

We present that $G(h^{-1}(r))$ is not linear when there exists $j_0\in\{1,2,...,n\}$ such that $\tilde m_{j_0}=+\infty$ as follows.

\begin{Theorem}
	\label{thm:prod-infinite-point}If $G(0)\in(0,+\infty)$ and there exists $j_0\in\{1,2,...,n\}$ such that $\tilde m_{j_0}=+\infty$, then $G(h^{-1}(r))$ is not linear with respect to $r\in(0,\int_0^{+\infty} c(s)e^{-s}ds]$.
	\end{Theorem}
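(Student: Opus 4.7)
The plan is a proof by contradiction via truncation to the finite-point case, Theorem \ref{thm:prod-finite-point}. Without loss of generality assume $\tilde m_1 = +\infty$, and suppose, toward a contradiction, that $G(h^{-1}(r))$ is linear on $(0, \int_0^{+\infty} c(s) e^{-s}\, ds]$. For every positive integer $N$, form the truncated weight
\[\psi_N := \max\left\{\pi_1^*\Bigl(2\!\!\sum_{1 \le k \le N}\!\!p_{1,k}G_{\Omega_1}(\cdot,z_{1,k})\Bigr),\ \max_{2 \le j \le n}\pi_j^*\Bigl(2\!\!\sum_{1 \le k < \tilde m_j}\!\!p_{j,k}G_{\Omega_j}(\cdot,z_{j,k})\Bigr)\right\}\]
(and truncate the other coordinates with $\tilde m_j = +\infty$ analogously). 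Denote the corresponding minimal $L^2$ integral by $G_N(t)$, with jet conditions imposed only at the finite index set $\tilde I_1^{(N)} \subset \tilde I_1$.

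The first step is to upgrade linearity of $G(h^{-1}(r))$ to linearity of each $G_N(h^{-1}(r))$. Since every additional Green-function term is nonpositive, $\psi_N \searrow \psi$ monotonically in $N$, so the sublevel sets satisfy $\{\psi_N < -t\} \subset \{\psi < -t\}$ while the admissible class for $G_N$ contains that for $G$. Combined with the known concavity of each $G_N(h^{-1}(r))$ recalled in the introduction and the rigidity that any concave function caught between two linear functions of equal slope on an interval must itself be linear, this should pin down each $G_N(h^{-1}(r))$ to be linear on the same interval.

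Applying Theorem \ref{thm:prod-finite-point} to each $G_N$ then yields nonnegative integers $\gamma_{j,k}^{(N)}$ satisfying
\[\sum_{j=1}^n \frac{\gamma_{j,\beta_j}^{(N)}+1}{p_{j,\beta_j}}=1 \ \ \text{for every}\ \beta\in\tilde I_1^{(N)}, \qquad \prod_{k=1}^N \chi_{1,z_{1,k}}^{\gamma_{1,k}^{(N)}+1}=\chi_{1,-u_1},\]
together with the explicit $L^2$ value of Remark \ref{r:1.2}. Varying $\beta_j$ coordinate-wise in the balance relation forces $\gamma_{j,k}^{(N)}+1 = C_j p_{j,k}$ for constants $C_j > 0$ with $\sum_j C_j = 1$, independent of $k$ and of $N$; in particular the integers $\gamma_{j,k}^{(N)}$ stabilize to nonnegative integers $\gamma_{j,k}$. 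Comparing the character identities at successive levels then gives that $\chi_{1,z_{1,N+1}}^{\gamma_{1,N+1}+1}$ is trivial for every $N$. Using condition (4) of Theorem \ref{thm:prod-finite-point} to express each $c_\beta$ as a nonzero value involving the infinite product $\prod_{k \ne \beta_1} f_{z_{1,k}}^{\gamma_{1,k}+1}$ evaluated at $P_1^{-1}(z_{1,\beta_1})$ (and analogously in the remaining coordinates), substituting into the Remark \ref{r:1.2} formula, and requiring the resulting sum over the infinite index set $\tilde I_1$ to converge to $G(0)/\int_0^{+\infty}c(s)e^{-s}\,ds \in (0,+\infty)$, combined with the asymptotic behavior of the logarithmic capacities $c_{j,\beta_j}$, yields the desired contradiction.

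The main obstacle is the first step: justifying that linearity of $G(h^{-1}(r))$ transfers to each truncated $G_N(h^{-1}(r))$. Truncation simultaneously shrinks the integration domain, modifies the integrand via $c(-\psi_N)$, and enlarges the admissible class by dropping jet constraints at infinitely many points $z_\beta$ with $\beta_1 > N$; a careful monotone-stability argument leveraging the concavity of both $G(h^{-1}(r))$ and $G_N(h^{-1}(r))$ together with tight comparisons at the endpoints of $(0, \int_0^{+\infty} c(s)e^{-s}\,ds]$ is required to sandwich $G_N(h^{-1}(r))$ between two linear functions matching $G(h^{-1}(r))$.
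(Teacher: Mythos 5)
Your overall strategy (contradiction, reduce to the finite-point characterization) is in the right spirit, but the step you yourself flag as ``the main obstacle'' is a genuine gap, and I do not see how to close it along the lines you sketch. Linearity of $G(h^{-1}(r))$ does not transfer to the truncated functionals $G_N$ by a sandwich argument: replacing $\psi$ by $\psi_N$ simultaneously shrinks the sublevel sets, changes the integrand from $c(-\psi)$ to $c(-\psi_N)$ (and $c$ itself need not be monotone --- only $c(t)e^{-t}$ is), and enlarges the admissible class by deleting infinitely many jet constraints. These three effects push in different directions, and there is no pair of linear bounds with matching slope that traps $G_N(h^{-1}(r))$. Indeed, if the transfer worked, Theorem \ref{thm:prod-finite-point} applied at every level $N$ would force each individual character $\chi_{1,z_{1,N+1}}^{\gamma_{1,N+1}+1}$ to be trivial --- an extremely rigid conclusion that cannot follow from linearity of the full $G$ alone. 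Your terminal contradiction (convergence of the infinite sum versus ``asymptotics of the logarithmic capacities'') is also not carried out concretely.

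The paper localizes differently and this is what makes the argument work. For each fixed $\beta\in\tilde I_1$ it splits $G(t;\tilde c\equiv 1)=G_\beta(t)+\tilde G_\beta(t)$ for $t\ge t_\beta$, where $G_\beta$ is the minimal integral over $\{\psi<-t\}\cap V_\beta$ and $\tilde G_\beta$ over the complement; both summands are concave in $r=e^{-t}$, so linearity of the sum forces linearity of each $G_\beta$. Applying Theorem \ref{thm:linear-2d} on the polydisc-like neighborhood $V_{\beta^*}$ and then Lemma \ref{l:decom} shows the extremal form splits globally as $F=\wedge_{1\le j\le n}\pi_j^*(h_{j,1})$, whence $G(t;\tilde c\equiv1)=\Pi_j G_j(\tfrac{\gamma_{j,1}+1}{p_{j,1}}t)$ with $G_j$ the one-dimensional minimal integral on $\Omega_j$ attached to the \emph{full} infinite divisor $\Psi_j=2\sum_k(\gamma_{j,k}+1)G_{\Omega_j}(\cdot,z_{j,k})$; each $G_j(-\log r)$ is then linear. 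The contradiction is now immediate from Proposition \ref{p:infinite}: the balance relation $\sum_j\frac{\gamma_{j,\beta_j}+1}{p_{j,\beta_j}}=1$ gives $p_{j_0,k}=\frac{p_{j_0,1}}{\gamma_{j_0,1}+1}(\gamma_{j_0,k}+1)\ge\frac{p_{j_0,1}}{\gamma_{j_0,1}+1}>0$ for every $k$, so $\sum_{k}p_{j_0,k}=+\infty$, violating condition $(4)$ of that proposition. If you want to salvage your approach, you would need to replace the truncation of the divisor by this kind of localization in the base, or find an independent proof that the truncated problems inherit linearity --- neither of which is supplied in your proposal.
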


\subsection{Applications: characterizations of the holding of equality in optimal jets $L^2$ extension problem}
\

In this section, we give characterizations of the holding of equality in optimal jets $L^2$ extension problem.

Let $\Omega_j$  be an open Riemann surface, which admits a nontrivial Green function $G_{\Omega_j}$ for any  $1\le j\le n$. Let $M=\prod_{1\le j\le n}\Omega_j$ be an $n-$dimensional complex manifold, and let $\pi_j$ be the natural projection from $M$ to $\Omega_j$. Let $K_M$ be the canonical (holomorphic) line bundle on $M$, and Let $K_{\Omega_j}$ be the canonical (holomorphic) line bundle on $\Omega_j$.

Let $z_0=(z_1,z_2,...,z_n)\in M$.
Let $w_j$ be a local coordinate on a neighborhood $V_{z_j}$ of $z_j\in\Omega_j$ satisfying $w_j(z_j)=0$. Denote that $V_0:=\prod_{1\le j\le n}V_{z_j}$, and $w:=(w_1,w_2,...,w_n)$ is a local coordinate on $V_0$ of $z_0\in M$.
Let $f=\sum_{\alpha\in E}d_{\alpha}w^{\alpha}dw_1\wedge dw_2\wedge...\wedge dw_n$ be a holomorphic $(n,0)$ form on $V_0$,  where $E=\{(\alpha_1,\alpha_2,...,\alpha_n):\sum_{1\le j\le n}\frac{\alpha_j+1}{p_j}=1\,\&\,\alpha_j\in\mathbb{Z}_{\ge0}\}$ and $\sum_{\alpha\in E}|d_{\alpha}|\not=0$.

We obtain a characterization of the holding of equality in optimal jets $L^2$ extension problem from single points  to products of open Riemann surfaces.

\begin{Theorem}
\label{thm:2d-jet}
Let $\Psi\le0$ be a  plurisubharmonic function on $M$, and let $\varphi_j$ be a Lebesgue measurable function on $\Omega_j$ such that $\Psi+\sum_{1\le j\le n}\pi_j^*(\varphi_j)$ is plurisubharmonic on $M$ and $(\Psi+\sum_{1\le j\le n}\pi_j^*(\varphi_j))(z_0)>-\infty$. Denote that
$$\psi:=\max_{1\le j\le n}\{2p_j\pi_j^{*}(G_{\Omega_j}(\cdot,z_j))\}+\Psi$$ and $\varphi:=\sum_{1\le j\le n}\pi_j^*(\varphi_j)$ on $M$, where $p_j$ is a positive real number for $1\le j\le n$.
Let $c$ be a positive function on $(0,+\infty)$ such that $\int_{0}^{+\infty}c(t)e^{-t}dt<+\infty$ and $c(t)e^{-t}$ is decreasing on $(0,+\infty)$.

Then there exists a holomorphic $(n,0)$ form $F$ on $M$ satisfying that $(F-f,z_0)\in(\mathcal{O}(K_M)\otimes\mathcal{I}(\max_{1\le j\le n}\{2p_j\pi_j^{*}(G_{\Omega_j}(\cdot,z_j))\}))_{z_0}$ and
	$$\int_{M}|F|^2e^{-\varphi}c(-\psi)\le(\int_0^{+\infty}c(s)e^{-s}ds)\sum_{\alpha\in E}\frac{|d_{\alpha}|^2(2\pi)^ne^{-(\varphi+\Psi)(z_{0})}}{\Pi_{1\le j\le n}(\alpha_j+1)c_{j}(z_j)^{2\alpha_{j}+2}}.$$
	
	Moreover, equality $(\int_0^{+\infty}c(s)e^{-s}ds)\sum_{\alpha\in E}\frac{|d_{\alpha}|^2(2\pi)^ne^{-(\varphi+\Psi)(z_{0})}}{\Pi_{1\le j\le n}(\alpha_j+1)c_{j}(z_j)^{2\alpha_{j}+2}}=\inf\{\int_{M}|\tilde{F}|^2e^{-\varphi}c(-\psi):\tilde{F}\in H^0(M,\mathcal{O}(K_M))\,\&\, (\tilde{F}-f,z_0)\in(\mathcal{O}(K_{M})\otimes\mathcal{I}(\max_{1\le j\le n}\{2p_j\pi_j^{*}(G_{\Omega_j}(\cdot,z_j))\}))_{z_0}\}$ holds if and only if the following statements hold:

	$(1)$ $\Psi\equiv0$;
	
	$(2)$  $\varphi_j=2\log|g_j|+2u_j$, where $g_j$ is  holomorphic functions on $\Omega_j$ such that $g_j(z_j)\not=0$ and $u_j$ is a harmonic function on $\Omega_j$ for any $1\le j\le n$;

    $(3)$ $\chi_{j,z_j}^{\alpha_j+1}=\chi_{j,-u_j}$ for any $j\in\{1,2,...,n\}$ and $\alpha\in E$ satisfying $d_{\alpha}\not=0$.
\end{Theorem}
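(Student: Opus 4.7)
The plan is to combine the concavity of the minimal $L^{2}$ integrals from \cite{GY-concavity,GMY-concavity2} with an asymptotic residue computation near $z_{0}$, and then to read off the equality case from Theorem \ref{thm:linear-2d} together with a maximum-principle argument for the extra plurisubharmonic factor $\Psi$.

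Write $\psi_{0}:=\max_{1\le j\le n}\{2p_{j}\pi_{j}^{*}G_{\Omega_{j}}(\cdot,z_{j})\}$, so $\psi=\psi_{0}+\Psi$. The decreasing property of $c(t)e^{-t}$ immediately yields the pointwise bound $c(-\psi)\le e^{-\Psi}c(-\psi_{0})$ on $M$. Denote by $G(t)$ the minimal $L^{2}$ integral associated to $(\psi,\varphi,f,c)$ with jet condition taken modulo $\mathcal{I}(\psi_{0})_{z_{0}}$. The cited concavity result asserts that $G\circ h^{-1}$ is concave on $(0,h(0)]$ and $G(h^{-1}(r))\to 0$ as $r\to 0^{+}$, hence
\[
\frac{G(0)}{h(0)}\le\limsup_{t\to+\infty}\frac{G(t)}{h(t)}.
\]
The central analytic step is to bound this limsup by the right-hand side of the theorem. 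I would first use the inequality $c(-\psi)\le e^{-\Psi}c(-\psi_{0})$ to dominate the integrand by $|F|^{2}e^{-\varphi-\Psi}c(-\psi_{0})$, and then, on the shrinking sublevel sets $\{\psi<-t\}$, exploit the local expansion $f=\sum_{\alpha\in E}d_{\alpha}w^{\alpha}dw_{1}\wedge\cdots\wedge dw_{n}$ modulo the jet ideal, the Suita-type asymptotic $G_{\Omega_{j}}(z,z_{j})=\log|w_{j}(z)|-\log c_{j}(z_{j})+o(1)$, and the sub-mean-value inequality for the plurisubharmonic function $\varphi+\Psi$ at $z_{0}$, to reduce the limit to the same explicit polar integration that appears in the proof of Remark \ref{r:1.1}; this delivers exactly
\[
\limsup_{t\to+\infty}\frac{G(t)}{h(t)}\le\sum_{\alpha\in E}\frac{|d_{\alpha}|^{2}(2\pi)^{n}e^{-(\varphi+\Psi)(z_{0})}}{\prod_{1\le j\le n}(\alpha_{j}+1)\,c_{j}(z_{j})^{2\alpha_{j}+2}}.
\]
Existence of a minimizer $F$ realising $G(0)$ then follows from standard weak-$L^{2}$ compactness arguments on sublevel sets.

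Sufficiency of $(1)$--$(3)$ is immediate from Remark \ref{r:1.1}: statement $(1)$ gives $\psi=\psi_{0}$, conditions $(2)$ and $(3)$ together with the prescribed form of $f$ are exactly the hypotheses of Theorem \ref{thm:linear-2d}, and the explicit holomorphic extension exhibited in Remark \ref{r:1.1} realises the right-hand side since $\Psi(z_{0})=0$. For necessity, if equality is attained, then both the concavity inequality $G(0)/h(0)\le\limsup G(t)/h(t)$ and the pointwise bound $c(-\psi)\le e^{-\Psi}c(-\psi_{0})$ must be saturated by the minimiser $F$. The first forces $G\circ h^{-1}$ to be linear; the second (using strict monotonicity of $c(t)e^{-t}$, to which one may reduce by a small perturbation $c_{\epsilon}:=c+\epsilon e^{t}\chi$ and a passage to the limit) forces $\Psi\equiv 0$ on the open set where $\psi_{0}>-\infty$, whence the maximum principle for plurisubharmonic functions on the connected manifold $M$ yields $(1)$. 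With $\psi=\psi_{0}$ in hand, the linearity of $G\circ h^{-1}$ transports directly to the setting of Theorem \ref{thm:linear-2d}, whose conclusions are precisely $(2)$ and $(3)$.

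The main obstacle, I expect, is the asymptotic evaluation of $\limsup_{t\to+\infty}G(t)/h(t)$: since $\Psi$ is only plurisubharmonic and may fail to be continuous at $z_{0}$ (indeed it can equal $-\infty$ on a dense subset of any neighbourhood), the passage from the integrand $c(-\psi_{0}-\Psi)$ to $e^{-\Psi}c(-\psi_{0})$ and the identification of the residue through the sub-mean-value property of $\varphi+\Psi$ require a careful Fatou-type argument together with a truncation in $\Psi$.
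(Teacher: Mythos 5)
Your treatment of the equality characterization is essentially the paper's: the saturation of $c(-\psi)\le e^{-\Psi}c(-\psi_{0})$ forcing $\Psi\equiv 0$ (the paper packages this as Lemma \ref{l:psi=G}, which handles the fact that $c(t)e^{-t}$ is only non-increasing without any perturbation of $c$), then deducing linearity of $G\circ h^{-1}$ and invoking Theorem \ref{thm:linear-2d}. The sufficiency via Remark \ref{r:1.1} also matches.

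The genuine gap is in the existence/inequality part. You propose to bound $\limsup_{t\to+\infty}G(t)/h(t)$ by an "asymptotic residue computation" near $z_{0}$ and then recover a minimizer by weak compactness. This fails for two reasons. First, when $\Psi\not\equiv 0$ the sublevel sets $\{\psi<-t\}$ need not shrink to $z_{0}$ (think of $\Psi$ with a pole at another point), so there is no local polar integration to perform; one must first dominate by the $\psi_{0}$-problem with weight $\varphi+\Psi$. Second, and more fatally, even on $\{\psi_{0}<-t\}\subset V_{0}$ the natural candidate $\tilde F=f$ gives $\int_{\{\psi_{0}<-t\}}|f|^{2}e^{-(\varphi+\Psi)}c(-\psi_{0})$, and a plurisubharmonic weight $\varphi+\Psi$ that is finite at $z_{0}$ admits no local upper bound for $e^{-(\varphi+\Psi)}$: the sub-mean-value inequality bounds averages of $\varphi+\Psi$ from \emph{below}, which via Jensen bounds $\int e^{-(\varphi+\Psi)}$ from \emph{below} — the wrong direction — and indeed this integral can be $+\infty$ for every $t$ (e.g.\ high-order log poles accumulating at $z_{0}$). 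Consequently no local candidate delivers the residue bound, and even the finiteness $G(t)<+\infty$ needed before you may invoke Theorem \ref{thm:general_concave} is not established, so the concavity step is circular. The paper avoids all of this by proving the inequality \emph{constructively}: Proposition \ref{p:exten-pro-finite} runs the Ohsawa--Takegoshi-type Lemma \ref{lem:L2} with smooth plurisubharmonic approximations $\Phi_{l}\downarrow\varphi+\Psi$ (for which $\sup_{\{\psi_{0}<-t\}}|\Phi_{l}-\Phi_{l}(z_{0})|\to 0$ by continuity), then lets $t\to+\infty$ and $l\to+\infty$ using Lemma \ref{l:converge}, Fatou and Lemma \ref{closedness}; combined with $c(-\psi)\le e^{-\Psi}c(-\psi_{0})$ this yields the global $F$ and the optimal bound in one stroke. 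You should replace your concavity-plus-residue step by this direct construction.
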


\begin{Remark}
	Let $f=\sum_{\alpha\in\mathbb{Z}_{\ge0}}d_{\alpha}w^{\alpha}dw_1\wedge dw_2\wedge...\wedge dw_n$ on $V_0$, and let $\psi=\max_{1\le j\le n}\{2p_j\pi_j^{*}(G_{\Omega_j}(\cdot,z_j))\}$. It follows from Lemma \ref{l:0} that $(f,z_0)\not\in(\mathcal{O}(K_M)\otimes\mathcal{I}(\psi))_{z_0}$ if and only if there exists $\alpha\in\mathbb{Z}_{\ge0}$ satisfying $\sum_{1\le j\le n}\frac{\alpha_j+1}{p_j}\le 1$ and $d_{\alpha}\not=0$.  It follows from Lemma \ref{l:m1} that $\lim_{t\rightarrow+\infty}\int_{\{-t-1<\psi<-t\}}|f|^2e^{-\psi}<+\infty$ if and only if $d_{\alpha}=0$ for any $\alpha$ satisfying $\sum_{1\le j\le n}\frac{\alpha_j+1}{p_j}<1$. Thus, we set $f=\sum_{\alpha\in E}d_{\alpha}w^{\alpha}dw_1\wedge dw_2\wedge...\wedge dw_n$ in the above theorem.
\end{Remark}

 Let $Z_j=\{z_{j,1},z_{j,2},...,z_{j,m_j}\}\subset\Omega_j$ for any  $j\in\{1,2,...,n\}$, where $m_j$ is a positive integer.
Let $w_{j,k}$ be a local coordinate on a neighborhood $V_{z_{j,k}}\Subset\Omega_{j}$ of $z_{j,k}\in\Omega_j$ satisfying $w_{j,k}(z_{j,k})=0$ for any $j\in\{1,2,...,n\}$ and $k\in\{1,2,...,m_j\}$, where $V_{z_{j,k}}\cap V_{z_{j,k'}}=\emptyset$ for any $j$ and $k\not=k'$. Denote that $I_1:=\{(\beta_1,\beta_2,...,\beta_n):1\le \beta_j\le m_j$ for any $j\in\{1,2,...,n\}\}$, $V_{\beta}:=\prod_{1\le j\le n}V_{z_{j,\beta_j}}$ for any $\beta=(\beta_1,\beta_2,...,\beta_n)\in I_1$ and $w_{\beta}:=(w_{1,\beta_1},w_{2,\beta_2},...,w_{n,\beta_n})$ is a local coordinate on $V_{\beta}$ of $z_{\beta}:=(z_{1,\beta_1},z_{2,\beta_2},...,z_{n,\beta_n})\in M$.

Let $f$ be a holomorphic $(n,0)$ form on $\cup_{\beta\in I_1}V_{\beta}$ such that  $f=\sum_{\alpha\in E_{\beta}}d_{\beta,\alpha}w_{\beta}^{\alpha}dw_{1,\beta_1}\wedge dw_{2,\beta_2}\wedge...\wedge dw_{n,\beta_n}$ on $V_{\beta}$, where  $E_{\beta}:=\{(\alpha_1,\alpha_2,...,\alpha_n):\sum_{1\le j\le n}\frac{\alpha_j+1}{p_{j,\beta_j}}=1\,\&\,\alpha_j\in\mathbb{Z}_{\ge0}\}$. Assume that $f=w_{\beta^*}^{\alpha_{\beta_*}}dw_{1,1}\wedge dw_{2,1}\wedge...\wedge dw_{n,1}$ on $V_{\beta^*}$, where $\beta^*=(1,1,...,1)\in I_1$. Denote that
\begin{equation*}
c_{j,k}:=\exp\lim_{z\rightarrow z_{j,k}}(\frac{\sum_{1\le k_1\le m_j}p_{j,k_1}G_{\Omega_j}(z,z_{j,k_1})}{p_{j,k}}-\log|w_{j,k}(z)|)
\end{equation*}
 for any $j\in\{1,2,...,n\}$ and $k\in\{1,2,...,m_j\}$.

We obtain a characterization of the holding of equality in optimal jets $L^2$ extension problem from products of finite subsets to products of open Riemann surfaces.

\begin{Theorem}
\label{thm:prod-finite-jet}
Let $\Psi\le0$ be a  plurisubharmonic function on $M$, and let $\varphi_j$ be a Lebesgue measurable function on $\Omega_j$ such that $\Psi+\sum_{1\le j\le n}\pi_j^*(\varphi_j)$ is plurisubharmonic on $M$ and $(\Psi+\sum_{1\le j\le n}\pi_j^*(\varphi_j))(z_\beta)>-\infty$ for any $\beta\in I_1$. Denote that
$$\psi:=\max_{1\le j\le n}\{2\sum_{1\le k\le m_j}p_{j,k}\pi_j^{*}(G_{\Omega_j}(\cdot,z_{j,k}))\}+\Psi$$
and $\varphi:=\sum_{1\le j\le n}\pi_j^*(\varphi_{j})$
 on $M$, where $p_{j,k}$ is positive real number for $1\le j\le n$ and $k\in\{1,2,...,m_j\}$.
Let $c$ be a positive function on $(0,+\infty)$ such that $\int_{0}^{+\infty}c(t)e^{-t}dt<+\infty$ and $c(t)e^{-t}$ is decreasing on $(0,+\infty)$.

Then there exists a holomorphic $(n,0)$ form $F$ on $M$ satisfying that $(F-f,z_0)\in(\mathcal{O}(K_M)\otimes\mathcal{I}(\max_{1\le j\le n}\{2\sum_{1\le k\le m_j}p_{j,k}\pi_j^{*}(G_{\Omega_j}(\cdot,z_{j,k}))\}))_{z_\beta}$ for any $\beta\in I_1$ and
	$$\int_{M}|F|^2e^{-\varphi}c(-\psi)\le(\int_{0}^{+\infty}c(s)e^{-s}ds)\sum_{\beta\in I_1}\sum_{\alpha\in E_{\beta}}\frac{|d_{\beta,\alpha}|^2(2\pi)^ne^{-(\varphi+\Psi)(z_{\beta})}}{\Pi_{1\le j\le n}(\alpha_j+1)c_{j,\beta_j}^{2\alpha_{j}+2}}.$$
	
	Moreover, equality $\inf\{\int_{M}|\tilde{F}|^2e^{-\varphi}c(-\psi):\tilde{F}\in H^0(M,\mathcal{O}(K_M))\,\&\,(\tilde{F}-f,z_\beta)\in(\mathcal{O}(K_{M})\otimes\mathcal{I}(\max_{1\le j\le n}\{2\sum_{1\le k\le m_j}p_{j,k}\pi_j^{*}(G_{\Omega_j}(\cdot,z_{j,k}))\}))_{z_\beta}$ for any $\beta\in I_1\}=(\int_{0}^{+\infty}c(s)e^{-s}ds)\sum_{\beta\in I_1}\sum_{\alpha\in E_{\beta}}\frac{|d_{\beta,\alpha}|^2(2\pi)^ne^{-(\varphi+\Psi)(z_{\beta})}}{\Pi_{1\le j\le n}(\alpha_j+1)c_{j,\beta_j}^{2\alpha_{j}+2}}$ holds if and only if the following statements hold:

	$(1)$ $\Psi\equiv0$ and $\varphi_j=2\log|g_j|+2u_j$ for any $j\in\{1,2,...,n\}$, where $u_j$ is a harmonic function on $\Omega_j$ and $g_j$ is a holomorphic function on $\Omega_j$ satisfying $g_j(z_{j,k})\not=0$ for any $k\in\{1,2,...,m_j\}$;
	
	$(2)$  there exists a nonnegative integer $\gamma_{j,k}$ for any $j\in\{1,2,...,n\}$ and $k\in\{1,2,...,m_j\}$, which satisfies that $\Pi_{1\le k\leq m_j}\chi_{j,z_{j,k}}^{\gamma_{j,k}+1}=\chi_{j,-u_j}$ and $\sum_{1\le j\le n}\frac{\gamma_{j,\beta_j}+1}{p_{j,\beta_j}}=1$ for any $\beta\in I_1$;
	
	$(3)$ $f=(c_{\beta}\Pi_{1\le j\le n}w_{j,\beta_j}^{\gamma_{j,\beta_j}}+g_{\beta})dw_{1,\beta_1}\wedge dw_{2,\beta_2}\wedge...\wedge dw_{n,\beta_n}$ on $V_{\beta}$ for any $\beta\in I_1$, where $c_{\beta}$ is a constant and $g_{\beta}$ is a holomorphic function on $V_{\beta}$ such that $(g_{\beta},z_{\beta})\in\mathcal{I}(\psi)_{z_{\beta}}$;
	
	$(4)$ $\lim_{z\rightarrow z_{\beta}}\frac{c_{\beta}\Pi_{1\le j\le n}w_{j,\beta_j}^{\gamma_{j,\beta_j}}dw_{1,\beta_1}\wedge dw_{2,\beta_2}\wedge...\wedge dw_{n,\beta_n}}{\wedge_{1\le j\le n}\pi_{j}^*(g_j(P_{j})_*(f_{u_j}(\Pi_{1\le k\le m_j}f_{z_{j,k}}^{\gamma_{j,k}+1})(\sum_{1\le k\le m_j}p_{j,k}\frac{df_{z_{j,k}}}{f_{z_{j,k}}})))}=c_0$ for any $\beta\in I_1$, where $c_0\in\mathbb{C}\backslash\{0\}$ is a constant independent of $\beta$, $f_{u_j}$ is a holomorphic function $\Delta$ such that $|f_{u_j}|=P_j^*(e^{u_j})$ and $f_{z_{j,k}}$ is a holomorphic function on $\Delta$ such that $|f_{z_{j,k}}|=P_j^*(e^{G_{\Omega_j}(\cdot,z_{j,k})})$ for any $j\in\{1,2,...,n\}$ and $k\in\{1,2,...,m_j\}$.

\end{Theorem}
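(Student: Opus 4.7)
The plan is to derive Theorem~\ref{thm:prod-finite-jet} from Theorem~\ref{thm:prod-finite-point} and Remark~\ref{r:1.2} by combining the concavity of the associated minimal $L^{2}$ integral with an asymptotic computation as the sublevel parameter $t\to\infty$.

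Step 1 (existence and upper bound). Let $\psi_0:=\max_{1\le j\le n}\{2\sum_{1\le k\le m_j}p_{j,k}\pi_j^{*}(G_{\Omega_j}(\cdot,z_{j,k}))\}$ so that $\psi=\psi_0+\Psi$, and let $G(t)$ denote the minimal $L^{2}$ integral on $\{\psi<-t\}$ with gain $c$, weight $e^{-\varphi}$, and the prescribed jet condition at each $z_\beta$. Since $G(h^{-1}(r))$ is concave on $(0,h(0)]$ with $\lim_{r\to 0^+}G(h^{-1}(r))=0$, the quotient $G(h^{-1}(r))/r$ is nonincreasing, so
\begin{equation*}
G(0)\le h(0)\cdot\liminf_{t\to\infty}\frac{G(t)}{h(t)}.
\end{equation*}
The lim inf on the right is bounded above by localizing near each $z_\beta$: one constructs candidates using the local expansion of $f$ on $V_\beta$, the Green-function asymptotic $G_{\Omega_j}(z,z_{j,k})=\log|w_{j,k}|+\log c_{j,k}+o(1)$ as $z\to z_{j,k}$, the pointwise inequality $c(-\psi)\le c(-\psi_0)e^{-\Psi}$ (from $c(t)e^{-t}$ being decreasing), and the upper semicontinuity of $\Psi$ at $z_\beta$. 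A direct calculation yields
\begin{equation*}
\liminf_{t\to\infty}\frac{G(t)}{h(t)}\le\sum_{\beta\in I_1}\sum_{\alpha\in E_\beta}\frac{|d_{\beta,\alpha}|^2(2\pi)^{n}e^{-(\varphi+\Psi)(z_\beta)}}{\prod_{j}(\alpha_j+1)c_{j,\beta_j}^{2\alpha_j+2}},
\end{equation*}
and any extremal $F$ realizing $G(0)$ supplies the asserted holomorphic extension on $M$ (note $\{\psi<0\}=M$ off a set of measure zero).

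Step 2 (equality characterization). Suppose equality holds in the bound. Equality in the concavity step forces $G(h^{-1}(r))$ to be linear on $(0,h(0)]$, and equality in the estimate $c(-\psi)\le c(-\psi_0)e^{-\Psi}$ used in the local analysis forces $\Psi\equiv 0$, since any strict gap on a positive-measure set would transfer to a strict gap in the bound for $G(t)/h(t)$. With $\Psi\equiv 0$, our setting coincides with that of Theorem~\ref{thm:prod-finite-point}, whose characterization of linearity of $G(h^{-1}(r))$ supplies conditions~(1)--(4). Conversely, if~(1)--(4) hold with $\Psi\equiv 0$, the explicit holomorphic form provided in Remark~\ref{r:1.2} realizes the right-hand side, giving equality.

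The main obstacle lies in Step~2, namely rigorously deducing $\Psi\equiv 0$ from equality. Because $c$ itself need not be monotonic, one must carefully track the gap in $c(-\psi)\le c(-\psi_0)e^{-\Psi}$ on the shrinking tubes $\{\psi<-t\}\cap V_\beta$ and transfer it to the asymptotic of $G(t)/h(t)$. A clean route is to compare with the $\Psi=0$ problem via an auxiliary modification of the gain function, then invoke Remark~\ref{r:1.2} together with the rigidity already contained in Theorem~\ref{thm:prod-finite-point} to conclude that $\Psi$ cannot be nontrivially negative without strictly decreasing the extremal value below the claimed right-hand side.
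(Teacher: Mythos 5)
Your Step 1 is essentially sound and matches the paper in substance: the paper obtains the extension and the bound by applying Proposition \ref{p:exten-pro-finite} to the weight $\varphi+\Psi$ and the unperturbed $\psi_0:=\max_{1\le j\le n}\{2\sum_{1\le k\le m_j}p_{j,k}\pi_j^{*}(G_{\Omega_j}(\cdot,z_{j,k}))\}$, together with the pointwise inequality $c(-\psi)\le c(-\psi_0)e^{-\Psi}$, which follows solely from the monotonicity of $c(t)e^{-t}$ and $\Psi\le 0$. Your worry that ``$c$ itself need not be monotonic'' is therefore a red herring: one never needs monotonicity of $c$ alone, only of $c(t)e^{-t}$, which is a standing hypothesis.

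The genuine gap is in Step 2, and it is exactly the step you flag as ``the main obstacle'': deducing $\Psi\equiv 0$ from equality. The mechanism you sketch --- transferring a positive-measure gap in $c(-\psi)\le c(-\psi_0)e^{-\Psi}$ to a strict gap in $\liminf_{t\to\infty}G(t)/h(t)$ --- does not work. The set on which the inequality is strict (produced by Lemma \ref{l:psi=G}) is an annular region of the form $\{-t_1<\psi_0<-t_2\}\cap U_0$, which is bounded away from the points $z_\beta$; for large $t$ the sublevel sets $\{\psi<-t\}$ are disjoint from it, so the gap is invisible in the $t\to\infty$ asymptotics that control $\liminf G(t)/h(t)$. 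Moreover, you cannot invoke Theorem \ref{thm:prod-finite-point} to read off conditions (1)--(4) until $\Psi\equiv 0$ is known, since that theorem requires $\psi$ to be exactly $\psi_0$. The paper closes this gap at $t=0$ rather than at $t=\infty$: if equality holds, the chain $\inf\le\int_M|F|^2e^{-\varphi}c(-\psi)\le\int_M|F|^2e^{-\varphi-\Psi}c(-\psi_0)\le\mathrm{RHS}$ collapses to equalities for the extremal $F$ from Proposition \ref{p:exten-pro-finite}; since $|F|^2e^{-\varphi}>0$ almost everywhere and Lemma \ref{l:psi=G} (applied to the decreasing function $l(t)=c(t)e^{-t}$) gives $c(-\psi)<c(-\psi_0)e^{-\Psi}$ on a set of positive measure whenever $\Psi\not\equiv 0$, the middle equality is impossible unless $\Psi\equiv 0$. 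Once $\Psi\equiv 0$ is in hand, linearity of $G(h^{-1}(r))$ follows by applying Proposition \ref{p:exten-pro-finite} on each sublevel set $\{\psi<-t\}$ to get $G(t)/\int_t^{+\infty}c(s)e^{-s}ds\le G(0)/\int_0^{+\infty}c(s)e^{-s}ds$ and combining with concavity, and then Theorem \ref{thm:prod-finite-point} and Remark \ref{r:1.2} finish the necessity and sufficiency exactly as you intend.
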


 Let ${Z}_j=\{z_{j,k}:1\le k<\tilde m_j\}$ be a discrete subset of $\Omega_j$ for any  $j\in\{1,2,...,n\}$, where $\tilde{m}_j\in\mathbb{Z}_{\ge2}\cup\{+\infty\}$.

Let $w_{j,k}$ be a local coordinate on a neighborhood $V_{z_{j,k}}\Subset\Omega_{j}$ of $z_{j,k}\in\Omega_j$ satisfying $w_{j,k}(z_{j,k})=0$ for any $j\in\{1,2,...,n\}$ and $1\le k<\tilde{m}_j$, where $V_{z_{j,k}}\cap V_{z_{j,k'}}=\emptyset$ for any $j$ and $k\not=k'$. Denote that $\tilde I_1:=\{(\beta_1,\beta_2,...,\beta_n):1\le \beta_j< \tilde m_j$ for any $j\in\{1,2,...,n\}\}$, $V_{\beta}:=\prod_{1\le j\le n}V_{z_{j,\beta_j}}$ for any $\beta=(\beta_1,\beta_2,...,\beta_n)\in\tilde I_1$ and $w_{\beta}:=(w_{1,\beta_1},w_{2,\beta_2},...,w_{n,\beta_n})$ is a local coordinate on $V_{\beta}$ of $z_{\beta}:=(z_{1,\beta_1},z_{2,\beta_2},...,z_{n,\beta_n})\in M$.

Let $f$ be a holomorphic $(n,0)$ form on $\cup_{\beta\in \tilde I_1}V_{\beta}$ such that  $f=\sum_{\alpha\in E_{\beta}}d_{\beta,\alpha}w_{\beta}^{\alpha}dw_{1,\beta_1}\wedge dw_{2,\beta_2}\wedge...\wedge dw_{n,\beta_n}$ on $V_{\beta}$, where  $E_{\beta}:=\{(\alpha_1,\alpha_2,...,\alpha_n):\sum_{1\le j\le n}\frac{\alpha_j+1}{p_{j,\beta_j}}=1\,\&\,\alpha_j\in\mathbb{Z}_{\ge0}\}$. Assume that $f=w_{\beta^*}^{\alpha_{\beta_*}}dw_{1,1}\wedge dw_{2,1}\wedge...\wedge dw_{n,1}$ on $V_{\beta^*}$, where $\beta^*=(1,1,...,1)\in\tilde I_1$. Denote that
\begin{equation*}
c_{j,k}:=\exp\lim_{z\rightarrow z_{j,k}}(\frac{\sum_{1\le k_1<\tilde m_j}p_{j,k_1}G_{\Omega_j}(z,z_{j,k_1})}{p_{j,k}}-\log|w_{j,k}(z)|)
\end{equation*}
 for any $j\in\{1,2,...,n\}$ and $1\le k<\tilde m_j$ (following from Lemma \ref{l:green-sup} and Lemma \ref{l:green-sup2}, we know the above limit exists).

When the products of analytic subsets is infinite, we obtain that the equality in optimal jets $L^2$ extension problem could not hold.

\begin{Theorem}
\label{thm:prod-infinite-jet}
Let $\Psi\le0$ be a  plurisubharmonic function on $M$, and let $\varphi_j$ be a Lebesgue measurable function on $\Omega_j$ such that  $\Psi+\sum_{1\le j\le n}\pi_j^*(\varphi_j)$ is plurisubharmonic on $M$ and $(\Psi+\sum_{1\le n}\pi_j^*(\varphi))(z_{\beta})>-\infty$ for any $\beta\in\tilde I_1$. Let $p_{j,k}$ be a positive number such that $\sum_{1\le k<\tilde{m}_j}p_{j,k}G_{\Omega_j}(\cdot,z_{j,k})\not\equiv-\infty$ for any $j$. Denote that
$$\psi:=\max_{1\le j\le n}\{2\sum_{1\le k<\tilde m_j}p_{j,k}\pi_j^{*}(G_{\Omega_j}(\cdot,z_{j,k}))\}+\Psi$$
and $\varphi:=\sum_{1\le j\le n}\pi_j^*(\varphi_{j})$
 on $M$.
Let $c$ be a positive function on $(0,+\infty)$ such that $\int_{0}^{+\infty}c(t)e^{-t}dt<+\infty$, $c(t)e^{-t}$ is decreasing on $(0,+\infty)$ and $\limsup_{t\rightarrow+\infty}c(t)<+\infty$. Assume that
$$\sum_{\beta\in\tilde I_1}\sum_{\alpha\in E_{\beta}}\frac{|d_{\beta,\alpha}|^2(2\pi)^ne^{-(\varphi+\Psi)(z_{\beta})}}{\Pi_{1\le j\le n}(\alpha_j+1)c_{j,\beta_j}^{2\alpha_{j}+2}}<+\infty$$
 and there exists $j_0\in\{1,2,...,n\}$ such that $\tilde m_{j_0}=+\infty$.

Then there exists a holomorphic $(n,0)$ form $F$ on $M$ satisfying that $(F-f,z_0)\in(\mathcal{O}(K_M)\otimes\mathcal{I}(\max_{1\le j\le n}\{2\sum_{1\le k<\tilde m_j}p_{j,k}\pi_j^{*}(G_{\Omega_j}(\cdot,z_{j,k}))\}))_{z_\beta}$ for any $\beta\in \tilde I_1$ and
	$$\int_{M}|F|^2e^{-\varphi}c(-\psi)<(\int_{0}^{+\infty}c(s)e^{-s}ds)\sum_{\beta\in\tilde I_1}\sum_{\alpha\in E_{\beta}}\frac{|d_{\beta,\alpha}|^2(2\pi)^ne^{-(\varphi+\Psi)(z_{\beta})}}{\Pi_{1\le j\le n}(\alpha_j+1)c_{j,\beta_j}^{2\alpha_{j}+2}}.$$
\end{Theorem}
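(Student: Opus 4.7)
The plan is to combine the general concavity of $G(h^{-1}(r))$ with the non-linearity assertion of Theorem \ref{thm:prod-infinite-point}, producing $F$ as a minimizer of the weighted $L^2$-norm obtained through standard weak compactness, and then to upgrade the resulting non-strict inequality to strict by appealing to non-linearity.

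\textbf{Existence of minimizer.} Let $G(t)$ denote the minimal $L^2$-integral function for the infinite-point setting, defined analogously to the paragraph preceding Theorem \ref{thm:prod-infinite-point} but with the given $\psi$ and $\varphi$ (including $\Psi$ in $\psi$). The finiteness of the total sum $S$ (the right-hand side of the claimed inequality) together with an exhaustion argument --- applying Theorem \ref{thm:prod-finite-jet} to finite truncations $Z_{j,N} := \{z_{j,k} : 1 \le k \le \min(N, \tilde m_j - 1)\}$, then taking a diagonal Montel limit of the resulting $F_N$ --- yields $G(0) \le S$. A standard weak-compactness argument in the Hilbert space of locally $L^2$ holomorphic $(n,0)$-forms produces an actual minimizer $F$ with $\int_M |F|^2 e^{-\varphi} c(-\psi) = G(0)$.

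\textbf{Strict inequality via non-linearity.} By the general concavity theorem, $G(h^{-1}(r))$ is concave on $(0, h(0)]$ with $h(0) := \int_0^{+\infty} c(s)e^{-s}ds$; combined with $G(+\infty) = 0$, this yields
$$G(0) \le h(0) \cdot \lim_{s \to 0^+}\frac{G(h^{-1}(s))}{s},$$
with equality if and only if $G(h^{-1}(r))$ is linear on $(0, h(0)]$. I next identify the limit with $S/h(0)$: as $t \to +\infty$, the region $\{\psi < -t\}$ decouples into shrinking disjoint neighborhoods of the points $z_\beta$; on each such neighborhood the single-point formula of Remark \ref{r:1.1} applies and contributes $h(t)\cdot K_\beta$, where $K_\beta$ is the corresponding single-point constant; dominated convergence (justified by concavity-monotonicity $G_\beta(t)/h(t)\le K_\beta$ and $\sum_\beta K_\beta = S/h(0) < +\infty$) gives $G(t)/h(t) \to \sum_\beta K_\beta = S/h(0)$. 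Thus $G(0) \le S$, with equality iff linearity. Since Theorem \ref{thm:prod-infinite-point} forbids linearity when some $\tilde m_{j_0} = +\infty$, we obtain $G(0) < S$, and the minimizer $F$ from the previous step satisfies the desired strict inequality. In the case $\Psi \not\equiv 0$, strict inequality can alternatively be obtained more elementarily from the pointwise bound $e^{-\varphi} c(-\psi) \le e^{-\varphi-\Psi} c(-\psi+\Psi)$, which is strict on the positive-measure set $\{\Psi < 0\}$.

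\textbf{Main obstacle.} The principal technical step is the identification of $\lim_{s \to 0^+} G(h^{-1}(s))/s$ with $S/h(0)$ in the infinite-point setting: one must (i) decouple $G(t)$ for large $t$ into single-point contributions, (ii) verify each local contribution converges to the constant of Remark \ref{r:1.1}, and (iii) control the tail of the sum over the infinitely many $\beta \in \tilde I_1$ using only the assumed finiteness of $S$ (and the hypothesis $\limsup_{t \to +\infty} c(t) < +\infty$, which prevents degenerate growth). A secondary complication is that Theorem \ref{thm:prod-infinite-point} is formally stated for $\varphi = \sum_j \pi_j^*(\varphi_j)$ without an added plurisubharmonic weight $\Psi$; this is handled by the pointwise reduction when $\Psi \not\equiv 0$, and by direct application when $\Psi \equiv 0$.
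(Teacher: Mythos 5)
Your overall architecture (optimal extension for the non-strict bound, then strictness by contradiction with the non-linearity of Theorem \ref{thm:prod-infinite-point} via concavity) matches the paper's, but two of your key steps have genuine gaps. First, your reduction to $\Psi\equiv 0$ is wrong as stated: the pointwise inequality $e^{-\varphi}c(-\psi)\le e^{-\varphi-\Psi}c(-\psi+\Psi)$ does follow from the monotonicity of $c(t)e^{-t}$, but it need not be strict anywhere on $\{\Psi<0\}$, since $c(t)e^{-t}$ is only assumed weakly decreasing and may be constant on the relevant range of values. The paper needs Lemma \ref{l:psi=G} here, which uses $\int_0^{+\infty}c(t)e^{-t}dt<+\infty$ to produce a set of positive measure on which the inequality is strict; without this, your ``elementary'' disposal of the case $\Psi\not\equiv 0$ fails, and with it the applicability of Theorem \ref{thm:prod-infinite-point}, which is stated for $\psi$ without $\Psi$ and for subharmonic $\varphi_j$.

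Second, your mechanism for showing that equality forces linearity --- identifying $\lim_{t\to+\infty}G(t)/h(t)$ with the limiting sum of single-point constants --- is both harder than necessary and not justified as written. In the infinite-point case there is in general no single $t_0$ with $\{\psi<-t_0\}\Subset\bigcup_{\beta}V_{\beta}$, so the claimed decoupling of $\{\psi<-t\}$ into neighborhoods of the $z_{\beta}$ is not uniform in $t$ (the paper's proof of Theorem \ref{thm:prod-infinite-point} only ever splits off one $\beta$ at a time, with $t_{\beta}$ depending on $\beta$); moreover Remark \ref{r:1.1} gives the value of the local minimal integral only under the linearity hypotheses, whereas the unconditional local limit would require the Lemma \ref{l:m2}-type lower bound combined with the local extension upper bound. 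The paper avoids all of this: assuming the infimum equals $(\int_0^{+\infty}c(s)e^{-s}ds)$ times the sum, it applies Proposition \ref{p:exten-pro-finite} on each sublevel set $\{\psi<-t\}$ to obtain $G(t)\le h(t)\,G(0)/h(0)$ for every $t\ge 0$, and Theorem \ref{thm:general_concave} then forces $G(h^{-1}(r))$ to be linear, with no asymptotic computation. (Your weak-compactness construction of a minimizer is also unnecessary: once the infimum is shown to be strictly below the bound, some competitor realizes a value below it by the definition of infimum.)
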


\subsubsection{\textbf{Suita conjecture, extended Suita conjecture, and a conjecture of Ohsawa}}\label{sec:1.2.1}

\
\

In this section, we present characterizations of the product versions of the equality parts of Suita conjecture and extended Suita conjecture, and the equality holding of a conjecture of Ohsawa for products of open Riemann surfaces.

Let $M$ be an $n-$dimensional complex manifold with a continuous volume form $dV_M$, and let $S$ be a closed complex submanifold of $M$. We consider a class of continuous plurisubharmonic function $\Psi$ from $M$ to the interval $[-\infty,0)$ such that

$(1)$ $S\subset\Psi^{-1}(-\infty)$;

$(2)$ If $S$ is $l$-dimensional around a point $x$, there exists a local coordinate $(z_1,...,z_n)$ on a neighborhood $U$ of $x$ such that $z_{l+1}=...=z_n$ on $S\cap U$ and
$$\sup_{U\backslash S}|\Psi(z)-(n-l)\log\sum_{j=l+1}^n|z_j|^2|<+\infty.$$

The set of such polar functions $\Psi$ will be denoted by $\Delta(S)$. For each $\Psi\in\Delta(S)$, one can associate a positive measure $dV_{M}[\Psi]$ on $S$ as the minimum element of the partial ordered set of positive measures $d\mu$ satisfying
$$\int_{S_l}fd\mu\ge\limsup_{t\rightarrow+\infty}\frac{
2(n-l)}{\sigma_{2n-2l-1}}\int_Mfe^{-\Psi}\mathbb{I}_{\{-1-t<\Psi<-t\}}dV_M$$
for any nonnegative continuous function $f$ with $Suppf\Subset M$. Here $S_l$ denotes the $l-$dimensional component of $S$ and $\sigma_m$ denotes the volume of the unit sphere in $\mathbb{R}^{m+1}$.

 Denote the space of $L^2$ integrable holomorphic section of $K_M$ by $A^2(M,K_M,dV_M^{-1},dV_M)$. A holomorphic section $f$ of $K_M|_S$ on $S$ is called $L^2$ integrable with respect to the measure $dV_M[\Psi]$, if $\int_{S}\frac{|f|^2}{dV_M}dV_M[\Psi]<+\infty$.  Denote the space of holomorphic section of $K_M|_S$ which is $L^2$ integrable with respect to the measure $dV_M[\Psi]$ by $A^2(S,K_M|_S,dV_M^{-1},dV_M[\Psi])$.

If $\Delta(S)$ is non-empty, we set $G(z,S):=(\sup\{u(z):u\in\Delta(S)\})^*$, which is the upper envelope of $\sup\{u(z):u\in\Delta(S)\}$. We have $G(z,S)\in\Delta(S)$ (see \cite{Ohsawa5}). Let $M$ be a Stein manifold, and let $\{\sigma_l\}_{l=1}^{+\infty}$ (resp. $\{\tau_l\}_{l=1}^{+\infty}$) be a complete orthogonal system of $A^2(M,K_M,dV_M^{-1},dV_M)$ (resp. $A^2(S,K_M|_S,dV_M^{-1},dV_M[G(\cdot,S)])$) satisfying $(\sqrt{-1})^{n^2}\int_{M}\frac{\sigma_i}{\sqrt{2^n}}\wedge\frac{\overline{\sigma}_j}{\sqrt{2^n}}=\delta_i^j$. Put $\kappa_M=\sum_{l=1}^{+\infty}\sigma_l\otimes\overline\sigma_l\in C^{\omega}(M,K_M\otimes\overline{K_M})$ (resp. $\kappa_{M/S}=\sum_{l=1}^{+\infty}\tau_l\otimes\overline\tau_l\in C^{\omega}(S,K_M\otimes K_M)$). A conjecture of Ohsawa (see \cite{Ohsawa5}) is stated below.
\begin{Conjecture}
$\frac{\pi^k}{k!}\kappa_M(x)\ge\kappa_{M/S}(x)$ for any $x\in S_{n-k}$.	
\end{Conjecture}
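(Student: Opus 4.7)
The plan is to reduce the inequality $\frac{\pi^k}{k!}\kappa_M(x)\ge\kappa_{M/S}(x)$ on $M=\prod_{j=1}^n\Omega_j$ to the optimal jets $L^2$ extension theorem (Theorem \ref{thm:prod-finite-jet}) via a standard reproducing-kernel argument, and then characterize the equality case through the equality part of that same theorem. By definition $\kappa_M(x)$ and $\kappa_{M/S}(x)$ are diagonal Bergman kernels, and satisfy the extremal characterizations $\kappa_M(x)/dV_M(x)=\sup\{|\sigma(x)|^2/dV_M(x):\sigma\in A^2(M,K_M,dV_M^{-1},dV_M),\ \|\sigma\|=1\}$, and similarly for $\kappa_{M/S}(x)/dV_M(x)$ with trial sections in $A^2(S,K_M|_S,dV_M^{-1},dV_M[G(\cdot,S)])$.

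The first step is to identify the minimal polar function $G(\cdot,S)$ on the product with the maximum-type plurisubharmonic function $\max_{1\le j\le n}\{\pi_j^*(\sum_{1\le k\le m_j}p_{j,k}G_{\Omega_j}(\cdot,z_{j,k}))\}$ (for the correct choice of $p_{j,k}$ dictated by the local geometry of $S=\prod_j Z_j$), and to identify the residual measure $dV_M[G(\cdot,S)]$ on $S$ with the product of Lelong residues of the individual Green functions, expressed through the logarithmic-capacity constants $c_{j,k}$. Under this identification, Theorem \ref{thm:prod-finite-jet} with $\varphi\equiv 0$, $\Psi\equiv 0$ and $c\equiv 1$ produces, for every unit-norm $\tau\in A^2(S,K_M|_S,dV_M^{-1},dV_M[G(\cdot,S)])$, a holomorphic extension $F$ on $M$ with $\|F\|^2\le\frac{\pi^k}{k!}$; the pointwise reproducing bound $|F(x)|^2/dV_M(x)\le(\kappa_M(x)/dV_M(x))\cdot\|F\|^2$ combined with $F(x)=\tau(x)$ yields $|\tau(x)|^2/dV_M(x)\le\frac{\pi^k}{k!}\kappa_M(x)/dV_M(x)$, and taking the supremum over $\tau$ gives the Ohsawa inequality.

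For the equality part, $\frac{\pi^k}{k!}\kappa_M(x)=\kappa_{M/S}(x)$ forces the extremal $\tau$ realizing $\kappa_{M/S}(x)$ to admit an extension saturating Theorem \ref{thm:prod-finite-jet}. The four equivalent conditions in Theorem \ref{thm:prod-finite-jet} then translate directly into the desired rigidity: $\Psi\equiv 0$, $\varphi_j=2\log|g_j|+2u_j$, the character identity $\prod_{k}\chi_{j,z_{j,k}}^{\gamma_{j,k}+1}=\chi_{j,-u_j}$, and proportionality of local jets of $\tau$ to the product of the $g_j$-twisted multiplicative Green-function differentials via a common constant $c_0$ independent of $\beta\in I_1$ (the explicit extremal extension being as in Remark \ref{r:1.2}). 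For the infinite-point case, Theorem \ref{thm:prod-infinite-jet} instead forces strict inequality.

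The main obstacle is the measure-theoretic identification in the first step: verifying that $dV_M[G(\cdot,S)]$, defined intrinsically as the minimal positive measure dominating the $\limsup$ of level-set integrals, coincides on $S$ with the product residual measure arising from the maximum-type polar function, and that the resulting normalization constants $(2\pi)^n/\prod_{j}c_{j,\beta_j}^{2\alpha_j+2}$ appearing in Theorem \ref{thm:prod-finite-jet} assemble cleanly into the Ohsawa-conjecture factor $\pi^k/k!$. Once this normalization is pinned down, the rest of the argument is a direct application of the $L^2$-extension and concavity machinery developed in the earlier sections.
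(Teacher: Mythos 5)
The statement you are proving is Ohsawa's conjecture for a general Stein manifold $M$ and a general closed complex submanifold $S$. The paper does not reprove this inequality; it cites Guan--Zhou \cite{GZ15}, and the tool it records for this purpose is Theorem \ref{gz:L2}, the optimal $L^2$ extension theorem with the measure $dV_M[\psi]$, which applies to arbitrary $(M,S)$ and already carries the constant $\sum_k\frac{\pi^k}{k!}$. Your reproducing-kernel skeleton (extend a unit-norm $\tau$ to $F$, bound $|\tau(x)|^2=|F(x)|^2\le\kappa_M(x)\,\|F\|^2$, take the supremum over $\tau$) is the right one, but you feed it Theorem \ref{thm:prod-finite-jet} instead of Theorem \ref{gz:L2}, and that substitution is a genuine gap: Theorem \ref{thm:prod-finite-jet} requires $M$ to be a product of open Riemann surfaces and, crucially, the subvariety to be a product $\prod_j Z_j$ of finite subsets. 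A general $0$-dimensional $S\subset\prod_j\Omega_j$ (let alone a positive-dimensional $S$ in a general Stein $M$) is merely a discrete set and need not be a product, so the weight $\max_{1\le j\le n}\{\pi_j^*(2\sum_k p_{j,k}G_{\Omega_j}(\cdot,z_{j,k}))\}$ that you want to identify with $G(\cdot,S)$ is not even defined. The "main obstacle" you flag --- identifying $dV_M[G(\cdot,S)]$ with the product residual measure --- is therefore not just an unfinished computation: for non-product $S$ the objects do not match, and even for product $S$ the identification needs Proposition \ref{p:pluricomplex} and Lemma \ref{l:m1}, which you never invoke.

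On the equality part (Theorem \ref{thm:ohsawa} in the paper, for $0$-dimensional $S$ in a product), the paper's route is different from yours and circumvents exactly this difficulty: it fixes $x=z_0\in S$, uses $G(\cdot,S)\le G(\cdot,\{z_0\})$ to compare the minimal $L^2$ integrals $G_{G(\cdot,S)}$ and $G_{G(\cdot,\{z_0\})}$, deduces from Theorem \ref{gz:L2} and the concavity (Theorem \ref{thm:general_concave}) that equality forces $G_{G(\cdot,S)}(-\log r)$ to be linear and then $G(\cdot,S)\equiv G(\cdot,\{z_0\})$, hence $S=\{z_0\}$, and only afterwards applies the single-point characterization Theorem \ref{thm:linear-2d} (together with Proposition \ref{p:pluricomplex}) to conclude $\chi_{j,z_j}=1$. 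Your proposal never derives that $S$ must be a single point: with $\varphi\equiv0$ the four conditions of Theorem \ref{thm:prod-finite-jet} can be satisfied by genuine multi-point products (e.g.\ on polydiscs, where all characters are trivial), because the extremal section computing $\kappa_{M/S}(x)$ is the one supported at the single atom $x$ and its $dV_M[G(\cdot,S)]$-mass strictly increases when $S$ acquires additional points. So even in the cases where your machinery applies, it would not recover the rigidity statement $S=\{z_0\}$ that the paper proves.
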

In \cite{GZ15}, Guan-Zhou prove the above conjecture. In the following, we give a characterization of the holding of equality in the above conjecture for the case $M$ is a product of open Riemann surfaces and $S$ is $0-$dimensional.

Let $\Omega_j$  be an open Riemann surface, which admits a nontrivial Green function $G_{\Omega_j}$ for any  $1\le j\le n$. Let $M=\prod_{1\le j\le n}\Omega_j$ be an $n-$dimensional complex manifold, and let $\pi_j$ be the natural projection from $M$ to $\Omega_j$. Let $S$ be a $0-$dimensional closed complex submanifold of $M$ (i.e. a discrete subset of $M$).
\begin{Theorem}
	\label{thm:ohsawa}$\frac{\pi^{n}}{n!}\kappa_M(x)=\kappa_{M/S}(x)$ for any $x\in S$ holds if and only if the following statements hold:
	
	$(1)$ $S$ is a single point set;
	
	$(2)$ $\Omega_j$ is conformally equivalent to the unit disc less a (possible) closed set of inner capacity zero for any $j\in\{1,2,...,n\}$.
\end{Theorem}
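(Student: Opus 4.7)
The plan is to deduce Theorem \ref{thm:ohsawa} from the equality characterizations of the optimal jets $L^2$ extension in Theorems \ref{thm:2d-jet}, \ref{thm:prod-finite-jet} and \ref{thm:prod-infinite-jet}. The first step is to reinterpret the Ohsawa equality at a point $x\in S$ as sharpness of an optimal $L^2$ extension: $1/\kappa_M(x)$ is (up to normalization) the minimal $L^2$ norm of a holomorphic $(n,0)$ form with prescribed value at $x$, while $\kappa_{M/S}(x)$ packages the point mass $dV_M[G(\cdot,S)](x)$, and the Ohsawa inequality $\frac{\pi^n}{n!}\kappa_M(x)\ge\kappa_{M/S}(x)$ is precisely the Ohsawa--Takegoshi type bound for extensions from $\{x\}$ with weight $G(\cdot,S)$ applied to $f=dw_1\wedge\cdots\wedge dw_n$. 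Equality at every $x\in S$ is therefore equivalent to sharpness of this extension at every $x\in S$.

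For the ``if'' direction, suppose $S=\{x_0\}$ and each $\Omega_j$ is conformally equivalent to $\Delta$ minus a closed set of inner capacity zero. Then $\chi_{j,z_j}=\chi_{j,0}$ for each $j$, so the three conditions of Theorem \ref{thm:2d-jet} hold with $\varphi\equiv 0$, $\Psi\equiv 0$, $p_j=n$ and multi-index $\alpha=0$. The explicit equality formula in Remark \ref{r:1.1}, combined with the product identity $\kappa_M=\otimes_j\pi_j^{*}\kappa_{\Omega_j}$ and the local asymptotic $G(\cdot,\{x_0\})=\max_j 2n\,\pi_j^{*}G_{\Omega_j}(\cdot,z_j)+O(1)$, then yields $\frac{\pi^n}{n!}\kappa_M(x_0)=\kappa_{M/S}(x_0)$.

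For the ``only if'' direction I must exclude $|S|\ge 2$. Enclose $S$ in the product $\tilde S:=\prod_j\pi_j(S)$ and identify $G(\cdot,S)$ with a max-form weight $\psi=\max_j 2\sum_k p_{j,k}\pi_j^{*}G_{\Omega_j}(\cdot,z_{j,k})$ up to a plurisubharmonic correction $\Psi:=G(\cdot,S)-\psi\le 0$. If $|S|=\infty$, Theorem \ref{thm:prod-infinite-jet} yields a strict inequality in the corresponding $L^2$ extension, contradicting Ohsawa equality. If $S$ is finite with $|S|\ge 2$, apply Theorem \ref{thm:prod-finite-jet}: with $\varphi\equiv 0$, condition (1) pins down $\Psi\equiv 0$ and allows the normalization $u_j\equiv 0$, $g_j\equiv 1$; the integer constraint $\sum_j(\gamma_{j,\beta_j}+1)/p_{j,\beta_j}=1$ for every $\beta\in I_1$, together with the values of $p_{j,k}$ read off from $G(\cdot,S)$, forces $\gamma_{j,k}=0$; the character identity of condition (2) then reads $\Pi_k\chi_{j,z_{j,k}}=\chi_{j,0}$; and the rigidity of condition (4) (a single $\beta$-independent constant $c_0$) is incompatible with $|Z_j|\ge 2$, so $|Z_j|=1$ for every $j$ and hence $S=\{x_0\}$. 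One is then back to Theorem \ref{thm:2d-jet}, whose condition (3) with $\varphi\equiv 0$ and $\alpha=0$ reads $\chi_{j,z_j}=\chi_{j,0}$; by the classical theory of multiplicative functions associated to Green functions, this is equivalent to each $\Omega_j$ being conformally equivalent to $\Delta$ minus a closed set of inner capacity zero.

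The main obstacle I anticipate is the precise matching of $G(\cdot,S)$ with the max-form weight $\psi$ of the extension theorems when $S$ is not literally a product: one must verify that $\Psi:=G(\cdot,S)-\psi\le 0$ is plurisubharmonic so that Theorems \ref{thm:prod-finite-jet} and \ref{thm:prod-infinite-jet} apply, and then show that Ohsawa equality forces $\Psi\equiv 0$ via condition (1) of those theorems. Once this identification is in place, the algebraic analysis of the character and exponent constraints reducing the problem to Theorem \ref{thm:2d-jet} and Suita's classical characterization is routine.
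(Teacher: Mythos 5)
Your ``if'' direction is essentially the paper's argument (reduce to Theorem \ref{thm:2d-jet} at a single point with trivial characters), but your ``only if'' direction has a genuine gap. First, the equality $\frac{\pi^n}{n!}\kappa_M(x)=\kappa_{M/S}(x)$ at a point $x$ is the sharpness of an extension problem \emph{from the single point $x$} ($\kappa_M(x)$ is the reciprocal of $\inf\{\int_M|F|^2:F(x)=f(x)\}$, with no vanishing imposed at the other points of $S$), albeit with the weight $G(\cdot,S)$ of the whole set; it is not the sharpness of the joint interpolation problem from all of $\tilde S=\prod_j\pi_j(S)$ that Theorems \ref{thm:prod-finite-jet} and \ref{thm:prod-infinite-jet} characterize, and you give no argument converting one into the other. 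Second, the decomposition $G(\cdot,S)=\psi+\Psi$ with $\Psi\le0$ plurisubharmonic cannot exist unless $S$ is itself a product: at a point $y\in\tilde S\setminus S$ one has $\psi(y)=-\infty$ while $G(y,S)>-\infty$, so $\Psi=G(\cdot,S)-\psi$ is $+\infty$ there. You flag this as ``the main obstacle,'' but it is not a verification to be supplied --- the identification is simply false in general. Third, even granting the setup, the claim that condition $(4)$ of Theorem \ref{thm:prod-finite-jet} (a $\beta$-independent constant $c_0$) is ``incompatible with $|Z_j|\ge2$'' is unjustified and in general wrong: that theorem, like its one-dimensional ancestor Theorem \ref{c:L2-1d-char}, exists precisely because equality \emph{can} hold for several points, so condition $(4)$ cannot by itself force $m_j=1$.

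The paper avoids all of this. It fixes $z_0\in S$, shows via Proposition \ref{p:pluricomplex} that $G(\cdot,\{z_0\})=\max_{1\le j\le n}\{2n\pi_j^*(G_{\Omega_j}(\cdot,z_j))\}$ and $G(\cdot,S)\le G(\cdot,\{z_0\})$, and compares the two minimal $L^2$ integral functions $G_{G(\cdot,S)}(t)$ and $G_{G(\cdot,\{z_0\})}(t)$, both with interpolation condition only at $z_0$. The Ohsawa equality together with Theorem \ref{gz:L2} and $dV_M[G(\cdot,S)+t]=e^{-t}dV_M[G(\cdot,S)]$ forces $G_{G(\cdot,S)}(-\log r)$ to be linear; since $G_{G(\cdot,\{z_0\})}\le G_{G(\cdot,S)}$, both are concave in $r$, and they agree at $t=0$, they must coincide for all $t$, which yields $G(\cdot,S)\equiv G(\cdot,\{z_0\})$ and hence $|S|=1$; only then is Theorem \ref{thm:linear-2d} invoked to get $\chi_{j,z_j}=1$ and Suita's classical criterion for each $\Omega_j$. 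If you want to salvage your plan, replace the product-set reduction by this comparison-of-Green-functions argument; the rest of your outline then goes through.
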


Let $\Omega$  be an open Riemann surface, which admits a nontrivial Green function $G_{\Omega}$. Let $w$ be a local coordinate on a neighborhood $V_{z_0}$ of $z_0\in\Omega$ satisfying $w(z_0)=0$. We define that
$$B_{\Omega}(z)dw\otimes\overline{dw}:=\kappa_{\Omega}|_{V_{z_0}}.$$Let $c_{\beta}(z)$ be the logarithmic capacity (see \cite{S-O69}) which is locally defined by
$$c_{\beta}(z_0):=\exp\lim_{z\rightarrow z_0}(G_{\Omega}(z,z_0)-\log|w(z)|)$$
on $\Omega$.
In \cite{suita72}, Suita stated a conjecture as below.
\begin{Conjecture}
	$c_{\beta}(z_0)^2\le\pi B_{\Omega}(z_0)$ holds for any $z_0\in \Omega$, and equality holds if and only if $\Omega$ is conformally equivalent to the unit disc less a (possible) closed set of inner capacity zero.
\end{Conjecture}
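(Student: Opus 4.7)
The plan is to deduce the Suita conjecture from the $n=1$, single-point specialization of Theorem \ref{thm:2d-jet} with trivial auxiliary data. I would set $\Omega_1 = \Omega$, fix the single point $z_0$, take $p_1 = 1$, $\Psi \equiv 0$, $\varphi_1 \equiv 0$, and choose $c(t) \equiv 1$ (so $\int_0^{+\infty} c(t)e^{-t}\,dt = 1$). Then $E = \{0\}$, and with $f = dw$ we have $d_0 = 1$. Theorem \ref{thm:2d-jet} produces a holomorphic $1$-form $F$ on $\Omega$ with $F(z_0) = dw$ satisfying
$$\int_{\Omega} |F|^2 \;\le\; \frac{2\pi}{c_\beta(z_0)^2}.$$
The paper's normalization of the Bergman kernel (via $\kappa_\Omega = \sum_l \tau_l \otimes \overline{\tau_l}$ with the factor $1/\sqrt{2^n}$ inside the defining inner product) gives the extremal characterization $1/B_\Omega(z_0) = \inf\{\frac{1}{2}\int_\Omega |\tilde F|^2 : \tilde F \in H^0(\Omega, K_\Omega),\ \tilde F(z_0) = dw\}$. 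Applying it to $F$ yields $B_\Omega(z_0) \ge c_\beta(z_0)^2/\pi$, which is the inequality $c_\beta(z_0)^2 \le \pi B_\Omega(z_0)$.

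For the equality part, $c_\beta(z_0)^2 = \pi B_\Omega(z_0)$ forces the Bergman extremal form to saturate the $L^2$ extension bound. I would then invoke the equality criterion in Theorem \ref{thm:2d-jet}: conditions (1) and (2) there are automatic in our setting (take $g_1 \equiv 1$, $u_1 \equiv 0$, so that $\chi_{1,-u_1}$ is the trivial character), while condition (3) reduces, with $\alpha = 0$, to $\chi_{1,z_0} = \chi_{1,-u_1}$; that is, to triviality of the character $\chi_{1,z_0}$ associated to the Green function $G_\Omega(\cdot, z_0)$. Thus equality holds if and only if $\chi_{1,z_0}$ is trivial.

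Finally, I would convert this character condition into the desired conformal statement. Triviality of $\chi_{1,z_0}$ means that the multiplicative function $f_{z_0}$ on $\Delta$, characterized by $|f_{z_0}| = P_1^*(e^{G_\Omega(\cdot, z_0)})$, descends to a single-valued holomorphic function $\tilde f_{z_0} : \Omega \to \Delta$ with $|\tilde f_{z_0}| < 1$, a simple zero at $z_0$, and $|\tilde f_{z_0}| \to 1$ at the ideal boundary (from the boundary behavior of $G_\Omega$). Classical Riemann surface theory (see Sario-Oikawa \cite{S-O69} and Suita \cite{suita72}) then forces $\tilde f_{z_0}$ to realize $\Omega$ as the unit disc minus a closed set of inner capacity zero; the converse is immediate from the explicit form of the Green function on such a domain. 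The main obstacle is this last step: the analytic inequality is an immediate consequence of Theorem \ref{thm:2d-jet}, but the sharp conformal classification of when $\chi_{1,z_0}$ is trivial rests on a delicate classical boundary argument --- in particular, the identification of the omitted set as one of inner capacity zero rather than merely a removable singularity set.
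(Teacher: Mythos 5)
Your proposal is correct and follows essentially the same route as the paper's treatment (the proof of Theorem \ref{thm:suita}, of which the stated conjecture is the $n=1$ case): specialize Theorem \ref{thm:2d-jet} with $\Psi\equiv 0$, $\varphi\equiv 0$, $c\equiv 1$, identify $2/B_\Omega(z_0)$ with the minimal $L^2$ integral, reduce equality to triviality of $\chi_{z_0}$, and invoke the classical equivalence (Suita, Sario--Oikawa) between triviality of that character and the conformal classification. The normalization bookkeeping and the reduction of conditions $(1)$--$(3)$ to the single condition $\chi_{1,z_0}=1$ are handled correctly.
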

The inequality part of  Suita conjecture for bounded planar domain was proved by B\l ocki \cite{Blo13}, and original form of the inequality was proved by Guan-Zhou \cite{gz12}.
The equality part of Suita conjecture was proved by Guan-Zhou \cite{guan-zhou13ap}, which completed the proof of Suita conjecture.

Let $\Omega_j$  be an open Riemann surface, which admits a nontrivial Green function $G_{\Omega_j}$ for any  $1\le j\le n$. Let $M=\prod_{1\le j\le n}\Omega_j$ be an $n-$dimensional complex manifold, and let $\pi_j$ be the natural projection from $M$ to $\Omega_j$. Let $K_M$ be the canonical (holomorphic) line bundle on $M$. Let $w_j$ be a local coordinate on a neighborhood $V_{z_j}$ of $z_j\in\Omega_j$ satisfying $w_j(z_j)=0$. Denote that $V_0:=\prod_{1\le j\le n}V_{z_j}$, and $w:=(w_1,w_2,...,w_n)$ is a local coordinate on $V_0$ of $z_0=(z_1,z_2,...,z_n)\in M$. We define
$$B_{M}(z)dw_1\wedge dw_2\wedge...\wedge dw_n \otimes\overline{dw_1\wedge dw_2\wedge...\wedge dw_n}:=\kappa_{M}|_{V_{0}}.$$Let $c_{j}(z_j)$ be the logarithmic capacity which is locally defined by
$$c_{j}(z_j):=\exp\lim_{z\rightarrow z_j}(G_{\Omega_j}(z,z_j)-\log|w_j(z)|).$$

 Theorem \ref{thm:2d-jet}  gives a characterization of the holding of equality in the product version of  Suita conjecture.
\begin{Theorem}
	\label{thm:suita}
	$\Pi_{1\le j\le n}c_j(z_j)^{2}\le \pi^n B_M(z_0)$ holds for any $z_0=(z_1,z_2,...,z_n)\in M$, and equality holds if and only if $\Omega_j$ is conformally equivalent to the unit disc less a (possible) closed set of inner capacity zero for any $j\in\{1,2,...,n\}$.
\end{Theorem}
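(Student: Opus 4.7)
The plan is to reduce Theorem \ref{thm:suita} to Theorem \ref{thm:2d-jet} by specializing the input data so that the extremal problem there becomes the reproducing problem for the Bergman kernel $B_M(z_0)$. Concretely, fix $z_0 = (z_1, \dots, z_n) \in M$ and apply Theorem \ref{thm:2d-jet} with $p_j := n$ for every $j$, $\Psi \equiv 0$, $\varphi_j \equiv 0$, $c(t) \equiv 1$, and $f := dw_1 \wedge \cdots \wedge dw_n$ on $V_0$. For this choice, $E = \{(0,\dots,0)\}$, $d_{(0,\dots,0)} = 1$, and $\int_0^{+\infty} c(t) e^{-t}\, dt = 1$. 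A short computation in polar coordinates (using $\psi \sim 2n\log\max_j|w_j|$ near $z_0$) shows that $\mathcal{I}(\psi)_{z_0}$ equals the maximal ideal $\mathfrak{m}_{z_0}$, so the jet condition in Theorem \ref{thm:2d-jet} becomes simply $F(z_0) = dw_1 \wedge \cdots \wedge dw_n$. Theorem \ref{thm:2d-jet} then supplies a holomorphic $(n,0)$-form $F$ with this value and
\[
\int_M |F|^2 \le \frac{(2\pi)^n}{\prod_{j=1}^n c_j(z_j)^2}.
\]

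Using the definition of $B_M$ and the normalization of $\kappa_M$ from Section \ref{sec:1.2.1} together with the identity $|dw_1 \wedge \cdots \wedge dw_n|^2 = 2^n\, dV$, the standard extremal characterization of the Bergman kernel reads
\[
\frac{2^n}{B_M(z_0)} = \inf\Big\{\int_M |\tilde F|^2 : \tilde F \in H^0(M, K_M),\ \tilde F(z_0) = dw_1 \wedge \cdots \wedge dw_n\Big\},
\]
so combining the two displays yields $\prod_{j} c_j(z_j)^2 \le \pi^n B_M(z_0)$. For the equality case, equality in Suita's inequality forces the extremal bound in Theorem \ref{thm:2d-jet} to be attained, so the ``Moreover'' clause applies: condition (1) ($\Psi \equiv 0$) is automatic; condition (2) ($\varphi_j = 2\log|g_j| + 2u_j$) with $\varphi_j \equiv 0$ forces $u_j = -\log|g_j|$ for some non-vanishing holomorphic $g_j$, whence $\chi_{j, -u_j}$ is trivial; and condition (3) with $\alpha = (0,\dots,0)$ reduces to $\chi_{j, z_j} = 1$ for every $j$. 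The final step is the classical result (Suita 1972, used in Guan--Zhou's proof of the equality case of the 1-dim Suita conjecture): $\chi_{j, z_j}$ is trivial iff the multiplicative function $f_{z_j}$ descends to a single-valued holomorphic map $\Omega_j \to \Delta$, which in turn is equivalent to $\Omega_j$ being conformally equivalent to $\Delta$ minus a closed set of inner capacity zero.

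The main subtleties to handle carefully are: (i) bookkeeping the factor of $2^n$ arising from $|dw_1\wedge\cdots\wedge dw_n|^2 = 2^n\,dV$, which is what converts the $(2\pi)^n$ of Theorem \ref{thm:2d-jet} into the $\pi^n$ of the Suita bound; and (ii) checking that the character $\chi_{j, -u_j}$ comes out trivial for \emph{any} valid decomposition of $\varphi_j \equiv 0$, so that condition (3) unambiguously reduces to $\chi_{j, z_j} = 1$. Neither is a true obstacle: the first is a direct normalization check, the second follows because $\log|g_j|$ is always the real part of a single-valued holomorphic function when $g_j$ is nowhere vanishing. The only genuinely non-trivial ingredient beyond Theorem \ref{thm:2d-jet} is the classical equivalence in the last sentence of the previous paragraph, which we use here as a black-box input from the 1-dim Suita theory.
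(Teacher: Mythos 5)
Your proposal is correct and follows essentially the same route as the paper: specialize Theorem \ref{thm:2d-jet} to $p_j=n$, $\varphi_j\equiv 0$, $\Psi\equiv 0$, $c\equiv 1$ and $f=dw_1\wedge\cdots\wedge dw_n$, identify the minimal $L^2$ integral with $2^n/B_M(z_0)$ via the extremal characterization of the Bergman kernel, and reduce the equality case to $\chi_{j,z_j}=1$, which is Suita's classical criterion for $\Omega_j$ being conformally equivalent to the disc less a closed set of inner capacity zero. The paper's proof is a more compressed version of exactly this argument, and your two ``subtleties'' (the $2^n$ normalization and the triviality of $\chi_{j,-u_j}$ for any decomposition of $\varphi_j\equiv 0$) are handled correctly.
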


Let $\Omega$  be an open Riemann surface, which admits a nontrivial Green function $G_{\Omega}$, and let $K_{\Omega}$ be the canonical (holomorphic) line bundle on $\Omega$. Let $w$ be a local coordinate on a neighborhood $V_{z_0}$ of $z_0\in\Omega$ satisfying $w(z_0)=0$. Let $\rho=e^{-2u}$ on $\Omega$, where $u$ is a harmonic function on $\Omega$. We define that
$$B_{\Omega,\rho}dw\otimes\overline{dw}:=\sum_{l=1}^{+\infty}\sigma_l\otimes\overline{\sigma}_l|_{V_{z_0}}\in C^{\omega}(V_{z_0},K_{\Omega}\otimes\overline{K_{\Omega}}),$$
where $\{\sigma_l\}_{l=1}^{+\infty}$ are holomorphic $(1,0)$ forms on $\Omega$ satisfying $\sqrt{-1}\int_{\Omega}\rho\frac{\sigma_i}{\sqrt{2}}\wedge\frac{\overline{\sigma}_j}{\sqrt{2}}=\delta_i^j$ and $\{F\in H^0(\Omega,K_{\Omega}):\int_{\Omega}\rho|F|^2<+\infty\,\&\,\int_{\Omega}\rho\sigma_l\wedge\overline F=0$ for any $l\in\mathbb{Z}_{>0}\}=\{0\}$.

In \cite{Yamada}, Yamada  stated a conjecture as below (so-called extended Suita conjecture).
\begin{Conjecture}
	$c_{\beta}(z_0)^2\le\pi \rho(z_0) B_{\Omega,\rho}(z_0)$ holds for any $z_0\in \Omega$, and equality holds if and only $\chi_{-u}=\chi_{z_0}$, where $\chi_{-u}$ and $\chi_{z_0}$ are the characters associated to the functions $-u$ and $G_{\Omega}(\cdot,z_0)$ respectively.
\end{Conjecture}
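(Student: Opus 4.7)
The plan is to derive the conjecture as the one-variable specialization of Theorem \ref{thm:2d-jet}, combined with the reproducing property of the weighted Bergman kernel. Specifically, I would apply Theorem \ref{thm:2d-jet} with $n=1$, $\Omega_{1}=\Omega$, $z_{1}=z_{0}$, $p_{1}=1$, $\Psi\equiv 0$, $\varphi_{1}=2u$ (so that $e^{-\varphi}=\rho$), $c\equiv 1$ (admissible since $\int_{0}^{+\infty}e^{-t}dt=1<+\infty$), $E=\{0\}$, $d_{0}=1$ and $f=dw$. The ideal condition $(F-dw,z_{0})\in(\mathcal{O}(K_{\Omega})\otimes\mathcal{I}(2G_{\Omega}(\cdot,z_{0})))_{z_{0}}$ reduces to $F(z_{0})=dw$, and the theorem produces such an $F$ satisfying
\[
\int_{\Omega}|F|^{2}e^{-2u}\;\le\;\frac{2\pi\,\rho(z_{0})}{c_{\beta}(z_{0})^{2}}.
\]

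Next I would match this against the Bergman lower bound. Expanding any admissible holomorphic $(1,0)$-form $\tilde F$ in the orthonormal basis $\{\sigma_{l}\}$ and applying Cauchy--Schwarz to the evaluation $\tilde F(z_{0})/dw=\sum_{l}c_{l}\,\sigma_{l}(z_{0})/dw$ gives $\int_{\Omega}|\tilde F|^{2}e^{-2u}\ge 2|\tilde F(z_{0})/dw|^{2}/B_{\Omega,\rho}(z_{0})$, with equality iff $\tilde F$ is proportional to the reproducing element at $z_{0}$. Specializing to $\tilde F(z_{0})=dw$ and chaining with the upper bound yields
\[
\frac{2}{B_{\Omega,\rho}(z_{0})}\;\le\;\frac{2\pi\,\rho(z_{0})}{c_{\beta}(z_{0})^{2}},
\]
i.e. the inequality $c_{\beta}(z_{0})^{2}\le\pi\,\rho(z_{0})B_{\Omega,\rho}(z_{0})$.

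For the equality characterization, observe that both chained inequalities become equalities simultaneously precisely when $\inf\{\int_{\Omega}|\tilde F|^{2}e^{-2u}:\tilde F(z_{0})=dw\}=2\pi\rho(z_{0})/c_{\beta}(z_{0})^{2}$, since the infimum on the left is exactly $2/B_{\Omega,\rho}(z_{0})$. By the equality part of Theorem \ref{thm:2d-jet}, this happens iff (1) $\Psi\equiv 0$ (automatic), (2) $\varphi_{1}=2\log|g_{1}|+2u_{1}$ with $g_{1}$ holomorphic, $g_{1}(z_{0})\ne 0$ and $u_{1}$ harmonic, and (3) $\chi_{z_{0}}^{\alpha_{1}+1}=\chi_{-u_{1}}$ for the unique $\alpha=(0)\in E$. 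Taking the trivial splitting $g_{1}\equiv 1$, $u_{1}=u$ makes (3) read $\chi_{z_{0}}=\chi_{-u}$, as asserted by Yamada.

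The main subtlety is to verify that condition (3) is intrinsic to the data $(\Omega,\rho,z_{0})$ and not an artifact of the non-unique splitting in (2). This relies on the fact recalled in the introduction that $\chi_{j,u_{1}}=\chi_{j,u_{2}}$ whenever $u_{1}-u_{2}=\log|g|$ for some holomorphic $g$: any two admissible splittings $u=\log|g_{1}|+u_{1}=\log|g_{1}'|+u_{1}'$ differ by such a log-modulus of a holomorphic function, so $\chi_{-u_{1}}$ is independent of the splitting and coincides with $\chi_{-u}$. With this observation the equality case collapses cleanly to the single character relation $\chi_{-u}=\chi_{z_{0}}$.
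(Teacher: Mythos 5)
Your proposal is correct and follows essentially the same route as the paper, which establishes the product version (Theorem \ref{thm:extend}) by identifying $2^{n}/B_{M,\rho}(z_{0})$ with the minimal $L^{2}$ integral $\inf\{\int_{M}|\tilde{f}|^{2}\rho\}$ and then invoking Theorem \ref{thm:2d-jet}; your argument is the $n=1$ specialization of this, with the orthonormal-expansion/Cauchy--Schwarz identification of the infimum and the splitting-independence of $\chi_{-u_{1}}$ spelled out explicitly. Both of those details are correct and consistent with the paper's normalization $\int_{\Omega}\rho|\sigma_{l}|^{2}=2$ and with the remark that $\chi_{u_{1}}=\chi_{u_{2}}$ whenever $u_{1}-u_{2}=\log|g|$.
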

The inequality part of extended Suita conjecture  was proved by Guan-Zhou \cite{GZ15}.
The equality part of extended Suita conjecture was proved by Guan-Zhou \cite{guan-zhou13ap}.

Let $\rho=e^{-2\sum_{1\le j\le n}\pi_j^*(u_j)}$ on $M$, where $u_j$ is a harmonic function on $\Omega_j$ for any $j\in\{1,2,...,n\}$. We define that
$$B_{M,\rho}dw_1\wedge dw_2\wedge...\wedge dw_n \otimes\overline{dw_1\wedge dw_2\wedge...\wedge dw_n }:=\sum_{l=1}^{+\infty}e_l\otimes\overline{e}_l|_{V_{z_0}}\in C^{\omega}(V_{0},K_{M}\otimes\overline{K_{M}}),$$
where $\{e_l\}_{l=1}^{+\infty}$ are holomorphic $(n,0)$ forms on $M$ satisfying $(\sqrt{-1})^{n^2}\int_{M}\rho\frac{e_i}{\sqrt{2^n}}\wedge\frac{\overline{e}_j}{\sqrt{2^n}}=\delta_i^j$ and $\{F\in H^0(M,K_{M}):\int_{M}\rho|F|^2<+\infty\,\&\,\int_{M}\rho e_l\wedge\overline F=0$ for any $l\in\mathbb{Z}_{>0}\}=\{0\}$. Theorem \ref{thm:2d-jet}  gives  a characterization of the holding of equality in the product version of the extended Suita conjecture.
 \begin{Theorem}
	\label{thm:extend}
	$\Pi_{1\le j\le n}c_j(z_j)^{2}\le \pi^n \rho(z_0) B_{M,\rho}(z_0)$ holds for any $z_0=(z_1,z_2,...,z_n)\in M$, and equality holds if and only if $\chi_{j,-u_j}=\chi_{j,z_j}$ for any $j\in\{1,2,...,n\}$, where $\chi_{j,-u_j}$ and $\chi_{j,z_j}$ are the characters associated to the functions $-u$ and $G_{\Omega}(\cdot,z_0)$ respectively.
\end{Theorem}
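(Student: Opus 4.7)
The plan is to apply Theorem \ref{thm:2d-jet} to a carefully chosen data set so that the $L^2$ extension problem collapses onto the Bergman-type extremal problem that defines $B_{M,\rho}(z_0)$, and then to transfer the corresponding equality criterion.

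First I would establish the extremal characterization of $B_{M,\rho}(z_0)$. Given any $F \in H^0(M,K_M)$ with $\int_M \rho |F|^2 < +\infty$, expanding $F = \sum_l a_l e_l$ in the orthonormal system, the normalization $(\sqrt{-1})^{n^2}\int_M \rho\, (e_i/\sqrt{2^n}) \wedge (\overline{e_j}/\sqrt{2^n}) = \delta_i^j$ yields $\int_M \rho |F|^2 = 2^n \sum_l |a_l|^2$. Writing $e_l = \hat e_l\, dw_1 \wedge \cdots \wedge dw_n$ near $z_0$, the condition $F(z_0) = dw_1 \wedge \cdots \wedge dw_n$ becomes the single linear constraint $\sum_l a_l \hat e_l(z_0) = 1$, and Cauchy--Schwarz against the vector $(\hat e_l(z_0))$ gives
\begin{equation*}
\inf\left\{\int_M \rho |F|^2 : F \in H^0(M,K_M),\; F(z_0) = dw_1 \wedge \cdots \wedge dw_n\right\} = \frac{2^n}{B_{M,\rho}(z_0)},
\end{equation*}
with the infimum attained.

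Next I would invoke Theorem \ref{thm:2d-jet} with the choices $p_j = n$ for every $j$, $f = dw_1 \wedge \cdots \wedge dw_n$, $\Psi \equiv 0$, $\varphi_j = 2u_j$, and $c(t) \equiv 1$. With $p_j = n$, the exponent set $E = \{\alpha : \sum_j (\alpha_j+1)/n = 1\}$ collapses to the single element $(0,\ldots,0)$ with $d_0 = 1$. A short layer-cake computation of $\int |w^\alpha|^2 e^{-\psi}$ on a small polydisc around $z_0$ shows that $(h,z_0) \in \mathcal{I}(\max_j 2n\pi_j^* G_{\Omega_j}(\cdot,z_j))_{z_0}$ is equivalent to the vanishing of all Taylor coefficients $h_\alpha$ with $\sum_j (\alpha_j+1)/n \le 1$; the only such $\alpha$ is $0$, so the extension constraint reduces exactly to $F(z_0) = dw_1 \wedge \cdots \wedge dw_n$. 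Theorem \ref{thm:2d-jet} then produces $F$ with $\int_M \rho |F|^2 \le (2\pi)^n \rho(z_0)/\prod_j c_j(z_j)^2$, and combining with the extremal identity above gives $2^n/B_{M,\rho}(z_0) \le (2\pi)^n \rho(z_0)/\prod_j c_j(z_j)^2$, which rearranges to $\prod_j c_j(z_j)^2 \le \pi^n \rho(z_0) B_{M,\rho}(z_0)$.

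For the equality characterization I would use that the two infima coincide, so equality in the inequality above is equivalent to equality in the $L^2$ extension estimate of Theorem \ref{thm:2d-jet}. The moreover part of that theorem requires (1) $\Psi \equiv 0$, which holds by our choice; (2) $\varphi_j = 2\log|g_j| + 2u_j$ with $g_j(z_j) \ne 0$, which holds automatically with $g_j \equiv 1$; and (3) $\chi_{j,z_j}^{\alpha_j+1} = \chi_{j,-u_j}$ for every $\alpha \in E$ with $d_\alpha \ne 0$. Since the only such $\alpha$ is $0$, condition (3) becomes precisely $\chi_{j,z_j} = \chi_{j,-u_j}$ for every $j$, as claimed. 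The main subtlety will be the careful bookkeeping of the normalization constants, in particular the factor $2^n$ arising from $(e_l/\sqrt{2^n})$ in the orthonormality relation, together with the verification that $p_j = n$ trims the multiplier ideal constraint down to just the value at $z_0$; once these matches are in place, the theorem follows directly from Theorem \ref{thm:2d-jet}.
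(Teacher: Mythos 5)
Your proposal is correct and follows essentially the same route as the paper: the paper's proof also takes $f=dw_1\wedge\cdots\wedge dw_n$, $\psi=\max_{1\le j\le n}\{\pi_j^*(2nG_{\Omega_j}(\cdot,z_j))\}$, identifies $\frac{2^n}{B_{M,\rho}(z_0)}$ with the constrained infimum $\inf\{\int_M|\tilde f|^2\rho\}$, and then quotes Theorem \ref{thm:2d-jet}. You merely spell out the Bergman-kernel extremal identity and the reduction of the ideal constraint via Lemma \ref{l:0}, which the paper leaves as "it is clear."
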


\section{Preparation}

\subsection{Concavity property of minimal $L^2$ integrals}
\

In this section, we recall some results about concavity property of minimal $L^2$ integrals (see \cite{GY-concavity,GMY-concavity2,GY-concavity3}).

Let $M$ be a complex manifold. We call $M$ that satisfies condition $(a)$, if there exists a closed subset $X\subset M$ satisfying the following two statements:

$(a1)$ $X$ is locally negligible with respect to $L^2$ holomorphic functions; i.e., for any local coordinate neighborhood $U\subset M$ and for any $L^2$ holomorphic function $f$ on $U\backslash X$, there exists an $L^2$ holomorphic function $\tilde f$ on $U$ such that $\tilde f|_{U\backslash X}=f$ with the same $L^2$ norm;

$(a2)$ $M\backslash X$ is a Stein manifold.

\

Let $M$ be an $n-$dimensional complex manifold satisfying condition $(a)$,  and let $K_{M}$ be the canonical (holomorphic) line bundle on $M$.
Let $\psi$ be a plurisubharmonic function on $M$,
and let  $\varphi$ be a Lebesgue measurable function on $M$,
such that $\varphi+\psi$ is a plurisubharmonic function on $M$. Take $T=-\sup_{M}\psi$ ($T$ maybe $-\infty$).

\begin{Definition}
\label{def:gain}
We call a positive measurable function $c$ on $(T,+\infty)$ in class $\mathcal{P}_T$ if the following two statements hold:

$(1)$ $c(t)e^{-t}$ is decreasing with respect to $t$;

$(2)$ there is a closed subset $E$ of $M$ such that $E\subset \{z\in Z:\psi(z)=-\infty\}$ and for any compact subset $K\subseteq M\backslash E$, $e^{-\varphi}c(-\psi)$ has a positive lower bound on $K$, where $Z$ is some analytic subset of $M$.	
\end{Definition}

Let $Z_{0}$ be a subset of $\{\psi=-\infty\}$ such that $Z_{0}\cap Supp(\{\mathcal{O}/\mathcal{I(\varphi+\psi)}\})\neq\emptyset$.
Let $U\supseteq Z_{0}$ be an open subset of $M$,
and let $f$ be a holomorphic $(n,0)$ form on $U$.
Let $\mathcal{F}\supseteq\mathcal{I}(\varphi+\psi)|_{U}$ be a  analytic subsheaf of $\mathcal{O}$ on $U$.

Denote
\begin{equation*}
\begin{split}
\inf\{\int_{\{\psi<-t\}}|\tilde{f}|^{2}e^{-\varphi}c(-\psi):(\tilde{f}-f)\in H^{0}(Z_0,&
(\mathcal{O}(K_{M})\otimes\mathcal{F})|_{Z_0})\\&\&{\,}\tilde{f}\in H^{0}(\{\psi<-t\},\mathcal{O}(K_{M}))\},
\end{split}
\end{equation*}
by $G(t;c)$ ($G(t)$ for short),  where $t\in[T,+\infty)$, $c$ is a nonnegative function on $(T,+\infty)$,
$|f|^{2}:=\sqrt{-1}^{n^{2}}f\wedge\bar{f}$ for any $(n,0)$ form $f$ and $(\tilde{f}-f)\in H^{0}(Z_0,
(\mathcal{O}(K_{M})\otimes\mathcal{F})|_{Z_0})$ means $(\tilde{f}-f,z_0)\in(\mathcal{O}(K_{M})\otimes\mathcal{F})_{z_0}$ for all $z_0\in Z_0$.

The following Theorem shows the concavity for $G(t)$.
\begin{Theorem}[see \cite{GY-concavity}, see also \cite{GMY-concavity2}]
\label{thm:general_concave}
Let $c\in\mathcal{P}_T$ satisfying $\int_T^{+\infty}c(s)e^{-s}ds<+\infty$. If there exists $t\in[T,+\infty)$ satisfying that $G(t)<+\infty$, then $G(h^{-1}(r))$ is concave with respect to $r\in(0,\int_{T}^{+\infty}c(s)e^{-s}ds)$, $\lim_{t\rightarrow T+0}G(t)=G(T)$ and $\lim_{t\rightarrow +\infty}G(t)=0$, where $h(t)=\int_{t}^{+\infty}c(s)e^{-s}ds$.
\end{Theorem}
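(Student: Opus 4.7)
The theorem makes three assertions: concavity of $r\mapsto G(h^{-1}(r))$, left-continuity $\lim_{t\to T^+}G(t)=G(T)$, and decay $\lim_{t\to+\infty}G(t)=0$. The two limits are standard: $G$ is non-increasing in $t$ (restricting a competitor to a smaller sublevel set does not increase the integral), so $G(t)\le G(T)$ for $t>T$ yielding $\limsup_{t\to T^+}G(t)\le G(T)$; for $\liminf$, take near-minimizers $\tilde f_n$ at $t_n\searrow T$ with uniformly bounded $L^2$-norm, use the positive lower bound of $e^{-\varphi}c(-\psi)$ on compact subsets of $M\setminus E$ (Definition \ref{def:gain}) plus the submean-value property to get local uniform bounds, then Montel to extract a locally uniform limit $\tilde f$ on $\{\psi<-T\}$; the germ condition at $Z_0$ passes to the limit, and Fatou gives $G(T)\le\liminf G(t_n)$. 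For the decay, a fixed admissible $\tilde f_0$ satisfies $G(t)\le\int_{\{\psi<-t\}}|\tilde f_0|^2e^{-\varphi}c(-\psi)$ for $t$ large, and the sublevel sets shrink to the measure-zero set $\{\psi=-\infty\}$, so dominated convergence gives $G(t)\to 0$.

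The substance is concavity. Since $h:[T,+\infty)\to[0,h(T)]$ is a strictly decreasing $C^1$-bijection, concavity of $G\circ h^{-1}$ is equivalent to the three-point inequality
\begin{equation*}
(h(t_1)-h(t_3))\,G(t_2)\ \ge\ (h(t_1)-h(t_2))\,G(t_3)+(h(t_2)-h(t_3))\,G(t_1)
\end{equation*}
for all $T\le t_1<t_2<t_3<+\infty$ (all three $h$-differences positive by monotonicity of $h$). Rearranging, it is the chord-type upper bound
\begin{equation*}
G(t_1)\ \le\ \frac{(h(t_1)-h(t_3))\,G(t_2)-(h(t_1)-h(t_2))\,G(t_3)}{h(t_2)-h(t_3)},
\end{equation*}
asserting that the graph of $G$ in the $h$-coordinate lies at or below the chord through the two deeper points extrapolated to $h(t_1)$. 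My plan is to realize this upper bound (up to $O(\epsilon)$) by explicitly constructing an admissible competitor for $G(t_1)$ via a $\bar\partial$-construction from a near-minimizer of $G(t_2)$.

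Fix $\epsilon>0$ and a near-minimizer $\tilde f$ of $G(t_2)$ on $\{\psi<-t_2\}$. Choose a smooth non-decreasing cutoff $b_\delta:\mathbb{R}\to[0,1]$ with $b_\delta\equiv 0$ on $(-\infty,t_2]$ and $b_\delta\equiv 1$ on $[t_3,+\infty)$, so that the support of $b_\delta'$ sits inside $[t_2,t_3]$, equivalently inside the annulus $\{-t_3\le\psi<-t_2\}$ where $\tilde f$ is defined. Extend $b_\delta(-\psi)\tilde f$ by $0$ to a smooth $(n,0)$-form on $\{\psi<-t_1\}$ and solve $\bar\partial u=\bar\partial(b_\delta(-\psi)\tilde f)=b_\delta'(-\psi)\,\partial(-\psi)\wedge\tilde f$ on the Stein sublevel set $\{\psi<-t_1\}$ via an Ohsawa--Takegoshi/Donnelly--Fefferman $L^2$-estimate with singular weight $\varphi+v(\psi)$, where $v$ is a carefully chosen plurisubharmonic function of one variable. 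Setting $F:=b_\delta(-\psi)\tilde f-u$ yields a holomorphic $(n,0)$-form on $\{\psi<-t_1\}$; the $\mathcal{I}(\psi)$-singularity of the weight forces $u\in\mathcal{I}(\psi)$ at $Z_0$, so $F$ inherits the jet of $f$ at $Z_0$ and is admissible for $G(t_1)$. The Guan--Zhou optimal-constant engineering of $v$ (tying $c(s)e^{-s-v(s)}$ to the $h$-increments as a telescoping calibration) is designed to produce the sharp estimate
\begin{equation*}
\int_{\{\psi<-t_1\}}|F|^2e^{-\varphi}c(-\psi)\ \le\ A+\alpha\,B+O(\delta),
\end{equation*}
with $A:=\int_{\{\psi<-t_3\}}|\tilde f|^2e^{-\varphi}c(-\psi)$, $B:=\int_{\{-t_3\le\psi<-t_2\}}|\tilde f|^2e^{-\varphi}c(-\psi)$, and $\alpha:=(h(t_1)-h(t_3))/(h(t_2)-h(t_3))>1$.

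The final step is algebraic. Restriction of $\tilde f$ to $\{\psi<-t_3\}$ is a competitor for $G(t_3)$, so $A\ge G(t_3)$; near-optimality gives $A+B\le G(t_2)+\epsilon$. The identity $A+\alpha B=\alpha(A+B)+(1-\alpha)A$ combined with $1-\alpha<0$ therefore yields
\begin{equation*}
A+\alpha B\ \le\ \alpha(G(t_2)+\epsilon)+(1-\alpha)G(t_3)\ =\ \frac{(h(t_1)-h(t_3))\,G(t_2)-(h(t_1)-h(t_2))\,G(t_3)}{h(t_2)-h(t_3)}+\alpha\epsilon,
\end{equation*}
which is the chord upper bound up to $O(\epsilon)$; sending $\delta,\epsilon\to 0$ gives the three-point inequality for $t_1>T$, and the already established $G(t)\to G(T)$ extends it to $t_1=T$. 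The main obstacle is the engineering of $v$: balancing the complex Hessian $\sqrt{-1}\partial\bar\partial(\varphi+v(\psi))$ against the $\bar\partial$-source $b_\delta'(-\psi)\partial(-\psi)\wedge\tilde f$ so that the Donnelly--Fefferman constant in the $L^2$-estimate equals exactly $\alpha$ is delicate and uses the decreasing property of $c(t)e^{-t}$ (the defining condition of $\mathcal{P}_T$) in a critical way.
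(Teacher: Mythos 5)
The paper does not prove Theorem \ref{thm:general_concave} at all: it is quoted from \cite{GY-concavity} (see also \cite{GMY-concavity2}), and the only piece of the underlying machinery reproduced here is Lemma \ref{lem:L2}. So the comparison has to be against that cited proof, whose architecture you have correctly reconstructed: reduce concavity to the three-point chord inequality, build a competitor for $G(t_1)$ from a near-minimizer at the intermediate level by the $\bar\partial$-method with the cutoff $b$ and the calibrated weight $v$, and close with the algebra $A+\alpha B=\alpha(A+B)+(1-\alpha)A$. Your treatment of the two limit statements and the final algebraic step is fine.

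The genuine gap is the central estimate $\int_{\{\psi<-t_1\}}|F|^2e^{-\varphi}c(-\psi)\le A+\alpha B+O(\delta)$ with the exact constant $\alpha=(h(t_1)-h(t_3))/(h(t_2)-h(t_3))$, which you assert by appeal to ``optimal-constant engineering'' but do not derive --- and this estimate \emph{is} the theorem; everything else is routine. Worse, as you have set it up (a single cutoff transitioning over the thick collar $\{-t_3\le\psi<-t_2\}$), the available $L^2$ lemma (Lemma \ref{lem:L2}) does not deliver $\alpha$: its right-hand side is $\bigl(\int c(t)e^{-t}dt\bigr)$ times the collar average $\frac{1}{B}\int_{\{-t_0-B<\psi<-t_0\}}|\tilde f|^2e^{-\varphi}$ (or, after the substitution $\varphi\mapsto\varphi+\psi$, the $e^{-\varphi-\psi}$-weighted average), whereas your $B$ is the $c(-\psi)$-weighted integral normalized by $h(t_2)-h(t_3)$. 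Converting one into the other costs a multiplicative factor controlled by the oscillation of $c(t)e^{-t}$ over $[t_2,t_3]$, which is a fixed quantity $\ge 1$ not absorbed by your $O(\delta)$; with that loss the chord inequality, hence concavity, does not follow. The cited proof avoids this precisely by working with collars of width $B\to 0$ (where the oscillation of the monotone function $c(t)e^{-t}$ becomes negligible at a.e.\ point), obtaining a monotonicity statement for the difference quotients $\frac{G(t)-G(t')}{h(t)-h(t')}$ in the limit, and then integrating up; this limiting/telescoping step, together with the verification that the solution $u$ of the $\bar\partial$-equation lies in $\mathcal{O}(K_M)\otimes\mathcal{I}(\varphi+\psi)$ along $Z_0$ (so that the competitor is admissible for the sheaf $\mathcal{F}\supseteq\mathcal{I}(\varphi+\psi)$, not merely $\mathcal{I}(\psi)$ as you wrote), is the missing substance of your proposal.
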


Denote that
\begin{displaymath}
	\begin{split}
		\mathcal{H}^2(c,t):=\{\tilde{f}:\int_{\{\psi<-t\}}|\tilde{f}|^2e^{-\varphi}c(-\psi)<+\infty,(\tilde{f}-f)&\in H^0(Z_0,(\mathcal{O}(K_{M})\otimes\mathcal{F})|_{Z_0})\\
		&\&\tilde{f}\in H^0(\{\psi<-t\},\mathcal{O}(K_M))\},
	\end{split}
\end{displaymath}
where $t\in[T,+\infty)$ and $c$ is a nonnegative measurable function on $(T,+\infty)$.

\begin{Corollary}[see \cite{GY-concavity}, see also \cite{GMY-concavity2}]	\label{c:linear}
Let $c\in\mathcal{P}_T$ satisfying $\int_T^{+\infty}c(s)e^{-s}ds<+\infty$. If $G(t)\in(0,+\infty)$ for some $t\geq T$ and $G({h}^{-1}(r))$ is linear with respect to $r\in[0,\int_{T}^{+\infty}c(s)e^{-s}ds)$,
 then there is a unique holomorphic $(n,0)$ form $F$ on $M$ satisfying $(F-f)\in H^{0}(Z_0,(\mathcal{O}(K_{M})\otimes\mathcal F)|_{Z_0})$ and $G(t;c)=\int_{\{\psi<-t\}}|F|^2e^{-\varphi}c(-\psi)$ for any $t\geq T$. Furthermore,
\begin{equation}
	\label{eq:20210412b}
	\int_{\{-t_1\leq\psi<-t_2\}}|F|^2e^{-\varphi}a(-\psi)=\frac{G(T_1;c)}{\int_{T_1}^{+\infty}c(t)e^{-t}dt}\int_{t_2}^{t_1} a(t)e^{-t}dt
\end{equation}
for any nonnegative measurable function $a$ on $(T,+\infty)$, where $+\infty\geq t_1>t_2\geq T$.

Especially, if $\mathcal H^2(\tilde{c},t_0)\subset\mathcal H^2(c,t_0)$ for some $t_0\geq T$, where $\tilde{c}$ is a nonnegative measurable function on $(T,+\infty)$, we have
\begin{equation}
	\label{eq:20210412a}
	G(t_0;\tilde{c})=\int_{\{\psi<-t_0\}}|F|^2e^{-\varphi}\tilde{c}(-\psi)=\frac{G(T_1;c)}{\int_{T_1}^{+\infty}c(s)e^{-s}ds}\int_{t_0}^{+\infty} \tilde{c}(s)e^{-s}ds.\end{equation}	
\end{Corollary}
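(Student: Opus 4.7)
The plan is to exploit the linearity of $G(h^{-1}(r))$ to upgrade the abstract concavity of Theorem~\ref{thm:general_concave} into a canonical global minimizer $F$ on $M$, then read off (\ref{eq:20210412b}) as an identity of push-forward measures on $(T,+\infty)$, and finally transfer this identity to the alternative gain $\tilde c$ to obtain (\ref{eq:20210412a}). Step one is existence and uniqueness of a minimizer on each sublevel. For fixed $t\ge T$ with $G(t;c)<+\infty$, the admissible family $\mathcal{H}^{2}(c,t)$ is a closed affine subspace of the Hilbert space $L^{2}\bigl(\{\psi<-t\},e^{-\varphi}c(-\psi)\bigr)$: closedness follows from Fatou's lemma together with the local uniform convergence (on $M\setminus X$) of weakly $L^{2}$-convergent sequences of holomorphic $(n,0)$ forms, which also preserves the germ condition $(\tilde f-f,z)\in(\mathcal{O}(K_M)\otimes\mathcal{F})_z$ at each $z\in Z_0$. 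The nearest-point projection of $0$ onto this affine subspace produces a unique minimizer $F_t$ realizing $G(t;c)$, with uniqueness coming from strict convexity of the $L^{2}$ norm.

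Step two is gluing. For $T\le t_2<t_1$ the restriction $F_{t_2}|_{\{\psi<-t_1\}}$ competes in $G(t_1;c)$. Using $\lim_{t\to+\infty}G(t)=0$ and $\lim_{t\downarrow T}G(t)=G(T)$ from Theorem~\ref{thm:general_concave}, the linearity hypothesis fixes $G(t;c)=c_0\,h(t)$ with the constant $c_0:=G(T_1;c)/\int_{T_1}^{+\infty}c(s)e^{-s}\,ds$ independent of $T_1\ge T$. Subtracting this linear identity at $t_2$ and $t_1$ from the fact that $\int_{\{\psi<-t_2\}}|F_{t_2}|^{2}e^{-\varphi}c(-\psi)=G(t_2;c)$ shows that $F_{t_2}|_{\{\psi<-t_1\}}$ already realizes $G(t_1;c)$, so uniqueness forces $F_{t_2}|_{\{\psi<-t_1\}}=F_{t_1}$. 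Patching over $t\downarrow T$ yields a holomorphic $(n,0)$ form on $\{\psi<-T\}$, extended across the locally negligible set $\{\psi=-\infty\}$ via condition (a1) to a global form $F$ on $M$ satisfying $\int_{\{\psi<-t\}}|F|^{2}e^{-\varphi}c(-\psi)=G(t;c)$ for every $t\ge T$.

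Step three is to extract (\ref{eq:20210412b}) and the second equality in (\ref{eq:20210412a}). Subtracting the linear identities at $t_1$ and $t_2$ gives
\[
\int_{\{-t_1\le\psi<-t_2\}}|F|^{2}e^{-\varphi}c(-\psi)=c_0\int_{t_2}^{t_1}c(s)e^{-s}\,ds
\]
for all $T\le t_2<t_1\le+\infty$, equivalently the push-forward $\mu:=(-\psi)_{\ast}\bigl(|F|^{2}e^{-\varphi}\,dV_M\bigr)$ on $(T,+\infty)$ satisfies $c(s)\,d\mu(s)=c_0\,c(s)e^{-s}\,ds$; since $c>0$ almost everywhere (because $c\in\mathcal{P}_T$), this forces $d\mu(s)=c_0\,e^{-s}\,ds$. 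Integrating any nonnegative measurable gain $a$ against $\mu$ yields (\ref{eq:20210412b}); specializing $a=\tilde c$, $(t_2,t_1)=(t_0,+\infty)$ gives the second equality of (\ref{eq:20210412a}) and shows $F\in\mathcal{H}^{2}(\tilde c,t_0)$, whence $G(t_0;\tilde c)\le\int_{\{\psi<-t_0\}}|F|^{2}e^{-\varphi}\tilde c(-\psi)$.

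The main obstacle is the reverse inequality in (\ref{eq:20210412a}), namely that $F$ is automatically a minimizer even for the new gain $\tilde c$. Here I would use the inclusion $\mathcal{H}^{2}(\tilde c,t_0)\subset\mathcal{H}^{2}(c,t_0)$: any competitor $\tilde F$ for the $\tilde c$-problem is also a competitor for the $c$-problem, and the extremal characterization of $F$ (its orthogonality in the $c$-inner product to every ideal-valued variation in $\mathcal{O}(K_M)\otimes\mathcal{F}$) should transfer, via the density relation $d\mu(s)=c_0 e^{-s}\,ds$ and the observation that this density is independent of the chosen gain, to the corresponding orthogonality in the $\tilde c$-inner product. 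Turning this orthogonality-transfer heuristic into a rigorous inequality $\int|\tilde F|^{2}e^{-\varphi}\tilde c(-\psi)\ge\int|F|^{2}e^{-\varphi}\tilde c(-\psi)$ is the delicate point of the argument.
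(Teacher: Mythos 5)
Your overall architecture --- Hilbert-space minimizers on each sublevel set, gluing them into a single $F$, reading \eqref{eq:20210412b} as an identity of push-forward measures on $(T,+\infty)$, and transferring orthogonality to the gain $\tilde c$ --- is the same as in the source \cite{GY-concavity} from which this corollary is quoted (the present paper states it with a citation and gives no proof of its own). But two steps are genuinely incomplete. First, in your gluing step the assertion that $F_{t_2}|_{\{\psi<-t_1\}}$ realizes $G(t_1;c)$ does not follow from ``subtracting the linear identity'': minimality of $F_{t_1}$ only gives $\int_{\{\psi<-t_1\}}|F_{t_2}|^2e^{-\varphi}c(-\psi)\ge G(t_1;c)$, equivalently $\int_{\{-t_1\le\psi<-t_2\}}|F_{t_2}|^2e^{-\varphi}c(-\psi)\le G(t_2;c)-G(t_1;c)$, and nothing in your argument supplies the reverse inequality --- a priori the restriction of $F_{t_2}$ could carry strictly more mass on $\{\psi<-t_1\}$ than $G(t_1;c)$. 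The reverse inequality is exactly the quantitative estimate underlying Theorem \ref{thm:general_concave}: one runs the $\bar\partial$-machinery of Lemma \ref{lem:L2} to extend the minimizer on the deeper sublevel set outward with controlled norm, obtains an upper bound for $G(t_2)-G(t_1)$ in terms of the boundary mass of that minimizer, and then uses linearity to force equality throughout; only after that do uniqueness and restriction cohere. As written, your Step 2 asserts the conclusion rather than proving it.

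Second, you correctly identify the reverse inequality in \eqref{eq:20210412a} as the delicate point but leave it as a heuristic; it must be carried out, and the correct completion is the following. For every $s\ge t_0$ and every competitor $\tilde F\in\mathcal H^2(\tilde c,t_0)\subset\mathcal H^2(c,t_0)$, the restriction $\tilde F|_{\{\psi<-s\}}$ lies in $\mathcal H^2(c,s)$, so minimality of $F|_{\{\psi<-s\}}$ for the $c$-problem gives the orthogonality $\int_{\{\psi<-s\}}(\tilde F-F)\wedge\overline F\,e^{-\varphi}c(-\psi)=0$ for all $s\ge t_0$. Differencing over $s$ shows that the complex measure $(-\psi)_*\bigl((\tilde F-F)\wedge\overline F\,e^{-\varphi}\bigr)$, multiplied by the a.e.\ positive density $c$, vanishes on every interval, hence the measure itself vanishes on $(t_0,+\infty)$; integrating $\tilde c$ against it (legitimate by Cauchy--Schwarz, since $F\in L^2(e^{-\varphi}\tilde c(-\psi))$ by \eqref{eq:20210412b} with $a=\tilde c$ and $\tilde F\in L^2(e^{-\varphi}\tilde c(-\psi))$ by hypothesis) yields $\int_{\{\psi<-t_0\}}(\tilde F-F)\wedge\overline F\,e^{-\varphi}\tilde c(-\psi)=0$, whence $\int_{\{\psi<-t_0\}}|\tilde F|^2e^{-\varphi}\tilde c(-\psi)=\int_{\{\psi<-t_0\}}|\tilde F-F|^2e^{-\varphi}\tilde c(-\psi)+\int_{\{\psi<-t_0\}}|F|^2e^{-\varphi}\tilde c(-\psi)\ge\int_{\{\psi<-t_0\}}|F|^2e^{-\varphi}\tilde c(-\psi)$. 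Without these two repairs the proposal does not constitute a proof.
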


Let $M=\Omega$  be an open Riemann surface, which admits a nontrivial Green function $G_{\Omega}$.
 Let $p:\Delta\rightarrow\Omega$ be the universal covering from unit disc $\Delta$ to $\Omega$.
It is known that for any harmonic function $u$ on $\Omega$,
there exists a $\chi_{u}$ (the  character associate to $u$) and a multiplicative function $f_u\in\mathcal{O}^{\chi_{u}}(\Omega)$,
such that $|f_u|=p^{*}e^{u}$.
Let $z_0\in \Omega$.
Recall that for the Green function $G_{\Omega}(z,z_0)$,
there exist a $\chi_{z_0}$ and a multiplicative function $f_{z_0}\in\mathcal{O}^{\chi_{z_0}}(\Omega)$, such that $|f_{z_0}(z)|=p^{*}e^{G_{\Omega}(z,z_0)}$ .

Let $Z_0:=\{z_1,z_2,...,z_m\}\subset\Omega$ be a subset of $\Omega$ satisfying that $z_j\not=z_k$ for any $j\not=k$.
Let $w_j$ be a local coordinate on a neighborhood $V_{z_j}\Subset\Omega$ of $z_j$ satisfying $w_j(z_j)=0$ for $j\in\{1,2,...,m\}$, where $V_{z_j}\cap V_{z_k}=\emptyset$ for any $j\not=k$. Denote that $V_0:=\cup_{1\le j\le n}V_{z_j}$.

Let $f$ be a holomorphic $(1,0)$ form on $V_0$, and let $f=f_1dw_j$ on $V_{z_j}$, where $f_1$ is a holomorphic function on $V_0$. Let $\psi$ be a negative subharmonic function on $\Omega$, and let $\varphi$ be a Lebesgue measurable function on $\Omega$ such that $\varphi+\psi$ is subharmonic on $\Omega$.

The following Theorem gives a characterization of the concavity of $G(h^{-1}(r))$ degenerating to linearity.
\begin{Theorem}[see \cite{GY-concavity3}]
	\label{thm:m-points}Let $c\in\mathcal{P}_0.$
 Assume that $G(0)\in(0,+\infty)$ and $(\psi-2p_jG_{\Omega}(\cdot,z_j))(z_j)>-\infty$ for  $j\in\{1,2,..,m\}$, where $p_j=\frac{1}{2}v(dd^c(\psi),z_j)>0$. Then $G(h^{-1}(r))$ is linear with respect to $r$ if and only if the following statements hold:
	
	$(1)$ $\psi=2\sum_{1\le j\le m}p_jG_{\Omega}(\cdot,z_j)$;
	
	$(2)$ $\varphi+\psi=2\log|g|+2\sum_{1\le j\le m}G_{\Omega}(\cdot,z_j)+2u$ and $\mathcal{F}_{z_j}=\mathcal{I}(\varphi+\psi)_{z_j}$ for any $j\in\{1,2,...,m\}$, where $g$ is a holomorphic function on $\Omega$ such that $ord_{z_j}(g)=ord_{z_j}(f_1)$ for any $j\in\{1,2,...,m\}$ and $u$ is a harmonic function on $\Omega$;
	
	$(3)$ $\Pi_{1\le j\le m}\chi_{z_j}=\chi_{-u}$, where $\chi_{-u}$ and $\chi_{z_j}$ are the  characters associated to the functions $-u$ and $G_{\Omega}(\cdot,z_j)$ respectively;
	
	$(4)$  $\lim_{z\rightarrow z_k}\frac{f}{gp_*(f_u(\Pi_{1\le j\le m}f_{z_j})(\sum_{1\le j\le m}p_{j}\frac{d{f_{z_{j}}}}{f_{z_{j}}}))}=c_0$ for any $k\in\{1,2...,m\}$, where $c_0\in\mathbb{C}\backslash\{0\}$ is a constant independent of $k$.
\end{Theorem}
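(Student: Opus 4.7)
The plan is to prove both directions by reducing to the existence of a unique extremal holomorphic $(1,0)$ form $F$ promised by Corollary \ref{c:linear}, and then matching this $F$ to an explicit multiplicative-differential representation pulled back from the universal cover $p\colon\Delta\to\Omega$. For the \emph{sufficiency} direction, I would construct the candidate extremal form
$$F_0 := c_0\, g\, p_{*}\!\left(f_u\Bigl(\prod_{1\le j\le m} f_{z_j}\Bigr)\Bigl(\sum_{1\le j\le m} p_j\,\frac{df_{z_j}}{f_{z_j}}\Bigr)\right),$$
where the character condition $\prod\chi_{z_j}=\chi_{-u}$ is precisely what guarantees that the expression inside $p_{*}$ is invariant under the deck transformation group (so that $F_0$ descends to a single-valued holomorphic $(1,0)$ form on $\Omega$). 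Condition (4) then matches $F_0$ to $f$ modulo $\mathcal{I}(\varphi+\psi)_{z_j}$ at each $z_j$. Using condition (1) to rewrite $\{\psi<-t\}$ as $\{2\sum_j p_j G_\Omega(\cdot,z_j)<-t\}$, and condition (2) to rewrite $e^{-\varphi} = e^{\psi}/(|g|^2 e^{2\sum G_\Omega+2u})$, a direct computation using the layer-cake formula on the universal cover (together with the fact that $f_{z_j}$ is a multiplicative function of modulus $e^{G_\Omega(\cdot,z_j)}$) should give
$$\int_{\{\psi<-t\}}|F_0|^2 e^{-\varphi} c(-\psi) = (\text{const})\cdot\int_t^{+\infty}c(s)e^{-s}ds,$$
which is linear in $h(t)=\int_t^{+\infty}c(s)e^{-s}ds$.

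For the \emph{necessity} direction, linearity of $G(h^{-1}(r))$ combined with the hypothesis $G(0)\in(0,+\infty)$ allows us to invoke Corollary \ref{c:linear} to produce a unique $F\in H^0(\Omega,\mathcal{O}(K_\Omega))$ realizing $G(t)$ for every $t\ge 0$, and satisfying the sharp equidistribution identity (\ref{eq:20210412b}). I would then analyze this identity on sublevel sets $\{\psi<-t\}$ near each $z_j$, which forces $|F|^2 e^{-\varphi}$ to have a prescribed density with respect to the measure $(dd^c\psi)$ on small level sets; passing to the universal cover and combining with $(\psi-2p_j G_\Omega(\cdot,z_j))(z_j)>-\infty$, the Lelong-number assumption pins down $\psi=2\sum_j p_j G_\Omega(\cdot,z_j)$, giving condition (1). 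Next, from $\int_{\{-t-1<\psi<-t\}}|F|^2 e^{-\varphi}c(-\psi) = (\text{const})\cdot\int_t^{t+1}c(s)e^{-s}ds$, using that $c$ is arbitrary in $\mathcal P_0$, I would deduce by a standard variation argument (taking $c_1,c_2$ with the same integral but different supports) that $e^{-\varphi}|F|^2 e^{\psi}$ is the pullback of a flat metric on the relevant line bundle on $\Omega$, which yields condition (2) with $g$ divisible by the local zero-orders of $f_1$ at the $z_j$.

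For condition (3), once conditions (1) and (2) are established, the extremal $F$ on the universal cover must take the shape $g\cdot p_{*}(f_u\cdot h)$ where $h$ is a multiplicative $(1,0)$ form with character $\prod\chi_{z_j}$; single-valuedness of $F$ on $\Omega$ then forces $\prod\chi_{z_j}=\chi_{-u}$. Condition (4) is then a matter of matching the jet of $F$ at each $z_k$ against the jet of $f$ (forced by membership in $(\mathcal{O}(K_\Omega)\otimes\mathcal F)_{z_k}$); the fact that the ratio $c_0$ is \emph{independent of} $k$ is the crucial rigidity obtained from the uniqueness of $F$.

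The main obstacle I anticipate is the passage from the integral equidistribution identity (\ref{eq:20210412b}) to the pointwise characterization of $\psi$ and $\varphi+\psi$ in conditions (1)–(2). This step requires turning an equality of integrals over thin annular level sets into a pointwise equality of measures, which technically amounts to inverting a kind of coarea formula on an open Riemann surface near its logarithmic singularities. I would handle it by choosing a clever one-parameter family of gain functions $c$ concentrated near a single value, so that the equidistribution identity specializes to the level-set measure of $|F|^2 e^{-\varphi+\psi}$ agreeing with a multiple of $(dd^c\psi)$; combined with the harmonicity of $\log|F|^2-\varphi+\psi$ away from the $z_j$ on the universal cover, this should close the argument and deliver conditions (1) and (2) simultaneously.
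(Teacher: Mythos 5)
Your skeleton is the right one in outline — Corollary \ref{c:linear} produces the unique extremal form $F$, the candidate $F_0=c_0\,g\,p_*(f_u(\Pi_j f_{z_j})(\sum_j p_j\,df_{z_j}/f_{z_j}))$ is indeed the extremal in the rigid case, and the character condition $\Pi_j\chi_{z_j}=\chi_{-u}$ is exactly what makes the expression under $p_*$ deck-invariant. (Note the paper does not prove this theorem; it quotes it from \cite{GY-concavity3}. But the higher-dimensional proofs in Sections 3--5, which imitate the one-dimensional argument, show what the intended route is.) There are, however, two genuine gaps.

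The serious one is in your derivation of conditions $(1)$ and $(2)$ in the necessity direction. Identity \eqref{eq:20210412b}, even used for all nonnegative gains $a$, only says that the pushforward of the measure $|F|^2e^{-\varphi}$ under $-\psi$ is a constant multiple of $e^{-t}dt$; it is a one-dimensional projection of the data and cannot be ``inverted'' to recover the pointwise structure of $\varphi+\psi$ on $\Omega$. Your appeal to ``harmonicity of $\log|F|^2-\varphi+\psi$'' is not available: $\varphi+\psi$ is only assumed subharmonic, and the whole content of condition $(2)$ is that linearity \emph{forces} its non-harmonic part to be exactly $2\log|g|+2\sum_jG_{\Omega}(\cdot,z_j)$. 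The route that actually works (and which this paper follows in its product-space analogues, e.g.\ Sections 3.2 and 4.2) is: first upgrade linearity for $c$ to linearity for $\tilde c\equiv 1$ via the strong openness property and Corollary \ref{c:linear}; then sandwich $e^{t}G(t;\tilde c\equiv 1)$ between the logarithmic-capacity upper bound coming from the optimal $L^2$ extension (the analogue of Proposition \ref{p:exten-pro-finite}) and the matching asymptotic lower bound as $t\to+\infty$ obtained from the Taylor expansion of $F$ at the $z_j$ (the analogue of Lemma \ref{l:m2}); linearity forces these to coincide, i.e.\ equality holds in the optimal $L^2$ extension problem, and \emph{that} equality case — analyzed via the exhaustion $\Omega_l$ of Lemma \ref{l:green-approx} and the Guan--Zhou treatment of the Suita-type equality — is what delivers $(1)$, $(2)$ and $(3)$. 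Your proposal skips this capacity comparison entirely, and without it the equidistribution identity does not pin down $\varphi+\psi$.

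The smaller gap is in sufficiency: you compute $\int_{\{\psi<-t\}}|F_0|^2e^{-\varphi}c(-\psi)$ and observe it is proportional to $h(t)$, but that only shows $G(t)\le K\,h(t)$. You still must show $G(t)$ \emph{equals} this, either by an orthogonality/reproducing argument showing $F_0$ minimizes the $L^2$ norm in its jet class (the role played by Lemma \ref{l:orth2} and Remark \ref{rem:1.2} in this paper), or by combining the upper bound with the concavity of $G(h^{-1}(r))$ and the asymptotic lower bound $\liminf_{t\to+\infty}e^tG(t;\tilde c\equiv1)\ge K$ forced by the prescribed jets. As written, the sufficiency argument establishes linearity of an upper envelope, not of $G$ itself.
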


\begin{Remark}[see \cite{GY-concavity3}]\label{r:chi}
	 For any $\{z_1,z_2,..,z_m\}$, there exists a harmonic function $u$ on $\Omega$ such that $\Pi_{1\le j\le m}\chi_{z_j}=\chi_{-u}$. In fact, as $\Omega$ is an open Riemann surface, then there exists a holomorphic function $\tilde{f}$ on $\Omega$ satisfying that $u:=\log|\tilde{f}|-\sum_{1\le j\le m}G_{\Omega}(\cdot,z_j)$ is harmonic on $\Omega$, which implies that $\Pi_{1\le j\le m}\chi_{z_j}=\chi_{-u}$.
\end{Remark}

We recall a  characterization of the holding of equality in optimal jets $L^2$ extension problem from finite points to open Riemann surfaces.
\begin{Theorem}[see \cite{GY-concavity3}]\label{c:L2-1d-char}
Let $k_j$ be a nonnegative integer for any $j\in\{1,2,...,m\}$. Let $\psi$ be a negative  subharmonic function on $\Omega$ satisfying that   $\frac{1}{2}v(dd^{c}\psi,z_j)=p_j>0$ for any $j\in\{1,2,...,m\}$. Let $\varphi$ be a Lebesgue measurable function on $\Omega$  such that $\varphi+\psi$ is subharmonic on $\Omega$, $\frac{1}{2}v(dd^c(\varphi+\psi),z_j)=k_j+1$ and $\alpha_j:=(\varphi+\psi-2(k_j+1)G_{\Omega}(\cdot,z_j))(z_j)>-\infty$ for any $j$. Let $c(t)$ be a positive measurable function on $(0,+\infty)$ satisfying $c(t)e^{-t}$ is decreasing on $(0,+\infty)$ and $\int_{0}^{+\infty}c(s)e^{-s}ds<+\infty$. Let $a_j$ be a constant for any $j$.

Let $f$ be a holomorphic $(1,0)$ form on $V_0$ satisfying that $f=a_jw_j^{k_j}dw_j$ on $V_{z_j}$. Then there exists a holomorphic $(1,0)$ form $F$ on $\Omega$ such that $(F-f,z_j)\in(\mathcal{O}(K_{\Omega})\otimes\mathcal{I}(2(k_j+1)G_{\Omega}(\cdot,z_j)))_{z_j}$ and
 \begin{equation}
 	\label{eq:210902a}
 	\int_{\Omega}|F|^2e^{-\varphi}c(-\psi)\leq(\int_0^{+\infty}c(s)e^{-s}ds)\sum_{1\le j\le m}\frac{2\pi|a_j|^2e^{-\alpha_j}}{p_jc_{\beta}(z_j)^{2(k_j+1)}}.
 \end{equation}

 Moreover, equality $(\int_0^{+\infty}c(s)e^{-s}ds)\sum_{1\le j\le m}\frac{2\pi|a_j|^2e^{-\alpha_j}}{p_jc_{\beta}(z_j)^{2(k_j+1)}}=\inf\{\int_{\Omega}|\tilde{F}|^2e^{-\varphi}c(-\psi):\tilde{F}$ is a holomorphic $(1,0)$ form on $\Omega$ such that $(\tilde{F}-f,z_j)\in(\mathcal{O}(K_{\Omega})\otimes\mathcal{I}(2(k_j+1)G_{\Omega}(\cdot,z_j)))_{z_j}$ for any $j\}$ holds if and only if the following statements hold:

	$(1)$ $\psi=2\sum_{1\le j\le m}p_jG_{\Omega}(\cdot,z_j)$;
	
	$(2)$ $\varphi+\psi=2\log|g|+2\sum_{1\le j\le m}(k_j+1)G_{\Omega}(\cdot,z_j)+2u$, where $g$ is a holomorphic function on $\Omega$ such that $g(z_j)\not=0$ for any $j\in\{1,2,...,m\}$ and $u$ is a harmonic function on $\Omega$;
	
	$(3)$ $\Pi_{1\le j\le m}\chi_{z_j}^{k_j+1}=\chi_{-u}$, where $\chi_{-u}$ and $\chi_{z_j}$ are the  characters associated to the functions $-u$ and $G_{\Omega}(\cdot,z_j)$ respectively;
	
	$(4)$  $\lim_{z\rightarrow z_k}\frac{f}{gp_*(f_u(\Pi_{1\le j\le m}f_{z_j}^{k_j+1})(\sum_{1\le j\le m}p_{j}\frac{d{f_{z_{j}}}}{f_{z_{j}}}))}=c_0$ for any $k\in\{1,2...,m\}$, where $c_0\in\mathbb{C}\backslash\{0\}$ is a constant independent of $k$.
\end{Theorem}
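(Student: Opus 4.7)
The plan is to deduce Theorem \ref{c:L2-1d-char} from the general concavity machinery (Theorem \ref{thm:general_concave} and Corollary \ref{c:linear}) together with the linearity characterization Theorem \ref{thm:m-points}, plus an explicit asymptotic computation of $G(t)/h(t)$ as $t\to+\infty$. First I would set up $G(t)$ with the given $\psi$, $\varphi$, $Z_0=\{z_1,\ldots,z_m\}$, $f$, and the analytic subsheaf $\mathcal{F}$ whose stalk at $z_j$ is $\mathcal{I}(2(k_j+1)G_\Omega(\cdot,z_j))_{z_j}$. The hypotheses $\tfrac{1}{2}v(dd^c(\varphi+\psi),z_j)=k_j+1$ and $\alpha_j>-\infty$ yield $\varphi+\psi\ge 2(k_j+1)G_\Omega(\cdot,z_j)+\alpha_j+o(1)$ near $z_j$, giving $\mathcal{I}(\varphi+\psi)_{z_j}\subseteq\mathcal{F}_{z_j}$; Theorem \ref{thm:general_concave} then gives that $G(h^{-1}(r))$ is concave on $(0,h(0)]$ with $\lim_{t\to+\infty}G(t)=0$. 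Concavity with vanishing at the origin forces $G(h^{-1}(r))/r$ to be decreasing, so $G(0)/h(0)\le\lim_{t\to+\infty}G(t)/h(t)$.

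Next I would compute the limit by localization near each $z_j$. For large $t$, $\{\psi<-t\}$ is a disjoint union of small neighborhoods of the $z_j$ on which $\psi=2p_j\log|w_j|+O(1)$, $\varphi+\psi=2(k_j+1)\log|w_j|+\alpha_j+o(1)$, $f=a_jw_j^{k_j}dw_j$, and $G_\Omega(\cdot,z_j)=\log|w_j|+\log c_\beta(z_j)+o(1)$. The jet condition $(\tilde F-f,z_j)\in\mathcal{I}(2(k_j+1)G_\Omega)_{z_j}$ pins the leading Taylor jet of $\tilde F$ at $z_j$, and a polar-coordinates calculation on each small disk---together with a control of cross contributions and the uniqueness from Corollary \ref{c:linear} applied to the one-point subproblem---shows that the $z_j$-piece contributes asymptotically $\tfrac{2\pi|a_j|^2e^{-\alpha_j}}{p_j c_\beta(z_j)^{2(k_j+1)}}h(t)$. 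Summing over $j$ and combining with the concavity bound yields \eqref{eq:210902a}.

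For the equality characterization, equality in \eqref{eq:210902a} forces $G(h^{-1}(r))/r$ to be constant, i.e.\ $G(h^{-1}(r))$ is linear. I would then apply Theorem \ref{thm:m-points}, in which $f_1=a_jw_j^{k_j}$ has $\mathrm{ord}_{z_j}(f_1)=k_j$, obtaining $\psi=2\sum_l p_l G_\Omega(\cdot,z_l)$ and $\varphi+\psi=2\log|g_0|+2\sum_l G_\Omega(\cdot,z_l)+2u_0$ with $\mathrm{ord}_{z_l}(g_0)=k_l$ and $\prod_l\chi_{z_l}=\chi_{-u_0}$ (the hypothesis $(\psi-2p_jG_\Omega)(z_j)>-\infty$ needed in Theorem \ref{thm:m-points} is forced by the sharp local bound in the second step). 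Using, as in Remark \ref{r:chi}, a holomorphic $h_0$ on $\Omega$ with divisor $\sum_l k_l[z_l]$ such that $v:=\log|h_0|-\sum_l k_lG_\Omega(\cdot,z_l)$ is harmonic, the factorization $g_0=g\cdot h_0$ with $g(z_l)\ne 0$ gives $\varphi+\psi=2\log|g|+2\sum_l(k_l+1)G_\Omega(\cdot,z_l)+2u$ with $u:=u_0+v$, matching (1)--(2). Since $p^*h_0$ has trivial character while $|p^*h_0|=|f_v|\prod_l|f_{z_l}|^{k_l}$, one gets $\chi_{-v}=\prod_l\chi_{z_l}^{k_l}$; composing with $\prod_l\chi_{z_l}=\chi_{-u_0}$ produces $\prod_l\chi_{z_l}^{k_l+1}=\chi_{-u}$, i.e.\ our (3), and the same identity $g_0\cdot p_*(f_{u_0}\prod_lf_{z_l}\cdot\omega)=g\cdot p_*(f_u\prod_lf_{z_l}^{k_l+1}\cdot\omega)$, where $\omega=\sum_lp_l\,df_{z_l}/f_{z_l}$, translates (4) of Theorem \ref{thm:m-points} into our (4).

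I expect the main obstacle to be the second step: extracting the sharp local constant $\tfrac{2\pi}{p_jc_\beta(z_j)^{2(k_j+1)}}$ requires identifying the extremal local test jet, controlling the cross-contributions between the different $z_j$, and handling the non-canonical bounded parts of $\psi$ and $\varphi+\psi$ in both an upper and a matching lower bound; this is where the interplay between the Ohsawa--Takegoshi-type estimate behind Theorem \ref{thm:general_concave} and the local Suita-type computation of $c_\beta$ is most delicate.
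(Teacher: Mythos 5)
First, a point of reference: the paper does not prove Theorem \ref{c:L2-1d-char} at all --- it is quoted verbatim from \cite{GY-concavity3} --- so there is no in-paper proof to match against. The closest internal analogues are Proposition \ref{p:exten-pro-finite} (for the extension-with-optimal-estimate part) and the proofs of Theorems \ref{thm:2d-jet} and \ref{thm:prod-finite-jet} (for the ``equality $\Rightarrow$ linearity $\Rightarrow$ apply the linearity characterization'' part). Your overall skeleton --- construct the extension with the optimal constant, observe that equality forces $G(h^{-1}(r))$ to be linear, then invoke Theorem \ref{thm:m-points} and translate its conclusions via the factorization $g_0=g\cdot h_0$ and the character identity $\chi_{-v}=\prod_l\chi_{z_l}^{k_l}$ --- is exactly the strategy this series of papers uses, and your Step 3 bookkeeping is essentially right.

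The genuine gap is in Step 2. You assert that near $z_j$ one has $\varphi+\psi=2(k_j+1)\log|w_j|+\alpha_j+o(1)$ and that a polar-coordinate computation then evaluates the contribution of the candidate $\tilde F=f$ as $(1+o(1))\,\tfrac{2\pi|a_j|^2e^{-\alpha_j}}{p_jc_\beta(z_j)^{2(k_j+1)}}h(t)$. Only half of that expansion is true: upper semicontinuity of the subharmonic function $v_j:=\varphi+\psi-2(k_j+1)G_\Omega(\cdot,z_j)$ gives $v_j\le\alpha_j+o(1)$ near $z_j$, but the lower bound $v_j\ge\alpha_j-o(1)$ fails in general (take $v_j=\sum_i c_i\log|w_j-a_i|$ with $a_i\to z_j$, $c_i\uparrow 2$, and $\sum_i c_i\log|a_i|>-\infty$; all hypotheses of the theorem are satisfied, yet $\limsup_{t\to\infty}h(t)^{-1}\int_{\{\psi<-t\}}|f|^2e^{-\varphi}c(-\psi)=+\infty$). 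So the direct asymptotic evaluation of the candidate integral does not deliver the upper bound \eqref{eq:210902a}, and with it your derivation of ``equality forces linearity'' also loses its footing. The mechanism that actually works --- visible in Proposition \ref{p:exten-pro-finite} --- is to take continuous plurisubharmonic $\Phi_l$ decreasing to the weight, perform the localization for each fixed $\Phi_l$ (where continuity makes the oscillation over $\{\psi<-t\}\cap V_{z_j}$ genuinely $o(1)$), apply the $L^2$ extension Lemma \ref{lem:L2} at each level to produce global extensions $F_{l,t}$ whose bounds involve only the value $\Phi_l(z_j)$, and then pass to the limit in the extensions via Lemma \ref{l:converge}, Lemma \ref{closedness} and Fatou, using $e^{-\Phi_l}\uparrow e^{-(\varphi+\psi)}$ and $\Phi_l(z_j)\downarrow\alpha_j+2(k_j+1)\log c_\beta(z_j)$. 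A secondary issue: Theorem \ref{thm:m-points} requires $(\psi-2p_jG_\Omega(\cdot,z_j))(z_j)>-\infty$, which is not among the hypotheses of Theorem \ref{c:L2-1d-char}; your parenthetical claim that it is ``forced by the sharp local bound'' needs an actual argument (a comparison in the spirit of Lemma \ref{l:psi=G}, showing that $\psi\lneq 2\sum_jp_jG_\Omega(\cdot,z_j)$ would make the minimal integral strictly smaller and contradict equality), since at that stage statement $(1)$ is a conclusion, not a hypothesis.
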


\begin{Remark}[see \cite{GY-concavity3}]\label{rem:1.2}
When the four statements in Theorem \ref{c:L2-1d-char} hold,
$$c_0gp_*(f_u(\Pi_{1\le j\le m}f_{z_j}^{k_j+1})(\sum_{1\le j\le m}p_{j}\frac{d{f_{z_{j}}}}{f_{z_{j}}}))$$
 is the unique holomorphic $(1,0)$ form $F$ on $\Omega$ such that $(F-f,z_j)\in(\mathcal{O}(K_{\Omega})\otimes\mathcal{I}(2(k_j+1)G_{\Omega}(\cdot,z_j)))_{z_j}$ and
 \begin{equation*}
 	\int_{\Omega}|F|^2e^{-\varphi}c(-\psi)\leq(\int_0^{+\infty}c(s)e^{-s}ds)\sum_{1\le j\le m}\frac{2\pi|a_j|^2e^{-\alpha_j}}{p_jc_{\beta}(z_j)^{2(k_j+1)}}.
 \end{equation*}
\end{Remark}

Let $Z_0:=\{z_j:j\in\mathbb{Z}_{\ge1}\}\subset\Omega$ be a  discrete set of infinite points.
Let $w_j$ be a local coordinate on a neighborhood $V_{z_j}\Subset\Omega$ of $z_j$ satisfying $w_j(z_j)=0$ for $j\in\mathbb{Z}_{\ge1}$, where $V_{z_j}\cap V_{z_k}=\emptyset$ for any $j\not=k$. Denote that $V_0:=\cup_{j\in\mathbb{Z}_{\ge1}}V_{z_j}$.
Let $f$ be a holomorphic $(1,0)$ form on $V_0$, and let $f=f_1dw_j$ on $V_{z_j}$, where $f_1$ is a holomorphic function on $V_0$. Let $\psi$ be a negative subharmonic function on $\Omega$, and let $\varphi$ be a Lebesgue measurable function on $\Omega$ such that $\varphi+\psi$ is subharmonic on $\Omega$.

 The following result gives a necessary condition for $G(h^{-1}(r))$ is linear.

\begin{Proposition}[see \cite{GY-concavity3}]Let $c\in\mathcal{P}_0.$	\label{p:infinite}
 Assume that $G(0)\in(0,+\infty)$ and $(\psi-2p_jG_{\Omega}(\cdot,z_j))(z_j)>-\infty$ for  $j\in\mathbb{Z}_{\ge1}$, where $p_j=\frac{1}{2}v(dd^c(\psi),z_j)>0$. Assume that $G(h^{-1}(r))$ is linear with respect to $r$. Then the following statements hold:
	
	$(1)$ $\psi=2\sum_{j\in\mathbb{Z}_{\ge1}}p_jG_{\Omega}(\cdot,z_j)$;

	$(2)$ $\varphi+\psi=2\log|g|$ and $\mathcal{F}_{z_j}=\mathcal{I}(\varphi+\psi)_{z_j}$ for any $j\in\mathbb{Z}_{\ge1}$, where $g$ is a holomorphic function on $\Omega$ such that $ord_{z_j}(g)=ord_{z_j}(f_1)+1$ for any $j\in\mathbb{Z}_{\ge1}$;

	$(3)$  $\frac{p_j}{ord_{z_j}g}\lim_{z\rightarrow z_j}\frac{dg}{f}=c_0$ for any $j\in\mathbb{Z}_{\ge1}$, where $c_0\in\mathbb{C}\backslash\{0\}$ is a constant independent of $j$;
	
		$(4)$ $\sum_{j\in\mathbb{Z}_{\ge1}}p_j<+\infty$.
\end{Proposition}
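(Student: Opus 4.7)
The plan is to follow the strategy used for the finite-point analog, Theorem \ref{thm:m-points}, and push it through the countable case. The main ingredients will be the rigidity identity provided by Corollary \ref{c:linear} together with a local-to-global analysis at each pole of $\psi$.

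First I would invoke Corollary \ref{c:linear}: the linearity of $G(h^{-1}(r))$ together with $G(0)\in(0,+\infty)$ produces the unique holomorphic $(1,0)$-form $F$ on $\Omega$ satisfying $(F-f,z_j)\in(\mathcal{O}(K_\Omega)\otimes\mathcal{F})_{z_j}$ for every $j$, together with
$$\int_{\{-t_1\le\psi<-t_2\}}|F|^{2}e^{-\varphi}a(-\psi)=\frac{G(0)}{\int_{0}^{+\infty}c(s)e^{-s}ds}\int_{t_{2}}^{t_{1}}a(t)e^{-t}dt$$
for every nonnegative measurable $a$ on $(0,+\infty)$. In other words, the pushforward measure $(-\psi)_{\ast}(|F|^{2}e^{-\varphi})$ is, up to a constant, $e^{-t}dt$. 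This identity is the rigidity engine.

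Next I would localize at each pole $z_{j_{0}}$. The hypothesis $(\psi-2p_{j_{0}}G_{\Omega}(\cdot,z_{j_{0}}))(z_{j_{0}})>-\infty$ means that on a small neighborhood $U_{j_{0}}$, $\psi$ differs from $2p_{j_{0}}\log|w_{j_{0}}|$ by a bounded function, so for $t$ large the set $\{\psi<-t\}\cap U_{j_{0}}$ is essentially a disc of radius $\sim e^{-t/(2p_{j_{0}})}$. Substituting the Laurent expansion $F=f_{1}\,dw_{j_{0}}$ near $z_{j_{0}}$ into the layer-cake identity, the leading $t\to+\infty$ asymptotic of the local piece must match the global $e^{-t}$ rate: this forces $\mathrm{ord}_{z_{j_{0}}}(F)=p_{j_{0}}-1$, so in particular $p_{j_{0}}$ is a positive integer equal to $\mathrm{ord}_{z_{j_{0}}}(f_{1})+1$, and pins down the constant $c_{0}$ appearing in (3). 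With these orders in hand, I would use a Weierstrass-type construction on the open Riemann surface $\Omega$ to build a holomorphic function $g$ with $\mathrm{ord}_{z_{j}}g=\mathrm{ord}_{z_{j}}(f_{1})+1$ for every $j$, which requires the summability $\sum_{j}p_{j}<+\infty$, i.e.\ statement (4); this summability is in turn forced by the finiteness of the total mass $G(0)$ and the requirement that $\psi\not\equiv-\infty$. The function $\varphi+\psi-2\log|g|$ is then subharmonic with vanishing Riesz measure, hence harmonic, and the identification $\mathcal{F}_{z_{j}}=\mathcal{I}(\varphi+\psi)_{z_{j}}$ together with the rigidity identity forces it to vanish identically, yielding (2); (1) follows by the same Riesz-measure comparison between $\psi$ and $2\sum_{j}p_{j}G_{\Omega}(\cdot,z_{j})$, and (3) is the constant-matching from the local analysis.

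The main obstacle is the simultaneous asymptotic matching across the countably many poles: one must carry out the local analysis at every $z_{j}$ and assemble the global object $g$ with the correct divisor $\sum_{j}(\mathrm{ord}_{z_{j}}(f_{1})+1)\cdot[z_{j}]$, which forces the convergence $\sum_{j}p_{j}<+\infty$ to be established essentially in parallel with the construction of $g$ rather than as an a priori hypothesis. Naively truncating $Z_{0}$ to a finite subset and applying Theorem \ref{thm:m-points} directly does \emph{not} work, because such a truncation would force $\psi$ to equal a finite sum of Green functions and contradict the positivity of Lelong numbers at the remaining points; the proof instead has to proceed by the direct local-to-global route sketched above, with statement (4) established along the way to enable the Weierstrass construction in the first place.
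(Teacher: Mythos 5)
A preliminary remark: the paper does not prove Proposition \ref{p:infinite} at all; it is quoted from \cite{GY-concavity3} as a known result, so there is no in-paper proof to compare your argument against. Judged on its own terms, your sketch contains two concrete errors that break the argument.

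The first is the rate-matching step. You claim that comparing the local $t\to+\infty$ asymptotics of $\int_{\{\psi<-t\}\cap U_{j_0}}|F|^{2}e^{-\varphi}$ with the global $e^{-t}$ law from Corollary \ref{c:linear} forces $\mathrm{ord}_{z_{j_0}}(F)=p_{j_0}-1$, so that each $p_{j_0}$ is a positive integer equal to $\mathrm{ord}_{z_{j_0}}(f_1)+1$. This cannot be correct: if it were, conclusion $(4)$ would read $\sum_{j}p_j\ge\sum_j 1=+\infty$ whenever $Z_0$ is infinite, the conclusions of the Proposition would be mutually contradictory, and the statement would be vacuous, whereas it is meant to apply (and is applied in this paper) to Lelong numbers $p_j$ that are small positive reals. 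The error is that $e^{-\varphi}=e^{-(\varphi+\psi)}e^{\psi}$ is not comparable to a constant near $z_j$: once $\varphi+\psi=2\log|g|$ with $\mathrm{ord}_{z_j}g=\mathrm{ord}_{z_j}F+1$, the local integrand behaves like $|w_j|^{2p_j-2}$ and the local mass decays like $e^{-t}$ for \emph{every} $p_j>0$, so the decay rate carries no information about $p_j$ and the chain ``rate matching $\Rightarrow$ $p_j$ integral $\Rightarrow$ value of $c_0$'' collapses. What the local analysis actually determines is the Lelong number of $\varphi+\psi$ at $z_j$, namely $2(\mathrm{ord}_{z_j}f_1+1)$, obtained by showing that each local minimal integral is itself linear (a sum of concave functions is linear only if each summand is) and then invoking the single-point characterization.

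The second gap concerns $(4)$. On an open Riemann surface every divisor supported on a discrete set is principal, so the Weierstrass-type construction of $g$ with $\mathrm{ord}_{z_j}g=\mathrm{ord}_{z_j}(f_1)+1$ requires no summability hypothesis whatsoever; and $\psi\not\equiv-\infty$ does not force $\sum_jp_j<+\infty$ either, since $G_{\Omega}(z,z_j)\to 0$ when the $z_j$ escape to the ideal boundary (compare Lemma \ref{l:green-sup2}, which constructs $2\sum_jp_jG_{\Omega}(\cdot,z_j)$ with no such condition). Thus neither of the two sources you propose for $(4)$ is available, and the logical order you describe, $(4)\Rightarrow$ existence of $g\Rightarrow(2)$, is backwards. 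Statement $(4)$ has to be extracted quantitatively: using $(1)$--$(3)$ one evaluates each local contribution explicitly, obtaining $G_j(t)$ proportional to $p_j|c_0|^{-2}e^{-t}$, so that $G(0)$ is a fixed positive multiple of $\sum_jp_j$, and the finiteness $G(0)<+\infty$ is what yields $(4)$. Your sketch never carries out this computation, and without it the proof of $(4)$, and hence of the Proposition, is incomplete.
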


\subsection{Some basic properties of the Green functions}
\

In this Section, we recall some basic properties of the Green functions. Let $\Omega$ be an open Riemann surface, which admits a nontrivial Green function $G_{\Omega}$, and let $z_0\in\Omega$.

\begin{Lemma}[see \cite{S-O69}, see also \cite{Tsuji}] 	\label{l:green-sup}Let $w$ be a local coordinate on a neighborhood of $z_0$ satisfying $w(z_0)=0$.  $G_{\Omega}(z,z_0)=\sup_{v\in\Delta_{\Omega}^*(z_0)}v(z)$, where $\Delta_{\Omega}^*(z_0)$ is the set of negative subharmonic function on $\Omega$ such that $v-\log|w|$ has a locally finite upper bound near $z_0$. Moreover, $G_{\Omega}(\cdot,z_0)$ is harmonic on $\Omega\backslash\{z_0\}$ and $G_{\Omega}(\cdot,z_0)-\log|w|$ is harmonic near $z_0$.
\end{Lemma}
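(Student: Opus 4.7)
The plan is to verify the Perron-envelope characterization in two stages: first, show that the function $u(z):=\sup_{v\in\Delta_{\Omega}^*(z_0)}v(z)$ is a well-defined subharmonic function on $\Omega$ with the properties listed (harmonic off $z_0$, logarithmic singularity at $z_0$); second, identify $u$ with the given Green function $G_\Omega(\cdot,z_0)$. The family $\Delta_{\Omega}^*(z_0)$ is nonempty, because the hypothesis that $\Omega$ admits a nontrivial Green function provides $G_\Omega(\cdot,z_0)\in\Delta_\Omega^*(z_0)$ itself; this also gives $u\not\equiv -\infty$. Since every $v\in\Delta_\Omega^*(z_0)$ is negative, $u\le 0$, and its upper semicontinuous regularization $u^*$ is a negative subharmonic function on $\Omega$.

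Harmonicity of $u^*$ on $\Omega\setminus\{z_0\}$ follows by the standard Poisson-modification trick. Fix $p\ne z_0$ and a small coordinate disc $\overline{D}\subset\Omega\setminus\{z_0\}$ around $p$. For any $v\in\Delta_\Omega^*(z_0)$, replacing $v|_D$ by the Poisson integral of its boundary values produces a new negative subharmonic function $\tilde v$ on $\Omega$ that still satisfies the singularity bound at $z_0$ (unchanged outside $D$), hence $\tilde v\in\Delta_\Omega^*(z_0)$ and $\tilde v\ge v$ on $D$. Consequently $u=u^*$ on $D$ equals the upper envelope of harmonic functions and is itself harmonic. The defining estimate $v-\log|w|\le C_v$ near $z_0$ passes to the envelope once we show (by the Harnack/Hartogs principle applied to the family translated by $-\log|w|$) that a uniform constant works; the existence of $G_\Omega$ as a member of $\Delta_\Omega^*(z_0)$ already supplies such a uniform bound. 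Then $u^*-\log|w|$ is subharmonic, bounded above in a punctured neighborhood of $z_0$, and therefore extends across $z_0$ as a subharmonic function; combined with harmonicity off $z_0$, the extension is harmonic near $z_0$.

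For the identification $u^*=G_\Omega(\cdot,z_0)$, I would use an exhaustion $\Omega_n\nearrow\Omega$ by relatively compact domains with smooth boundary containing $z_0$. Classical potential theory produces the Green function $G_{\Omega_n}(\cdot,z_0)$ on each $\Omega_n$; by the maximum principle these form a decreasing sequence in $n$, and on an open Riemann surface admitting a nontrivial Green function they converge to a negative harmonic function on $\Omega\setminus\{z_0\}$ with logarithmic pole at $z_0$, which is precisely $G_\Omega(\cdot,z_0)$. Each $G_{\Omega_n}$, extended by $0$ outside $\Omega_n$ and regularized, lies in $\Delta_\Omega^*(z_0)$, so $u^*\ge G_\Omega(\cdot,z_0)$. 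Conversely, for any $v\in\Delta_\Omega^*(z_0)$, the maximum principle on $\Omega_n$ applied to $v-G_{\Omega_n}(\cdot,z_0)$ (a subharmonic function bounded above near $z_0$ and with nonpositive boundary values on $\partial\Omega_n$) yields $v\le G_{\Omega_n}(\cdot,z_0)$; passing to the limit gives $v\le G_\Omega(\cdot,z_0)$, hence $u^*\le G_\Omega(\cdot,z_0)$. Finally, $u=u^*$ because $G_\Omega(\cdot,z_0)$ is already upper semicontinuous (indeed continuous off $z_0$ and tending to $-\infty$ at $z_0$).

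The main obstacle is the comparison step $v\le G_{\Omega_n}(\cdot,z_0)$: one must control the singularity at $z_0$ carefully to apply the maximum principle to $v-G_{\Omega_n}(\cdot,z_0)$ on the punctured domain and then remove the isolated singularity. This is standard but is the place where the hypothesis $v-\log|w|\le C_v$ enters essentially, matching the pole of $G_{\Omega_n}$ term-by-term so that the difference is bounded above near $z_0$ and the removable-singularity theorem for subharmonic functions applies.
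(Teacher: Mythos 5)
The paper does not prove this lemma at all: it is quoted as a classical fact with the citation to Sario--Oikawa and Tsuji, so there is no in-paper argument to compare against. Your Perron-envelope plus exhaustion argument is the standard proof from those references and is essentially correct: nonemptiness and the lower bound $u^*\ge G_{\Omega}(\cdot,z_0)$ come from $G_{\Omega}(\cdot,z_0)\in\Delta_{\Omega}^*(z_0)$, harmonicity off $z_0$ comes from Poisson modification, and the reverse inequality $v\le G_{\Omega_n}(\cdot,z_0)\downarrow G_{\Omega}(\cdot,z_0)$ comes from the maximum principle after removing the singularity at $z_0$ using the hypothesis $v-\log|w|\le C_v$. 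Two small points deserve a word more than you give them. First, when you extend $u^*-\log|w|$ across $z_0$ as a subharmonic function that is harmonic on the punctured disc, the Riesz decomposition only gives $u^*-\log|w|=c\log|w|+h_0$ with $c\ge 0$ and $h_0$ harmonic; to conclude harmonicity you must rule out $c>0$, which follows from the lower bound $u^*-\log|w|\ge G_{\Omega}(\cdot,z_0)-\log|w|$, the latter being bounded below near $z_0$ by the defining normalization of the Green function --- this is implicit in your setup but should be said. Second, the step in which you place the zero-extension of $G_{\Omega_n}(\cdot,z_0)$ into $\Delta_{\Omega}^*(z_0)$ is both unnecessary (membership of $G_{\Omega}(\cdot,z_0)$ itself already gives $u^*\ge G_{\Omega}(\cdot,z_0)$) and slightly delicate if ``negative'' is read strictly, so it is better omitted. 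With those remarks your proof is complete and matches the classical treatment the paper is invoking.
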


\begin{Lemma}[see \cite{GY-concavity3}]
	\label{l:green-sup2}Let $K=\{z_j:j\in\mathbb{Z}_{\ge1}\,\&\,j<\gamma \}$ be a discrete subset of $\Omega$, where $\gamma\in\mathbb{Z}_{>1}\cup\{+\infty\}$. Let $\psi$ be a negative subharmonic function on $\Omega$ such that $\frac{1}{2}v(dd^c\psi,z_j)\ge p_j$ for any $j$, where $p_j>0$ is a constant. Then $2\sum_{1\le j< \gamma}p_jG_{\Omega}(\cdot,z_j)$ is a subharmonic function on $\Omega$ satisfying that $2\sum_{1\le j<\gamma }p_jG_{\Omega}(\cdot,z_j)\ge\psi$ and $2\sum_{1\le j<\gamma }p_jG_{\Omega}(\cdot,z_j)$ is harmonic on $\Omega\backslash K$.
\end{Lemma}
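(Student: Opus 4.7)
The plan is to prove the three assertions of Lemma~\ref{l:green-sup2} in sequence: first the comparison $\Phi_n:=2\sum_{j=1}^n p_jG_{\Omega}(\cdot,z_j)\ge\psi$ for finite truncations, then pass to the (possibly infinite) limit to get subharmonicity of $\Phi_\infty:=2\sum_{1\le j<\gamma}p_jG_{\Omega}(\cdot,z_j)$, and finally extract harmonicity on $\Omega\backslash K$ from the local finiteness of $K$.

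For the finite case $n<\gamma$, I would invoke a relatively compact smooth exhaustion $\Omega_N\nearrow\Omega$ (which exists since $\Omega$ is an open Riemann surface) and work with the classical Green functions $G_{\Omega_N}(\cdot,z_j)$, which vanish on $\partial\Omega_N$ and decrease to $G_{\Omega}(\cdot,z_j)$ as $N\to+\infty$. Set $u_N:=\psi-2\sum_{j=1}^n p_jG_{\Omega_N}(\cdot,z_j)$. On $\Omega_N\backslash\{z_1,\ldots,z_n\}$ this is subharmonic, being the sum of a subharmonic function and a function harmonic away from the poles. The Lelong-number hypothesis $\frac{1}{2}v(dd^c\psi,z_j)\ge p_j$ gives $\psi\le 2p_j\log|w_j|+O(1)$ near $z_j$, and $G_{\Omega_N}(\cdot,z_j)-\log|w_j|$ is locally bounded near $z_j$, so $u_N$ stays bounded above near each pole and hence extends subharmonically to $\Omega_N$. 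Since $u_N=\psi\le 0$ on $\partial\Omega_N$, the maximum principle yields $u_N\le 0$ on $\Omega_N$, and letting $N\to+\infty$ produces $\psi\le\Phi_n$ on $\Omega$.

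For the general case, $\{\Phi_n\}_{n}$ is a decreasing sequence of subharmonic functions on $\Omega$ (additional summands $2p_jG_{\Omega}(\cdot,z_j)$ are nonpositive), and by the previous step $\Phi_n\ge\psi$ for every admissible $n$, so the pointwise limit $\Phi_\infty$ satisfies $\Phi_\infty\ge\psi$. In particular $\Phi_\infty\not\equiv-\infty$, and thus $\Phi_\infty$ is subharmonic on $\Omega$ by the standard result on decreasing limits of subharmonic functions. For the harmonicity part, I would use that $K$ is closed and discrete in $\Omega$, hence locally finite: for any $z^*\in\Omega\backslash K$ there is a neighborhood $U\Subset\Omega\backslash K$ on which every $\Phi_n$ is harmonic. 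Since the decreasing limit $\Phi_\infty$ dominates $\psi$ and is therefore not identically $-\infty$ on $U$, the classical fact that a pointwise decreasing limit of harmonic functions with non-$(-\infty)$ limit is itself harmonic yields harmonicity of $\Phi_\infty$ on $U$, and hence on all of $\Omega\backslash K$.

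The main obstacle is the comparison step $\psi\le\Phi_n$: a naive attempt using the extremal characterization of Lemma~\ref{l:green-sup} only gives $\psi\le 2p_jG_{\Omega}(\cdot,z_j)$ for one $j$ at a time, and since the sum of nonpositive terms is more negative than any single summand, these individual estimates cannot be combined. The exhaustion-plus-maximum-principle argument bypasses this by handling all poles simultaneously against a common boundary condition where every $G_{\Omega_N}(\cdot,z_j)$ vanishes together on $\partial\Omega_N$.
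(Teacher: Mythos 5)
This lemma is stated in the paper only as a quotation from \cite{GY-concavity3}, with no proof given in the text, so there is no in-paper argument to compare against; judged on its own terms, your proof is correct. The truncation--exhaustion--maximum-principle scheme works as you describe: $u_N=\psi-2\sum_{j=1}^{n}p_jG_{\Omega_N}(\cdot,z_j)$ is subharmonic off the poles, bounded above near each $z_j$ because the Lelong-number hypothesis gives $\psi\le 2p_j\log|w_j|+O(1)$ while $G_{\Omega_N}(\cdot,z_j)-\log|w_j|$ is bounded there, hence extends subharmonically; the boundary values are nonpositive since all the $G_{\Omega_N}(\cdot,z_j)$ vanish together on $\partial\Omega_N$ (the exhaustion of Lemma \ref{l:green-approx} supplies exactly this), and the monotone passage $G_{\Omega_N}\downarrow G_\Omega$ gives $\psi\le\Phi_n$. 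The limit arguments are also sound: $\Phi_\infty\ge\psi$ forces $\Phi_\infty\not\equiv-\infty$ on any open set because $\{\psi=-\infty\}$ is polar, so the decreasing limit is subharmonic on $\Omega$ and, by Harnack's theorem for decreasing sequences of harmonic functions, harmonic on $\Omega\backslash K$. One remark on your closing paragraph: the ``naive'' route through Lemma \ref{l:green-sup} is not actually a dead end --- it can be organized as an induction on $n$. Having established $\psi\le\Phi_{n-1}$, the function $h:=\psi-\Phi_{n-1}$ is subharmonic off $\{z_1,\dots,z_{n-1}\}$ and bounded above by $0$, hence extends to a negative subharmonic function on $\Omega$ whose Lelong number at $z_n$ still dominates $2p_n$ (the subtracted sum is harmonic near $z_n$), and Lemma \ref{l:green-sup} then yields $h\le 2p_nG_{\Omega}(\cdot,z_n)$, i.e. $\psi\le\Phi_n$. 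That version avoids the exhaustion and boundary regularity entirely and is presumably what the cited reference does; your version buys a one-shot treatment of all poles at the cost of invoking Lemma \ref{l:green-approx}. Either way the statement is proved.
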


\begin{Lemma}[see \cite{GY-concavity}]\label{l:G-compact}
For any  open neighborhood $U$ of $z_0$, there exists $t>0$ such that $\{G_{\Omega}(z,z_0)<-t\}$ is a relatively compact subset of $U$.
\end{Lemma}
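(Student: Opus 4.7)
The plan is to exploit the fact that on a hyperbolic open Riemann surface the Green function $G_{\Omega}(\cdot,z_0)$ is an exhaustion function from below, in the sense that its sublevel sets $\{G_{\Omega}(\cdot,z_0)\le c\}$ are compact in $\Omega$ for every $c<0$. Once this is granted, the lemma follows by a routine finite-intersection argument.

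First I would note that, by Lemma \ref{l:green-sup}, $G_{\Omega}(\cdot,z_0)$ is continuous as a map $\Omega\to[-\infty,0)$ and is finite on $\Omega\setminus\{z_0\}$ (being harmonic there), with a logarithmic pole at $z_0$. I would then fix a coordinate disk $V$ around $z_0$ with $\overline{V}\Subset U$, and set $C:=-\min_{\partial V}G_{\Omega}(\cdot,z_0)\in(0,+\infty)$, which is finite by continuity on the compact set $\partial V\subset\Omega\setminus\{z_0\}$. The argument then reduces to establishing the lower bound $G_{\Omega}(\cdot,z_0)\ge -C$ on $\Omega\setminus V$: this gives $\{G_{\Omega}(\cdot,z_0)<-t\}\subset V$ for every $t>C$, whose closure is contained in the compact set $\overline{V}\subset U$, producing the desired relatively compact subset of $U$.

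The lower bound on $\Omega\setminus V$ is the main technical step. I would run the minimum principle along a Stein exhaustion $\Omega_n\nearrow\Omega$ by relatively compact subdomains with sufficiently regular boundary, for which the Green function $G_{\Omega_n}(\cdot,z_0)$ vanishes on $\partial\Omega_n$; the minimum principle then applies directly to the harmonic function $G_{\Omega_n}(\cdot,z_0)+C_n$ on $\Omega_n\setminus V$, where $C_n:=-\min_{\partial V}G_{\Omega_n}(\cdot,z_0)$, yielding $G_{\Omega_n}(\cdot,z_0)\ge -C_n$ on $\Omega_n\setminus V$. The classical monotone convergence $G_{\Omega_n}(\cdot,z_0)\downarrow G_{\Omega}(\cdot,z_0)$ together with $C_n\to C$ then transfers the bound to $G_{\Omega}(\cdot,z_0)$.

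The main obstacle is exactly this ideal-boundary step: without the exhaustion-and-passage-to-the-limit argument, one would need the potential-theoretic fact that $G_{\Omega}(\cdot,z_0)$ vanishes at the ideal boundary of $\Omega$. An equivalent clean route is to use Lemma \ref{l:green-sup} directly, since the sup-over-$\Delta_{\Omega}^{*}(z_0)$ characterization realizes $G_{\Omega}(\cdot,z_0)$ as the upper envelope of Green functions of relatively compact subdomains, which is precisely what the exhaustion argument encodes.
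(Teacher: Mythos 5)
Your argument is correct, and it is complete. Note that this paper does not actually prove Lemma \ref{l:G-compact} --- it is only quoted from \cite{GY-concavity} --- so there is no in-paper proof to compare against; but your reduction to the lower bound $G_{\Omega}(\cdot,z_0)\ge -C$ on $\Omega\setminus V$ with $C=-\min_{\partial V}G_{\Omega}(\cdot,z_0)$, established by the minimum principle on an exhaustion $\Omega_n\Subset\Omega$ whose Green functions vanish on $\partial\Omega_n$ (as provided by Lemma \ref{l:green-approx}) followed by the monotone convergence $G_{\Omega_n}\downarrow G_{\Omega}$, is the standard route and every step checks out: since $G_{\Omega_n}\ge G_{\Omega}$ one has $C_n:=-\min_{\partial V}G_{\Omega_n}\le C$, hence $G_{\Omega_n}\ge -C$ on $\Omega_n\setminus\overline{V}$, and the bound survives the decreasing limit, giving $\{G_{\Omega}(\cdot,z_0)<-t\}\subset V\Subset U$ for $t\ge C>0$. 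One small caveat on your closing aside: the Green function need \emph{not} vanish at the ideal boundary in general (e.g. for a disc minus a closed polar set), which is exactly why the exhaustion-and-limit detour --- which only requires a uniform lower bound rather than boundary vanishing --- is the right move; your proof does not rely on that false stronger statement, so this does not affect its validity.
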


\begin{Lemma}[see \cite{GY-concavity3}]
	\label{l:green-approx} There exists a sequence of open Riemann surfaces $\{\Omega_l\}_{l\in\mathbb{Z}^+}$ such that $z_0\in\Omega_l\Subset\Omega_{l+1}\Subset\Omega$, $\cup_{l\in\mathbb{Z}^+}\Omega_l=\Omega$, $\Omega_l$ has a smooth boundary $\partial\Omega_l$ in $\Omega$  and $e^{G_{\Omega_l}(\cdot,z_0)}$ can be smoothly extended to a neighborhood of $\overline{\Omega_l}$ for any $l\in\mathbb{Z}^+$, where $G_{\Omega_l}$ is the Green function of $\Omega_l$. Moreover, $\{{G_{\Omega_l}}(\cdot,z_0)-G_{\Omega}(\cdot,z_0)\}$ is decreasingly convergent to $0$ on $\Omega$.
\end{Lemma}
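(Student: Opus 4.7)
The plan is to produce the exhaustion from a smooth strictly subharmonic exhaustion of $\Omega$, verify boundary regularity of each finite Green function by elliptic PDE theory, and then identify the monotone limit with $G_\Omega$ via the extremal characterisation recalled in Lemma \ref{l:green-sup}.

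First I would invoke the Behnke--Stein theorem: an open Riemann surface is Stein, so $\Omega$ admits a smooth strictly subharmonic exhaustion function $\rho\colon\Omega\to\mathbb{R}$. By Sard's theorem the regular values of $\rho$ form a set of full measure, hence one can pick a strictly increasing sequence $\{c_l\}$ of regular values with $c_l\to+\infty$ and $\rho(z_0)<c_1$. Setting $\Omega_l:=\{\rho<c_l\}$ then gives $z_0\in\Omega_l\Subset\Omega_{l+1}\Subset\Omega$, $\cup_l\Omega_l=\Omega$, and each $\partial\Omega_l$ is a smooth real curve in $\Omega$.

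Second, on each $\Omega_l$ the Green function $G_{\Omega_l}(\cdot,z_0)$ exists by classical potential theory on relatively compact domains in a Riemann surface. Writing $G_{\Omega_l}=\chi\log|w-w(z_0)|+u_l$ with $\chi$ a cut-off supported near $z_0$, the correction $u_l$ solves a Poisson equation with smooth right-hand side on $\Omega_l$ and smooth boundary data on $\partial\Omega_l$; standard elliptic regularity for the Dirichlet problem on a smooth-boundary domain gives $u_l\in C^\infty(\overline{\Omega_l})$, hence $G_{\Omega_l}\in C^\infty(\overline{\Omega_l}\setminus\{z_0\})$. The Seeley extension theorem then provides a smooth extension of $e^{G_{\Omega_l}}$ to an open neighbourhood of $\overline{\Omega_l}$. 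Alternatively, picking $\rho$ real-analytic produces a real-analytic $\partial\Omega_l$, across which $G_{\Omega_l}$ extends by Schwarz reflection for harmonic functions.

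Third, the monotone convergence follows from the maximum principle. On $\Omega_l$ the difference $G_{\Omega_{l+1}}-G_{\Omega_l}$ is harmonic (the logarithmic singularities at $z_0$ cancel) with nonpositive boundary values, so $G_{\Omega_{l+1}}\le G_{\Omega_l}$ on $\Omega_l$; similarly $G_\Omega\le G_{\Omega_l}$. Set $G_\infty:=\lim_l G_{\Omega_l}\ge G_\Omega$. On any compact subset of $\Omega\setminus\{z_0\}$ the sequence $\{G_{\Omega_l}\}$ is eventually defined, decreasing, and bounded below by $G_\Omega$, so Harnack's convergence theorem identifies $G_\infty$ as a harmonic function on $\Omega\setminus\{z_0\}$; the uniform estimate near $z_0$ preserves the $\log|w-w(z_0)|$ singularity. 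Hence $G_\infty\in\Delta_\Omega^*(z_0)$, and Lemma \ref{l:green-sup} forces $G_\infty\le G_\Omega$, so $G_\infty=G_\Omega$ and $G_{\Omega_l}-G_\Omega$ decreases to $0$. The main obstacle is ensuring the three conditions on the $\Omega_l$ (exhaustion, smooth boundary, and smooth extendibility of $e^{G_{\Omega_l}}$ to a neighbourhood) are all simultaneously satisfied; once the $\Omega_l$ are produced via Sard and Behnke--Stein, the rest splits into standard Dirichlet regularity and the extremal characterisation of $G_\Omega$, and no further essential difficulty arises.
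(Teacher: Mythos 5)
The paper does not prove this lemma itself but imports it from \cite{GY-concavity3}; your argument is the standard one and is essentially that proof: Behnke--Stein plus Sard for the smooth strictly subharmonic exhaustion, elliptic boundary regularity (or reflection across real-analytic level sets) for the smooth extension of $G_{\Omega_l}$, and the maximum principle combined with the extremal characterisation of Lemma \ref{l:green-sup} to identify the decreasing limit with $G_{\Omega}$. Two cosmetic points worth absorbing: replace $\{\rho<c_l\}$ by its connected component containing $z_0$ so that each $\Omega_l$ is a Riemann surface (a compact path from any $z$ to $z_0$ shows these components still exhaust $\Omega$), and note that $e^{G_{\Omega_l}}\sim|w|$ near $z_0$ is only Lipschitz at the pole, so the smooth extension is across $\partial\Omega_l$ away from $z_0$, which is all the statement is meant to assert.
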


\subsection{Some results related to $\max_{1\le j\le n}\{2p_j\log|w_j|\}$}
\

Let $f=\sum_{\alpha\in\mathbb{Z}_{\ge0}^n}b_{\alpha}w^{\alpha}$ (Taylor expansion) be a holomorphic function  on $D=\{w\in\mathbb{C}^n:|w_j|<r_0$ for any $j\in\{1,2,...,n\}\}$, where $r_0>0$. 	Let $\psi=\max_{1\le j\le n}\{2p_j\log|w_j|\}$ be a plurisubharmonic function on $\mathbb{C}^n$, where $p_j>0$ is a constant for any $j\in\{1,2,...,n\}$. We recall a characterization of  $\mathcal{I}(\psi)_o$, where $o$ is the origin in $\mathbb{C}^n$.
\begin{Lemma}[see \cite{guan-20}]\label{l:0}
$(f,o)\in\mathcal{I}(\psi)_{o}$ if and only if $\sum_{1\le j\le n}\frac{\alpha_j+1}{p_j}>1$ for any $\alpha\in\mathbb{Z}_{\ge0}^n$ satisfying $b_{\alpha}\not=0$.
\end{Lemma}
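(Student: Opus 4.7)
The plan is to reduce integrability of $|f|^2e^{-\psi}$ near the origin to that of individual monomials, using the rotational invariance of $e^{-\psi}=\min_{1\le j\le n}|w_j|^{-2p_j}$ in each coordinate $w_j$. As a preliminary I would establish the monomial case: for $\alpha\in\mathbb{Z}_{\ge0}^n$ and a sufficiently small polydisc $D_r\subset D$, the integral $\int_{D_r}|w^\alpha|^2e^{-\psi}dV$ is finite if and only if $\sum_{1\le j\le n}(\alpha_j+1)/p_j>1$. This is carried out by passing to polar coordinates $w_j=r_je^{i\theta_j}$, decomposing the region of integration according to which index $k$ attains the maximum in $\psi=\max_j 2p_j\log|w_j|$, and performing the substitution $u_j=r_j^{p_j}$; a direct Fubini calculation isolates an inner integral in $u_k$ and yields the sharp threshold $\sum_j(\alpha_j+1)/p_j>1$.

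For the direction $(f,o)\in\mathcal{I}(\psi)_o\Rightarrow$ the exponent condition, I would invoke Parseval on the torus. At each fixed polyradius $(r_1,\ldots,r_n)$ with $r_j<r_0$, the Fourier expansion of the map $\theta\mapsto f(r_1e^{i\theta_1},\ldots,r_ne^{i\theta_n})$ gives $\frac{1}{(2\pi)^n}\int_{[0,2\pi]^n}|f|^2d\theta=\sum_\alpha|b_\alpha|^2r^{2\alpha}$. Multiplying by the rotation-invariant density $r_1\cdots r_n\cdot\min_jr_j^{-2p_j}$, integrating over a small polydisc $D_r$, and invoking Tonelli yields
\begin{equation*}
\int_{D_r}|f|^2e^{-\psi}dV=\sum_{\alpha\in\mathbb{Z}_{\ge0}^n}|b_\alpha|^2\int_{D_r}|w^\alpha|^2e^{-\psi}dV\quad\text{in }[0,+\infty].
\end{equation*}
Finiteness of the left side forces every summand on the right with $b_\alpha\neq 0$ to be finite, and the monomial criterion then delivers the desired inequality.

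For the reverse direction, I would split $f=P+Q$, where $P$ is the Taylor polynomial of $f$ of total degree less than a large integer $N$ and $Q=f-P$ vanishes to order at least $N$ at the origin. Each monomial of $P$ satisfies the exponent condition by hypothesis, so by the preliminary calculation together with the elementary bound $|P|^2\le|\{\alpha:b_\alpha\neq 0,|\alpha|<N\}|\cdot\sum_\alpha|b_\alpha w^\alpha|^2$, the integral $\int_{D_r}|P|^2e^{-\psi}dV$ is finite for small $r$. For $Q$, standard Cauchy estimates yield $|Q(w)|\le C\max_j|w_j|^N$ on a small polydisc, so $|Q|^2\le C^2\sum_j|w_j|^{2N}$; choosing $N$ large enough that $(N+1)/p_j>1$ for every $j$, each summand is the squared modulus of the monomial $w^{Ne_j}$ which satisfies the exponent condition and is therefore integrable by the monomial case.

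The main technical point is the Parseval identity, which interchanges an infinite sum with a weighted integral. Since the integrand is nonnegative and the Fourier expansion at each fixed polyradius converges absolutely (as $f$ is holomorphic on $D$), Tonelli's theorem legitimizes the interchange and the identity holds in $[0,+\infty]$, regardless of whether the left side is a priori finite; this is precisely what is needed for both directions of the equivalence.
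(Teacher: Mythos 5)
Your proposal is correct and follows essentially the same route as the paper: orthogonality of monomials on tori (Parseval/Tonelli) reduces the forward direction to the monomial case, an explicit computation of $\int|w^{\alpha}|^{2}e^{-\psi}$ gives the threshold $\sum_{j}\frac{\alpha_j+1}{p_j}>1$, and the converse follows by splitting off a Taylor polynomial and estimating the high-order remainder. The only differences are cosmetic: you compute the monomial integral by decomposing according to which coordinate attains the max rather than by the layer-cake formula, and you dominate the remainder $Q$ by integrable monomials $|w_j|^{2N}$ whereas the paper bounds $|f-\sum_{0\le\alpha_j\le N}b_{\alpha}w^{\alpha}|^{2}\le e^{\psi}$ directly; both are sound.
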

\begin{proof}
	For the convenience of the reader, we recall the proof.
	
There exists $r_1>0$ such that $\{\psi<\log r_1\}\Subset D$.	If $(f,o)\in\mathcal{I}(\psi)_{o}$,  we have
	\begin{equation}
		\label{eq:1125a}\int_{\{\psi<\log r_1\}}|f|^2e^{-\psi}d\lambda_n<+\infty,
	\end{equation}
	where $d\lambda_n$ is the Lebesgue measure on $\mathbb{C}^n$. Note that
	\begin{displaymath}
		\begin{split}
			\int_{\{\psi<\log r_1\}}|f|^2e^{-\psi}d\lambda_n=&\lim_{\epsilon\rightarrow0+0}\int_{\{\epsilon<|w_1|<r_1^{\frac{1}{2p_1}}\}\cap...\cap\{\epsilon<|w_n|<r_1^{\frac{1}{2p_n}}\}}|f|^2e^{-\psi}d\lambda_n\\
			=&\lim_{\epsilon\rightarrow0+0}(\sum_{\alpha\in\mathbb{Z}_{\ge0}^n}\int_{\{\epsilon<|w_1|<r_1^{\frac{1}{2p_1}}\}\cap...\cap\{\epsilon<|w_n|<r_1^{\frac{1}{2p_n}}\}}|b_{\alpha}w^{\alpha}|^2e^{-\psi}d\lambda_n)\\
			=&\sum_{\alpha\in\mathbb{Z}_{\ge0}^n}|b_{\alpha}|^2\int_{\{\psi<\log r_1\}}|w^{\alpha}|^2e^{-\psi}d\lambda_n.
	   \end{split}
	\end{displaymath}
 Inequality \eqref{eq:1125a} implies that
	\begin{equation}
		\label{eq:1125b}\int_{\{\psi<\log r_1\}}|w^{\alpha}|^2e^{-\psi}d\lambda_n<+\infty
	\end{equation}
	for any $\alpha\in\mathbb{Z}_{\ge0}^n$ satisfying $b_{\alpha}\not=0$. Note that
	\begin{equation}	\label{eq:1125c}\begin{split}
	\int_{\{\psi<\log r_1\}}|w^{\alpha}|^2e^{-\psi}d\lambda_n=&\int_{\{\psi<\log r_1\}}|w^{\alpha}|^2(\int_{0}^{+\infty}\mathbb{I}_{\{l<e^{-\psi}\}}dl)d\lambda_n\\
		=&\int_{0}^{r_1}(\int_{\{\psi<\log r\}}|w^{\alpha}|^2d\lambda_n)r^{-2}dr\\
		&+\frac{1}{r_1}\int_{\{\psi<\log r_1\}}|w^{\alpha}|^2d\lambda_n
		\end{split}
	\end{equation}
	and
	\begin{equation}
		\label{eq:1125d}\begin{split}
			\int_{\{\psi<\log r\}}|w^{\alpha}|^2d\lambda_n=&\int_{\{|w_1|<r^{\frac{1}{2p_1}}\}\cap...\cap\{|w_n|<r^{\frac{1}{2p_n}}\}}|\Pi_{1\le j\le n}w_j^{\alpha_j}|^2d\lambda_n\\
			=&\pi^n\frac{r^{\sum_{1\le j\le n}\frac{\alpha_j+1}{p_j}}}{\Pi_{1\le j\le n}(\alpha_j+1)}.
		\end{split}
	\end{equation}
	It follows from inequality \eqref{eq:1125b}, equality \eqref{eq:1125c} and equality \eqref{eq:1125d} that $\sum_{1\le j\le n}\frac{\alpha_j+1}{p_j}>1$ for any $\alpha\in\mathbb{Z}_{\ge0}^n$ satisfying $b_{\alpha}\not=0$.
	
	If $\sum_{1\le j\le n}\frac{\alpha_j+1}{p_j}>1$ for any $\alpha\in\mathbb{Z}_{\ge0}^n$ satisfying $b_{\alpha}\not=0$, it follows from equality \eqref{eq:1125c} and equality \eqref{eq:1125d} that
	\begin{equation}
		\label{eq:1125f}\int_{\{\psi<\log r_1\}}|w^{\alpha}|^2e^{-\psi}d\lambda_n<+\infty
	\end{equation}
	holds for any $\alpha\in\mathbb{Z}_{\ge0}^n$ satisfying $b_{\alpha}\not=0$. Note that there exists $N>0$ and $r_2\in(0,r_1)$ such that
	$$|f-\sum_{0\le\alpha_j\le N}b_{\alpha}w^{\alpha}|^2\le e^{\psi}$$	
on $\{\psi<\log r_2\}$.	By inequality \eqref{eq:1125f}, we have
\begin{displaymath}
	\begin{split}
		\int_{\{\psi<\log r_2\}}|f|^2e^{-\psi}d\lambda_n\le& 2\int_{\{\psi<\log r_2\}}|f-\sum_{0\le\alpha_j\le N}b_{\alpha}w^{\alpha}|^2e^{-\psi}d\lambda_n\\
		&+\int_{\{\psi<\log r_2\}}|\sum_{0\le\alpha_j\le N}b_{\alpha}w^{\alpha}|^2e^{-\psi}d\lambda_n\\
		<&+\infty,
	\end{split}
\end{displaymath}
i.e. $(f,o)\in\mathcal{I}(\psi)_{o}$.	
	\end{proof}

In the following two lemmas, we discuss integrals $\int_{\{-t-1<\psi<-t\}}|f|^2e^{-\psi}d\lambda_n$ and $\int_{\{\psi<-t\}}|f|^2d\lambda_n$.
\begin{Lemma}\label{l:m1}
Let $\psi=\max_{1\le j\le n}\{2p_j\log|w_j|\}$ be a plurisubharmonic function on $\mathbb{C}^n$, where $p_j>0$.
	Let $f=\sum_{\alpha\in\mathbb{Z}_{\ge0}^n}b_{\alpha}w^{\alpha}$ (Taylor expansion) be a holomorphic function on $\{\psi<-t_0\}$, where $t_0>0$. Denote that $q_{\alpha}:=\sum_{1\le j\le n}\frac{\alpha_j+1}{p_j}-1$ for any $\alpha\in\mathbb{Z}_{\ge0}^n$ and  $E_1:=\{\alpha\in\mathbb{Z}_{\ge0}^n:q_{\alpha}=0\}$. Then
\begin{displaymath}\begin{split}
	\int_{\{-t-1<\psi<-t\}}|f|^2e^{-\psi}d\lambda_n=&\sum_{\alpha\in E_1}\frac{|b_{\alpha}|^2\pi^n}{\Pi_{1\le j\le n}(\alpha_j+1)}\\
	&+\sum_{\alpha\not\in E_1}\frac{|b_{\alpha}|^2\pi^n(q_{\alpha}+1)(e^{-q_{\alpha}t}-e^{-q_{\alpha}(t+1)})}{q_{\alpha}\Pi_{1\le j\le n}(\alpha_j+1)}	
\end{split}\end{displaymath}
	 for any $t>t_0$.
\end{Lemma}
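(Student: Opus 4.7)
The plan is to exploit the Reinhardt structure of both the weight $\psi$ and its sublevel sets, reducing the identity to orthogonality of monomials together with the explicit integral formula already obtained in the proof of Lemma \ref{l:0}.

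First I would observe that $\{\psi<-s\}$ is precisely the open polydisc $\Delta_s:=\prod_{j=1}^n\{|w_j|<e^{-s/(2p_j)}\}$; in particular, for any $t>t_0$ the annular region $A_t:=\{-t-1<\psi<-t\}$ satisfies $\overline{A_t}\subset\overline{\Delta_t}\subset\Delta_{t_0}$, so the Taylor series of $f$ converges absolutely and uniformly on the compact set $\overline{A_t}$. Since both $A_t$ and $\psi$ are invariant under the torus action $(w_1,\ldots,w_n)\mapsto(e^{i\theta_1}w_1,\ldots,e^{i\theta_n}w_n)$, passing to polar coordinates $w_j=r_je^{i\theta_j}$ and integrating out the angular variables immediately yields the orthogonality relations
\begin{equation*}
\int_{A_t}w^\alpha\,\overline{w^\beta}\,e^{-\psi}\,d\lambda_n=0\qquad\text{whenever }\alpha\neq\beta.
\end{equation*}
As $e^{-\psi}\le e^{t+1}$ on $A_t$ and the double series $\sum_{\alpha,\beta}b_\alpha\overline{b_\beta}w^\alpha\overline{w^\beta}$ converges uniformly on $\overline{A_t}$ to $|f|^2$, dominated convergence applied to the partial sums legitimizes termwise integration, producing
\begin{equation*}
\int_{A_t}|f|^2e^{-\psi}\,d\lambda_n=\sum_{\alpha\in\mathbb{Z}_{\ge0}^n}|b_\alpha|^2\int_{A_t}|w^\alpha|^2e^{-\psi}\,d\lambda_n
\end{equation*}
as an equality in $[0,+\infty]$.

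Next I would compute each diagonal integral using the formula
\begin{equation*}
F_\alpha(s):=\int_{\{\psi<-s\}}|w^\alpha|^2\,d\lambda_n=\frac{\pi^n\,e^{-s(q_\alpha+1)}}{\prod_{j=1}^n(\alpha_j+1)},
\end{equation*}
which is the change-of-variables computation already carried out in the proof of Lemma \ref{l:0} (with $\log r=-s$). The pushforward of the measure $|w^\alpha|^2\,d\lambda_n$ under $-\psi$ is therefore absolutely continuous with density $-F_\alpha'(s)=\tfrac{\pi^n(q_\alpha+1)}{\prod_j(\alpha_j+1)}e^{-s(q_\alpha+1)}$ on $(t_0,+\infty)$, so
\begin{equation*}
\int_{A_t}|w^\alpha|^2e^{-\psi}\,d\lambda_n=\int_t^{t+1}e^s(-F_\alpha'(s))\,ds=\frac{\pi^n(q_\alpha+1)}{\prod_j(\alpha_j+1)}\int_t^{t+1}e^{-sq_\alpha}\,ds.
\end{equation*}
Evaluating the last integral gives $\tfrac{\pi^n}{\prod_j(\alpha_j+1)}$ when $\alpha\in E_1$ (so $q_\alpha=0$) and $\tfrac{\pi^n(q_\alpha+1)(e^{-tq_\alpha}-e^{-(t+1)q_\alpha})}{q_\alpha\prod_j(\alpha_j+1)}$ otherwise. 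Summing over $\alpha$ assembles the two pieces displayed in the lemma.

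No step of this argument presents a serious obstacle; the entire computation is essentially routine once one identifies $\{\psi<-s\}$ as a polydisc. The only point requiring genuine care is the justification of termwise integration, which rests on the Reinhardt invariance of $A_t$ and the uniform convergence of the Taylor series on the compact set $\overline{A_t}\subset\{\psi<-t_0\}$.
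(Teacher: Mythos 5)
Your proposal is correct and reaches the stated formula. The overall skeleton is the same as the paper's: reduce to the diagonal terms by torus invariance and then evaluate $\int_{A_t}|w^\alpha|^2e^{-\psi}\,d\lambda_n$ explicitly. Where you differ is in that last evaluation. The paper computes the annulus integral directly: polar coordinates, the substitution $r_j=s_j^{p_j}$, and a Fubini decomposition of $\{e^{-(t+1)/2}<\max_j r_j<e^{-t/2}\}$ according to which coordinate attains the maximum (their equalities \eqref{eq:211125c}--\eqref{eq:211125d}). You instead recycle the polydisc volume $F_\alpha(s)=\int_{\{\psi<-s\}}|w^\alpha|^2\,d\lambda_n=\pi^n e^{-s(q_\alpha+1)}/\prod_j(\alpha_j+1)$ already computed in the proof of Lemma \ref{l:0}, note that the pushforward of $|w^\alpha|^2\,d\lambda_n$ under $-\psi$ has density $-F_\alpha'(s)$, and integrate $e^s(-F_\alpha'(s))$ over $[t,t+1]$. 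This is a genuinely cleaner route: it avoids redoing the max-decomposition and makes the case split $q_\alpha=0$ versus $q_\alpha\neq0$ transparent (and it is insensitive to the sign of $q_\alpha$, since the integration is over a compact interval). You are also more explicit than the paper about justifying the termwise integration, which the paper dismisses with ``it is clear that''; your argument via uniform convergence of the Taylor series on $\overline{A_t}\subset\{\psi<-t_0\}$ together with the bound $e^{-\psi}\le e^{t+1}$ on $A_t$ is exactly the right justification. No gaps.
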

\begin{proof}By direct calculations, we obtain that
	\begin{equation}\label{eq:211125c}\begin{split}
		&\int_{\{-t-1<\psi<-t\}}|w^{\alpha}|^2e^{-\psi}d\lambda_n\\
		=&(2\pi)^n\int_{\{e^{-\frac{t+1}{2}}<\max_{1\le j\le n}\{s_j^{p_j}\}<e^{-\frac{t}{2}}\,\&\,s_j>0\}}\frac{\Pi_{1\le j\le n}s_j^{2\alpha_j+1}}{\max_{1\le j\le n}\{s_j^{2p_j}\}}ds_1ds_2...ds_n\\
		=&(2\pi)^n\frac{1}{\Pi_{1\le j\le n}p_j}\int_{\{e^{-\frac{t+1}{2}}<\max_{1\le j\le n}\{r_j\}<e^{-\frac{t}{2}}\,\&\,r_j>0\}}\frac{\Pi_{1\le j\le n}r_j^{\frac{2\alpha_j+2}{p_j}-1}}{\max_{1\le j\le n}\{r_j^2\}}dr_1dr_2...dr_n.
		\end{split}
	\end{equation}
 By the Fubini's theorem, we have
\begin{equation}
	\label{eq:211125d}\begin{split}
		&\int_{\{e^{-\frac{t+1}{2}}<\max_{1\le j\le n}\{r_j\}<e^{-\frac{t}{2}}\,\&\,r_j>0\}}\frac{\Pi_{1\le j\le n}r_j^{\frac{2\alpha_j+2}{p_j}-1}}{\max_{1\le j\le n}\{r_j^2\}}dr_1dr_2...dr_n\\
		=&\sum_{1\le j'\le n}\int_{e^{-\frac{t+1}{2}}}^{e^{-\frac{t}{2}}}(\int_{\{0\le r_j<r_{j'},j\not=j'\}}\Pi_{j\not=j'}r_j^{\frac{2\alpha_j+2}{p_j}-1}\wedge_{j\not=j'}dr_j)r_{j'}^{\frac{2\alpha_{j'}+2}{p_{j'}}-3}dr_{j'}\\
		=&\sum_{1\le j'\le n}(\Pi_{j\not=j'}\frac{p_j}{2\alpha_j+2})\int_{e^{-\frac{t+1}{2}}}^{e^{-\frac{t}{2}}}r_{j'}^{2\sum_{1\le j\le n}\frac{\alpha_j+1}{p_j}-3}dr_{j'}\\
		=&2^{-n}(2q_{\alpha}+2)\Pi_{1\le j\le n}\frac{p_j}{\alpha_j+1}\int_{e^{-\frac{t+1}{2}}}^{e^{-\frac{t}{2}}}r^{2q_{\alpha}-1}dr
			\end{split}
\end{equation}	
It is clear that $\int_{\{-t-1<\psi<-t\}}|f|^2e^{-\psi}d\lambda_n=\sum_{\alpha\in\mathbb{Z}_{\ge0}^n}|b_{\alpha}|^2\int_{\{-t-1<\psi<-t\}}|w^{\alpha}|^2e^{-\psi}d\lambda_n$, then equality \eqref{eq:211125c} and equality \eqref{eq:211125d} implies that
\begin{displaymath}\begin{split}
	\int_{\{-t-1<\psi<-t\}}|f|^2e^{-\psi}d\lambda_n=&\sum_{\alpha\in E_1}\frac{|b_{\alpha}|^2\pi^n}{\Pi_{1\le j\le n}(\alpha_j+1)}\\
	&+\sum_{\alpha\not\in E_1}\frac{|b_{\alpha}|^2\pi^n(q_{\alpha}+1)(e^{-q_{\alpha}t}-e^{-q_{\alpha}(t+1)})}{q_{\alpha}\Pi_{1\le j\le n}(\alpha_j+1)}	
\end{split}\end{displaymath}
\end{proof}

\begin{Lemma}\label{l:m2}
Let $\psi=\max_{1\le j\le n}\{2p_j\log|w_j|\}$ be a plurisubharmonic function on $\mathbb{C}^n$, where $p_j>0$.
	Let $f=\sum_{\alpha\in \mathbb{Z}_{\ge0}^n}b_{\alpha}w^{\alpha}$ (Taylor expansion) be a holomorphic function on $\{\psi<-t_0\}$, where $t_0>0$. Then
	$$\int_{\{\psi<-t\}}|f|^2d\lambda_n=\sum_{\alpha\in\mathbb{Z}_{\ge0}^n}e^{-\sum_{1\le j\le n}\frac{\alpha_j+1}{p_j}t}\frac{|b_{\alpha}|^2\pi^n}{\Pi_{1\le j\le n}(\alpha_j+1)}$$
	holds for any $t\ge t_0$. Moreover, if $f=\sum_{\alpha\in E_1}b_{\alpha}w^{\alpha}+g_0$, where $E_1=\{(\alpha_1,\alpha_2,...,\alpha_n):\sum_{1\le j\le n}\frac{\alpha_j+1}{p_{j}}=1\,\&\,\alpha_j\in\mathbb{Z}_{\ge0}\}$, $(g_0,o)\in\mathcal{I}(\psi)_o$ and $o$ is the origin in $\mathbb{C}^n$, we have
	$$\lim_{t\rightarrow+\infty}e^{t}\int_{\{\psi<-t\}}|f|^2d\lambda_n=\sum_{\alpha\in E_1}\frac{|b_{\alpha}|^2\pi^n}{\Pi_{1\le j\le n}(\alpha_j+1)}.$$
\end{Lemma}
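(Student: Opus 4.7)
The plan is to exploit the product structure of the sublevel set $\{\psi<-t\}$, which is simply the open polydisc $\prod_{j=1}^{n}\{|w_{j}|<e^{-t/(2p_{j})}\}$ centered at the origin, since $\psi<-t$ is equivalent to $2p_{j}\log|w_{j}|<-t$ for every $j$. On such polydiscs the monomials $\{w^{\alpha}\}$ form an orthogonal family in $L^{2}$, so the computation reduces to a single-variable exercise in each factor.

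For the first formula, I would write $f=\sum_{\alpha}b_{\alpha}w^{\alpha}$ and use the orthogonality of monomials in polar coordinates to obtain
\[\int_{\{\psi<-t\}}|f|^{2}d\lambda_{n}=\sum_{\alpha\in\mathbb{Z}_{\ge 0}^{n}}|b_{\alpha}|^{2}\int_{\{\psi<-t\}}|w^{\alpha}|^{2}d\lambda_{n}.\]
A direct Fubini computation in each $w_{j}$ gives $\int_{|w_{j}|<e^{-t/(2p_{j})}}|w_{j}|^{2\alpha_{j}}d\lambda_{1}(w_{j})=\pi e^{-(\alpha_{j}+1)t/p_{j}}/(\alpha_{j}+1)$, and taking the product over $j$ yields the first claimed identity for every $t\ge t_{0}$. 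The term-by-term integration is justified on any slightly smaller polydisc where $f$ is bounded (monotone convergence as the polydisc shrinks), after which one lets the radii increase back to their original values.

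For the second formula, I would apply Lemma \ref{l:0} to $g_{0}$ to conclude that every Taylor coefficient $c_{\alpha}$ of $g_{0}$ is supported on indices with $q_{\alpha}:=\sum_{j}(\alpha_{j}+1)/p_{j}-1>0$. Writing the Taylor expansion of $f$ as the disjoint union of the $E_{1}$-part (where $q_{\alpha}=0$, with coefficients $b_{\alpha}$) and the tail (where $q_{\alpha}>0$, with coefficients $c_{\alpha}$), multiplication by $e^{t}$ in the first formula gives
\[e^{t}\int_{\{\psi<-t\}}|f|^{2}d\lambda_{n}=\sum_{\alpha\in E_{1}}\frac{|b_{\alpha}|^{2}\pi^{n}}{\Pi_{1\le j\le n}(\alpha_{j}+1)}+\sum_{q_{\alpha}>0}e^{-q_{\alpha}t}\frac{|c_{\alpha}|^{2}\pi^{n}}{\Pi_{1\le j\le n}(\alpha_{j}+1)}.\]
Each tail summand decays to $0$ as $t\to+\infty$, so it remains only to justify interchanging the limit with the infinite sum.

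For that interchange I would fix some $t_{1}>t_{0}$: for $t\ge t_{1}$ each tail summand is dominated by its value at $t_{1}$, and the sum of those values equals $e^{t_{1}}\int_{\{\psi<-t_{1}\}}|g_{0}|^{2}d\lambda_{n}$, which is finite because $g_{0}$ is holomorphic on the relatively compact set $\{\psi<-t_{1}\}\Subset\{\psi<-t_{0}\}$. Dominated convergence on the counting measure in $\alpha$ then delivers the limit. The only mild technical point is this interchange, and it is dispatched by the dominating summable sequence coming from $t=t_{1}$; otherwise the lemma is essentially a bookkeeping statement about Taylor expansions, once the polydisc structure of $\{\psi<-t\}$ is recognized.
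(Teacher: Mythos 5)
Your proposal is correct and follows essentially the same route as the paper: identify $\{\psi<-t\}$ as the polydisc $\prod_j\{|w_j|<e^{-t/(2p_j)}\}$, use orthogonality of monomials to reduce to the explicit computation of $\int_{\{\psi<-t\}}|w^\alpha|^2d\lambda_n$, invoke Lemma \ref{l:0} to see that the Taylor coefficients of $g_0$ live where $q_\alpha>0$, and pass to the limit term by term. The only (harmless) differences are that you integrate each factor directly instead of the paper's change of variables and max-decomposition in the Fubini step, and that you spell out the dominated-convergence justification for the limit interchange, which the paper leaves implicit.
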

\begin{proof}
	By direct calculations, we obtain that
	\begin{equation}\label{eq:1127b}\begin{split}
		&\int_{\{\psi<-t\}}|w^{\alpha}|^2d\lambda_n\\
		=&(2\pi)^n\int_{\{\max_{1\le j\le n}\{s_j^{p_j}\}<e^{-\frac{t}{2}}\,\&\,s_j>0\}}\Pi_{1\le j\le n}s_j^{2\alpha_j+1}ds_1ds_2...ds_n\\
		=&(2\pi)^n\frac{1}{\Pi_{1\le j\le n}p_j}\\
		&\times\int_{\{\max_{1\le j\le n}\{r_j\}<e^{-\frac{t}{2}}\,\&\,r_j>0\}}\Pi_{1\le j\le n}r_j^{\frac{2\alpha_j+2}{p_j}-1}dr_1dr_2...dr_n.
		\end{split}
	\end{equation}
 By the Fubini's theorem, we have
\begin{equation}
	\label{eq:211125f}\begin{split}
		&\int_{\{\max_{1\le j\le n}\{r_j\}<e^{-\frac{t}{2}}\,\&\,r_j>0\}}\Pi_{1\le j\le n}r_j^{\frac{2\alpha_j+2}{p_j}-1}dr_1dr_2...dr_n\\
		=&\sum_{j'=1}^n\int_{0}^{e^{-\frac{t}{2}}}(\int_{\{0\le r_j<r_{j'},j\not=j'\}}\Pi_{j\not=j'}r_j^{\frac{2\alpha_j+2}{p_j}-1}\wedge_{j\not=j'}dr_j)r_{j'}^{\frac{2\alpha_{j'}+2}{p_{j'}}-1}dr_{j'}\\
		=&\sum_{j'=1}^n(\Pi_{j\not=j'}\frac{p_j}{2\alpha_j+2})\int_{0}^{e^{-\frac{t}{2}}}r_{j'}^{\sum_{1\le k\le n}\frac{2\alpha_k+2}{p_k}-1}dr_{j'}\\
		=&e^{-\sum_{1\le j\le n}\frac{\alpha_j+1}{p_j}t}\Pi_{1\le j\le n}\frac{p_j}{2\alpha_j+2}.
			\end{split}
\end{equation}		
Following from $\int_{\{\psi<-t\}}|f|^2d\lambda_n=\sum_{\alpha\in\mathbb{Z}_{\ge0}^n}|b_{\alpha}|^2\int_{\{\psi<-t\}}|w^{\alpha}|^2d\lambda_n$, equality \eqref{eq:1127b} and equality \eqref{eq:211125f} that
\begin{equation}
	\label{eq:1127f}\int_{\{\psi<-t\}}|f|^2d\lambda_n=\sum_{\alpha\in\mathbb{Z}_{\ge0}^n}e^{-\sum_{1\le j\le n}\frac{\alpha_j+1}{p_j}t}\frac{|b_{\alpha}|^2\pi^n}{\Pi_{1\le j\le n}(\alpha_j+1)}.
\end{equation}

Now, we consider the case $f=\sum_{\alpha\in E_1}b_{\alpha}w^{\alpha}+g_0$. It follows from Lemma \ref{l:0} that $g_0=\sum_{\alpha\in E_2}\tilde{b}_{\alpha}w^{\alpha}$ (Taylor expansion), where $E_2=\{(\alpha_1,\alpha_2,...,\alpha_n):\sum_{1\le j\le n}\frac{\alpha_j+1}{p_{j}}>1\,\&\,\alpha_j\in\mathbb{Z}_{\ge0}\}$.
 It follows from equality \eqref{eq:1127f} that
\begin{equation}\begin{split}
	\label{eq:1127g}\lim_{t\rightarrow+\infty}e^{t}\int_{\{\psi<-t\}}|g_0|^2d\lambda_n&=\lim_{t\rightarrow+\infty}\sum_{\alpha\in E_2}e^{(1-\sum_{1\le j\le n}\frac{\alpha_j+1}{p_j})t}\frac{|\tilde{b}_{\alpha}|^2\pi^n}{\Pi_{1\le j\le n}(\alpha_j+1)}\\
	&=0.
	\end{split}
\end{equation}
As $\int_{\{\psi<-t\}}|f|^2d\lambda_n=\int_{\{\psi<-t\}}|\sum_{\alpha\in E_1}b_{\alpha}w^{\alpha}|^2d\lambda_n+\int_{\{\psi<-t\}}|g_0|^2d\lambda_n$, it follows from equality \eqref{eq:1127g} and equality \eqref{eq:1127f}, we obtain that
\begin{displaymath}
	\begin{split}
		\lim_{t\rightarrow+\infty}e^{t}\int_{\{\psi<-t\}}|f|^2d\lambda_n=\sum_{\alpha\in E_1}\frac{|b_{\alpha}|^2\pi^n}{\Pi_{1\le j\le n}(\alpha_j+1)}.
	\end{split}
\end{displaymath}
\end{proof}

The following lemma  will be used in the proof of Lemma \ref{l:orth2}.
\begin{Lemma}\label{l:1}Let $\psi=\max_{1\le j\le n}\{2p_j\log|w_j|\}$ be a plurisubharmonic function on $\mathbb{C}^n$.
Let $\gamma$ be a nonnegative integer such that $\frac{\gamma+1}{p_n}<1$ and let $\tilde{\psi}=\max_{1\le j\le n-1}\{2p_j(1-\frac{\gamma+1}{p_n})\log|w_j|\}$ be a plurisubharmonic function on $\mathbb{C}^{n-1}$. If $(f,o)\in\mathcal{I}(\psi)_o$, then $((\frac{\partial}{\partial w_n})^{\sigma}f(\cdot,0),o')\in\mathcal{I}(\tilde{\psi})_{o'}$ for any integer $\sigma$ satisfying $0\le \sigma\le \gamma$, where  $o'$ is the origin in $\mathbb{C}^{n-1}$.
	\end{Lemma}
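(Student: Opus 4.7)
The plan is to apply Lemma \ref{l:0} to both sides of the implication and compare the resulting Taylor-coefficient conditions. Writing the Taylor expansion $f = \sum_{\alpha \in \mathbb{Z}_{\ge 0}^n} b_\alpha w^\alpha$ near $o$, the hypothesis $(f,o) \in \mathcal{I}(\psi)_o$ translates via Lemma \ref{l:0} into the arithmetic condition
\begin{equation*}
\sum_{j=1}^{n} \frac{\alpha_j + 1}{p_j} > 1 \qquad \text{for every } \alpha \in \mathbb{Z}_{\ge 0}^n \text{ with } b_\alpha \ne 0.
\end{equation*}

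Next I would identify the Taylor expansion of the restricted derivative: for any integer $\sigma$ with $0 \le \sigma \le \gamma$, one has
\begin{equation*}
\Bigl(\tfrac{\partial}{\partial w_n}\Bigr)^{\!\sigma} f(w_1,\dots,w_{n-1},0) \;=\; \sigma! \sum_{\alpha' \in \mathbb{Z}_{\ge 0}^{n-1}} b_{(\alpha',\sigma)}\,(w')^{\alpha'},
\end{equation*}
where $w' = (w_1,\dots,w_{n-1})$. By Lemma \ref{l:0} applied to the plurisubharmonic function $\tilde\psi$ on $\mathbb{C}^{n-1}$, the membership $((\partial/\partial w_n)^\sigma f(\cdot,0),o') \in \mathcal{I}(\tilde\psi)_{o'}$ is equivalent to
\begin{equation*}
\sum_{j=1}^{n-1} \frac{\alpha'_j + 1}{p_j\bigl(1 - \tfrac{\gamma+1}{p_n}\bigr)} > 1 \qquad \text{for every } \alpha' \text{ with } b_{(\alpha',\sigma)} \ne 0,
\end{equation*}
or equivalently $\sum_{j=1}^{n-1} \tfrac{\alpha'_j+1}{p_j} > 1 - \tfrac{\gamma+1}{p_n}$.

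Finally, combine the two: fix $\alpha'$ with $b_{(\alpha',\sigma)} \ne 0$ and apply the hypothesis condition to $\alpha = (\alpha',\sigma)$. This yields $\sum_{j=1}^{n-1} \tfrac{\alpha'_j+1}{p_j} > 1 - \tfrac{\sigma+1}{p_n}$. Since $0 \le \sigma \le \gamma$ implies $\tfrac{\sigma+1}{p_n} \le \tfrac{\gamma+1}{p_n}$, we get $1 - \tfrac{\sigma+1}{p_n} \ge 1 - \tfrac{\gamma+1}{p_n}$, and the hypothesis $\tfrac{\gamma+1}{p_n} < 1$ keeps the right side positive so that $\tilde\psi$ is genuinely of the asserted form. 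The desired inequality follows and the lemma is proved. There is no real obstacle here: the entire argument is a bookkeeping chain linking two instances of Lemma \ref{l:0}, and the only substantive input beyond that lemma is the monotonicity $\sigma \le \gamma$.
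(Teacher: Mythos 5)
Your proposal is correct and follows essentially the same route as the paper's proof: both apply Lemma \ref{l:0} to the Taylor expansion of $f$, restrict to the multi-indices with $\alpha_n=\sigma$, and use the monotonicity $\tfrac{\sigma+1}{p_n}\le\tfrac{\gamma+1}{p_n}<1$ to pass from the condition for $\psi$ to the condition for $\tilde\psi$. The only difference is cosmetic: the paper divides the inequality through by $1-\tfrac{\sigma+1}{p_n}$ while you keep it in the form $\sum_{j\le n-1}\tfrac{\alpha_j'+1}{p_j}>1-\tfrac{\sigma+1}{p_n}\ge 1-\tfrac{\gamma+1}{p_n}$.
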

\begin{proof}
	Let $f=\sum_{\alpha\in\mathbb{Z}_{\ge0}^n}b_{\alpha}w^{\alpha}$ (Taylor expansion)   on $D=\{w\in\mathbb{C}^n:|w_j|<r_0$ for any $j\in\{1,2,...,n\}\}$, where $r_0>0$. It follows from Lemma \ref{l:0} and $(f,o)\in\mathcal{I}(\psi)_o$ that
	\begin{equation}
		\label{eq:1125g}\sum_{1\le j\le n}\frac{\alpha_j+1}{p_j}>1
	\end{equation}
 for any $\alpha\in\mathbb{Z}_{\ge0}^n$ satisfying $b_{\alpha}\not=0$. For any integer $\sigma\le\gamma$, inequality \eqref{eq:1125g} shows that
	\begin{equation}\label{eq:1125h}
		\begin{split}
			\sum_{1\le j\le n-1}\frac{\alpha_j+1}{p_j(1-\frac{\gamma+1}{p_n})}\ge 	\sum_{1\le j\le n-1}\frac{\alpha_j+1}{p_j(1-\frac{\sigma+1}{p_n})}>1,
		\end{split}
	\end{equation}
where $\alpha\in\mathbb{Z}_{\ge0}^n$ satisfies $\alpha_n=\sigma$ and $b_{\alpha}\not=0$.
	As $(\frac{\partial}{\partial w_n})^{\sigma}f(w_1,...,w_{n-1},0)=\sum_{\alpha_{n}=\sigma}\sigma!b_{\alpha}\Pi_{1\le j\le n-1}w_j^{\alpha_j}$ on $\tilde{D}:=\{(w_1,...,w_{n-1})\in\mathbb{C}^{n-1}:|w_j|<r_0$ for any $j\in\{1,2,...,n-1\}\}$, it follows from inequality \eqref{eq:1125h} and Lemma \ref{l:0} that
	$$((\frac{\partial}{\partial w_n})^{\sigma}f(\cdot,0),o')\in\mathcal{I}(\tilde{\psi})_{o'}$$
	 for any integer $\sigma$ satisfying $0\le \sigma\le \gamma$.
	\end{proof}

\subsection{Some other required results}
\

We recall an $L^2$ extension Theorem, which will be used in the proof of Theorem \ref{thm:ohsawa}.
\begin{Theorem}[see \cite{guan-zhou13ap}]
	\label{gz:L2}
	Let $M$ be an $n-$dimensional complex manifold with a continuous volume form $dV_M$, and let $S$ be a closed complex submanifold of $M$. Let $\psi\in \Delta(S)$. Then for any  holomorphic section $f$ of $K_M|_S$ on $S$, such that
$$\sum_{k=1}^{n}\frac{\pi^k}{k!}\int_{S_{n-k}}\frac{|f|^2}{dV_{M}}dV_{M}[\psi]<+\infty,$$
there exists a holomorphic $(n,0)$ form $F$ on $M$ such that $F|_S=f$ and
$$\int_{M}|F|^2\leq\sum_{k=1}^{n}\frac{\pi^k}{k!}\int_{S_{n-k}}\frac{|f|^2}{dV_{M}}dV_{M}[\psi].$$
\end{Theorem}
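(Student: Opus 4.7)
The plan is to establish the extension together with the optimal $L^2$ bound by studying the minimal $L^2$ integral on sublevel sets of $\psi$ and then invoking the concavity property from Theorem \ref{thm:general_concave}. After exhausting $M$ by relatively compact Stein subdomains $M_j\Subset M_{j+1}\Subset M$ (so that a diagonal argument reduces the global statement to uniform estimates on each $M_j$), I would fix one such piece and define, for $t\ge 0$,
\[
G(t):=\inf\Bigl\{\int_{\{\psi<-t\}}|\tilde F|^2:\tilde F\in H^0(\{\psi<-t\},\mathcal O(K_M)),\ \tilde F|_S=f\Bigr\}.
\]
A standard Ohsawa--Takegoshi extension on a small tubular neighborhood $\{\psi<-t_0\}$, using that $\psi$ behaves like $(n-l)\log\sum_{j=l+1}^n|z_j|^2$ near each stratum $S_l$, guarantees that $G(t_0)$ is finite for some large $t_0$.

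The next step is to apply Theorem \ref{thm:general_concave} with $\varphi\equiv 0$ and $c\equiv 1$, so $h(t)=e^{-t}$ and $\int_0^{+\infty}c(s)e^{-s}ds=1$. This yields concavity of $r\mapsto G(h^{-1}(r))$ on $(0,1]$ and $\lim_{t\to+\infty}G(t)=0$. A concave function $\phi$ on $[0,1]$ vanishing at $0$ satisfies $\phi(1)\le \lim_{r\to 0^+}\phi(r)/r$, and applying this to $\phi(r)=G(h^{-1}(r))$ gives
\[
G(0)\le\lim_{t\to+\infty}e^t G(t).
\]
The heart of the proof is then to identify this limit with $\sum_{k=1}^n(\pi^k/k!)\int_{S_{n-k}}(|f|^2/dV_M)\,dV_M[\psi]$. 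In local coordinates near a point of $S_l$ where $S=\{z_{l+1}=\cdots=z_n=0\}$ and $k=n-l$, the defining asymptotic $\psi-k\log\sum_{j=l+1}^n|z_j|^2=O(1)$ shows that $\{\psi<-t\}$ is locally modeled by the product of $S\cap U$ with a Euclidean ball of radius $\approx e^{-t/(2k)}$ in the normal $\mathbb{C}^k$-direction, whose Euclidean volume is $\pi^k e^{-t}/k!$. A Fubini computation in these coordinates, weighted by $|f|^2/dV_M$ and combined with the defining inequality for $dV_M[\psi]$ involving $\limsup_{t\to+\infty}\int e^{-\psi}\mathbb{I}_{\{-1-t<\psi<-t\}}dV_M$, produces the desired asymptotic bound.

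Finally, a weak $L^2$ limit of (near-)minimizers of $G(t)$ as $t\to 0^+$ on each $M_j$, followed by letting $j\to\infty$, furnishes the global holomorphic extension $F$ on $M$, and the chain of inequalities then yields the optimal $L^2$ estimate. The main obstacle is the asymptotic identification in the last display: one must simultaneously match the local geometry of $\psi$ against each stratum $S_{n-k}$ so that the varying exponent $k$ is tracked correctly, control the $O(1)$ discrepancy between $\psi$ and its leading term (which is precisely what the definition of $dV_M[\psi]$ absorbs), and show that the infimum $G(t)$ is asymptotically attained by extensions with the expected local leading behavior. It is the sharpness of the Ohsawa--Takegoshi extension (or, equivalently, a Berndtsson-type positivity of direct image) that prevents any spurious multiplicative constant and yields exactly the factor $\pi^k/k!$.
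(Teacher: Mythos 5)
First, note that the paper does not prove Theorem \ref{gz:L2} at all: it is recalled verbatim from \cite{guan-zhou13ap}, where it is established by the method of undetermined functions (a twisted $\bar\partial$-estimate whose auxiliary weights are optimized by solving an ODE), not by the concavity of minimal $L^2$ integrals. Your proposal is therefore a genuinely different route, closer in spirit to the way the present paper proves its own optimal extension statement, Proposition \ref{p:exten-pro-finite}. The difference between your route and that one is instructive: Proposition \ref{p:exten-pro-finite} does not pass through the function $G(t)$ and its concavity at all; it applies Lemma \ref{lem:L2} directly to a fixed local holomorphic extension $\tilde f$ of $f$ near $S$, with $t_0\to+\infty$, so that the constant $C=\int_M\mathbb{I}_{\{-t_0-1<\psi<-t_0\}}|\tilde f|^2e^{-\psi}$ converges to the boundary-measure term, and then extracts a weak limit of the resulting global extensions. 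That argument produces the global $F$ and the optimal bound in one stroke.

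The genuine gap in your plan is the central step $\limsup_{t\to+\infty}e^{t}G(t)\le\sum_{k}\frac{\pi^k}{k!}\int_{S_{n-k}}\frac{|f|^2}{dV_M}dV_M[\psi]$. To bound $G(t)$ from above you must exhibit, for each large $t$, a holomorphic competitor on the \emph{entire} sublevel set $\{\psi<-t\}$ with the stated $L^2$ norm. But for a general $\psi\in\Delta(S)$ the set $\{\psi<-t\}$ is not contained in any prescribed neighborhood of $S$ (the definition only requires $S\subset\psi^{-1}(-\infty)$ plus the local asymptotic along $S$), so a local extension near $S$ together with a Fubini computation in tubular coordinates does not produce an admissible competitor; turning the local model into a global holomorphic object on $\{\psi<-t\}$ with an error of size $o(e^{-t})$ is itself an optimal-constant extension problem of essentially the same difficulty as the theorem. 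In other words, the concavity inequality $G(0)\le\lim_{t\to+\infty}e^tG(t)$ is correct but shifts, rather than resolves, the analytic burden: you still need a Lemma \ref{lem:L2}-type sublevel-set extension with the sharp constant to control the right-hand side, and once you have that lemma the detour through concavity is unnecessary. A complete write-up along your lines should therefore either invoke Lemma \ref{lem:L2} explicitly at this step (mimicking the proof of Proposition \ref{p:exten-pro-finite}, including the regularization of $\psi$ and the Fatou/Montel limit arguments), or revert to the undetermined-function method of \cite{guan-zhou13ap}. The remaining bookkeeping you sketch --- matching $\frac{2(n-l)}{\sigma_{2n-2l-1}}$ with $\frac{(n-l)!}{\pi^{n-l}}$ stratum by stratum and absorbing the $O(1)$ discrepancy of $\psi$ into $dV_M[\psi]$ --- is standard once that extension input is in place.
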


The following three lemmas will be used in the proof of Proposition \ref{p:exten-pro-finite}.
\begin{Lemma}[see \cite{GY-concavity}, see also \cite{GMY-concavity2}] \label{lem:L2} Let $c$ be a positive function on $(0,+\infty)$, such that $\int_{0}^{+\infty}c(t)e^{-t}dt<+\infty$ and $c(t)e^{-t}$ is decreasing on $(0,+\infty)$.
Let $B\in(0,+\infty)$ and $t_{0}\geq 0$ be arbitrarily given.
Let $M$ be an $n-$dimensional Stein manifold.
Let $\psi<0$ be a plurisubharmonic function
on $M$.
Let $\varphi$ be a plurisubharmonic function on $M$.
Let $F$ be a holomorphic $(n,0)$ form on $\{\psi<-t_{0}\}$,
such that
\begin{equation}
\label{equ:20171124a}
\int_{K\cap\{\psi<-t_{0}\}}|F|^{2}<+\infty
\end{equation}
for any compact subset $K$ of $M$,
and
\begin{equation}
\label{equ:20171122a}
\int_{M}\frac{1}{B}\mathbb{I}_{\{-t_{0}-B<\psi<-t_{0}\}}|F|^{2}e^{-\varphi}\leq C<+\infty.
\end{equation}
Then there exists a
holomorphic $(n,0)$ form $\tilde{F}$ on $M$, such that
\begin{equation}
\label{equ:3.4}
\begin{split}
\int_{M}&|\tilde{F}-(1-b_{t_0,B}(\psi))F|^{2}e^{-\varphi+v_{t_0,B}(\psi)}c(-v_{t_0,B}(\psi))\leq C\int_{0}^{t_{0}+B}c(t)e^{-t}dt
\end{split}
\end{equation}
where
$b_{t_0,B}(t)=\int_{-\infty}^{t}\frac{1}{B}\mathbb{I}_{\{-t_{0}-B< s<-t_{0}\}}ds$ and
$v_{t_0,B}(t)=\int_{-t_0}^{t}b_{t_0,B}(s)ds-t_0$.
\end{Lemma}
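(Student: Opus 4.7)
The plan is to construct $\tilde F$ as the difference of a smoothly truncated extension of $F$ and the solution of a $\bar\partial$-equation, using the twisted weight of Guan--Zhou. First set $F_1 := (1 - b_{t_0,B}(\psi))F$, extended by zero to $\{\psi \ge -t_0\}$; since $b_{t_0,B}$ is a smooth cutoff that equals $1$ on $\{\psi \ge -t_0\}$ and $0$ on $\{\psi \le -t_0-B\}$, the form $F_1$ is a smooth $(n,0)$-form on $M$. Because $F$ is holomorphic on $\{\psi < -t_0\}$,
\[
\bar\partial F_1 \;=\; -b_{t_0,B}'(\psi)\,\bar\partial\psi\wedge F \;=\; -\frac{1}{B}\mathbb{I}_{\{-t_0-B<\psi<-t_0\}}\,\bar\partial\psi\wedge F,
\]
which is supported in the strip $\{-t_0-B<\psi<-t_0\}$ and square-integrable by \eqref{equ:20171124a}--\eqref{equ:20171122a}. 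If one can solve $\bar\partial u = \bar\partial F_1$ with the desired norm bound, then $\tilde F := F_1 - u$ is holomorphic on $M$ and satisfies \eqref{equ:3.4}.

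The second step is to apply an $L^2$-estimate for $\bar\partial$ with the twisted weight $e^{-\varphi + v_{t_0,B}(\psi)} c(-v_{t_0,B}(\psi))$. One checks that $v_{t_0,B}(t)$ is increasing and concave with $v_{t_0,B}'(t)=b_{t_0,B}(t)\in[0,1]$; hence $v_{t_0,B}(\psi)$ is plurisubharmonic. The hypothesis that $c(t)e^{-t}$ is decreasing forces $\varphi - v_{t_0,B}(\psi) - \log c(-v_{t_0,B}(\psi))$ to be plurisubharmonic, which is what is needed to feed Donnelly--Fefferman/Berndtsson-type twisted Bochner--Kodaira inequalities. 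Following Guan--Zhou's treatment, one introduces auxiliary smooth functions $\eta$ and $\phi$ of $\psi$ (chosen so that $\eta + \phi^{-1}$ is matched to $c$ and $v_{t_0,B}$), and applies the twisted estimate to obtain $u$ satisfying
\[
\int_M |u|^2 \, e^{-\varphi + v_{t_0,B}(\psi)} c(-v_{t_0,B}(\psi)) \;\leq\; \int_{\{-t_0-B<\psi<-t_0\}} \eta\,|\bar\partial F_1|^2_\psi\, e^{-\varphi + v_{t_0,B}(\psi)} c(-v_{t_0,B}(\psi)).
\]
A direct computation converts the right-hand side, via $|\bar\partial\psi\wedge F|^2_\psi = |F|^2$ (the standard identity for $(n,0)$-forms), and the decreasing property of $c(t)e^{-t}$, into $C\int_0^{t_0+B}c(t)e^{-t}\,dt$ after absorbing the weight $\frac{1}{B^2}\mathbb{I}_{\{-t_0-B<\psi<-t_0\}}$ and integrating out the $\psi$-direction.

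To turn this scheme into an actual proof one needs an exhaustion argument: since $M$ is Stein, write $M = \bigcup_k M_k$ with $M_k\Subset M_{k+1}$ smoothly bounded strongly pseudoconvex; regularize $\psi$ and $\varphi$ by decreasing sequences of smooth plurisubharmonic functions $\psi_k, \varphi_k$, carry out the above estimate on each $M_k$ to obtain $u_k$, and extract a weakly convergent subsequence in $L^2_{\mathrm{loc}}$. The uniform bound passes to the weak limit, giving the desired $\tilde F$ on $M$. The main obstacle is the twisted Bochner--Kodaira estimate itself with sharp constants: one must select $\eta$ and $\phi$ so that both the curvature term and the remainder produced by the twist are controlled simultaneously, and then verify that the substitution $t = -\psi$ converts the integral over the strip into $\int_0^{t_0+B}c(t)e^{-t}dt$ with constant exactly $1$ in front of $C$. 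Once that calibration is performed, the rest is routine.
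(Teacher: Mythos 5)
The paper does not prove this lemma; it is quoted verbatim from \cite{GY-concavity} (see also \cite{GMY-concavity2}), and the proof there follows exactly the scheme you describe: truncate $F$ by $1-b_{t_0,B}(\psi)$, solve $\bar\partial u=\bar\partial\big((1-b_{t_0,B}(\psi))F\big)$ with a twisted $L^2$ estimate, and pass to the limit over a Stein exhaustion with regularized weights. So your overall strategy is the right one.

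However, there is a genuine gap: the entire content of the lemma is the sharp constant $C\int_{0}^{t_{0}+B}c(t)e^{-t}dt$, and the step that produces it --- the construction of the auxiliary functions $\eta=s(-v_{t_0,B}(\psi))$ and $\phi$ (equivalently the pair $(s,u)$ solving the ODE system that makes the group $\eta\,i\partial\bar\partial\varphi-i\partial\bar\partial\eta-\eta^{-1}i\partial\eta\wedge\bar\partial\eta+\cdots$ nonnegative while the error term of the twisted Bochner--Kodaira inequality collapses to exactly $\frac{1}{B}\mathbb{I}_{\{-t_0-B<\psi<-t_0\}}|F|^2e^{-\varphi}$ times $\int_0^{t_0+B}c(t)e^{-t}dt$) --- is precisely what you defer as ``calibration to be performed.'' Without exhibiting $s$ and $u$ and verifying both the positivity and the identity $(\eta+g^{-1})^{-1}=\text{(the prescribed weight)}$, the displayed bound for $\int_M|u|^2e^{-\varphi+v}c(-v)$ is unsupported, and in particular the factor $\eta$ on its right-hand side is not what the twisted estimate actually yields (one gets $\eta+g^{-1}$, and matching this to $c$ is the whole point). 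Two further claims need repair: (i) the assertion that $c(t)e^{-t}$ decreasing ``forces $\varphi-v_{t_0,B}(\psi)-\log c(-v_{t_0,B}(\psi))$ to be plurisubharmonic'' is false as stated (an increasing function composed with $-v(\psi)$ need not be psh, and the lemma only assumes $c$ positive and measurable in this monotonicity class, so $\log c(-v(\psi))$ need not even be upper semicontinuous); the required positivity is built into the choice of $\eta,\phi$, not into the weight itself, and a merely measurable $c$ must be handled by an additional approximation of $c$. (ii) The identity $|\bar\partial\psi\wedge F|^2_{\psi}=|F|^2$ holds only after regularizing $\psi$ and measuring against $i\partial\bar\partial$ of the regularization, with an inequality surviving in the limit; since $\psi$ is an arbitrary negative psh function this limiting argument must be carried out, not assumed.
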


It is clear that $\mathbb{I}_{(-t_{0},+\infty)}\leq b_{t_0,B}(t)\leq\mathbb{I}_{(-t_{0}-B,+\infty)}$ and $\max\{t,-t_{0}-B\}\leq v_{t_0,B}(t) \leq\max\{t,-t_{0}\}$.

\begin{Lemma}[see \cite{GY-concavity}]
	\label{l:converge}
	Let $M$ be a complex manifold. Let $S$ be an analytic subset of $M$.  	
	Let $\{g_j\}_{j=1,2,...}$ be a sequence of nonnegative Lebesgue measurable functions on $M$, which satisfies that $g_j$ are almost everywhere convergent to $g$ on  $M$ when $j\rightarrow+\infty$,  where $g$ is a nonnegative Lebesgue measurable function on $M$. Assume that for any compact subset $K$ of $M\backslash S$, there exist $s_K\in(0,+\infty)$ and $C_K\in(0,+\infty)$ such that
	$$\int_{K}{g_j}^{-s_K}dV_M\leq C_K$$
	 for any $j$, where $dV_M$ is a continuous volume form on $M$.
	
 Let $\{F_j\}_{j=1,2,...}$ be a sequence of holomorphic $(n,0)$ form on $M$. Assume that $\liminf_{j\rightarrow+\infty}\int_{M}|F_j|^2g_j\leq C$, where $C$ is a positive constant. Then there exists a subsequence $\{F_{j_l}\}_{l=1,2,...}$, which satisfies that $\{F_{j_l}\}$ is uniformly convergent to a holomorphic $(n,0)$ form $F$ on $M$ on any compact subset of $M$ when $l\rightarrow+\infty$, such that
 $$\int_{M}|F|^2g\leq C.$$
\end{Lemma}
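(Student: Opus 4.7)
The plan is to produce a Montel-type compactness argument for $\{F_j\}$ on $M \setminus S$, extend the resulting limit holomorphically across the analytic subset $S$, and close with Fatou's lemma to obtain the integral bound. After passing to a subsequence, I may assume $\int_M |F_j|^2 g_j\, dV_M \le C+1$ for all $j$.

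The first step is to obtain local uniform bounds for $F_j$ on $M \setminus S$. For each compact $K \subset M \setminus S$, set $\tau_K := s_K/(1+s_K) \in (0,1)$. Writing $|F_j|^{2\tau_K} = (|F_j|^2 g_j)^{\tau_K} g_j^{-\tau_K}$ and applying H\"older's inequality with conjugate exponents $1/\tau_K$ and $1/(1-\tau_K)$ yields
\begin{equation*}
\int_K |F_j|^{2\tau_K}\, dV_M \le \left(\int_M |F_j|^2 g_j\, dV_M\right)^{\tau_K}\left(\int_K g_j^{-s_K}\, dV_M\right)^{1-\tau_K} \le (C+1)^{\tau_K} C_K^{1-\tau_K}.
\end{equation*}
Since $F_j$ is holomorphic, $|F_j|^{2\tau_K}$ is plurisubharmonic, so the sub-mean-value inequality on small Euclidean balls promotes this uniform $L^1$ bound on $K$ to a uniform pointwise bound for $F_j$ on any compact $K' \Subset K$. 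Thus $\{F_j\}$ is a normal family on $M \setminus S$, and a diagonal extraction along a compact exhaustion of $M \setminus S$, together with Montel's theorem, produces a subsequence $\{F_{j_l}\}$ converging locally uniformly on $M \setminus S$ to a holomorphic $(n,0)$ form $F$ on $M \setminus S$.

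Next, I extend $F$ across $S$. The uniform bounds above descend to $F$, so $F$ is locally bounded on $M \setminus S$ near every point of $S$; since $S$ is a proper analytic subset of the complex manifold $M$, the classical Riemann-type removable singularity theorem yields a holomorphic $(n,0)$ form on all of $M$, which I still denote by $F$. Finally, since $F_{j_l} \to F$ locally uniformly on $M \setminus S$, $g_{j_l} \to g$ almost everywhere, and $S$ has Lebesgue measure zero, we have $|F_{j_l}|^2 g_{j_l} \to |F|^2 g$ almost everywhere on $M$. Fatou's lemma then gives
\begin{equation*}
\int_M |F|^2 g\, dV_M \le \liminf_{l\to\infty} \int_M |F_{j_l}|^2 g_{j_l}\, dV_M \le C,
\end{equation*}
completing the argument.

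The main technical obstacle is ensuring that the uniform sup-norm bound from Step one survives up to $S$ so that the removable-singularity theorem applies. For pieces of $S$ of codimension at least two the extension is automatic by Hartogs' theorem, while for codimension-one components one needs to know the $L^{2\tau_K}$ bound holds on compacts of $M$ (not only of $M \setminus S$); this can be arranged by noting that the plurisubharmonic functions $|F_j|^{2\tau_K}$, being $L^1_{\mathrm{loc}}$ across $S$, admit plurisubharmonic extensions, so their mean values remain controlled and the desired pointwise bounds propagate across $S$.
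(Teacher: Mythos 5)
Your overall architecture (H\"older's inequality to get uniform $L^{2s_K/(1+s_K)}$ bounds on compacts of $M\setminus S$, the sub-mean value inequality for the plurisubharmonic functions $|F_j|^{2\tau_K}$ to upgrade these to sup bounds, Montel, then Fatou) is the standard one for this lemma, and the H\"older computation with $\tau_K=s_K/(1+s_K)$ is correct. One small slip at the end: you should extract the initial subsequence so that $\int_M|F_{j_k}|^2g_{j_k}$ converges to $\liminf_j\int_M|F_j|^2g_j\le C$; with only the normalization $\int_M|F_j|^2g_j\le C+1$, Fatou delivers $\int_M|F|^2g\le C+1$ rather than $\le C$.

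The genuine gap is the passage across $S$. The hypothesis controls $\int_Kg_j^{-s_K}$ only for $K$ compact in $M\setminus S$, with a constant $C_K$ that may blow up as $K$ approaches $S$; hence your uniform $L^{2\tau_K}$ and sup bounds are available only on compacts of $M\setminus S$, and neither the normality of $\{F_j\}$ near points of $S$ nor the local boundedness of the limit $F$ near $S$ follows from what you have established. Your proposed repair does not close this: each $|F_j|^{2\tau_K}$ is already plurisubharmonic on all of $M$ (the $F_j$ are holomorphic on $M$ by hypothesis), so there is nothing to extend, and the individual $L^1_{\mathrm{loc}}$ integrability of these functions across $S$ gives no bound that is uniform in $j$, which is exactly what the sub-mean value inequality at a point of $S$ would require. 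Note also that the lemma asserts uniform convergence on compacts of $M$, not merely of $M\setminus S$, so the bounds must genuinely be propagated across $S$ before the subsequence is extracted; a removable-singularity argument applied after the fact cannot produce that. The standard mechanism is the maximum principle: for $p\in S$ choose local coordinates (Weierstrass preparation in the last variable) so that $S$ meets the polydisc $\Delta_{\rho}^{n-1}\times\Delta_r$ in a set disjoint from $\overline{\Delta_{\rho'}^{n-1}}\times\partial\Delta_r$; the latter is a compact subset of $M\setminus S$, and the one-variable maximum principle in $z_n$ bounds $\sup_{\Delta_{\rho'}^{n-1}\times\Delta_r}|F_j|$ by the sup over that compact set. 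With this in place Montel applies on all of $M$, the limit is holomorphic on $M$ outright, and the Fatou step then finishes the proof as you wrote it.
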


\begin{Lemma}[see \cite{G-R}]
\label{closedness}
Let $N$ be a submodule of $\mathcal O_{\mathbb C^n,o}^q$, $1\leq q<+\infty$, let $f_j\in\mathcal O_{\mathbb C^n}(U)^q$ be a sequence of $q-$tuples holomorphic in an open neighborhood $U$ of the origin $o$. Assume that the $f_j$ converge uniformly in $U$ towards  a $q-$tuples $f\in\mathcal O_{\mathbb C^n}(U)^q$, assume furthermore that all germs $(f_{j},o)$ belong to $N$. Then $(f,o)\in N$.	
\end{Lemma}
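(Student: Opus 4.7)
The plan is to derive Lemma \ref{closedness} from two classical ingredients: (i) the fact that uniform convergence of holomorphic functions forces Taylor coefficient convergence to all orders at any interior point, and (ii) Krull's intersection theorem applied to the Noetherian local ring $\mathcal{O}_{\mathbb{C}^n,o}$. Let $\mathfrak{m}$ denote the maximal ideal of $\mathcal{O}_{\mathbb{C}^n,o}$.

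First I would reduce the statement to an $\mathfrak{m}$-adic approximation fact. Choose a closed polydisc $\overline{\Delta}_r$ centered at $o$ with $\overline{\Delta}_r\subset U$. By Cauchy's integral formula applied componentwise on $\overline{\Delta}_r$, uniform convergence $f_j\to f$ on $U$ implies that every partial derivative $\partial^{\alpha}f_j(o)$ converges to $\partial^{\alpha}f(o)$ as $j\to+\infty$. Equivalently, for every integer $k\ge 1$ there exists $j(k)$ such that the germ difference satisfies
\[
(f-f_j,o)\in\mathfrak{m}^{k}\mathcal{O}_{\mathbb{C}^n,o}^{q}\qquad\text{for all }j\ge j(k).
\]
Since $(f_j,o)\in N$ by hypothesis, this gives
\[
(f,o)\in N+\mathfrak{m}^{k}\mathcal{O}_{\mathbb{C}^n,o}^{q}\qquad\text{for every }k\ge 1.
\]

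Next I would invoke the Noetherian/Krull machinery to intersect over $k$. The local ring $\mathcal{O}_{\mathbb{C}^n,o}$ is Noetherian by R\"uckert's basis theorem (a consequence of the Weierstrass preparation theorem), so the quotient $\mathcal{O}_{\mathbb{C}^n,o}^{q}/N$ is a finitely generated module over it. Krull's intersection theorem then yields
\[
\bigcap_{k\ge 1}\mathfrak{m}^{k}\bigl(\mathcal{O}_{\mathbb{C}^n,o}^{q}/N\bigr)=0,
\]
which, lifted back to $\mathcal{O}_{\mathbb{C}^n,o}^{q}$, is exactly the identity $\bigcap_{k\ge 1}\bigl(N+\mathfrak{m}^{k}\mathcal{O}_{\mathbb{C}^n,o}^{q}\bigr)=N$. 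Combining this with the conclusion of the previous step gives $(f,o)\in N$, as desired.

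The main obstacle is not in the analysis but in the commutative-algebra input: one needs the Noetherianness of $\mathcal{O}_{\mathbb{C}^n,o}$ (hence the full Weierstrass preparation/R\"uckert package) together with the Krull intersection theorem for finitely generated modules. Once those are in hand, the holomorphic side of the argument is just Cauchy's estimates. An alternative route that avoids explicit use of Krull's theorem is to pick finitely many generators $g_1,\dots,g_r$ of $N$ (using Noetherianness), lift each $(f_j,o)=\sum_{i}h_{j,i}g_i$, and then argue by a normal families/Montel-type compactness that the coefficient tuples $(h_{j,i})$ can be chosen to admit a convergent subsequence; this would be the fallback approach if one wished to bypass the $\mathfrak{m}$-adic formalism.
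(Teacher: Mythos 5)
Your overall strategy --- reduce the lemma to the identity $\bigcap_{k\ge 1}\bigl(N+\mathfrak{m}^{k}\mathcal{O}_{\mathbb{C}^n,o}^{q}\bigr)=N$ via Krull's intersection theorem --- is the standard one and is essentially how the result is established in the reference \cite{G-R} that the paper cites (the paper itself quotes the lemma without proof). However, there is a genuine gap in the analytic half of your argument. You assert that convergence of all partial derivatives $\partial^{\alpha}f_j(o)\to\partial^{\alpha}f(o)$ is ``equivalent'' to: for every $k$ there is $j(k)$ with $(f-f_j,o)\in\mathfrak{m}^{k}\mathcal{O}_{\mathbb{C}^n,o}^{q}$ for all $j\ge j(k)$. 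This is false. Membership of $(f-f_j,o)$ in $\mathfrak{m}^{k}$ requires the Taylor coefficients of $f-f_j$ of order $<k$ to vanish \emph{exactly}, whereas uniform convergence only makes them small. Already for $n=q=1$, $f_j\equiv 1/j$ and $f\equiv 0$ give $f-f_j$ equal to a nonzero constant, which does not lie in $\mathfrak{m}$ at all; so no $j(1)$ exists. Consequently the displayed inclusion $(f,o)\in N+\mathfrak{m}^{k}\mathcal{O}_{\mathbb{C}^n,o}^{q}$ is not justified by the step that precedes it.

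The conclusion of that step is nevertheless true, and the correct justification is where the real content lies: since $N+\mathfrak{m}^{k}\mathcal{O}_{\mathbb{C}^n,o}^{q}$ contains $\mathfrak{m}^{k}\mathcal{O}_{\mathbb{C}^n,o}^{q}$, the quotient $Q_k:=\mathcal{O}_{\mathbb{C}^n,o}^{q}/(N+\mathfrak{m}^{k}\mathcal{O}_{\mathbb{C}^n,o}^{q})$ is a finite-dimensional $\mathbb{C}$-vector space, and the composite $\mathcal{O}_{\mathbb{C}^n}(U)^q\to\mathcal{O}_{\mathbb{C}^n,o}^{q}\to Q_k$ factors through the $(k-1)$-jet at $o$, hence is continuous for uniform convergence by the Cauchy estimates you already invoked. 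The images of the $f_j$ in $Q_k$ are all $0$ and converge to the image of $f$ in the Hausdorff finite-dimensional space $Q_k$, so the image of $f$ is $0$, i.e.\ $(f,o)\in N+\mathfrak{m}^{k}\mathcal{O}_{\mathbb{C}^n,o}^{q}$. (Equivalently: the set of $(k-1)$-jets of elements of $N$ is a linear, hence closed, subspace of the finite-dimensional jet space, and $N+\mathfrak{m}^{k}\mathcal{O}_{\mathbb{C}^n,o}^{q}$ is its preimage under the jet map.) With this repair your Krull-intersection argument closes correctly; the remaining commutative-algebra input (Noetherianness of $\mathcal{O}_{\mathbb{C}^n,o}$ and Krull for the finitely generated module $\mathcal{O}_{\mathbb{C}^n,o}^{q}/N$) is used correctly. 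Your fallback route via generators, division with bounds, and Montel is also viable but requires the nontrivial ``division with estimates'' theorem, which you would need to state and use precisely rather than gesture at.
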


Let $\Omega_j$  be an open Riemann surface, which admits a nontrivial Green function $G_{\Omega_j}$ for any  $1\le j\le n$. Let $M=\prod_{1\le j\le n}\Omega_j$ be an $n-$dimensional complex manifold, and let $\pi_j$ be the natural projection from $M$ to $\Omega_j$. Let $\tilde{Z}_j:=\{z_{j,k}:1\le k<\tilde m_j\}$ be a discrete subset of $\Omega_j$ for any  $j\in\{1,2,...,n\}$, where $\tilde{m}_j\in\mathbb{Z}_{\ge2}\cup\{+\infty\}$.

Let $\varphi_j$ be a subharmonic function on $\Omega_j$ such that $\varphi(z_{j,k})>-\infty$ for any $j\in\{1,2,...,n\}$ and $1\leq k<\tilde{m}_j$.
Let $p_{j,k}$ be a positive number such that $\sum_{1\le k<\tilde{m}_j}p_{j,k}G_{\Omega_j}(\cdot,z_{j,k})\not\equiv-\infty$ for any $j$. Denote that $\varphi:=\sum_{1\le j\le n}\pi_j^*(\varphi_j)$ and  $\psi:=\max_{1\le j\le n}\{\pi_j^*(2\sum_{1\le k<\tilde{m}_j}p_{j,k}G_{\Omega_j}(\cdot,z_{j,k}))\}$. Let $c$ be a positive function on $(0,+\infty)$ such that $\int_{0}^{+\infty}c(t)e^{-t}dt<+\infty$ and $c(t)e^{-t}$ is decreasing on $(0,+\infty)$.

Let $w_{j,k}$ be a local coordinate on a neighborhood $V_{z_{j,k}}\Subset\Omega_{j}$ of $z_{j,k}\in\Omega_j$ satisfying $w_{j,k}(z_{j,k})=0$ for any $j\in\{1,2,...,n\}$ and $1\le k<\tilde{m}_j$, where $V_{z_{j,k}}\cap V_{z_{j,k'}}=\emptyset$ for any $j$ and $k\not=k'$. Denote that $\tilde I_1:=\{(\beta_1,\beta_2,...,\beta_n):1\le \beta_j< \tilde m_j$ for any $j\in\{1,2,...,n\}\}$, $V_{\beta}:=\prod_{1\le j\le n}V_{z_{j,\beta_j}}$ and $w_{\beta}:=(w_{1,\beta_1},w_{2,\beta_2},...,w_{n,\beta_n})$ is a local coordinate on $V_{\beta}$ of $z_{\beta}:=(z_{1,\beta_1},z_{2,\beta_2},...,z_{n,\beta_n})\in M$ for any $\beta=(\beta_1,\beta_2,...,\beta_n)\in\tilde I_1$.
 Denote that
\begin{equation}
	\label{eq:1126a}c_{j,k}=\exp\lim_{z\rightarrow z_{j,k}}(\frac{\sum_{1\le k_1<\tilde m_j}p_{j,k_1}G_{\Omega_j}(z,z_{j,k_1})}{p_{j,k}}-\log|w_{j,k}(z)|)\end{equation}
 for any $j\in\{1,2,...,n\}$ and $1\le k<\tilde m_j$.
Denote that $E_{\beta}:=\{(\alpha_1,\alpha_2,...,\alpha_n):\sum_{1\le j\le n}\frac{\alpha_j+1}{p_{j,\beta_j}}=1\,\&\,\alpha_j\in\mathbb{Z}_{\ge0}\}$.
 Let $f$ be a holomorphic $(n,0)$ form on $\cup_{\beta\in\tilde I_1}V_{\beta}$ such that $f=\sum_{\alpha\in E_{\beta}}d_{\beta,\alpha}w_{\beta}^{\alpha}dw_{1,\beta_1}\wedge dw_{2,\beta_2}\wedge...\wedge dw_{n,\beta_n}$ on $V_{\beta}$ for any $\beta\in\tilde I_1$.
 \begin{Proposition}
 	\label{p:exten-pro-finite}If $\sum_{\beta\in\tilde I_1}\sum_{\alpha\in E_{\beta}}\frac{|d_{\beta,\alpha}|^2(2\pi)^ne^{-\varphi(z_{\beta})}}{\Pi_{1\le j\le n}(\alpha_j+1)c_{j,\beta_j}^{2\alpha_{j}+2}}<+\infty$, there exists a holomorphic $(n,0)$ form $F$ on $M$, which satisfies that $(F-f,z_{\beta})\in(\mathcal{O}(K_{M})\otimes\mathcal{I}(\psi))_{z_{\beta}}$ for any $\beta\in\tilde I_1$ and
 	\begin{displaymath}
 		\int_{M}|F|^2e^{-\varphi}c(-\psi)\le(\int_{0}^{+\infty}c(s)e^{-s}ds)\sum_{\beta\in\tilde I_1}\sum_{\alpha\in E_{\beta}}\frac{|d_{\beta,\alpha}|^2(2\pi)^ne^{-\varphi(z_{\beta})}}{\Pi_{1\le j\le n}(\alpha_j+1)c_{j,\beta_j}^{2\alpha_{j}+2}}.
 	\end{displaymath}
 \end{Proposition}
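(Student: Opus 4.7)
The plan is to apply the Ohsawa--Takegoshi type extension Lemma \ref{lem:L2} on the Stein manifold $M = \prod_{j=1}^n \Omega_j$, taking as initial datum the given local form $f$ itself on a deep sublevel set $\{\psi < -t_0\}$, and then pass to the limit $t_0 \to +\infty$ and $B \to 0^+$ to produce the global extension $F$ with the stated norm bound. First I would verify the hypotheses: each $\Omega_j$ is Stein so $M$ is Stein; $\psi \le 0$ is plurisubharmonic on $M$ by Lemma \ref{l:green-sup} (Green functions are non-positive); and $\varphi = \sum_j \pi_j^*(\varphi_j)$ is plurisubharmonic. By Lemma \ref{l:G-compact} applied in each factor, for $t_0$ sufficiently large $\{\psi < -t_0\} \subset \bigcup_{\beta \in \tilde I_1} V_\beta$, so $f$ defines a holomorphic $(n,0)$ form there which is locally $L^2$ on compacta.

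The central computation is the asymptotic of the narrow-strip integral. Near each $z_\beta$, the defining relation \eqref{eq:1126a} gives $\sum_{k_1} p_{j,k_1} G_{\Omega_j}(z,z_{j,k_1}) = p_{j,\beta_j}\log|c_{j,\beta_j}w_{j,\beta_j}(z)| + o(1)$ as $z \to z_{j,\beta_j}$, so $\psi$ is asymptotic to $\max_{1\le j\le n}\{2p_{j,\beta_j}\log|c_{j,\beta_j}w_{j,\beta_j}|\}$ on $V_\beta$, while $e^{-\varphi} \to e^{-\varphi(z_\beta)}$. Writing $f$ in the rescaled coordinates $\tilde w_{j,\beta_j} = c_{j,\beta_j}w_{j,\beta_j}$ and applying Lemma \ref{l:m1} term-by-term on each $V_\beta$ (each monomial $\alpha \in E_\beta$ has $q_\alpha = 0$, contributing $\pi^n/\prod_j(\alpha_j+1)$ times the squared coefficient, after accounting for the $2^n$ factor coming from $\sqrt{-1}^{n^2}\,dw\wedge d\bar w$), then letting $B \to 0^+$ and $t_0 \to +\infty$ and summing over $\beta$ by dominated convergence (justified by the standing finiteness assumption on the target bound), one obtains
\[ C_\infty := \lim_{t_0 \to +\infty}\,\lim_{B \to 0^+} \frac{1}{B} \int_M \mathbb{I}_{\{-t_0-B<\psi<-t_0\}} |f|^2 e^{-\varphi} = \sum_{\beta \in \tilde I_1} \sum_{\alpha \in E_\beta} \frac{|d_{\beta,\alpha}|^2 (2\pi)^n e^{-\varphi(z_\beta)}}{\prod_{j=1}^n(\alpha_j+1)c_{j,\beta_j}^{2\alpha_j+2}}. \]

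For each such large $t_0$ and small $B > 0$, Lemma \ref{lem:L2} produces a holomorphic $(n,0)$ form $\tilde F_{t_0,B}$ on $M$ with
\[ \int_M \bigl|\tilde F_{t_0,B} - (1 - b_{t_0,B}(\psi))f\bigr|^2 e^{-\varphi + v_{t_0,B}(\psi)} c(-v_{t_0,B}(\psi)) \le C_{t_0,B}\int_0^{t_0+B} c(s)e^{-s}\,ds, \]
where $C_{t_0,B}$ denotes the strip integral above. Choosing $(t_{0,k},B_k)$ with $t_{0,k}\to+\infty$, $B_k\to 0^+$, and $C_{t_{0,k},B_k}\to C_\infty$, and noting $v_{t_0,B}(\psi)\to\psi$ and $b_{t_0,B}(\psi)\to 0$ pointwise, Lemma \ref{l:converge} (applied with $S=\{z_\beta:\beta\in\tilde I_1\}$, so that $e^{-\varphi}c(-v_{t_{0,k},B_k}(\psi))$ has a uniform positive lower bound on compacta of $M\setminus S$) extracts a subsequence of $\tilde F_{t_{0,k},B_k}$ converging uniformly on compacta to a global holomorphic $(n,0)$ form $F$ on $M$ with $\int_M |F|^2 e^{-\varphi}c(-\psi)\le (\int_0^{+\infty}c(s)e^{-s}ds)\cdot C_\infty$ by Fatou. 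The germ conditions $(F-f, z_\beta) \in (\mathcal O(K_M)\otimes \mathcal I(\psi))_{z_\beta}$ at each $\beta$ are preserved in the limit via Lemma \ref{closedness}, since the local $L^2$ control forces the approximating differences $\tilde F_{t_{0,k},B_k}-f$ into $\mathcal I(\psi)$ at $z_\beta$, and uniform convergence of germs passes this to the limit.

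The hard part will be the asymptotic identification of $C_\infty$: one must carefully propagate the rescaling constants $c_{j,\beta_j}$ through Lemma \ref{l:m1}, track the $(2\pi)^n$ normalization stemming from the wedge-product definition of $|f|^2$, and uniformize these over the possibly infinite index set $\tilde I_1$ so that dominated convergence and Fatou apply. The convergence hypothesis on the target bound is precisely what licenses this exchange of limits with infinite summation, while the asymmetry of $\psi$ under the $\max_{1\le j\le n}$ requires that the local contribution from each $\beta$ be computed via the $n$-dimensional strip formula of Lemma \ref{l:m1} rather than by a naive factorization over the factors $\Omega_j$.
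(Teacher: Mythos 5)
Your overall strategy (Lemma \ref{lem:L2} plus the strip asymptotics of Lemma \ref{l:m1}, then Lemmas \ref{l:converge} and \ref{closedness} to pass to the limit) is the same as the paper's, but there are three concrete gaps. First, the weight in your application of Lemma \ref{lem:L2} is wrong: you feed in the strip integral $\frac{1}{B}\int_M \mathbb{I}_{\{-t_0-B<\psi<-t_0\}}|f|^2e^{-\varphi}$, which for a monomial with $q_\alpha=0$ behaves like $e^{-t_0}\pi^n/\prod_j(\alpha_j+1)$ and hence tends to $0$ as $t_0\to+\infty$; your claimed identity $C_\infty=\sum_\beta\sum_\alpha(\cdots)$ is therefore false as written (it is inconsistent with your own invocation of Lemma \ref{l:m1}, whose integrand carries the extra factor $e^{-\psi}\approx e^{t_0}$ on the strip). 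One must apply Lemma \ref{lem:L2} with total weight $\varphi+\psi$ (the paper uses $\Phi_l+\psi$), so that the hypothesis is the $t$-stable quantity $\int_{\{-t-1<\psi<-t\}}|\tilde f|^2e^{-\Phi_l-\psi}$ and the conclusion, combined with $v_{t,1}(\psi)\ge\psi$ and the monotonicity of $c(s)e^{-s}$, dominates $\int_M|\cdot|^2e^{-\Phi_l}c(-\psi)$; with weight $\varphi$ alone the output estimate neither controls $\int_M|F|^2e^{-\varphi}c(-\psi)$ nor forces the germ condition in $\mathcal I(\psi)_{z_\beta}$. Second, your assertion that $e^{-\varphi}\to e^{-\varphi(z_\beta)}$ near $z_\beta$ is unjustified: $\varphi$ is only plurisubharmonic, and upper semicontinuity gives $\varphi\le\varphi(z_\beta)+\epsilon$ near $z_\beta$, i.e.\ a lower bound on $e^{-\varphi}$, which is the wrong direction for bounding the strip integral from above (indeed $e^{-\varphi}$ need not even be locally bounded near $z_\beta$). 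This is exactly why the paper introduces smooth plurisubharmonic $\Phi_l$ decreasing to $\varphi$, proves the estimate with $e^{-\Phi_l}$ and the constant $e^{-\Phi_l(z_\beta)}$ first, and only then lets $l\to+\infty$ with a second application of Lemma \ref{l:converge}; this double limit is not optional.

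Third, your treatment of the infinite index set is too quick: when some $\tilde m_{j_0}=+\infty$, Lemma \ref{l:G-compact} (which concerns a single Green function) does not give $\{\psi<-t_0\}\subset\bigcup_{\beta\in\tilde I_1}V_\beta$ for large $t_0$, since the points $z_{j_0,k}$ may accumulate at the boundary of $\Omega_{j_0}$ and the sublevel sets of $2\sum_k p_{j_0,k}G_{\Omega_{j_0}}(\cdot,z_{j_0,k})$ need not be confined to the disjoint coordinate neighborhoods. The paper handles this by first reducing to finitely many points: it exhausts each $\Omega_j$ by the subsurfaces $\Omega_{j,l}$ of Lemma \ref{l:green-approx}, applies the finite-point case on $M_l=\prod_j\Omega_{j,l}$ with the Green functions $G_{\Omega_{j,l}}$ (using that $G_{\Omega_{j,l}}(\cdot,z)-G_{\Omega_j}(\cdot,z)$ decreases to $0$, so $\psi_l\downarrow\psi$ and $c_{j,k,l}\downarrow c_{j,k}$), and then passes to the limit in $l$ via Lemma \ref{l:converge}, Fatou, and Lemma \ref{closedness}. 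You should either incorporate this reduction or supply a separate argument that your limit interchange over $\tilde I_1$ is legitimate; "dominated convergence justified by the finiteness assumption" does not by itself address the geometric issue of where $\{\psi<-t_0\}$ sits.
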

\begin{proof}
The following Remark shows that it suffices to prove Proposition \ref{p:exten-pro-finite} for the case $\tilde m_j<+\infty$ for any $j\in\{1,2,...,n\}$.
\begin{Remark} Assume that Proposition \ref{p:exten-pro-finite} holds for the case $\tilde m_j<+\infty$ for any $j\in\{1,2,...,n\}$.
For any $j\in\{1,2,...,n\}$, it follows from Lemma \ref{l:green-approx} that there exists a sequence of Riemann surfaces $\{\Omega_{j,l}\}_{l\in\mathbb{Z}_{\ge1}}$, which satisfies that $\Omega_{j,l}\Subset\Omega_{j,l+1}\Subset\Omega_{j}$ for any $l$, $\cup_{l\in\mathbb{Z}_{\ge1}}\Omega_{j,l}=\Omega_j$ and $\{G_{\Omega_{j,l}}(\cdot,z)-G_{\Omega_j}(\cdot,z)\}_{l\in\mathbb{Z}_{\ge1}}$ is decreasingly convergent to $0$ with respect to $l$ for any $z\in\Omega_j$. As $\tilde{Z}_j$ is a discrete subset of $\Omega_j$, $\tilde Z_{j,l}:=\Omega_{j,l}\cap\tilde{Z}_{j}$ is a set of finite points. Denote that $M_l:=\Pi_{1\le j\le n}\Omega_{j,l}$ and $\psi_l:=\max_{1\le j\le n}\{\pi_{j}^*(\sum_{z_{j,k}\in \tilde{Z}_{j,l}}2p_{j,k}G_{\Omega_{j,l}}(\cdot,z_{j,k}))\}$  on $M_l$.
Denote that $$c_{j,k,l}=\exp\lim_{z\rightarrow z_{j,k}}(\frac{\sum_{z_{j,k_1}\in\Omega_{j,l}}p_{j,k_1}G_{\Omega_{j,l}}(z,z_{j,k_1})}{p_{j,k}}-\log|w_{j,k}(z)|)$$
for $z_{j,k}\in\Omega_{j,l}$.
Note that $\{c_{j,k,l}\}$ is decreasingly convergent to $c_{j,k}$ and $\{\psi_l\}$ is decreasingly convergent to $\psi$ with respect to $l$.

Then there exists a holomorphic $(n,0)$ form $F_l$ on $M_l$ such that $(F_l-f,z_{\beta})\in(\mathcal{O}(K_{M})\otimes\mathcal{I}(\psi))_{z_{\beta}}$ for any $\beta\in\{\tilde\beta\in\tilde{I}_1:z_{\tilde\beta}\in M_l\}$ and
\begin{displaymath}
	\begin{split}
		\int_{M_l}|F_l|^2e^{-\varphi}c(-\psi_l)&\leq(\int_0^{+\infty}c(s)e^{-s}ds)\sum_{\beta\in\{\tilde\beta\in\tilde{I}_1:z_{\tilde\beta}\in M_l\}}\sum_{\alpha\in E_{\beta}}\frac{|d_{\beta,\alpha}|^2(2\pi)^ne^{-\varphi(z_{\beta})}}{\Pi_{1\le j\le n}(\alpha_j+1)c_{j,\beta_j,l}^{2\alpha_{j}+2}}\\
		&\le (\int_{0}^{+\infty}c(s)e^{-s}ds)\sum_{\beta\in\tilde I_1}\sum_{\alpha\in E_{\beta}}\frac{|d_{\beta,\alpha}|^2(2\pi)^ne^{-\varphi(z_{\beta})}}{\Pi_{1\le j\le n}(\alpha_j+1)c_{j,\beta_j}^{2\alpha_{j}+2}}.
	\end{split}
\end{displaymath}
As $\psi\le\psi_l$ and $c(t)e^{-t}$ is decreasing on $(0,+\infty)$, we have
\begin{equation}
	\label{eq:211130a}\begin{split}
		&\int_{M_l}|F_l|^2e^{-\varphi-\psi_l+\psi}c(-\psi)\\
		\le&\int_{M_l}|F_l|^2e^{-\varphi}c(-\psi_l)\\
		\leq&(\int_0^{+\infty}c(s)e^{-s}ds)\sum_{\beta\in\tilde I_1}\sum_{\alpha\in E_{\beta}}\frac{|d_{\beta,\alpha}|^2(2\pi)^ne^{-\varphi(z_{\beta})}}{\Pi_{1\le j\le n}(\alpha_j+1)c_{j,\beta_j}^{2\alpha_{j}+2}}.
	\end{split}
\end{equation}
Note that $\psi$ is continuous on $M\backslash \{z_{\beta}:\beta\in \tilde I_1\}$, $\psi_l$ is continuous on $M_l\backslash \{z_{\beta}:\beta\in \tilde I_1\}$ and $\{z_{\beta}:\beta\in \tilde I_1\}$ is a discrete subset of $M$.
 For any compact subset $K$ of $M\backslash \{z_{\beta}:\beta\in \tilde I_1\}$, there exist $l_K>0$ such that $K\Subset\Pi_{1\le j\le n}\Omega_{j,l_K}$ and $C_K>0$  such that $\frac{e^{\varphi+\psi_l-\psi}}{c(-\psi)}\le C_K$ for any $l\ge l_K$.
 It follows from Lemma \ref{l:converge} and the diagonal method that there exists a subsequence of $\{F_l\}$, denoted still by $\{F_l\}$, which is uniformly convergent to a holomorphic $(n,0)$ form $F$ on $M$ on any compact subset of $M$. It follows from the Fatou's Lemma and inequality \eqref{eq:211130a} that
\begin{displaymath}
	\begin{split}
		\int_{M}|F|^2e^{-\varphi}c(-\psi)&=\int_{M}\lim_{l\rightarrow+\infty}|F_l|^2e^{-\varphi-\psi_l+\psi}c(-\psi)\\
		&\leq\liminf_{l\rightarrow+\infty}\int_{M_l}|F_l|^2e^{-\varphi-\psi_l+\psi}c(-\psi)\\
		&\le(\int_0^{+\infty}c(s)e^{-s}ds)\sum_{\beta\in\tilde I_1}\sum_{\alpha\in E_{\beta}}\frac{|d_{\beta,\alpha}|^2(2\pi)^ne^{-\varphi(z_{\beta})}}{\Pi_{1\le j\le n}(\alpha_j+1)c_{j,\beta_j}^{2\alpha_{j}+2}}.	\end{split}
\end{displaymath}
 Since $\{F_l\}$ is uniformly convergent to   $F$ on any compact subset of $M$ and $(F_l-f,z_{\beta})\in(\mathcal{O}(K_{M})\otimes\mathcal{I}(\psi))_{z_{\beta}}$ for any $\beta\in\{\tilde\beta\in\tilde{I}_1:z_{\tilde\beta}\in M_l\}$, following from Lemma \ref{closedness}, we have  $(F-f,z_{\beta})\in(\mathcal{O}(K_{M})\otimes\mathcal{I}(\psi))_{z_{\beta}}$ for any $\beta\in\tilde I_1$.
\end{Remark}

Denote that $m_j=\tilde{m}_j-1$. As $M$ is a Stein manifold, then there exist smooth plurisubharmonic functions $\Phi_l$ on $M$, which are decreasingly convergent to $\varphi$ with respect to $l$.
 It follows from Lemma \ref{l:green-sup} and Lemma \ref{l:green-sup2} that there exists a local coordinate $\tilde{w}_{j,k}$ on a neighborhood $\tilde{V}_{z_{j,k}}\Subset V_{z_{j,k}}$ of $z_{j,k}$ satisfying $\tilde{w}_{j,k}(z_{j,k})=0$ and $|\tilde{w}_{j,k}|=e^{\frac{\sum_{1\le k_1\le m_j}p_{j,k_1}G_{\Omega_j}(\cdot,z_{j,k_1})}{p_{j,k}}}$ on $\tilde{V}_{z_{j,k}}$. Denote that $\tilde{V}_{\beta}:=\Pi_{1\le j\le n}\tilde V_{j,\beta_j}$ for any $\beta\in I_1$. Let $\tilde{f}$ be a holomorphic $(n,0)$ form on $\cup_{\beta\in I_1}\tilde{V}_{\beta}$ satisfying
$$\tilde{f}=\sum_{\alpha\in E_{\beta}}\tilde{d}_{\beta,\alpha}\tilde{w}_{\beta}^{\alpha}d\tilde{w}_{1,\beta_1}\wedge d\tilde{w}_{2,\beta_2}\wedge...\wedge d\tilde{w}_{n,\beta_n}$$
on $V_{\beta}$,  where $\tilde{d}_{\beta,\alpha}=d_{\beta,\alpha}(\lim_{z\rightarrow z_{j,\beta_j}}\frac{w_{j,\beta_j}(z)}{\tilde{w}_{j,\beta_j}(z)})^{\alpha_j+1}$.
It follows from Lemma \ref{l:0} that $(f-\tilde{f},z_{\beta})\in(\mathcal{O}(K_{M})\otimes\mathcal{I}(\psi))_{z_{\beta}}$ for any $\beta\in I_1$. Following from equality \eqref{eq:1126a}, we have $|\tilde{d}_{\beta,\alpha}|=|\frac{d_{\beta,\alpha}}{\Pi_{1\le j\le n}c_{j,\beta_j}^{\alpha_j+1}}|$.
It follows from Lemma \ref{l:G-compact} and Lemma \ref{l:green-sup2} that there exists $t_0>0$ such that $\{\psi<-t_0\}\Subset \cup_{\beta\in I_1}\tilde V_{\beta}$, which implies that $\int_{\{\psi<-t\}}|\tilde f|^2<+\infty$.

Using Lemma \ref{lem:L2}, there exists a holomorphic $(n,0)$ form $F_{l,t}$ on $M$ such that
\begin{equation}
	\label{eq:1125m}
	\begin{split}
&\int_{M}|F_{l,t}-(1-b_{t,1}(\psi))\tilde{f}|^{2}e^{-\Phi_l-\psi+v_{t,1}(\psi)}c(-v_{t,1}(\psi))\\
\leq& (\int_{0}^{t+1}c(s)e^{-s}ds) \int_{M}\mathbb{I}_{\{-t-1<\psi<-t\}}|\tilde f|^2e^{-\Phi_l-\psi},
\end{split}
\end{equation}
where $t\ge t_0$. Note that $b_{t,1}(s)=0$ for large enough $s$, then $(F_{l,t}-\tilde f,z_{\beta})\in(\mathcal{O}(K_{M})\otimes\mathcal{I}(\psi))_{z_{\beta}}$ for any $\beta\in I_1$.

For any $\epsilon>0$, there exists $t_1>t_0$, such that
 $\sup_{z\in\{\psi<-t_1\}\cap \tilde{V}_{\beta}}|\Phi_l(z)-\Phi_l(z_{\beta})|<\epsilon$ for any $\beta\in I_1$. As $\{\psi<-t_1\}\Subset \cup_{\beta\in I_1}\tilde{V}_{\beta}$, it follows from Lemma \ref{l:m1} that
\begin{equation}
	\label{eq:1203a}\int_{M}\mathbb{I}_{\{-t-1<\psi<-t\}}|\tilde{f}|^2e^{-\Phi_l-\psi}\le \sum_{\beta\in\tilde I_1}\sum_{\alpha\in E_{\beta}}\frac{|d_{\beta,\alpha}|^2(2\pi)^ne^{-\Phi_l(z_{\beta})+\epsilon}}{\Pi_{1\le j\le n}(\alpha_j+1)c_{j,\beta_j}^{2\alpha_{j}+2}}.
\end{equation}
Letting $t\rightarrow+\infty$ and $\epsilon\rightarrow0$, inequality \eqref{eq:1203a} implies that
\begin{equation}
	\label{eq:1126c}\limsup_{t\rightarrow+\infty}\int_{M}\mathbb{I}_{\{-t-1<\psi<-t\}}|\tilde{f}|^2e^{-\Phi_l-\psi}\le \sum_{\beta\in\tilde I_1}\sum_{\alpha\in E_{\beta}}\frac{|d_{\beta,\alpha}|^2(2\pi)^ne^{-\Phi_l(z_{\beta})}}{\Pi_{1\le j\le n}(\alpha_j+1)c_{j,\beta_j}^{2\alpha_{j}+2}}.
\end{equation}
As $v_{t,1}(\psi)\ge\psi$ and $c(t)e^{-t}$ is decreasing, Combining inequality \eqref{eq:1125m} and \eqref{eq:1126c}, then we have
\begin{equation}
	\label{eq:1126d}
	\begin{split}
&\limsup_{t\rightarrow+\infty}\int_{M}|F_{l,t}-(1-b_{t,1}(\psi))\tilde{f}|^{2}e^{-\Phi_l}c(-\psi)\\
\le&\limsup_{t\rightarrow+\infty}\int_{M}|F_{l,t}-(1-b_{t,1}(\psi))\tilde{f}|^{2}e^{-\Phi_l-\psi+v_{t,1}(\psi)}c(-v_{t,1}(\psi))\\
\leq& \limsup_{t\rightarrow+\infty}(\int_{0}^{t+1}c(s)e^{-s}ds) \int_{M}\mathbb{I}_{\{-t-1<\psi<-t\}}|\tilde f|^2e^{-\Phi_l-\psi}\\
\leq&(\int_0^{+\infty}c(s)e^{-s}ds)\sum_{\beta\in\tilde I_1}\sum_{\alpha\in E_{\beta}}\frac{|d_{\beta,\alpha}|^2(2\pi)^ne^{-\Phi_l(z_{\beta})}}{\Pi_{1\le j\le n}(\alpha_j+1)c_{j,\beta_j}^{2\alpha_{j}+2}}\\
	<&+\infty.
\end{split}
\end{equation}
For any open set $K\Subset M\backslash \{z_{\beta}:\beta\in\tilde I_1\}$, it follows from $b_{t,1}(s)=1$ for any $s\ge -t$ and $c(s)e^{-s}$ is decreasing with respect to $s$ that there exists a constant $C_K>0$ such that
$$\int_{K}|(1-b_{t,1}(\psi))\tilde{f}|^2e^{-\Phi_l}c(-\psi)\le C_K\int_{\{\psi<-t_1\}}|\tilde{f}|^2<+\infty$$ for any $t>t_1$,
which implies that
$$\limsup_{t\rightarrow+\infty}\int_{K}|F_{l,t}|^2e^{-\Phi_l}c(-\psi)<+\infty.$$
Using Lemma \ref{l:converge} and the diagonal method,
we obtain that
there exists a subsequence of $\{F_{l,t}\}_{t\rightarrow+\infty}$ denoted by $\{F_{l,t_m}\}_{m\rightarrow+\infty}$
uniformly convergent on any compact subset of $M\backslash \{z_{\beta}:\beta\in\tilde I_1\}$. As $\{z_{\beta}:\beta\in\tilde I_1\}$ is discrete subset of $M$, we obtain that $\{F_{l,t_m}\}_{m\rightarrow+\infty}$ is uniformly convergent to a holomorphic $(n,0)$ form $F_{l}$ on $M$ on any compact subset of $M$. Then it follows from inequality \eqref{eq:1126d} and the Fatou's Lemma that
\begin{displaymath}
\begin{split}
&\int_{M}|F_{l}|^{2}e^{-\Phi_l}c(-\psi)
\\=&\int_{\Omega}\liminf_{m\rightarrow+\infty}|F_{l,t_m}-(1-b_{t_m,1}(\psi))\tilde{f}|^{2}e^{-\Phi_l}c(-\psi)\\
\leq&\liminf_{m\rightarrow+\infty}\int_{M}|F_{l,t_m}-(1-b_{t_m,1}(\psi))\tilde{f}|^{2}e^{-\Phi_l}c(-\psi)\\
\leq&(\int_0^{+\infty}c(s)e^{-s}ds)\sum_{\beta\in \tilde I_1}\sum_{\alpha\in E_{\beta}}\frac{|d_{\beta,\alpha}|^2(2\pi)^ne^{-\Phi_l(z_{\beta})}}{\Pi_{1\le j\le n}(\alpha_j+1)c_{j,\beta_j}^{2\alpha_{j}+2}}\\
	<&+\infty.
\end{split}	
\end{displaymath}
Note that $\lim_{l\rightarrow+\infty}\Phi_l(z_{\beta})=\varphi(z_{\beta})>-\infty$ for any $\beta\in I_1$, then we have
\begin{equation}
	\label{eq:1126e}\begin{split}
		&\limsup_{l\rightarrow+\infty}\int_{M}|F_{l}|^{2}e^{-\Phi_l}c(-\psi)\\
		\leq&(\int_0^{+\infty}c(s)e^{-s}ds)\sum_{\beta\in\tilde I_1}\sum_{\alpha\in E_{\beta}}\frac{|d_{\beta,\alpha}|^2(2\pi)^ne^{-\varphi(z_{\beta})}}{\Pi_{1\le j\le n}(\alpha_j+1)c_{j,\beta_j}^{2\alpha_{j}+2}}\\
	<&+\infty.
	\end{split}
\end{equation}
Using Lemma \ref{l:converge},
we obtain that
there exists a subsequence of $\{F_{l}\}$ (also denoted by $\{F_{l}\}$)
uniformly convergent to a holomorphic $(n,0)$ form $F$ on $M$ on any compact subset of $M$, which satisfies that
\begin{equation*}
\int_{M}|F|^2e^{-\varphi}c(-\psi)\le (\int_0^{+\infty}c(s)e^{-s}ds)\sum_{\beta\in\tilde I_1}\sum_{\alpha\in E_{\beta}}\frac{|d_{\beta,\alpha}|^2(2\pi)^ne^{-\varphi(z_{\beta})}}{\Pi_{1\le j\le n}(\alpha_j+1)c_{j,\beta_j}^{2\alpha_{j}+2}}.
\end{equation*}
It follows from Lemma \ref{closedness}, we have $(F-f,z_{\beta})\in(\mathcal{O}(K_{M})\otimes\mathcal{I}(\psi))_{z_{\beta}}$ for any $\beta\in I_1$.

Thus, Proposition \ref{p:exten-pro-finite} holds.
\end{proof}

\begin{Lemma}
	\label{l:psi=G}Let $\psi=\max_{1\le j\le n}\{\pi_j^*(2\sum_{1\le k<\tilde m_j}p_{j,k}G_{\Omega_j}(\cdot,z_{j,k}))\}$ be a plurisubharmonic function on $M$, where $\sum_{1\le k<\tilde m_j}p_{j,k}G_{\Omega_j}(\cdot,z_{j,k})\not\equiv-\infty$ for any $j\in\{1,2,..,n\}$.	 Let $\Psi\le0$ be a plurisubharmonic function on $M$, and denote that $\tilde\psi:=\psi+\Psi$. Let $l(t)$ be a positive Lebesgue measurable function on $(0,+\infty)$ satisfying $l$ is decreasing on $(0,+\infty)$ and $\int_0^{+\infty}l(t)dt<+\infty$. If $\Psi\not\equiv 0$ on $M$, there exists a Lebesgue measurable subset $V$ of $M$ such that  $l(-\tilde\psi(z))<l(-\psi(z))$ for any  $z\in V$ and $\mu(V)>0$, where $\mu$ is the Lebesgue measure on $M$.
\end{Lemma}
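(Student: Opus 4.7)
The plan is a proof by contradiction: I assume $\mu(V)=0$, so $l(-\psi(z))=l(-\tilde\psi(z))$ for $\mu$-almost every $z\in M$, and derive a contradiction with $\Psi\not\equiv 0$.

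First I would establish that $\Psi<0$ pointwise on $M$. Since each $\Omega_j$ is a connected Riemann surface, $M=\prod_j\Omega_j$ is connected, and the maximum principle for plurisubharmonic functions together with $\Psi\le 0$ and $\Psi\not\equiv0$ rules out $\Psi(z)=0$ at any $z\in M$. Introduce the positive Lebesgue--Stieltjes measure $\nu:=d(-l)$ on $(0,+\infty)$, normalized so that $l(a)-l(b)=\nu((a,b])$ for $a<b$. Because $l$ is positive, decreasing, and tends to $0$ at infinity, the support $D:=\operatorname{supp}(\nu)\subset(0,+\infty)$ is unbounded.

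Next I locate a compact set $Q$ at which to compute. Let $S:=\bigcup_{j=1}^n\pi_j^{-1}(\tilde Z_j)$; then $\psi$ is continuous on the open, connected, full-measure set $M\setminus S$, and since $\psi\to-\infty$ at $S$, the image $-\psi(M\setminus S)$ is an interval of the form $(T_0,+\infty)$. Pick $s^*\in D\cap(T_0,+\infty)$ (possible since $D$ is unbounded) and choose $z_0\in M\setminus S$ with $-\psi(z_0)=s^*$, $\Psi(z_0)\in(-\infty,0)$, and such that $\psi$ has nonvanishing gradient at $z_0$. At any point where a single $\pi_{j_0}^*\psi_{j_0}$ strictly attains the maximum one has $\psi=\pi_{j_0}^*\psi_{j_0}$ locally and $\psi_{j_0}$ is real-analytic off $\tilde Z_{j_0}$, so its critical set has measure zero; these conditions on $z_0$ are generic within the level set $\{-\psi=s^*\}\cap(M\setminus S)$. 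Upper semi-continuity of $\Psi$ then yields a relatively compact open $Q\Subset M\setminus S$ containing $z_0$ on which $\Psi\le-\delta$ for some $\delta>0$, $-\psi$ is a submersion, and $-\psi(Q)\supset(s^*-\epsilon,s^*+\epsilon)$ for some $\epsilon>0$.

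The main computation is a Fubini/coarea calculation. Writing $F^Q_\varphi(t):=\mu(\{-\varphi<t\}\cap Q)$, one has
\begin{equation*}
\int_Q\bigl(l(-\psi)-l(-\tilde\psi)\bigr)\,d\mu=\int_0^{+\infty}\bigl(F^Q_\psi(t)-F^Q_{\tilde\psi}(t)\bigr)\,d\nu(t).
\end{equation*}
Since $-\tilde\psi\ge-\psi+\delta$ on $Q$ gives $F^Q_{\tilde\psi}(t)\le F^Q_\psi(t-\delta)$, the integrand on the right is at least $F^Q_\psi(t)-F^Q_\psi(t-\delta)\ge 0$. Under the assumption $\mu(V)=0$, the left side vanishes, forcing $F^Q_\psi(t)=F^Q_\psi(t-\delta)$ for $\nu$-almost every $t$. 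On the other hand, the coarea formula applied to the submersion $-\psi$ on $Q$ gives $(-\psi)_*(\mu|_Q)=\rho(s)\,ds$ with $\rho>0$ on $(s^*-\epsilon,s^*+\epsilon)$, whence $F^Q_\psi(t)-F^Q_\psi(t-\delta)=\int_{t-\delta}^t\rho(s)\,ds>0$ for every $t$ in the open interval $I_0:=(s^*-\epsilon,s^*+\epsilon+\delta)$. Since $I_0$ is a neighborhood of $s^*\in D=\operatorname{supp}(\nu)$, we have $\nu(I_0)>0$, contradicting the $\nu$-a.e.\ vanishing. Hence $\mu(V)>0$.

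The main obstacle is the construction of $z_0$ at a prescribed level $s^*\in D$: one must verify that the codimension-one level set $\{-\psi=s^*\}\cap(M\setminus S)$ meets, in positive $(2n-1)$-dimensional measure, the set where $\Psi\in(-\infty,0)$ and $-\psi$ is nondegenerate. This is a genericity argument resting on (i) the local representation $\psi=\pi_{j_0}^*\psi_{j_0}$ at points of unique maximum, (ii) the critical set of $\psi_{j_0}$ being a proper real-analytic subset of $\Omega_{j_0}\setminus\tilde Z_{j_0}$ and hence of measure zero, and (iii) $\{\Psi=-\infty\}$ being pluripolar, of real codimension at least $2$.
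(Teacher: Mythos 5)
Your argument is correct, but it takes a genuinely different and considerably heavier route than the paper's. The paper works directly at the distinguished point $z_{\beta^*}$: it fixes a neighborhood $U_0$ of $z_{\beta^*}$ in which $\{\psi<-t_0\}\cap U_0$ is relatively compact, uses the integrability of the decreasing function $l$ to find $t_1>t_0$ with $l(t)<l(t_1)$ for every $t>t_1$, and then uses upper semicontinuity of $\tilde\psi$ (together with $\Psi<0$ everywhere, from the maximum principle) to get $t_3:=-\sup_{\{\psi\le-t_2\}\cap U_0}\tilde\psi>t_1$ for a suitable $t_2\in(t_0,t_1)$; the open set $V=\{-t_1<\psi<-t_2\}\cap U_0$ then works, since $l(-\tilde\psi)\le l(t_3)<l(t_1)\le l(-\psi)$ on $V$. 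Your contradiction argument via the Stieltjes measure $\nu=d(-l)$, the layer-cake identity and the coarea formula is a valid alternative: it buys you the freedom to locate the witness set at an arbitrary level in $\mathrm{supp}(\nu)$ rather than deep in the sublevel sets of $\psi$, and it isolates cleanly where the strict decrease of $l$ enters; the price is the genericity discussion needed to make $-\psi$ a smooth submersion near a prescribed level. Two remarks. First, the hypothesis $\Psi(z_0)\in(-\infty,0)$ is unnecessary: you only need $\Psi(z_0)<0$, which holds at every point by the maximum principle, and upper semicontinuity then gives $\Psi\le-\delta$ on a neighborhood even when $\Psi(z_0)=-\infty$; this removes the pluripolarity issue you flag as the main obstacle. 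Second, the genericity step can be made completely explicit: since $s^*>-\sup_M\psi$, choose $j_0$ with $\sup_{\Omega_{j_0}}\psi_{j_0}>-s^*$, pick a non-critical point $x_{j_0}$ of the harmonic function $\psi_{j_0}$ on the (infinite) level set $\{\psi_{j_0}=-s^*\}$ (critical points of a nonconstant harmonic function are isolated zeros of $\partial\psi_{j_0}$), and pick $x_j$ with $\psi_j(x_j)<-s^*$ for $j\ne j_0$; at $z_0=(x_1,\dots,x_n)$ the index $j_0$ strictly attains the maximum on a neighborhood, so $\psi=\pi_{j_0}^*\psi_{j_0}$ is a submersion there, and no measure-theoretic genericity on the level hypersurface is required.
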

\begin{proof} Let $U_0\Subset M\backslash\{z_\beta:\exists j\in\{1,2,...,n\}$ s.t. $2\le \beta_j<\tilde m_j \}$ be a neighborhood of $z_{\beta^*}$, where $\beta^*=(1,1,...,1)\in\tilde I_1$.
	It follows from Lemma \ref{l:green-sup2} and Lemma \ref{l:G-compact} that there exists $t_0>0$ such that $\{z\in U_0:\psi<-t_0\}\Subset U_0$. As $l$ is decreasing and $\int_0^{+\infty}l(t)dt<+\infty$, then there exists $t_1>t_0$ such that $l(t)<l(t_1)$ holds for any $t>t_1.$
	
	 As $\tilde\psi$ and $\Psi\not\equiv0$ are upper semicontinuous, we have $\sup_{z\in\{\psi\leq-t_1\}\cap U_0}\tilde\psi(z)<-t_1$, which implies that there exists  $t_2\in(t_0,t_1)$ such that $\sup_{z\in\{\psi\leq-t_2\}\cap U_0}\tilde\psi(z)<-t_1$. Denote that $t_3:=-\sup_{z\in\{\psi\leq-t_2\}\cap U_0}\tilde\psi(z)$. Let $V=\{z\in\Omega:-t_1<\psi<-t_2\}\cap U_0$, then $\mu(V)>0$. As $l(t)$ is decreasing on $(0,+\infty)$, for any $z\in V$, we have
	$$l(-\tilde\psi(z))\leq l(t_3)<l(t_1)\leq l(-\psi).$$
	Thus, Lemma \ref{l:psi=G} holds.
\end{proof}

 Let $Z_j:=\{z_{j,1},z_{j,2},...,z_{j,m_j}\}\subset\Omega_j$ for any  $j\in\{1,2,...,n\}$, where $m_j$ is a positive integer.
For any $j\in\{1,2,...,n\}$, let $\varphi_j$ be a subharmonic function on $\Omega_j$ such that  $\varphi_j(z_{j,k})>-\infty$  for any $k\in\{1,2,...,m_j\}$.
Let $\varphi=\sum_{1\le j\le n}\pi_j^*(\varphi_j)$ and $\psi=\max_{1\le j\le n}\{\pi_j^*(2\sum_{1\le k\le m_j}p_{j,k}G_{\Omega_j}(\cdot,z_{j,k}))\}$ on $M=\Pi_{1\le j\le n}\Omega_j$, where $p_{j,k}>0$ for any $1\le j\le n$ and $1\le k\le m_j$.

Let $w_{j,k}$ be a local coordinate on a neighborhood $V_{z_{j,k}}\Subset\Omega_{j}$ of $z_{j,k}\in\Omega_j$ satisfying $w_{j,k}(z_{j,k})=0$ for any $j\in\{1,2,...,n\}$ and $k\in\{1,2,...,m_j\}$, where $V_{z_{j,k}}\cap V_{z_{j,k'}}=\emptyset$ for any $j$ and $k\not=k'$. Denote that $I_1:=\{(\beta_1,\beta_2,...,\beta_n):1\le \beta_j\le m_j$ for any $j\in\{1,2,...,n\}\}$, $V_{\beta}:=\prod_{1\le j\le n}V_{z_{j,\beta_j}}$ for any $\beta=(\beta_1,\beta_2,...,\beta_n)\in I_1$ and $w_{\beta}:=(w_{1,\beta_1},w_{2,\beta_2},...,w_{n,\beta_n})$ is a local coordinate on $V_{\beta}$ of $z_{\beta}:=(z_{1,\beta_1},z_{2,\beta_2},...,z_{n,\beta_n})\in M$. Denote that $\Psi_j:=2\sum_{1\le k\le m_j}p_{j,k}G_{\Omega_j}(\cdot,z_{j,k})$,
\begin{displaymath}
	\begin{split}
		H_j:=\{f\in H^0(\Omega_j,&\mathcal{O}(K_{\Omega_j})):\int_{\Omega_j}|f|^2e^{-\varphi_j}<+\infty\\
		&\&(f,z_{j,k})\in(\mathcal{O}(K_{\Omega_j})\otimes\mathcal{I}(\Psi_j))_{z_{j,k}}\mbox{for any $k\in\{1,2,...,m_j\}$}\}
	\end{split}
\end{displaymath}
and
\begin{displaymath}
	\begin{split}
	H_0:=\{f\in H^0(M,&\mathcal{O}(K_{M})):\int_{M}|f|^2e^{-\varphi}<+\infty\\
		&\&(f,z_{\beta})\in(\mathcal{O}(K_{M})\otimes\mathcal{I}(\psi))_{z_{\beta}}\mbox{for any $\beta \in I_1$}\}.	
	\end{split}
\end{displaymath}
We give an orthogonal property related to $H_j$ and $H_0$, which will be used in the proofs of Lemma \ref{l:orth1} and Theorem \ref{thm:prod-finite-point}.
\begin{Lemma}
	\label{l:orth2}Let $f_j$ be a holomorphic $(1,0)$ form on $\Omega_j$ such that $\int_{\Omega_j}|f_j|^2e^{-\varphi_j}<+\infty$ and $\int_{\Omega_j}f_j\wedge\overline fe^{-\varphi_j}=0$ for any $f\in H_j$. Denote that $\gamma_{j,k}:=ord_{z_{j,k}}f_j$ for any $j\in\{1,2,...,n\}$ and $k\in\{1,2,...,m_j\}$. If $\sum_{1\le j\le n}\frac{\gamma_{j,\beta_j}+1}{p_{j,\beta_j}}=1$ for any $\beta\in I_1$, then $\int_{M}f\wedge\overline{\wedge_{1\le j\le n} \pi_j^*(f_j)}e^{-\varphi}=0$ for any $f\in H_0$.
 \end{Lemma}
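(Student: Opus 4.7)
The plan is to prove the orthogonality by induction on $n$, using Fubini to integrate out one variable at a time and reducing the statement to the one-dimensional hypothesis $f_j\perp H_j$. The base case $n=1$ is precisely that hypothesis.

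For the inductive step, exploit the product structure $\varphi=\sum_j\pi_j^*\varphi_j$ together with Fubini to write
\begin{equation*}
\int_M f\wedge\overline{\wedge_{1\le j\le n}\pi_j^* f_j}\,e^{-\varphi}=\pm\int_{\Omega_n}\overline{f_n}\,e^{-\varphi_n}\wedge J(\zeta_n),
\end{equation*}
where $J(\zeta_n)\,d\zeta_n$ is the holomorphic $(1,0)$-form on $\Omega_n$ obtained by integrating the $(n-1,0)$-slice of $f$ at parameter $\zeta_n$ against $\wedge_{j<n}\pi_j^* f_j$ over $M':=\prod_{j<n}\Omega_j$ with weight $e^{-\sum_{j<n}\pi_j^*\varphi_j}$. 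Holomorphicity of $J$ in $\zeta_n$ follows by differentiation under the integral sign, and the $L^2$-integrability of $J$ on $(\Omega_n,e^{-\varphi_n})$ follows from Cauchy--Schwarz applied to Fubini's decomposition. It then suffices to verify $J\,d\zeta_n\in H_n$, whence $f_n\perp H_n$ forces the integral to vanish.

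To check the local multiplier ideal condition $(J\,d\zeta_n,z_{n,k})\in(\mathcal{O}(K_{\Omega_n})\otimes\mathcal{I}(\Psi_n))_{z_{n,k}}$ at each $k$, expand $f$ as a Taylor series in $w_{n,k}$ at every $z_\beta=(z_{\beta'},z_{n,k})$ for $\beta'$ in the analog $I_1'$ for $M'$, and apply the slicing lemma (Lemma \ref{l:1}): for $\sigma\le\gamma_{n,k}$ the $\sigma$-th Taylor coefficient $F_\sigma^{(\beta')}$ has germ at $z_{\beta'}$ in the scaled multiplier ideal $\mathcal{I}(\tilde\psi_{\beta'})_{z_{\beta'}}$ with scaling factor $1-(\gamma_{n,k}+1)/p_{n,k}$. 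The sum condition $\sum_j(\gamma_{j,\beta_j}+1)/p_{j,\beta_j}=1$ is precisely designed so that $\sum_{j<n}(\gamma_{j,\beta_j}+1)/[p_{j,\beta_j}(1-(\gamma_{n,k}+1)/p_{n,k})]=1$, matching the corresponding sum condition on $M'$ with the scaled weights. Invoking the inductive hypothesis on $M'$ then yields $\int_{M'}F_\sigma^{(\beta')}\wedge\overline{\wedge_{j<n}\pi_j^* f_j}\,e^{-\sum_{j<n}\pi_j^*\varphi_j}=0$ for every $\sigma\le\gamma_{n,k}$, so $\mathrm{ord}_{z_{n,k}}J\ge\gamma_{n,k}+1$, which supplies the required multiplier ideal condition.

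The main obstacle is reconciling the inductive application with the scaled weights: since the scaled multiplier ideal is generally larger than the original, the orthogonality $f_j\perp H_j$ under $\Psi_j$ does not immediately transfer to orthogonality against the scaled analogue. One fix is to formulate the inductive statement flexibly enough to absorb the scaling, letting the $p_{j,k}$ be arbitrary positive numbers in both the original and scaled settings so that the same $f_j$ serves as the orthogonal element throughout. Alternatively, one can bypass the induction entirely by a direct Taylor-expansion computation: localize the integral near each $z_\beta$, expand $f$ and $\wedge_j\pi_j^* f_j$ in simultaneous Taylor series in the coordinates $w_{j,\beta_j}$, and use the orthogonality of $f_j\perp H_j$ at the single-variable angular integration level to argue term-by-term cancellation.
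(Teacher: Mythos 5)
Your plan follows the paper's proof essentially step for step: induction on $n$, Fubini to define the slice--integral form $J\,d\zeta_n$ on $\Omega_n$ (the paper's $\tilde F h_n$), holomorphicity and square-integrability via Cauchy estimates and Cauchy--Schwarz, membership in $H_n$ via the slicing Lemma \ref{l:1} together with the inductive hypothesis for the scaled weights $p_{j,k}\bigl(1-\frac{\gamma_{n,k}+1}{p_{n,k}}\bigr)$, and finally the orthogonality $f_n\perp H_n$. The obstacle you flag at the end is genuine (the paper's own induction silently relies on resolving it) and your first fix is the correct one: every space that occurs, whether with the original or the rescaled weights, is contained in $\{g\in L^2(e^{-\varphi_j}):\mathrm{ord}_{z_{j,k}}g\ge\gamma_{j,k}+1\text{ for all }k\}$, so running the induction with orthogonality against that larger space --- which is exactly the orthogonality available wherever the lemma is invoked, e.g.\ in Lemma \ref{l:orth1} and in the proof of Theorem \ref{thm:prod-finite-point} --- makes the inductive statement stable under the rescaling.
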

\begin{proof}
	We will prove Lemma \ref{l:orth2} by induction on $n$. For the case $n=1$, Lemma \ref{l:orth2} is trivial. We assume that Lemma \ref{l:orth2} holds for $n-1$.
	
As $\Omega_j$ is an open Riemann surface, there exists a holomorphic $(1,0)$ form $h_j$ on $\Omega_j$ such that $h_j(z)\not=0$ for any $z\in\Omega_j$ (see \cite{OF81}). There exist a holomorphic function $\tilde{f}_j$ on $\Omega_j$ such that $f_j=\tilde{f}_jh_j$ and a holomorphic function $\tilde{f}$ on $M$ such that $f=\tilde{f}\wedge_{1\le j\le n}\pi_j^*(h_j)$. Let $\tilde{M}=\prod_{1\le j\le n-1}\Omega_j$ be an $(n-1)-$dimensional complex manifold, and let $\tilde{\pi}_j$ be the natural projection from $\tilde{M}$ to $\Omega_j$. Denote that
	$\tilde{\varphi}:=\sum_{1\le j\le n-1}\tilde{\pi}_j^*(\varphi_j)$.
	As $|\tilde f(x,y_n)|^2\le C_{y_n}\int_{\Omega_n}|\tilde f(x,\cdot)|^2|h_n|^2e^{-\varphi_n}$ for any $x\in\tilde M$ and $y_n\in\Omega_n$, where $C_{y_n}$ is a constant independent of $x$, we know
$$\int_{\tilde M}|\tilde f(\cdot,y_n)|^2e^{-\tilde{\varphi}}|\wedge_{1\le j\le n-1}\tilde{\pi}_j^*(h_j)|^2\le C_{y_n}\int_{M}|f|^2e^{-\varphi}<+\infty.$$ Denote that
	\begin{equation}
		\label{eq:1124a}\tilde{F}(y_n):=\int_{\tilde{M}}\tilde f(\cdot,y_n)\overline{\Pi_{1\le j\le n-1}\tilde{\pi}^*_j(\tilde{f}_j)}e^{-\tilde{\varphi}}|\wedge_{1\le j\le n-1}\tilde{\pi}_j^*(h_j)|^2,
	\end{equation}
where $y_n\in\Omega_n$, then $\tilde{F}$ is a function on $\Omega_n$. Take $F=\tilde{F}h_n$.

In the following, we prove $F\in H_n$.

Let $y_n\in\Omega_n$, and let $w_n$ be a local coordinate on a neighborhood $U_n\Subset\Omega_n$ of $y_n$ satisfying $w_n(y_n)=0$. For any nonnegative $\sigma$, $(\frac{\partial}{\partial w_n})^{\sigma}\tilde{f}$ is a holomorphic function on $\tilde{M}\times U_n$. Without misunderstanding, we see $U_n$ and the unit disc $\Delta$  the same ($y_n=0$). There exists a constant $C_{r,\sigma}$ such that
\begin{equation}
	\label{eq:1124b}\sup_{|w_n|<r}|(\frac{\partial}{\partial w_n})^{\sigma}\tilde{f}(x,w_n)|^2\le C_{r,\sigma}\int_{\Omega_n}|\tilde{f}(x,\cdot)|^2|h_n|^2e^{-\varphi_n}
\end{equation}
for any $x\in\tilde M$, where $r\in(0,1)$. It follows from inequality \eqref{eq:1124b}  and $\int_{M}|f|^2e^{-\varphi}<+\infty$ that
\begin{equation}
	\label{eq:1124d}\int_{\tilde M}|(\frac{\partial}{\partial w_n})^{\sigma}\tilde{f}(\cdot,w_n)|^2e^{-\tilde \varphi}|\wedge_{1\le j\le n-1}\tilde{\pi}_j^*(h_j)|^2<+\infty.\end{equation}	
Denote that
$$\tilde{F}_{\sigma}(w_n):=\int_{\tilde{M}}(\frac{\partial}{\partial w_n})^{\sigma}\tilde{f}(\cdot,w_n)\overline{\Pi_{1\le j\le n-1}\tilde{\pi}^*_j(\tilde{f}_j)}e^{-\tilde{\varphi}}|\wedge_{1\le j\le n-1}\tilde{\pi}_j^*(h_j)|^2,$$	
	where $w_n\in \Delta$, then $\tilde {F}_{\sigma}$ is a function on $\Delta$. It is clear the $\tilde{F}=\tilde{F}_0$ on $U_n=\Delta$. It follows from inequality \eqref{eq:1124b} that
	\begin{equation}
		\label{eq:1124c}
		\begin{split}
			&|(\frac{\partial}{\partial w_n})^{\sigma}\tilde{f}(x,w_n)-(\frac{\partial}{\partial w_n})^{\sigma}\tilde{f}(x,\tilde w_n)-(w_n-\tilde w_n)(\frac{\partial}{\partial w_n})^{\sigma+1}f(x,\tilde w_n)|^2\\
			\le&|w_n-\tilde w_n|^4\sup_{|z|<r}|(\frac{\partial}{\partial w_n})^{\sigma+2}\tilde{f}(x,z)|\\
			\le &C_{r,\sigma}\int_{\Omega_n}|\tilde{f}(x,\cdot)|^2|h_n|^2e^{-\varphi_n} |w_n-\tilde w_n|^4
		\end{split}
	\end{equation}
	for any $x\in \tilde{M}$, where $|w_n|<r$ and $|\tilde w_n|<r$.
	Following from inequality \eqref{eq:1124c} and Cauchy-Schwarz inequality, we obtain that
	\begin{displaymath}
		\begin{split}
			&|\tilde F_{\sigma}(w_n)-\tilde{F}_{\sigma}(\tilde w_n)-(w_n-\tilde w_n)\tilde F_{\sigma+1}(\tilde w_n)|^2\\
			\le &|\int_{\tilde{M}}((\frac{\partial}{\partial w_n})^{\sigma}\tilde{f}(\cdot,w_n)-(\frac{\partial}{\partial w_n})^{\sigma}\tilde{f}(\cdot,\tilde w_n)-(w_n-\tilde w_n)(\frac{\partial}{\partial w_n})^{\sigma+1}f(x,\tilde w_n))\\
			&\times\overline{\Pi_{1\le j\le n-1}\tilde{\pi}^*_j(\tilde{f}_j)}e^{-\tilde{\varphi}}|\wedge_{1\le j\le n-1}\tilde{\pi}_j^*(h_j)|^2|^2\\
			\le & C_{r,\sigma}|w_n-\tilde w_n|^4\int_{M}|f|^2e^{-\varphi}\times\Pi_{1\le j\le n-1}\int_{\Omega_j}|f_j|^2e^{-\varphi_j},
					\end{split}
	\end{displaymath}
	which implies that
	$$\frac{\partial}{\partial w_n}\tilde{F}_{\sigma}=\tilde{F}_{\sigma+1}$$
	 on $\Delta$ for any $\sigma\in\mathbb{Z}_{\ge0}$. Thus, we obtain that $F$ is a holomorphic $(1,0)$ form on $\Omega_n$.
	Let
	$$\tilde \psi=\max_{1\le j\le n-1}\{\tilde{\pi}_j^
	*(\sum_{1\le k\le m_j}p_{j,k}(1-\frac{\gamma_{n,k}+1}{p_{n,k}})G_{\Omega_j}(\cdot,z_{j,k}))\}$$
	 be a plurisubharmonic function on $\tilde M$.
 Denote that $I_2:=\{(\tilde\beta_1,\tilde\beta_2,...\tilde\beta_{n-1}):1\le\tilde\beta_j\le m_j\,\&\,\tilde\beta_j\in\mathbb{Z}_{>0}$ for any $j\in\{1,2,...,n-1\}\}$ and $x_{\tilde{\beta}}:=(z_{1,\tilde\beta_1},z_{2,\tilde\beta_2},...,z_{n-1,\tilde\beta_{n-1}})\in\tilde M$ for any $\tilde\beta\in I_2$. As $(f,z_{\beta})\in(\mathcal{O}(K_{M})\otimes\mathcal{I}(\psi))_{z_{\beta}}$ for any $\beta \in I_1$, we have $(\tilde f,z_{\beta})\in\mathcal{I}(\psi)_{z_{\beta}}$ for any $\beta\in I_1$.
	 Taking $y_n=z_{n,k}\in\Omega_n$ ($k\in\{1,2,...,m_n\}$), it follows from Lemma \ref{l:1} that $((\frac{\partial}{\partial w_n})^{\sigma}\tilde f(\cdot,z_{n,k}),x_{\tilde\beta})\in\mathcal{I}(\tilde\psi)_{x_{\tilde\beta}}$ for any $\sigma\in\{0,1,...,\gamma_{n,k}\}$ and $\tilde\beta\in I_2$, which implies that
	 $$((\frac{\partial}{\partial w_n})^{\sigma}\tilde f(\cdot,z_{n,k})\wedge_{1\le j\le n-1}\tilde\pi_j^*(h_j),x_{\tilde\beta})\in(\mathcal{O}(K_{\tilde{M}})\otimes\mathcal{I}(\tilde\psi))_{x_{\tilde\beta}}$$
	  for any $\sigma\in\{0,1,...,\gamma_{n,k}\}$ and $\tilde\beta\in I_2$. As $\sum_{1\le j\le n}\frac{\gamma_{j,\beta_j}+1}{p_{j,\beta_j}}$ for any $\beta\in I_1$, we have
	  $$\sum_{1\le j\le n-1}\frac{\gamma_{j,\tilde\beta_j}+1}{p_{j,\tilde\beta_j}(1-\frac{\gamma_{n,k}+1}{p_{n,k}})}=1$$
	 for any $\tilde\beta\in{I}_2$. As Lemma \ref{l:orth2} holds for $n-1$ and $(\frac{\partial}{\partial w_n})^{\sigma}\tilde f(\cdot,z_{n,k})\wedge_{1\le j\le n-1}\tilde\pi_j^*(h_j)$ is a holomorphic $(n-1,0)$ form on $\tilde M$, it follows from inequality \eqref{eq:1124d} and $((\frac{\partial}{\partial w_n})^{\sigma}\tilde f(\cdot,z_{n,k})\wedge_{1\le j\le n-1}\tilde\pi_j^*(h_j),x_{\tilde\beta})\in(\mathcal{O}(K_{\tilde{M}})\otimes\mathcal{I}(\tilde\psi))_{x_{\tilde\beta}}$ for any $\sigma\in\{0,1,...,\gamma_{n,k}\}$ and $\tilde\beta\in I_2$ that
	 \begin{equation}
	 	\label{eq:1124e}\tilde{F}_{\sigma}(z_{n,k})=0
	 \end{equation}
	 for any $\sigma\in\{0,1,...,\gamma_{n,k}\}$. As $\frac{\partial}{\partial w_n}\tilde{F}_{\sigma}=\tilde{F}_{\sigma+1}$ and $\tilde{F}_0=\tilde{F}$, we obtain that
	 $$(\tilde F,z_{n,k})\in\mathcal{I}(2(\gamma_{n,k}+1)G_{\Omega_n}(\cdot,z_{n,k}))$$
	  for any $k\in\{1,2,...,m_n\}$. Note that $F=\tilde Fh_n$ and $f=\tilde f\wedge_{1\le j\le n}\pi_j^*(h_j)$.
	 It follows from the Fubini's theorem that
	 \begin{displaymath}\begin{split}
	 	&\int_{\Omega_n}|F|^2e^{-\varphi_n}\\
	 	=&\int_{\Omega_n}(\int_{\tilde{M}}\tilde f(\cdot,y_n)\overline{\Pi_{1\le j\le n-1}\tilde{\pi}^*_j(\tilde{f}_j)}e^{-\tilde{\varphi}}|\wedge_{1\le j\le n-1}\tilde{\pi}_j^*(h_j)|^2	)|h_n(y_n)|^2\\
	 	=&\int_{M}|f|^2e^{-\varphi}\\
	 	<&+\infty.
	 \end{split}	 \end{displaymath} Thus, we have $F\in H_n$.
	
	It follows from the Fubini's theorem that
	\begin{displaymath}
		\begin{split}
			&\int_{M}f\wedge\overline{\wedge_{1\le j\le n} \pi_j^*(f_j)}e^{-\varphi}\\
			=&\int_{\Omega_n}(h_n(y_n)\int_{\tilde{M}}\tilde f(\cdot,y_n)\overline{\Pi_{1\le j\le n-1}\tilde{\pi}^*_j(\tilde{f}_j)}e^{-\tilde{\varphi}}|\wedge_{1\le j\le n-1}\tilde{\pi}_j^*(h_j)|^2)\wedge \overline{f_n}(y_n) e^{-\varphi_n}\\
			=&\int_{\Omega_n}F\wedge\overline{f_n}e^{-\varphi_n}\\
			=&0.
		\end{split}
	\end{displaymath}
	We have thus prove Lemma \ref{l:orth2} for $n$. The proof of Lemma \ref{l:orth2} is now complete.
	\end{proof}

Let $z_j\in\Omega_j$ and $z_0\in M$ such that $\pi_j(z_0)=z_j$ for any $1\le j\le n$. Let $\varphi_j$ be  subharmonic functions on $\Omega_j$ such that $\varphi_j(z_j)>-\infty$. Denote that $\psi:=\max_{1\le j\le n}\{2p_j\pi_j^{*}(G_{\Omega_j}(\cdot,z_j))\}$ and $\varphi:=\sum_{1\le j\le n}\pi_j^*(\varphi_j)$ on $M$, where $p_j$ is positive real number for $1\le j\le n$.

Let $w_j$ be a local coordinate on a neighborhood $V_{z_j}$ of $z_j\in\Omega_j$ satisfying $w_j(z_j)=0$. Denote that $V_0:=\prod_{1\le j\le n}V_{z_j}$, and $w:=(w_1,w_2,...,w_n)$ is a local coordinate on $V_0$ of $z_0\in M$.
Take $E=\{(\alpha_1,\alpha_2,...,\alpha_n):\sum_{1\le j\le n}\frac{\alpha_j+1}{p_{j}}=1\,\&\,\alpha_j\in\mathbb{Z}_{\ge0}\}$.

It follows from Proposition \ref{p:exten-pro-finite} and Lemma \ref{l:converge} that there exists a holomorphic  $(1,0)$ form $f_{j,\alpha_j}$ on $\Omega_j$ such that $(f_{j,\alpha_j}-w_j^{\alpha_j}dw_j,z_j)\in(\mathcal{O}(K_{\Omega_j})\otimes \mathcal{I}(2(\alpha_j+1)G_{\Omega_j}(\cdot,z_j)))_{z_j}$ and $\int_{\Omega_j}|f_{j,\alpha_j}|^2e^{-\varphi_j}=\inf\{\int_{\Omega_j}|\tilde{f}|^2e^{-\varphi_j}:\tilde{f}\in H^0(\Omega_j,\mathcal{O}(K_{\Omega_j}))\,\&\,(\tilde{f}-w_j^{\alpha_j}dw_j,z_j)\in(\mathcal{O}(K_{\Omega_j})\otimes \mathcal{I}(2(\alpha_j+1)G_{\Omega_j}(\cdot,z_j)))_{z_j}\}<+\infty$ for any $\alpha\in E$ and $j\in\{1,2,...,n\}$.

The following Lemma gives a property of $\sum_{\alpha\in E}d_{\alpha}\Pi_{1\le j\le n}\pi_{j}^*(f_{j,\alpha_j})$, which will be used in the proofs of Theorem \ref{thm:linear-2d} and Remark \ref{r:1.1}.

\begin{Lemma}
	\label{l:orth1}$F=\sum_{\alpha\in E}d_{\alpha}\Pi_{1\le j\le n}\pi_{j}^*(f_{j,\alpha_j})$ is a holomorphic $(n,0)$ form on $M$ such that $({F}-\sum_{\alpha\in E}d_{\alpha}w^{\alpha}dw_1\wedge dw_2\wedge...\wedge dw_n ,z_0)\in\mathcal{O}(K_{M})\otimes\mathcal{I}(\psi))_{z_0}$,
	$$\int_{M}|F|^2e^{-\varphi}=\sum_{\alpha\in E}|d_{\alpha}|^2\int_{M}|\Pi_{1\le j\le n}\pi_{j}^*(f_{j,\alpha_j})|^2e^{-\varphi}$$
	 and $\int_{M}|F|^2e^{-\varphi}=\inf\{\int_{M}|\tilde{F}|^2e^{-\varphi}:\tilde{F}$ is a holomorphic $(n,0)$ form on $M$ such that $(\tilde{F}-\sum_{\alpha\in E}d_{\alpha}w^{\alpha}dw_1\wedge dw_2\wedge...\wedge dw_n ,z_0)\in\mathcal{O}(K_{M})\otimes\mathcal{I}(\psi))_{z_0}\}$, where $d_{\alpha}$ is a constant for any $\alpha\in E$.
\end{Lemma}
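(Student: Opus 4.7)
The argument decomposes into three steps: (i) verify the jet condition for $F$ at $z_0$; (ii) establish that the family $\{\prod_j \pi_j^*(f_{j,\alpha_j})\}_{\alpha \in E}$ is pairwise $L^2$-orthogonal, so $\int_M |F|^2 e^{-\varphi}$ splits as claimed; (iii) prove that $F$ realizes the infimum. Step (iii) is the main step and is where Lemma \ref{l:orth2} is invoked.

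For (i), since $G_{\Omega_j}(\cdot, z_j) - \log|w_j|$ is locally bounded near $z_j$ (Lemma \ref{l:green-sup}), the ideal $\mathcal{I}(2(\alpha_j+1)G_{\Omega_j}(\cdot, z_j))_{z_j}$ coincides with $(w_j^{\alpha_j+1})$, so each $f_{j,\alpha_j}$ has local form $(w_j^{\alpha_j} + h_{j,\alpha_j}) dw_j$ with $h_{j,\alpha_j}$ vanishing to order $\ge \alpha_j + 1$ at $z_j$. Expanding $\prod_j \pi_j^*(f_{j,\alpha_j}) = \prod_j(w_j^{\alpha_j} + h_{j,\alpha_j}) \, dw_1 \wedge \cdots \wedge dw_n$, the term $w^\alpha dw_1 \wedge \cdots \wedge dw_n$ is singled out, while every other monomial $w^\beta$ arising from a cross term involves at least one factor from some $h_{j,\alpha_j}$, forcing $\sum_j (\beta_j+1)/p_j > \sum_j (\alpha_j+1)/p_j = 1$. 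By Lemma \ref{l:0} (applied after noting $\psi$ differs from $\max_j 2p_j\log|w_j|$ by a locally bounded function near $z_0$), all such cross terms lie in $\mathcal{I}(\psi)_{z_0}$, which gives the required jet condition.

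For (ii), if $\alpha \neq \alpha'$ in $E$, pick $j$ with $\alpha_j' > \alpha_j$ (without loss of generality). The local form of $f_{j,\alpha_j'}$ then shows $(f_{j,\alpha_j'}, z_j) \in \mathcal{I}(2(\alpha_j+1)G_{\Omega_j}(\cdot, z_j))_{z_j}$, so $f_{j,\alpha_j'}$ is admissible in the extension problem defining $f_{j,\alpha_j}$. The variational characterization of the minimizer $f_{j,\alpha_j}$ yields $\int_{\Omega_j} f_{j,\alpha_j} \wedge \overline{f_{j,\alpha_j'}} e^{-\varphi_j} = 0$. Fubini's theorem (valid since $\int_{\Omega_j} |f_{j,\alpha_j}|^2 e^{-\varphi_j} < \infty$ for each $j$) converts the $M$-integral of the cross term into a product of one-dimensional integrals, one of which vanishes.

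For (iii), given any competitor $\tilde F$ with finite $L^2$ norm and the correct jet, set $F_0 := \tilde F - F$, which lies in $H_0$. The target is $\int_M F_0 \wedge \overline{\prod_j \pi_j^*(f_{j,\alpha_j})} e^{-\varphi} = 0$ for each $\alpha \in E$, since summing over $\alpha$ with weights $\overline{d_\alpha}$ will give $\int_M F_0 \wedge \overline F e^{-\varphi} = 0$ and hence $\int_M |\tilde F|^2 e^{-\varphi} = \int_M |F|^2 e^{-\varphi} + \int_M |F_0|^2 e^{-\varphi} \geq \int_M |F|^2 e^{-\varphi}$ by a Pythagorean identity. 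To invoke Lemma \ref{l:orth2} with $f_j := f_{j,\alpha_j}$, one checks its two hypotheses: the order identity $\sum_j (\gamma_{j,1}+1)/p_j = \sum_j (\alpha_j+1)/p_j = 1$ is exactly $\alpha \in E$; and $f_{j,\alpha_j}$ is $L^2$-orthogonal to $H_j$, because every $g \in H_j$ satisfies $(g, z_j) \in \mathcal{I}(2p_j G_{\Omega_j}(\cdot, z_j))_{z_j} \subset \mathcal{I}(2(\alpha_j+1)G_{\Omega_j}(\cdot, z_j))_{z_j}$ (the inclusion holds because $\alpha \in E$ forces $\alpha_j + 1 \leq p_j$), so again the variational principle applies. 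The hard part is precisely the bookkeeping here: the matching of the two ideal filtrations between $H_j$ and the defining extension problem for $f_{j,\alpha_j}$ works only because $\alpha$ lies in $E$, and the passage from one-dimensional orthogonality to product orthogonality on $M$ is exactly the content of Lemma \ref{l:orth2}.
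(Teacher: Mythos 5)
Your proposal is correct and follows essentially the same route as the paper: Lemma \ref{l:0} for the jet condition, the variational orthogonality of the one-dimensional minimizers $f_{j,\alpha_j}$ combined with Fubini for the Pythagorean splitting over $\alpha\in E$, and Lemma \ref{l:orth2} for the optimality. Your explicit verification that $\alpha\in E$ forces $\alpha_j+1\le p_j$, hence $\mathcal{I}(2p_jG_{\Omega_j}(\cdot,z_j))_{z_j}\subset\mathcal{I}(2(\alpha_j+1)G_{\Omega_j}(\cdot,z_j))_{z_j}$ so that membership in $H_j$ feeds into the variational orthogonality, is a detail the paper leaves implicit but is exactly the right bookkeeping.
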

\begin{proof}
As $(f_{j,\alpha_j}-w_j^{\alpha_j}dw_j,z_j)\in(\mathcal{O}(K_{\Omega_j})\otimes \mathcal{I}(2(\alpha_j+1)G_{\Omega_j}(\cdot,z_j)))_{z_j}$ and $\sum_{1\le j\le n}\frac{\alpha_j+1}{p_j}=1$ for any $\alpha\in E$, it follows from Lemma \ref{l:0} that $({F}-\sum_{\alpha\in E}d_{\alpha}w^{\alpha}dw_1\wedge dw_2\wedge...\wedge dw_n ,z_0)\in\mathcal{O}(K_{M})\otimes\mathcal{I}(\psi))_{z_0}$.

	As $f_{j,\alpha_j}$ is a  holomorphic $(1,0)$ form  on $\Omega_j$ such that $(f_{j,\alpha_j}-w_j^{\alpha_j}dw_j,z_j)\in(\mathcal{O}(K_{\Omega_j})\otimes \mathcal{I}(2(\alpha_j+1)G_{\Omega_j}(\cdot,z_j)))_{z_j}$ and $\int_{\Omega_j}|f_{\alpha,j}|^2e^{-\varphi_j}=\inf\{\int_{\Omega_j}|\tilde{f}|^2e^{-\varphi_j}:\tilde{f}\in H^0(\Omega_j,\mathcal{O}(K_{\Omega_j}))\,\&\,(\tilde{f}-w_j^{\alpha_j}dw_j,z_j)\in(\mathcal{O}(K_{\Omega_j})\otimes \mathcal{I}(2(\alpha_j+1)G_{\Omega_j}(\cdot,z_j)))_{z_j}\}$, we obtain that $\int_{\Omega_j}f_{j,\alpha_j}\wedge\overline{f}e^{-\varphi_j}=0$ for any holomorphic $(1,0)$ form $f$ on $\Omega_j$ satisfying $\int_{\Omega_j}|f|^2e^{-\varphi_j}<+\infty$ and $(f,z_j)\in(\mathcal{O}(K_{\Omega_j})\otimes\mathcal{I}(2(\alpha_j+1)G_{\Omega_j}(\cdot,z_j)))_{z_j}$. It follows from Lemma \ref{l:orth2} that
$$\int_{M}f\wedge\overline{\wedge_{1\le j\le n}\pi_j^*(f_{j,\alpha_j})}e^{-\varphi}=0$$
 for any holomorphic $(n,0)$ form $f$ on $M$ satisfying $\int_{M}|f|^2e^{-\varphi}<+\infty$ and $(f,z_0)\in\mathcal{O}(K_{M})\otimes\mathcal{I}(\psi))_{z_0}$ for any $\alpha\in E$. Thus, we have $$\int_{M}f\wedge\overline{F}e^{-\varphi}=0$$
 for any holomorphic $(n,0)$ form $f$ on $M$ satisfying $\int_{M}|f|^2e^{-\varphi}<+\infty$ and $(f,z_0)\in\mathcal{O}(K_{M})\otimes\mathcal{I}(\psi))_{z_0}$ for any $\alpha\in E$, which implies that $\int_{M}|F|^2e^{-\varphi}=\inf\{\int_{M}|\tilde{F}|^2e^{-\varphi}:\tilde{F}$ is a holomorphic $(n,0)$ form on $M$ such that $(\tilde{F}-\sum_{\alpha\in E}d_{\alpha}w^{\alpha}dw_1\wedge dw_2\wedge...\wedge dw_n ,z_0)\in\mathcal{O}(K_{M})\otimes\mathcal{I}(\psi))_{z_0}\}$.

 As $ord_{z_j}f_{j,\alpha_j}=\alpha_j$ and $\int_{\Omega_j}f_{j,\alpha_j}\wedge\overline{f}e^{-\varphi_j}=0$ for any holomorphic $(1,0)$ form $f$ on $\Omega_j$ satisfying $\int_{\Omega_j}|f|^2e^{-\varphi_j}<+\infty$ and $(f,z_j)\in(\mathcal{O}(K_{\Omega_j})\otimes\mathcal{I}(2(\alpha_j+1)G_{\Omega_j}(\cdot,z_j)))_{z_j}$, we have
 $$\int_{M}\wedge_{1\le j\le n}\pi_j^*(f_{j,\alpha_j})\wedge\overline{\wedge_{1\le j\le n}\pi_j^*(f_{j,\tilde{\alpha}_j})}e^{-\varphi}=0$$
 for any $\alpha\in E$ and $\tilde{\alpha}\in E$ satisfying $\alpha\not=\tilde{\alpha}$, which implies that
 	$$\int_{M}|F|^2e^{-\varphi}=\sum_{\alpha\in E}|d_{\alpha}|^2\int_{M}|\Pi_{1\le j\le n}\pi_{j}^*(f_{j,\alpha_j})|^2e^{-\varphi}.$$
 Hence we prove Lemma \ref{l:orth1}.
\end{proof}

The following lemma will be used in the proofs of Theorem \ref{thm:prod-finite-point} and Theorem \ref{thm:prod-infinite-point}.
\begin{Lemma}
	\label{l:decom}Let $U_j$ be an open subset of $\Omega_j$, and let $f_j\not\equiv0$ be a holomorphic $(1,0)$ form on $U_j$ for any $j\in\{1,2,...,n\}$. Let $F$ be a holomorphic $(n,0)$ form on $M$. If $F=\wedge_{1\le j\le n}\pi_j^*(f_j)$ on $\Pi_{1\le j\le n}U_j$, then there exists a holomorphic $(1,0)$ form $\tilde{f}_j$ on $\Omega_j$ for any $j\in\{1,2,...,n\}$ such that $F=\wedge_{1\le j\le n}\pi_j^*(\tilde{f}_j)$ on $M$ and $f_j=\tilde{f}_{j}$ on $U_j$ for any $j\in\{1,2,...,n\}$.
\end{Lemma}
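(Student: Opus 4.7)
\smallskip

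\noindent\textbf{Proof proposal.} The plan is to reduce the factorization problem for the $(n,0)$ form $F$ to an analogous factorization problem for a single holomorphic function on $M$, by trivializing the canonical bundle using non-vanishing holomorphic $(1,0)$ forms on each factor. Recall that since each $\Omega_j$ is an open Riemann surface, there exists a holomorphic $(1,0)$ form $h_j$ on $\Omega_j$ with $h_j(z)\neq 0$ for all $z\in\Omega_j$ (this was used already in the proof of Lemma \ref{l:orth2}). Hence $\wedge_{1\le j\le n}\pi_j^*(h_j)$ is a nowhere-vanishing holomorphic $(n,0)$ form on $M$, and there exist a holomorphic function $\tilde F$ on $M$ and holomorphic functions $g_j$ on $U_j$ (with $g_j\not\equiv 0$, since $f_j\not\equiv 0$) such that
$$F=\tilde F\cdot\wedge_{1\le j\le n}\pi_j^*(h_j)\quad\text{on }M,\qquad f_j=g_jh_j\quad\text{on }U_j.$$
The hypothesis $F=\wedge_{1\le j\le n}\pi_j^*(f_j)$ on $\prod_jU_j$ translates into $\tilde F(w_1,\dots,w_n)=\prod_{1\le j\le n}g_j(w_j)$ on $\prod_jU_j$.

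Next I would fix, for each $j$, a point $a_j\in U_j$ with $g_j(a_j)\neq 0$ (such $a_j$ exists because $g_j\not\equiv 0$ on $U_j$) and define
$$\tilde g_j(z):=\frac{\tilde F(a_1,\dots,a_{j-1},z,a_{j+1},\dots,a_n)}{\prod_{k\neq j}g_k(a_k)},\qquad z\in\Omega_j.$$
Since $\tilde F$ is holomorphic on $M$ and the denominator is a non-zero constant, $\tilde g_j$ is a holomorphic function on all of $\Omega_j$. For $z\in U_j$, the point $(a_1,\dots,a_{j-1},z,a_{j+1},\dots,a_n)$ lies in $\prod_kU_k$, so the factorization on $\prod_kU_k$ gives
$$\tilde F(a_1,\dots,a_{j-1},z,a_{j+1},\dots,a_n)=g_j(z)\prod_{k\neq j}g_k(a_k),$$
which shows $\tilde g_j=g_j$ on $U_j$.

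Finally, both $\tilde F$ and $\prod_{1\le j\le n}\pi_j^*(\tilde g_j)$ are holomorphic functions on the connected complex manifold $M$ (connected because each $\Omega_j$ is connected), and they coincide on the non-empty open subset $\prod_jU_j$ by construction. The identity theorem therefore forces $\tilde F=\prod_{1\le j\le n}\pi_j^*(\tilde g_j)$ on $M$. Setting $\tilde f_j:=\tilde g_jh_j$, we obtain holomorphic $(1,0)$ forms on $\Omega_j$ with $\tilde f_j=f_j$ on $U_j$ and $F=\wedge_{1\le j\le n}\pi_j^*(\tilde f_j)$ on all of $M$, as required. The only place requiring any thought is the extension step: this is why it is essential to trivialize $K_M$ globally by the $h_j$'s, so that the variables can be separated by evaluating $\tilde F$ along coordinate slices through the basepoint $(a_1,\dots,a_n)$; after that the identity theorem closes the argument immediately.
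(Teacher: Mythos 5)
Your proposal is correct and follows essentially the same route as the paper: both trivialize $K_M$ by the nowhere-vanishing forms $\wedge_{1\le j\le n}\pi_j^*(h_j)$ and reduce to factoring the holomorphic function $F/\wedge_{1\le j\le n}\pi_j^*(h_j)$ as a product $\Pi_{1\le j\le n}\pi_j^*(\tilde h_j)$. The only difference is that the paper simply asserts the existence of the global factors $\tilde h_j$, whereas you supply the justification (evaluation along coordinate slices through a basepoint where no $g_k$ vanishes, followed by the identity theorem), which is a worthwhile filling-in of detail rather than a different method.
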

\begin{proof}
As $\Omega_j$ is an open Riemann suface, there exists a holomorphic $(1,0)$ form $h_j$ on $\Omega_j$ such that $h_j(z)\not=0$ for any $z\in \Omega_j$ (see \cite{OF81}). As $\frac{F}{\Pi_{1\le j\le n}\pi_j^*(h_j)}$ is a holomorphic function on $M$ and $\frac{F}{\Pi_{1\le j\le n}\pi_j^*(h_j)}=\Pi_{1\le j\le n}\pi_j^*(\frac{f_j}{h_j})$ on $\Pi_{1\le j\le n}U_j$, there exists a holomorphic function $\tilde h_j$ on $\Omega_j$ such that $\tilde h_j=\frac{f_j}{h_j}$ on $U_j$ for any $j\in\{1,2,...,n\}$. Let $\tilde{f}_j=\tilde{h}_jh_j$ on $\Omega_j$, which satisfies that $F=\wedge_{1\le j\le n}\pi_j^*(\tilde{f}_j)$ on $M$.
\end{proof}

\section{Proofs of Theorem \ref{thm:linear-2d} and Remark \ref{r:1.1}}
In this section, we prove Theorem \ref{thm:linear-2d} and Remark \ref{r:1.1}.

\subsection{Proof of the sufficiency part of Theorem \ref{thm:linear-2d}}
\label{sec:s-1}
\

Denote that $\tilde{E}=\{\alpha\in E:d_{\alpha}\not=0\}$. For any $\alpha=(\alpha_1,\alpha_2,...,\alpha_n)\in\tilde{E}$ and $j\in\{1,2,...,n\}$, it follows from Proposition \ref{p:exten-pro-finite} and Lemma \ref{l:converge} that there exists a holomorphic $(1,0)$ form $f_{j,\alpha_j}$ on $\Omega_j$ such that $(f_{j,\alpha_j}-w_j^{\alpha_j}dw_j,z_j)\in(\mathcal{O}(K_{\Omega_j})\otimes\mathcal{I}(2(\alpha_j+1)G_{\Omega_j}(\cdot,z_j)))_{z_j}$ and $\int_{\Omega_j}|f_{j,\alpha_j}|^2e^{-\varphi_j}=\inf\{\int_{\Omega_j}|\tilde{f}|^2e^{-\varphi_j}:\tilde{f}\in H(\Omega_j,\mathcal{O}(K_{\Omega_j}))\,\&\,(\tilde{f}-w_j^{\alpha_j}dw_j,z_j)\in(\mathcal{O}(K_{\Omega_j})\otimes\mathcal{I}(2(\alpha_j+1)G_{\Omega_j}(\cdot,z_j)))_{z_j}\}<+\infty$.

Note that $f=(\sum_{\alpha\in E}d_{\alpha}w^{\alpha}+g_0)dw_{1}\wedge dw_{2}\wedge...\wedge dw_{n}$, where  $g_0$ is a holomorphic function on $V_0$ such that $(g_0,z_0)\in\mathcal{I}(\psi)_{z_0}$. As $\psi=\max_{1\le j\le n}\{2p_j\pi_j^*(G_{\Omega_j}(\cdot,z_j))\}$, following Lemma \ref{l:orth1} and $\sum_{1\le j\le n}\frac{\alpha_j+1}{p_j}=1$ for any $\alpha\in E$, we get that
\begin{displaymath}
	\begin{split}
		&\sum_{\alpha\in E}(|d_{\alpha}|^2\Pi_{1\le j\le n}\int_{\Omega_j}|f_{j,\alpha_j}|^2e^{-\varphi_j})\\
		=&\int_{M}|\sum_{\alpha\in E}d_{\alpha}\Pi_{1\le j\le n}\pi_j^*(f_{j,\alpha_j})|^2e^{-\varphi}\\
		=&\inf\{\int_{M}|\tilde{f}|^2e^{-\varphi}:\tilde{f}\in H^0(M,\mathcal{O}(K_M))\,\&\,(\tilde{f}-f,z_0)\in(\mathcal{O}(K_{\Omega})\otimes\mathcal{I}(\psi))_{z_0}\}.
			\end{split}
\end{displaymath}
Denote that $G_{j,\alpha_j}(t)=\inf\{\int_{\{2(\alpha_j+1)G_{\Omega_j}(\cdot,z_j)<-t\}}|\tilde{f}|^2e^{-\varphi_j}:\tilde{f}$ is a holomorphic $(1,0)$ form on $\{2(\alpha_j+1)G_{\Omega_j}(\cdot,z_j)<-t\}$ such that $(\tilde{f}-w_j^{\alpha_j},z_j)\in(\mathcal{O}(K_{\Omega_j})\otimes\mathcal{I}(2(\alpha_j+1)G_{\Omega_j}(\cdot,z_j)))_{z_j}\}$. Note that $\psi=\max_{1\le j\le n}\{2p_j\pi_j^*(G_{\Omega_j}(\cdot,z_j))\}$. Thus, we have
\begin{equation}
	\label{eq:211003c}
	G(t;\tilde{c}\equiv1)=\sum_{\alpha\in E}(|d_{\alpha}|^2\Pi_{1\le j\le n}G_{j,\alpha_j}(\frac{(\alpha_j+1)t}{p_j})).
\end{equation}

 Note that $G_{j,\alpha_j}(0)\in(0,+\infty)$. It follows from Theorem \ref{thm:m-points} that there exists a positive constant $k_{j,\alpha_j}$ such that $G_{j,\alpha_j}(t)=k_{j,\alpha_j}e^{-t}\in(0,+\infty)$ for any $t\ge0.$ As $\sum_{1\le j\le n}\frac{\alpha_j+1}{p_j}=1$ for any $\alpha\in E$, then equality \eqref{eq:211003c} tells us that
 \begin{displaymath}
 	G(t;\tilde{c})=\sum_{\alpha\in E}(|d_{\alpha}|^2\Pi_{1\le j\le n}k_{j,\alpha_j}e^{-\frac{(\alpha_j+1)t}{p_j}})=e^{-t}\sum_{\alpha\in E}(|d_{\alpha}|^2\Pi_{1\le j\le n}k_{j,\alpha_j}),
 \end{displaymath}
which implies that $G(-\log r;\tilde{c})$ is linear with respect to $r.$

It follows from Corollary \ref{c:linear} that there exists a holomorphic $(n,0)$ form $F$ on $M$, such that $(F-f,z_0)\in(\mathcal{O}(K_M)\otimes\mathcal{I}(\psi))_{z_0}$ and
\begin{equation}
	\label{eq:1127e}G(t;\tilde{c})=\int_{\{\psi<-t\}}|F|^2e^{-\varphi}=e^{-t}G(0;\tilde{c}).
\end{equation}
Let $t\in[0,+\infty)$. Let $\tilde{F}\in H^0(\{\psi<-t\},\mathcal{O}(K_M))$ satisfying that $\int_{\{\psi<-t\}}|\tilde F|^2e^{-\varphi}c(-\psi)<+\infty$ and $(\tilde{F}-F,z_0)\in(\mathcal{O}(K_{M})\otimes\mathcal{I}(\psi))_{z_0}$. Following from $\varphi(z_0)>-\infty$, there exists  $t_1>t$ such that
 \begin{equation}
 	\label{eq:211003d}
 	\int_{\{\psi<-t_1\}}|\tilde{F}|^{2}e^{-\varphi}<+\infty.
 \end{equation}
 As $c(s)e^{-s}$ is decreasing with respect to $s$, it follows from inequality \eqref{eq:211003d} that
 \begin{displaymath}
 	\begin{split}
 		&\int_{\{\psi<-t\}}|\tilde{F}|^2e^{-\varphi}\\
 		\le&\int_{\{\psi<-t_1\}}|\tilde{F}|^2e^{-\varphi}+\int_{\{-t_1\le\psi<-t\}}|\tilde{F}|^2e^{-\varphi}\\
 		\le&\int_{\{\psi<-t_1\}}|\tilde{F}|^2e^{-\varphi}+\frac{1}{\inf_{s\in(t,t_1]}c(s)}\int_{\{-t_1\le\psi<-t\}}|\tilde{F}|^2e^{-\varphi}c(-\psi)\\
 		<&+\infty.
 	\end{split}
 \end{displaymath}
 Thus,  Corollary \ref{c:linear} shows that
 \begin{equation}
 	\label{eq:1127d}G(t;c)=\int_{\{\psi<-t\}}|F|^2e^{-\varphi}c(-\psi)=\frac{G(0;c)}{\int_{0}^{+\infty}c(s)e^{-s}ds}\int_{t}^{+\infty}c(s)e^{-s}ds,
 \end{equation}
 i.e. $G(h^{-1}(r);c)$ is linear with respect to $r$.

\subsection{Proof of the necessity part of Theorem \ref{thm:linear-2d}}\label{sec:n-1}
\

It follows from the linearity of $G(h^{-1}(r))$ and Corollary \ref{c:linear}, there exists a holomorphic $(n,0)$ form $F$ on $M$, such that $(F-f,z_0)\in(\mathcal{O}(K_M)\otimes\mathcal{I}(\psi))_{z_0}$ and
\begin{equation}
	\label{eq:211126a}G(t)=\int_{\{\psi<-t\}}|F|^2e^{-\varphi}c(-\psi).
\end{equation}

For any $t>0$, let $\tilde{F}\in H^0(\{\psi<-t\},\mathcal{O}(K_M))$ satisfying $\int_{\{\psi<-t\}}|\tilde F|^2e^{-\varphi}<+\infty$ and  $(F-\tilde{F},z_0)\in(\mathcal{O}(K_{M})\otimes\mathcal{I}(\psi))_{z_0}$. Following from the Strong openness property (see \cite{GZSOC}) and $\varphi(z_0)>-\infty$,   there exists $r>1$ and $t_1>t$ such that
 \begin{equation}
 	\label{eq:211003a}
 	\int_{\{\psi<-t_1\}}|F-\tilde{F}|^{2r}e^{-r\psi}<+\infty
 \end{equation}
 and
 \begin{equation}
 	\label{eq:211003b}
 	\int_{\{\psi<-t_1\}}e^{-\frac{r}{r-1}\varphi}<+\infty.
 \end{equation}
 As $c(s)e^{-s}$ is decreasing with respect to $s$, it follows from inequality \eqref{eq:211003a} and inequality \eqref{eq:211003b} that
 \begin{displaymath}
 	\begin{split}
 		&\int_{\{\psi<-t\}}|\tilde{F}|^2e^{-\varphi}c(-\psi)\\
 		\le&\int_{\{\psi<-t_1\}}|\tilde{F}|^2e^{-\varphi}c(-\psi)+\int_{\{-t_1\le\psi<-t\}}|\tilde{F}|^2e^{-\varphi}c(-\psi)\\
 		\le&C\int_{\{\psi<-t_1\}}|F-\tilde{F}|^2e^{-\varphi-\psi}+2\int_{\{\psi<-t_1\}}|F|^2e^{-\varphi}c(-\psi)\\
 		&+(\sup_{s\in(t,t_1]}c(s))\int_{\{-t_1\le\psi<-t\}}|\tilde{F}|^2e^{-\varphi}\\
 		\le&C(\int_{\{\psi<-t_1\}}|F-\tilde{F}|^{2r}e^{-r\psi})^{\frac{1}{r}}(\int_{\{\psi<-t_1\}}e^{-\frac{r}{r-1}\varphi})^{1-\frac{1}{r}}\\
 		&+2\int_{\{\psi<-t_1\}}|F|^2e^{-\varphi}c(-\psi)+(\sup_{s\in(t,t_1]}c(s))\int_{\{-t_1\le\psi<-t\}}|\tilde{F}|^2e^{-\varphi}\\
 		<&+\infty,
 	\end{split}
 \end{displaymath}
 where $C$ is a constant.  Thus, it follows from Corollary \ref{c:linear} and equality \eqref{eq:211126a} that
 \begin{equation}
 	\label{eq:1128a}G(t;\tilde{c}\equiv1)=\int_{\{\psi<-t\}}|F|^2e^{-\varphi}=e^{-t}\frac{G(0;c)}{\int_0^{+\infty}c(s)e^{-s}ds}
 \end{equation}
 for any $t>0$.
Theorem \ref{thm:general_concave} shows that $G(0,\tilde{c})=\lim_{t\rightarrow0+0}G(t;\tilde{c})$, then we have $G(-\log r,\tilde{c})$ is linear with respect to $r\in(0,1].$

It follows from Lemma \ref{l:0} that  $f=(\sum_{\alpha\in E_1}d_{\alpha}w^{\alpha}+g_0)dw_{1}\wedge dw_{2}\wedge...\wedge dw_{n}$,  where $E_1=\{\alpha\in\mathbb{Z}_{\ge0}^n:\sum_{1\le j\le n}\frac{\alpha_j+1}{p_j}\le1\}$, $d_{\alpha}\in\mathbb{C}$ and $g_0$ is a holomorphic function on $V_0$ such that $(g_0,z_0)\in\mathcal{I}(\psi)_{z_0}$.
Denote that $G_{\alpha}(t):=\inf\{\int_{\{\psi<-t\}}|\tilde{f}|^2e^{-\varphi}:\tilde{f}$ is a holomorphic $(n,0)$ form on $M$ such that$(\tilde{f}-w^{\alpha}dw_{1}\wedge dw_{2}\wedge...\wedge dw_{n},z_0)\in(\mathcal{O}(K_M)\otimes\mathcal{I}(\psi))_{z_0}\}$, where $\alpha\in E_1$. Denote that $\tilde{E}_1=\{\alpha\in E_1:d_{\alpha}\not=0\}$. As $G(0)>0$, we have $\tilde E_1\not=\emptyset$.

Firstly, we prove  $\tilde{E}_1\subset E$ by contradiction: if not, then $s_0=\inf\{\sum_{1\le j\le n}\frac{\alpha_j+1}{p_j}:\alpha\in \tilde{E}_1\}<1$. Denote that $E_2:=\{\alpha\in\tilde{E}_1:\sum_{1\le j\le n}\frac{\alpha_j+1}{p_j}=s_0\}$  and
\begin{displaymath}
	\begin{split}
		G_2(t):=\inf\{&\int_{\{\psi<-t\}}|\tilde{f}|^2e^{-\varphi}:\tilde{f}\in H^0(\{\psi<-t\},\mathcal{O}(K_M))\\
		&\&\,(\tilde{f}-\sum_{\alpha\in E_2}d_{\alpha}w^{\alpha}dw_{1}\wedge dw_{2}\wedge...\wedge dw_{n},z_0)\in(\mathcal{O}(K_M)\otimes\mathcal{I}(s_0\psi))_{z_0}\}.
	\end{split}
\end{displaymath}
It follows from Lemma \ref{l:0} that
$$(\sum_{\alpha\in E_1\backslash E_2}w^{\alpha}dw_{1}\wedge dw_{2}\wedge...\wedge dw_{n},z_0)\in(\mathcal{O}(K_M)\otimes\mathcal{I}(s_0\psi))_{z_0}.$$
If $\tilde{f}$ is a holomorphic $(n,0)$ form on $M$ such that$(\tilde{f}-f,z_0)\in(\mathcal{O}(K_M)\otimes\mathcal{I}(\psi))_{z_0}$,  we have $(\tilde{f}-\sum_{\alpha\in E_2}d_{\alpha}w^{\alpha}dw_{1}\wedge dw_{2}\wedge...\wedge dw_{n},z_0)=(\tilde{f}-f+\sum_{\alpha\in E_1\backslash E_2}d_{\alpha}w^{\alpha}dw_{1}\wedge dw_{2}\wedge...\wedge dw_{n}+g_0,z_0)\in(\mathcal{O}(K_M)\otimes\mathcal{I}(s_0\psi))_{z_0}.$ Thus, we get
$$G(t;\tilde{c}\equiv1)\ge G_2(t)$$
 for any $t\ge0$.
As $(\sum_{\alpha\in E_2}w^{\alpha}dw_{1}\wedge dw_{2}\wedge...\wedge dw_{n},z_0)\not\in(\mathcal{O}(K_M)\otimes\mathcal{I}(s_0\psi))_{z_0}$, then $G_{2}(t)>0$ for any $t\ge0.$ Theorem \ref{thm:general_concave} shows that $G_{2}(-\frac{\log r}{s_0})$ is concave with respect to $r\in(0,1]$,  which implies that $\lim_{r\rightarrow0+0}\frac{G_{2}(-\frac{\log r}{s_0})}{r}>0$. Thus, we have
\begin{displaymath}
	\begin{split}
		\lim_{r\rightarrow0+0}\frac{G(-\log r;\tilde{c})}{r}&\ge\lim_{r\rightarrow0+0}\frac{G_2(-\log r)}{r}\\
		&=\lim_{r\rightarrow0+0}\frac{G_2(-\frac{\log r^{s_0}}{s_0})}{r^{s_0}}r^{s_0-1}\\
		&=+\infty,
	\end{split}
\end{displaymath}
which contradicts to the linearity of $G(-\log r;\tilde{c})$. Thus, we get $\tilde{E}_1\subset{E}\not=\emptyset$ and $f=(\sum_{\alpha\in E}d_{\alpha}w^{\alpha}+g_0)dw_{1}\wedge dw_{2}\wedge...\wedge dw_{n}$.

Now, we prove the statements $(2)$ and $(3)$. Denote that
\begin{displaymath}
	\begin{split}
	G_{j,\alpha_j}(t):=\inf\{\int_{\{2(\alpha_j+1)G_{\Omega_j}(\cdot,z_j)<-t\}}&|\tilde{f}|^2e^{-\varphi_j}:\tilde{f}\in H^0(\{2(\alpha_j+1)G_{\Omega_j}(\cdot,z_j)<-t\},\mathcal{O}(K_{\Omega_j}))
	\\ & \&\,(\tilde{f}-w_j^{\alpha_j},z_j)\in(\mathcal{O}(K_{\Omega_j})\otimes\mathcal{I}(2(\alpha_j+1)G_{\Omega_j}(\cdot,z_j)))_{z_j}\}.	
	\end{split}
\end{displaymath}
 Note that $\psi=\max_{1\le j\le n}\{2p_j\pi_j^*(G_{\Omega_j}(\cdot,z_j))\}$ and $\sum_{1\le j\le n}\frac{\alpha_j+1}{p_j}=1$ for any $\alpha\in E$. Following from Lemma \ref{l:orth1} that
\begin{equation}
	\label{eq:211003e}
	G(t;\tilde{c}\equiv1)=\sum_{\alpha\in E}(|d_{\alpha}|^2\Pi_{1\le j\le n}G_{j,\alpha_j}(\frac{(\alpha_j+1)t}{p_j})).
\end{equation}
Theorem \ref{c:L2-1d-char} shows that
\begin{equation}\label{eq:211003f}
	G_{j,\alpha_j}(t)\le e^{-t}\frac{2\pi e^{-\varphi_j(z_j)}}{(\alpha_j+1)c_j(z_j)^{2\alpha_j+2}}.
\end{equation}
Combining equality \eqref{eq:211003e} and inequality \eqref{eq:211003f} and $\sum_{1\le j\le n}\frac{\alpha_j+1}{p_j}=1$ for any $\alpha\in E$, we have
\begin{equation}
	\label{eq:211003g}
	G(t;\tilde{c})\leq e^{-t}\sum_{\alpha\in E}(|d_{\alpha}|^2(2\pi)^ne^{-\varphi(z_0)}\Pi_{1\le j\le n}\frac{1}{(\alpha_j+1)c_j(z_j)^{2\alpha_j+2}}).
\end{equation}

As $M$ is a Stein manifold and $\varphi$ is plurisubharmonic function on $M$, there exist smooth plurisubharmonic functions $\Phi_l$ on $M$, which are decreasingly convergent to $\varphi$ with respect to $l$.  It follows from Lemma \ref{l:green-sup} and Lemma \ref{l:green-sup2} that there exists a local coordinate $\tilde{w}_{j}$ on a neighborhood $\tilde{V}_{z_{j}}\Subset V_{z_{j}}$ of $z_{j}$ satisfying $\tilde{w}_{j}(z_{j})=0$ and $|\tilde{w}_{j}|=e^{G_{\Omega_j}(\cdot,z_{j})}$ on $\tilde{V}_{z_{j}}$. Denote that $\tilde{V}_{0}:=\Pi_{1\le j\le n}\tilde V_{z_j}$. It follows from Lemma \ref{l:G-compact} that there exists $t_2>0$ such that $\{\psi<-t_2\}\Subset\tilde{V}_0$.  Note that
\begin{equation}
	\label{eq:1128b}\begin{split}G(t;\tilde{c})&=\int_{\{\psi<-t\}}|F|^2e^{-\varphi}	\\
	&\ge \int_{\{\psi<-t\}}|F|^2e^{-\Phi_l}.
\end{split}\end{equation}
As $(F-f,z_0)\in (\mathcal{O}(K_{M})\otimes\mathcal{I}(\psi))_{z_{0}}$, it follows from Lemma \ref{l:0} that $$F=(\sum_{\alpha\in E}\tilde{d}_{\alpha}\tilde w^{\alpha}+\tilde{g}_0)d\tilde{w}_1\wedge d\tilde{w}_2\wedge...\wedge d\tilde{w}_n,$$
where $|\frac{d_{\alpha}}{\tilde{d}_{\alpha}}|=c_j(z_j)^{\alpha_j+1}$ for any $\alpha\in E$ and $(\tilde{g}_0,z_0)\in\mathcal{I}(\psi)_{z_0}$. It follows from inequality \eqref{eq:1128b} and Lemma \ref{l:m2} that
\begin{equation}
	\label{eq:1128c}
	\liminf_{t\rightarrow+\infty}e^tG(t;\tilde{c})\ge \sum_{\alpha\in E}\frac{|d_{\alpha}|^2(2\pi)^ne^{-\Phi_l(z_0)}}{\Pi_{1\le j\le n}(\alpha_j+1)c_j(z_j)^{2\alpha_j+2}}.
\end{equation}
Letting $l\rightarrow+\infty$, inequality \eqref{eq:1128c} shows that
\begin{equation}
	\label{eq:211003h}
	\liminf_{t\rightarrow+\infty}e^tG(t;\tilde{c})\ge\sum_{\alpha\in E}\frac{|d_{\alpha}|^2(2\pi)^ne^{-\varphi(z_0)}}{\Pi_{1\le j\le n}(\alpha_j+1)c_j(z_j)^{2\alpha_j+2}}.
\end{equation}
As $G(-\log r;\tilde{c})$ is linear with respect to $r$, it follows from inequality \eqref{eq:211003g} and equality \eqref{eq:211003h} that
\begin{equation}
	\label{eq:211110a}G(t;\tilde{c})= e^{-t}\sum_{\alpha\in E}\frac{|d_{\alpha}|^2(2\pi)^ne^{-\varphi(z_0)}}{\Pi_{1\le j\le n}(\alpha_j+1)c_j(z_j)^{2\alpha_j+2}}.
\end{equation}
 Combining equality \eqref{eq:211003e}, inequality \eqref{eq:211003f} and equality \eqref{eq:211110a}, we have
 \begin{equation}
 	\label{eq:211110b}G_{j,\alpha_j}(t)= e^{-t}\frac{2\pi e^{-\varphi_j(z_j)}}{(\alpha_j+1)c_j(z_j)^{2\alpha_j+2}}
 \end{equation}
 holds for any $j$ and any $\alpha$ satisfying $d_{\alpha}\not=0$. It follows from Theorem \ref{thm:m-points} that $\varphi_j=2\log|g_j|+2u_j$ and $\chi_{j,z_j}^{\alpha_j+1}=\chi_{j,-u_j}$ for any $j\in\{1,2,...,n\}$ and $\alpha\in E$ satisfying $d_{\alpha}\not=0$, where $g_j$ is a holomorphic function on $\Omega_j$ such that $g_j(z_j)\not=0$ and $u_j$ is a harmonic function on $\Omega_j$. 
 
  Thus, we complete the proof of the necessity.

 \subsection{Proof of Remark \ref{r:1.1}}
 \

 \label{sec:pf-r1.1}Following from Corollary \ref{c:linear}, we have the uniqueness, thus it suffices to prove the existence.
 It follows from Section \ref{sec:n-1} that
 \begin{equation*}
G_{j,\alpha_j}(t)= e^{-t}\frac{2\pi e^{-\varphi_j(z_j)}}{(\alpha_j+1)c_j(z_j)^{2\alpha_j+2}}
 \end{equation*}
 holds for any $t\ge0$, any $j\in\{1,2,...,n\}$ and any $\alpha\in E$ satisfying $d_{\alpha}\not=0$. Remark \ref{rem:1.2} shows that
 $$G_{j,\alpha_j}(0)=\int_{\Omega_j}|a_{j,\alpha_j}g_j(P_j)_*(f_{u_j}(f_{z_j})^{\alpha_j}df_{z_j})|^2e^{-\varphi_j},$$
  where $a_{j,\alpha_j}=\lim_{z\rightarrow z_j}\frac{w_j^{\alpha_j}dw_j}{g_j(P_j)_*(f_{u_j}(f_{z_j})^{\alpha_j}df_{z_j})}$. It follows from Lemma \ref{l:orth1} that
 \begin{displaymath}\begin{split}
 	G(0;\tilde{c}\equiv1 )&=\int_{M}|\sum_{\alpha\in E}d_{\alpha}\wedge_{1\le j\le n}\pi_j^*(a_{j,\alpha_j}g_j(P_j)_*(f_{u_j}(f_{z_j})^{\alpha_j}df_{z_j}))|^2e^{-\varphi}\\
 	&=\sum_{\alpha\in E}(|d_{\alpha}|^2\Pi_{1\le j\le n}G_{j,\alpha_j}(0))\\
 	&=\sum_{\alpha\in E}(|d_{\alpha}|^2(2\pi)^ne^{-\varphi(z_0)}\Pi_{1\le j\le n}\frac{1}{(\alpha_j+1)c_j(z_j)^{2\alpha_j+2}}). 	
 \end{split} \end{displaymath}
Note that $\tilde{d}_{\alpha}=\lim_{z\rightarrow z_0}\frac{d_{\alpha}w^{\alpha}dw_1\wedge dw_2\wedge...\wedge dw_n}{\wedge_{1\le j\le n}\pi_j^*(g_j(P_j)_*(f_{u_j}(f_{z_j})^{\alpha_j}df_{z_j}))}=d_{\alpha}\Pi_{1\le j\le n}a_{j,\alpha_j}$. As $G(-\log r;\tilde{c})$ is linear with respect to $r$, Corollary \ref{c:linear} tells us that
\begin{displaymath}\begin{split}
	 	G(t;\tilde{c}\equiv1 )&=\int_{\{\psi<-t\}}|\sum_{\alpha\in E}d_{\alpha}\wedge_{1\le j\le n}\pi_j^*(a_{j,\alpha_j}g_j(P_j)_*(f_{u_j}(f_{z_j})^{\alpha_j}df_{z_j}))|^2e^{-\varphi}\\
 	&=e^{-t}G(0;\tilde{c})\\
 	&=e^{-t}\sum_{\alpha\in E}(|d_{\alpha}|^2(2\pi)^ne^{-\varphi(z_0)}\Pi_{1\le j\le n}\frac{1}{(\alpha_j+1)c_j(z_j)^{2\alpha_j+2}}).
\end{split}\end{displaymath}
It follows from equality \eqref{eq:1127e} and equality \eqref{eq:1127d}, we get that $\sum_{\alpha\in E}\tilde{d}_{\alpha}\wedge_{1\le j\le n}\pi_j^*(g_j(P_j)_*(f_{u_j}(f_{z_j})^{\alpha_j}df_{z_j}))$ is the $(n,0)$ form satisfying the conditions in Remark \ref{r:1.1}.

\section{Proofs of Theorem \ref{thm:prod-finite-point} and Remark \ref{r:1.2}}
In this section, we prove Theorem \ref{thm:prod-finite-point} and Remark \ref{r:1.2}.

\subsection{Proof of the sufficiency part of Theorem \ref{thm:prod-finite-point}}\label{sec:s-2}
\

Denote that
 $$F=c_0\wedge_{1\le j\le n}\pi_{j}^*(g_j(P_{j})_*(f_{u_j}(\Pi_{1\le k\le m_j}f_{z_{j,k}}^{\gamma_{j,k}+1})(\sum_{1\le k\le m_j}p_{j,k}\frac{df_{z_{j,k}}}{f_{z_{j,k}}})))$$
 is a holomorphic $(n,0)$ form on $M$. As
$$\lim_{z\rightarrow z_{\beta}}\frac{c_{\beta}\Pi_{1\le j\le n}w_{j,\beta_j}^{\gamma_{j,\beta_j}}dw_{1,\beta_1}\wedge dw_{2,\beta_2}\wedge...\wedge dw_{n,\beta_n}}{\wedge_{1\le j\le n}\pi_{j}^*(g_j(P_{j})_*(f_{u_j}(\Pi_{1\le k\le m_j}f_{z_{j,k}}^{\gamma_{j,k}+1})(\sum_{1\le k\le m_j}p_{j,k}\frac{df_{z_{j,k}}}{f_{z_{j,k}}})))}=c_0,$$
 $f=(c_{\beta}\Pi_{1\le j\le n}w_{j,\beta_j}^{\gamma_{j,\beta_j}}+g_{\beta})dw_{1,\beta_1}\wedge dw_{2,\beta_2}\wedge...\wedge dw_{n,\beta_n}$ on $V_{\beta}$ and $\sum_{1\le j\le n}\frac{\gamma_{j,\beta_j}+1}{p_{j,\beta_j}}=1$ for any $\beta\in I_1$, it follows from Lemma \ref{l:0} that  $(F-f,z_{\beta})\in(\mathcal{O}(K_{M})\otimes\mathcal{I}(\psi))_{z_{\beta}}$ for any $\beta\in I_1$.
Denote that
$$f_j:=g_j(P_{j})_*(f_{u_j}(\Pi_{1\le k\le m_j}f_{z_{j,k}}^{\gamma_{j,k}+1})(\sum_{1\le k\le m_j}p_{j,k}\frac{df_{z_{j,k}}}{f_{z_{j,k}}})),$$
$$r_{j,k}:=\varphi_j(z_{j,k})+\lim_{z\rightarrow z_{j,k}}(2\sum_{1\le k'\le m_j}(\gamma_{j,k'}+1)G_{\Omega_j}(z,z_{j,k'})-2(\gamma_{j,k}+1)\log|w_{j,k}(z)|)$$
and
$$a_{j,k}:=\lim_{z\rightarrow z_{j,k}}\frac{f_j}{w_{j,k}^{\gamma_{j,k}}dw_{j,k}}.$$
 Note that
 \begin{displaymath}
 	\begin{split}
 		&\varphi_j+2\sum_{1\le k'\le m_j}(\gamma_{j,k'}+1)G_{\Omega_j}(\cdot,z_{j,k'})-2(\gamma_{j,k}+1)\log|w_{j,k}|\\
 		=&\varphi_j+2\sum_{1\le k'\le m_j}(\gamma_{j,k'}+1)G_{\Omega_j}(\cdot,z_{j,k'})-2(\gamma_{j,k}+1)G_{\Omega_j}(\cdot,z_{j,k})\\
 		&+(2(\gamma_{j,k}+1)G_{\Omega_j}(\cdot,z_{j,k})-2(\gamma_{j,k}+1)\log|w_{j,k}|).
 	\end{split}
 \end{displaymath}
It follows from Theorem \ref{c:L2-1d-char} and Remark \ref{rem:1.2} that
$\int_{\Omega_j}|f_j|^2e^{-\varphi_j}=\inf\{\int_{\Omega_j}|\tilde f|^2e^{-\varphi_j}:\tilde f$ is a holomorphic $(1,0)$ form on $\Omega_j$ such that $(\tilde f-f_j,z_{j,k})\in(\mathcal{O}(K_{\Omega_j})\otimes\mathcal{I}(2(\gamma_{j,k}+1)G_{\Omega_j}(\cdot,z_{j,k})))_{z,k}$ for any $k\in\{1,2,...,m_j\}$$\}$ and
\begin{equation}\label{eq:1125i}
	\int_{\Omega_j}|f_j|^2e^{-\varphi_j}=\sum_{1\le k\le m_j}\frac{2\pi|a_{j,k}|^2e^{-r_{j,k}}}{\gamma_{j,k}+1},
\end{equation}
which implies that
\begin{equation}
	\label{eq:1125j}\int_{\Omega_j}f_j\wedge\overline{F_1}e^{-\varphi_j}=0
\end{equation}
for any $F_1\in\{\tilde f\in H^0(\Omega_j,\mathcal{O}(K_{\Omega_j})):\int_{\Omega_j}|\tilde f|^2e^{-\varphi_j}<+\infty\,
	\&\,(\tilde f,z_{j,k})\in(\mathcal{O}(K_{\Omega_j})\otimes\mathcal{I}(2(\gamma_{j,k}+1)G_{\Omega_j}(\cdot,z_{j,k})))_{z_{j,k}}$ for any $k\in\{1,2,...,m_j\}\}$. It follows from Lemma \ref{l:orth2} and equality \eqref{eq:1125j} that
	\begin{equation}
		\label{eq:1125k}\int_{M}F\wedge\overline{F_1}e^{-\varphi}=0
	\end{equation}
	for any $F_1\in\{\tilde f\in H^0(M,\mathcal{O}(K_{M})):\int_{M}|\tilde f|^2e^{-\varphi}<+\infty
		\,\&\,(\tilde f,z_{\beta})\in(\mathcal{O}(K_{M})\otimes\mathcal{I}(\psi))_{z_{\beta}}$ for any $\beta \in I_1\}$.
	Equality \eqref{eq:1125i} shows that
\begin{equation}
	\label{eq:1122h}\int_{M}|F|^2e^{-\varphi}=|c_0|^2\Pi_{1\le j\le n}(\sum_{1\le k\le m_j}\frac{2\pi|a_{j,k}|^2e^{-r_{j,k}}}{\gamma_{j,k}+1}).
\end{equation}
As $(F-f,z_{\beta})\in(\mathcal{O}(K_{M})\otimes\mathcal{I}(\psi))_{z_{\beta}}$ for any $\beta\in I_1$, it follows from equality \eqref{eq:1125k} and equality \eqref{eq:1122h} that
\begin{equation}
	\label{eq:1125l}\begin{split}
	G(0;\tilde{c}\equiv1)&=\int_{M}|F|^2e^{-\varphi}\\
	&=|c_0|^2\Pi_{1\le j\le n}(\sum_{1\le k\le m_j}\frac{2\pi|a_{j,k}|^2e^{-r_{j,k}}}{\gamma_{j,k}+1})\\
	&=|c_0|^2(2\pi)^n \sum_{\beta\in I_1} \Pi_{1\le j\le n}\frac{|a_{j,\beta_j}|^2e^{-r_{j,\beta_j}}}{\gamma_{j,\beta_j}+1}.
	\end{split}
\end{equation}
As $\sum_{1\le j\le n}\frac{\gamma_{j,\beta_j}+1}{p_{j,\beta_j}}=1$ for any $\beta\in I_1$, we have
$$\frac{\gamma_{j,1}+1}{p_{j,1}}=\frac{\gamma_{j,k}+1}{p_{j,k}}$$
for any $j\in\{1,2,...,n\}$ and $k\in\{1,2,...,m_j\}$.
 Note that
 $$(F-(c_0\Pi_{1\le j\le n}a_{j,\beta_j}w_{j,\beta_j}^{\gamma_{j,\beta_j}})dw_{1,\beta_1}\wedge dw_{2,\beta_2}\wedge...\wedge dw_{n,\beta_n},z_{\beta})\in(\mathcal{O}(K_M)\otimes\mathcal{I}(\psi))_{z_{\beta}}$$ for any $\beta\in I_1$ and
\begin{displaymath}
	\begin{split}
		&e^{\varphi(z_{\beta})}\Pi_{1\le j\le n}(\exp\lim_{z\rightarrow z_{j,\beta_j}}(\frac{\sum_{1\le k\le m_j}p_{j,k}G_{\Omega_j}(z,z_{j,k})+\frac{t}{2}}{p_{j,\beta_j}}-\log|w_{j,\beta_j}|))^{2\gamma_{j,\beta_j}+2}\\
		=&e^{t}\Pi_{1\le j\le n}\exp(\varphi(z_{j,\beta_j})\\
		&+\lim_{z\rightarrow z_{j,\beta_j}}(\sum_{1\le k\le m_j}2(\gamma_{j,k}+1)G_{\Omega_j}(z,z_{j,k})-2(\gamma_{j,\beta_j}+1)\log|w_{j,\beta_j}|))\\
		=&e^{t}\Pi_{1\le j\le n}e^{r_{j,\beta_j}}
	\end{split}
\end{displaymath}for any $\beta\in I_1$ and $t\ge0$.
 It follows from Proposition \ref{p:exten-pro-finite} ($M\sim \{\psi<-t\}$ and $\psi\sim\max_{1\le j\le n}\{\pi_j^*(2\sum_{1\le k\le m_j}p_{j,k}G_{\Omega_j}(z,z_{j,k})+t)\}$) that
 \begin{equation}
 	\label{eq:1126f}
 	G(t;\tilde{c})\le e^{-t}|c_0|^2(2\pi)^n \sum_{\beta\in I_1} \Pi_{1\le j\le n}\frac{|a_{j,\beta_j}|^2e^{-r_{j,\beta_j}}}{\gamma_{j,\beta_j}+1}
 \end{equation}
holds for any $t\ge0$. Theorem \ref{thm:general_concave} shows that $G(-\log r;\tilde{c})$ is concave with respect to $r$. Combining equality \eqref{eq:1125l} and inequality \eqref{eq:1126f}, we obtain that
\begin{equation*}
	G(t;\tilde{c})= e^{-t}|c_0|^2(2\pi)^n \sum_{\beta\in I_1} \Pi_{1\le j\le n}\frac{|a_{j,\beta_j}|^2e^{-r_{j,\beta_j}}}{\gamma_{j,\beta_j}+1}
\end{equation*}
for any $t\ge 0$, i.e. $G(-\log r;\tilde{c})$ is linear with respect to $r$.

It follows from Corollary \ref{c:linear} that
$$G(t;\tilde{c})=e^{-t}\int_{\{\psi<-t\}}|F|^2e^{-\varphi}.$$
 For any $t\ge0$. For any $t\ge0$ and any holomorphic $(n,0)$ form $\tilde{F}$ on  $\{\psi<-t\}$ satisfying $\int_{\{\psi<-t\}}|\tilde{F}|^2e^{-\varphi}c(-\psi)<+\infty$, following from $\varphi(z_{\beta})>-\infty$ for any $\beta\in I_1$,   there exists  $t_1>t$ such that
 \begin{equation}
 	\label{eq:1126g}
 	\int_{\{\psi<-t_1\}}|\tilde{F}|^{2}e^{-\varphi}<+\infty.
 \end{equation}
 As $c(s)e^{-s}$ is decreasing with respect to $s$, it follows from inequality \eqref{eq:1126g} that
 \begin{displaymath}
 	\begin{split}
 		&\int_{\{\psi<-t\}}|\tilde{F}|^2e^{-\varphi}\\
 		\le&\int_{\{\psi<-t_1\}}|\tilde{F}|^2e^{-\varphi}+\int_{\{-t_1\le\psi<-t\}}|\tilde{F}|^2e^{-\varphi}\\
 		\le&\int_{\{\psi<-t_1\}}|\tilde{F}|^2e^{-\varphi}+\frac{1}{\inf_{s\in(t,t_1]}c(s)}\int_{\{-t_1\le\psi<-t\}}|\tilde{F}|^2e^{-\varphi}c(-\psi)\\
 		<&+\infty.
 	\end{split}
 \end{displaymath}
 Thus,  Corollary \ref{c:linear} shows that $G(h^{-1}(r),c)$ is linear with respect to $r$.

Thus, the sufficiency part of Theorem \ref{thm:prod-finite-point} holds.

\subsection{Proof of the necessity part of Theorem \ref{thm:prod-finite-point}}\label{sec:n-2}
\

We prove the necessity part of Theorem \ref{thm:prod-finite-point} in four steps.

\

\emph{Step 1. $G(-\log r;\tilde{c}\equiv1 )$ is linear.}

\

Following from the linearity of $G(h^{-1}(r))$ and Corollary \ref{c:linear}, there exists a holomorphic $(n,0)$ form $F$ on $M$, such that $(F-f,z_{\beta})\in(\mathcal{O}(K_M)\otimes\mathcal{I}(\psi))_{z_{\beta}}$ for any $\beta\in I_1$ and
\begin{equation}
	\label{eq:1121a}G(t)=\int_{\{\psi<-t\}}|F|^2e^{-\varphi}c(-\psi).
\end{equation}
Let $t>0$, and let $\tilde{F}$ be a holomorphic $(n,0)$ form on $\{\psi<-t\}$ satisfying $\int_{\{\psi<-t\}}|\tilde{F}|^2e^{-\varphi}<+\infty$ and $(\tilde{F}-f,z_{\beta})\in(\mathcal{O}(K_{M})\otimes\mathcal{I}(\psi))_{z_{\beta}}$ for any $\beta\in I_1$. Note that $(F-\tilde{F},z_{\beta})\in(\mathcal{O}(K_{M})\otimes\mathcal{I}(\psi))_{z_{\beta}}$. Following from the Strong openness property (see \cite{GZSOC}) and $\varphi(p_0)>-\infty$,   there exists $r>1$ and $t_1>t$ such that
 \begin{equation}
 	\label{eq:211120a}
 	\int_{\{\psi<-t_1\}}|F-\tilde{F}|^{2r}e^{-r\psi}<+\infty
 \end{equation}
 and
 \begin{equation}
 	\label{eq:211120b}
 	\int_{\{\psi<-t_1\}}e^{-\frac{r}{r-1}\varphi}<+\infty.
 \end{equation}
 As $c(s)e^{-s}$ is decreasing with respect to $s$, it follows from inequality \eqref{eq:211120a} and inequality \eqref{eq:211120b} that
 \begin{displaymath}
 	\begin{split}
 		&\int_{\{\psi<-t\}}|\tilde{F}|^2e^{-\varphi}c(-\psi)\\
 		\le&\int_{\{\psi<-t_1\}}|\tilde{F}|^2e^{-\varphi}c(-\psi)+\int_{\{-t_1\le\psi<-t\}}|\tilde{F}|^2e^{-\varphi}c(-\psi)\\
 		\le&C\int_{\{\psi<-t_1\}}|F-\tilde{F}|^2e^{-\varphi-\psi}+2\int_{\{\psi<-t_1\}}|F|^2e^{-\varphi}c(-\psi)\\
 		&+(\sup_{s\in(t,t_1]}c(s))\int_{\{-t_1\le\psi<-t\}}|\tilde{F}|^2e^{-\varphi}\\
 		\le&C(\int_{\{\psi<-t_1\}}|F-\tilde{F}|^{2r}e^{-r\psi})^{\frac{1}{r}}(\int_{\{\psi<-t_1\}}e^{-\frac{r}{r-1}\varphi})^{1-\frac{1}{r}}\\
 		&+2\int_{\{\psi<-t_1\}}|F|^2e^{-\varphi}c(-\psi)+(\sup_{s\in(t,t_1]}c(s))\int_{\{-t_1\le\psi<-t\}}|\tilde{F}|^2e^{-\varphi}\\
 		<&+\infty,
 	\end{split}
 \end{displaymath}
 where $C$ is a constant.
 Thus, it follows from Corollary \ref{c:linear} and equality \eqref{eq:1121a} that
 $$G(t;\tilde{c}\equiv1)=\int_{\{\psi<-t\}}|F|^2e^{-\varphi}=e^{-t}\frac{G(0)}{\int_0^{+\infty}c(s)e^{-s}ds}.$$
Theorem \ref{thm:general_concave} shows that $G(0,\tilde{c})=\lim_{t\rightarrow0+0}G(t;\tilde{c})$. Thus we have $G(-\log r,\tilde{c})$ is linear with respect to $r\in(0,1].$

\

\emph{Step 2. $\sum_{1\le j\le n}\frac{\gamma_{j,\beta_j}+1}{p_{j,\beta_j}}=1$ and  $f=(c_{\beta}\Pi_{1\le j\le n}w_{j,\beta_j}^{\gamma_{j,\beta_j}}+g_{\beta})dw_{1,\beta_1}\wedge dw_{2,\beta_2}\wedge...\wedge dw_{n,\beta_n}$.}

\

It follows from Lemma \ref{l:green-sup2} and Lemma \ref{l:G-compact} that there exists $t_0>0$ such that $\{\psi<-t_0\}\Subset\cup_{\beta\in I_1}V_{\beta}$ and $\{z\in\Omega_j:2\sum_{1\le k\le m_j}p_{j,k}G_{\Omega_j}(\cdot,z_{j,k})<-t_0\}\cap V_{z_{j,k}}$ is simply connected for any $j\in\{1,2,...,n\}$ and $k\in\{1,2,...,m_j\}$. For any $\beta\in I_1$,
 denote
\begin{equation*}
\begin{split}
\inf\{\int_{\{\psi<-t\}\cap V_{\beta}}|\tilde{f}|^{2}e^{-\varphi}:(\tilde{f}-f,z_{\beta})\in&(\mathcal{O}(K_M)\otimes\mathcal{I}(\psi))_{z_{\beta}}\\&\&{\,}\tilde{f}\in H^{0}(\{\psi<-t\}\cap V_{\beta},\mathcal{O}(K_{M}))\},
\end{split}
\end{equation*}
by $G_{\beta}(t)$,  where $t\in[t_0,+\infty)$. Note that $\{\psi<-t\}=\cup_{\beta\in I_1}(\{\psi<-t\}\cap V_{\beta})$ for any $t\ge t_0$ and $\{\psi<-t\}\cap V_{\beta}=\Pi_{1\le j\le n}(\{z\in\Omega_j:2\sum_{1\le k\le m_j}p_{j,k}G_{\Omega_j}(\cdot,z_{j,k})<-t\}\cap V_{z_{j,\beta_j}})$ for any $t\ge t_0$ and $\beta\in I_1$.  By the definition of $G(t;\tilde{c})$ and $G_{\beta}(t)$, we have $G(t;\tilde{c})=\sum_{\beta\in I_1}G_{\beta}(t)$ for $t\ge t_0$. Thus, we have
$$G_{\beta}(t)=\int_{\{\psi<-t\}\cap V_{\beta}}|F|^2e^{-\varphi}$$
for any $t\ge t_0$.
 Theorem \ref{thm:general_concave} tells us that $G_{\beta}(-\log r)$ is concave with respect to $r\in(0,e^{-t_0})$. As $G(-\log r;\tilde{c})$ is linear with respect to $r$, we have $G_{\beta}(-\log r)$ linear with respect to $r\in(0,e^{-t_0}]$.

Let $\beta^*=(1,1,...,1)\in I_1$. As $f=w_{\beta^*}^{\alpha_{\beta_*}}dw_{1,1}\wedge dw_{2,1}\wedge...\wedge dw_{n,1}$ on $V_{\beta^*}$ and $\frac{1}{2p_{j,1}}(2\sum_{1\le k\le m_j}p_{j,k}G_{\Omega_j}(\cdot,z_{j,k})+t_0)$ is the Green function on $\{z\in\Omega_j:2\sum_{1\le k\le m_j}p_{j,k}G_{\Omega_j}(\cdot,z_{j,k})<-t_0\}\cap V_{z_{j,1}}$. Using Theorem \ref{thm:linear-2d} and Remark \ref{r:1.1}, we obtain that there exists a holomorphic $(1,0)$ form $h_{j,0}$ on $\{z\in\Omega_j:2\sum_{1\le k\le m_j}p_{j,k}G_{\Omega_j}(\cdot,z_{j,k})<-t_0\}\cap V_{z_{j,1}}$ for any $j\in\{1,2,...,n\}$ such that
$$F=\wedge_{1\le j\le n}\pi_{j}^*(h_{j,0})$$
 on $\{\psi<-t_0\}\cap V_{\beta^*}$.  It follows from Lemma \ref{l:decom} that there exists a holomorphic $(1,0)$ form $h_{j,1}$ on $\Omega_j$ such that
$$F=\wedge_{1\le j\le n}\pi_j^*(h_{j,1})$$
on $M$ and $h_{j,0}=h_{j,1}$ on $\{z\in\Omega_j:2\sum_{1\le k\le m_j}p_{j,k}G_{\Omega_j}(\cdot,z_{j,k})<-t_0\}\cap V_{z_{j,1}}$.

Denote that $\gamma_{j,k}:=ord_{z_{j,k}}h_{j,1}$ for any $j\in\{1,2,...,n\}$ and $k\in\{1,2,...,m_j\}$. As $G_{\beta}(-\log r)$ is linear with respect to $r\in(0,e^{-t_0}]$ and $G_{\beta}(t)=\int_{\{\psi<-t\}\cap V_{\beta}}|F|^2e^{-\varphi}>0$ for any $t\ge t_0$, it follows from Theorem \ref{thm:linear-2d} and Lemma \ref{l:0} that
$$\sum_{1\le j\le n}\frac{\gamma_{j,\beta_j}+1}{p_{j,\beta_j}}=1$$ and
$$f=(c_{\beta}\Pi_{1\le j\le n}w_{j,\beta_j}^{\gamma_{j,\beta_j}}+g_{\beta})dw_{1,\beta_1}\wedge dw_{2,\beta_2}\wedge...\wedge dw_{n,\beta_n}$$
 on $V_{\beta}$ for any $\beta\in I_1$, where $c_{\beta}$ is a constant and $g_{\beta}$ is a holomorphic function on $V_{\beta}$ such that $(g_{\beta},z_{\beta})\in\mathcal{I}(\psi)_{z_{\beta}}$.

 \

 \emph{Step 3. $\varphi_j=2\log|g_j|+2u_j$ and $\chi_{j,-u_j}=\Pi_{1\le k\le m_j}\chi_{j,z_{j,k}}^{\gamma_{j,k}+1}.$}

 \

As  $\sum_{1\le j\le n}\frac{\gamma_{j,\beta_j}+1}{p_{j,\beta_j}}=1$ for any $\beta\in I_1$, we have
$$\frac{\gamma_{j,1}+1}{p_{j,1}}=\frac{\gamma_{j,k}+1}{p_{j,k}}$$
 for any $j\in\{1,2,...,n\}$ and $k\in\{1,2,...,m_j\}$. Denote that $\Psi_j:=2\sum_{1\le k\le m_j}(\gamma_{j,k}+1)G_{\Omega_j}(\cdot,z_{j,k})$. For any $j\in\{1,2,...,n\}$, denote
\begin{equation*}
\begin{split}
\inf\{\int_{\{\Psi_j<-t\}}|\tilde{f}|^{2}e^{-\varphi_j}:(\tilde{f}-h_{j,1},z_{j,k})\in(\mathcal{O}(K_{\Omega_j})\otimes&\mathcal{I}(\Psi_j))_{z_{j,k}}\,\mbox{for any $k\in\{1,2,...,m_j\}$}\\&\&{\,}\tilde{f}\in H^{0}(\{\Psi_j<-t\},\mathcal{O}(K_{\Omega_j}))\},
\end{split}
\end{equation*}
by $G_{j}(t)$,  where $t\in[0,+\infty)$. Note that $\psi=\max_{1\le j\le n}\{\pi_j^*(\frac{p_{j,1}}{\gamma_{j,1}+1}\Psi_j)\}$ and $f=(c_{\beta}\Pi_{1\le j\le n}w_{j,\beta_j}^{\gamma_{j,\beta_j}}+g_{\beta})dw_{1,\beta_1}\wedge dw_{2,\beta_2}\wedge...\wedge dw_{n,\beta_n}$ on $V_{\beta}$ for any $\beta\in I_1$, where $c_{\beta}$ is a constant and $g_{\beta}$ is a holomorphic function on $V_{\beta}$ such that $(g_{\beta},z_{\beta})\in\mathcal{I}(\psi)_{z_{\beta}}$. For any $j\in\{1,2,...,n\}$ and $t\ge 0$, taking any $\tilde{f}_j\in H^{0}(\{\frac{p_{j,1}}{\gamma_{j,1}+1}\Psi_j<-t\},\mathcal{O}(K_{\Omega_j}))$ satisfying $(\tilde{f}_j-h_{j,1},z_{j,k})\in(\mathcal{O}(K_{\Omega_j})\otimes\mathcal{I}(\Psi_j))_{z_{j,k}}$ for any $k\in\{1,2,...,m_j\}$, it follows from Lemma \ref{l:0} and $\sum_{1\le j\le n}\frac{\gamma_{j,\beta_j}+1}{p_{j,\beta_j}}=1$ that  $\wedge_{1\le j\le n}\pi_j^*(\tilde{f}_j)\in H^0(\{\psi<-t\},\mathcal{O}(K_M))$ and $(\wedge_{1\le j\le n}\pi_j^*(\tilde{f}_j)-f,z_{\beta})\in(\mathcal{O}(K_M)\otimes\mathcal{I}(\psi))_{z_{\beta}}$ for any $\beta\in I_1$, which implies that
\begin{equation}
	\label{eq:1121b}G(t;\tilde{c}\equiv1)=\Pi_{1\le j\le n}G_{j}(\frac{\gamma_{j,1}+1}{p_{j,1}}t)
\end{equation}
holds for any $t\ge0$. Denote that
$$r_{j,k} :=\varphi_j(z_{j,k})+\lim_{z\rightarrow z_{j,k}}(\Psi_j-2(\gamma_{j,k}+1)\log|w_{j,k}|)$$
and
$$b_{j,k}:=\lim_{z\rightarrow z_{j,k}}\frac{h_{j,1}}{w_{j,k}^{\gamma_{j,k}}dw_{j,k}}$$
for any $j\in\{1,2,...,n\}$ and $k\in\{1,2,...,m_j\}$. It follows from Proposition \ref{p:exten-pro-finite} that
\begin{equation}
\label{eq:1121c}
	G_j(t)\le e^{-t}\sum_{1\le k\le m_j}\frac{2\pi|b_{j,k}|^2e^{-r_{j,k}}}{\gamma_{j,k}+1}
\end{equation}
for any $j\in\{1,2,...,n\}$. As $\sum_{1\le j\le n}\frac{\gamma_{j,1}+1}{p_{j,1}}=1$, following from equality \eqref{eq:1121b} and inequality \eqref{eq:1121c}, we obtain that
\begin{equation}
	\label{eq:1121d}
	G(t;\tilde{c}\equiv1)\le e^{-t}\Pi_{1\le j\le n}(\sum_{1\le k\le m_j}\frac{2\pi|b_{j,k}|^2e^{-r_{j,k}}}{\gamma_{j,k}+1}).
\end{equation}

As $M$ is a Stein manifold and $\varphi$ is plurisubharmonic function on $M$, there exist smooth plurisubharmonic functions $\Phi_l$ on $M$, which are decreasingly convergent to $\varphi$ with respect to $l$.  It follows from Lemma \ref{l:green-sup} and Lemma \ref{l:green-sup2} that there exists a local coordinate $\tilde{w}_{j,k}$ on a neighborhood $\tilde{V}_{z_{j,k}}\Subset V_{z_{j,k}}$ of $z_{j,k}$ satisfying $\tilde{w}_{j,k}(z_{j,k})=0$ and $|\tilde{w}_{j,k}|=e^{\frac{1}{p_{j,k}}\sum_{1\le k_1\le m_j}p_{j,k_1}G_{\Omega_j}(\cdot,z_{j,k_1})}$ on $\tilde{V}_{z_{j,k}}$ for any $j\in\{1,2,...,n\}$ and $k\in\{1,2,...,m_j\}$. Denote that $\tilde{V}_{\beta}:=\Pi_{1\le j\le n}V_{z_{j,\beta_j}}$ for any $\beta\in I_1$. It follows from Lemma \ref{l:G-compact} that there exists $t_2>0$ such that $\{\psi<-t_2\}\Subset\cup_{\beta\in I_1}\tilde{V}_\beta$.  Note that
\begin{equation}
	\label{eq:1128d}\begin{split}G(t;\tilde{c}\equiv1 )&=\int_{\{\psi<-t\}}|F|^2e^{-\varphi}	\\
	&\ge \int_{\{\psi<-t\}}|F|^2e^{-\Phi_l}
	\\&=\sum_{\beta\in I_1}\int_{\{\psi<-t\}\cap \tilde{V}_{\beta}}|F|^2e^{-\Phi_l}
\end{split}\end{equation}
for any $t\ge t_2$.
It follows from Lemma \ref{l:0} that
\begin{equation}
	\label{eq:1128f}(F-(\Pi_{1\le j\le n}b_{j,\beta_j}w_{j,\beta_j}^{\gamma_{j,\beta_j}})d{w}_{1,\beta_1}\wedge d{w}_{2,\beta_2}\wedge...\wedge d{w}_{n,\beta_n},z_{\beta})\in (\mathcal{O}(K_{M})\otimes\mathcal{I}(\psi))_{z_{\beta}}
\end{equation}
 for any $\beta\in I_1$, which implies that
$$F=(\Pi_{1\le j\le n}\tilde{b}_{j,\beta_j}\tilde{w}_{j,\beta_j}^{\gamma_{j,\beta_j}}+\tilde{g}_{\beta})d\tilde{w}_{1,\beta_1}\wedge d\tilde{w}_{2,\beta_2}\wedge...\wedge d\tilde{w}_{n,\beta_n},$$
where $\tilde{b}_{j,\beta_j}=b_{j,\beta_j}(\lim_{z\rightarrow z_{j,\beta}}\frac{w_{j,\beta_j}}{\tilde{w}_{j,\beta_j}})^{\gamma_{j,\beta_j}+1}$ and $(\tilde{g}_{\beta},z_{\beta})\in\mathcal{I}(\psi)_{z_\beta}$ for any $\beta\in I_1$. It follows from inequality \eqref{eq:1128d} and Lemma \ref{l:m2} that
\begin{equation}
	\label{eq:1128e}\begin{split}
	\liminf_{t\rightarrow+\infty}e^tG(t;\tilde{c}\equiv1 )\ge \sum_{\beta\in I_1}(2\pi)^ne^{-\Phi_l(z_{\beta})}\Pi_{1\le j\le n}\frac{|\tilde{b}_{j,\beta_j}|^2}{\gamma_{j,\beta_j}+1}.		
	\end{split}\end{equation}
	Note that
	\begin{displaymath}
		\begin{split}
			e^{-\varphi(z_\beta)}\Pi_{1\le j\le n}|\tilde{b}_{j,\beta_j}|^2&=			e^{-\varphi(z_\beta)}\Pi_{1\le j\le n}|{b}_{j,\beta_j}|^2\lim_{z\rightarrow z_{j,\beta}}|\frac{w_{j,\beta_j}}{\tilde{w}_{j,\beta_j}}|^{2\gamma_{j,\beta_j}+2}\\
			&=\Pi_{1\le j\le n}|{b}_{j,\beta_j}|^2e^{-(\varphi_j(z_{j,\beta_j})+\lim_{z\rightarrow z_{j,\beta_j}}(\Psi_j-(2\gamma_{j,\beta_j}+2)\log|w_{j,\beta_j}|))}\\
			&=\Pi_{1\le j\le n}|{b}_{j,\beta_j}|^2e^{-r_{j,\beta_j}}.
		\end{split}
	\end{displaymath}
Letting $l\rightarrow+\infty$, inequality \eqref{eq:1128e} shows that
\begin{equation}
	\label{eq:1121e}\begin{split}
	\liminf_{t\rightarrow+\infty}e^{t}G(t;\tilde{c}\equiv1)&\ge\sum_{\beta\in I_1}(2\pi)^n\Pi_{1\le j\le n}\frac{|{b}_{j,\beta_j}|^2e^{-r_{j,\beta_j}}}{\gamma_{j,\beta_j}+1}\\
	&=\Pi_{1\le j\le n}(\sum_{1\le k\le m_j}\frac{2\pi|b_{j,k}|^2e^{-r_{j,k}}}{\gamma_{j,k}+1}).
	\end{split}
\end{equation}
As $G(-\log r;\tilde{c}\equiv1)$ is linear with respect to $r$, it follows from inequality \eqref{eq:1121d} and inequality \eqref{eq:1121e} that
\begin{equation}
	\label{eq:1121f}G(t;\tilde{c}\equiv1)= e^{-t}\Pi_{1\le j\le n}(\sum_{1\le k\le m_j}\frac{2\pi|b_{j,k}|^2e^{-r_{j,k}}}{\gamma_{j,k}+1}),
\end{equation}
which implies that
\begin{equation}\label{eq:1122d}
		G_j(t)= e^{-t}\sum_{1\le k\le m_j}\frac{2\pi|b_{j,k}|^2e^{-r_{j,k}}}{\gamma_{j,k}+1},
\end{equation}
i.e. $G_j(-\log r)$ is linear with respect to $r\in(0,1]$.
Note that $r_{j,k}=\varphi_j(z_{j,k})+\lim_{z\rightarrow z_{j,k}}(\Psi_j-2(\gamma_{j,k}+1)\log|w_{j,k}|)$. It follows from Theorem
\ref{c:L2-1d-char} that
$$\varphi_j=2\log|g_j|+2u_j$$
and
$$\Pi_{1\le j\le m_j}\chi_{j,z_{j,k}}^{\gamma_{j,k}+1}=\chi_{j,-u_j}$$
for any $j\in\{1,2,...,n\}$ and $k\in\{1,2,...,m_j\}$,
where $g_j$ is a holomorphic function on $\Omega_j$ such that $g_j(z_{j,k})\not=0$ and $u_j$ is a harmonic function on $\Omega_j$.

 \

 \emph{Step 4. $\lim_{z\rightarrow z_{\beta}}\frac{c_{\beta}\Pi_{1\le j\le n}w_{j,\beta_j}^{\gamma_{j,\beta_j}}dw_{1,\beta_1}\wedge dw_{2,\beta_2}\wedge...\wedge dw_{n,\beta_n}}{\wedge_{1\le j\le n}\pi_{j}^*(g_j(P_{j})_*(f_{u_j}(\Pi_{1\le k\le m_j}f_{z_{j,k}}^{\gamma_{j,k}+1})(\sum_{1\le k\le m_j}p_{j,k}\frac{df_{z_{j,k}}}{f_{z_{j,k}}})))}=c_0$.}

 \

By the definition of $G_{j}(t)$, we have
\begin{equation}
	\label{eq:1122e}\int_{\{\Psi_j<-t\}}|h_{j,1}|^2e^{-\varphi_j}\geq e^{-t}\sum_{1\le k\le m_j}\frac{2\pi|b_{j,k}|^2e^{-r_{j,k}}}{\gamma_{j,k}+1}.
\end{equation}
 As $G(t;\tilde{c}\equiv1)=\int_{\{\psi<-t\}}|F|^2e^{-\varphi}=e^{-t}\Pi_{1\le j\le n}(\sum_{1\le k\le m_j}\frac{2\pi|b_{j,k}|^2e^{-r_{j,k}}}{\gamma_{j,k}+1})$ and $F=\wedge_{1\le k\le m_j}\pi_j^{*}(h_{j,1})$, it follows from inequality \eqref{eq:1122e} that
 \begin{equation}
 	\label{eq:1122f}\int_{\{\Psi_j<-t\}}|h_{j,1}|^2e^{-\varphi_j}= e^{-t}\sum_{1\le k\le m_j}\frac{2\pi|b_{j,k}|^2e^{-r_{j,k}}}{\gamma_{j,k}+1}=G_j(t).
 \end{equation}
It follows from Remark \ref{rem:1.2} and equality \eqref{eq:1122f}  that
\begin{equation*}
	\label{eq:1122c}h_{j,1}=b_jg_{j}(P_j)_*(f_{u_{j}}(\Pi_{1\le k\le m_j}f_{z_{j,k}}^{\gamma_{j,k}+1})(\sum_{1\le k\le m_j}p_{j,k}\frac{d{f_{z_{j,k}}}}{f_{z_{j,k}}})),
\end{equation*}
where $b_j$ is a constant,
hence we have
\begin{equation}
	\label{eq:1122g}F=\wedge_{1\le j\le n} \pi_j^*(b_jg_{j}(P_j)_*(f_{u_{j}}(\Pi_{1\le k\le m_j}f_{z_{j,k}}^{\gamma_{j,k}+1})(\sum_{1\le k\le m_j}p_{j,k}\frac{d{f_{z_{j,k}}}}{f_{z_{j,k}}}))).
\end{equation}
As $(F-f,z_{\beta})\in(\mathcal{O}(K_{M})\otimes\mathcal{I}(\psi))_{z_{\beta}}$ and $f=(c_{\beta}\Pi_{1\le j\le n}w_{j,\beta_j}^{\gamma_{j,\beta_j}}+g_{\beta})dw_{1,\beta_1}\wedge dw_{2,\beta_2}\wedge...\wedge dw_{n,\beta_n}$ on $V_{\beta}$ for any $\beta\in I_1$, it follows from Lemma \ref{l:0} that
\begin{displaymath}
	\lim_{z\rightarrow z_{\beta}}\frac{c_{\beta}\Pi_{1\le j\le n}w_{j,\beta_j}^{\gamma_{j,\beta_j}}dw_{1,\beta_1}\wedge dw_{2,\beta_2}\wedge...\wedge dw_{n,\beta_n}}{\wedge_{1\le j\le n}\pi_{j}^*(g_j(P_{j})_*(f_{u_j}(\Pi_{1\le k\le m_j}f_{z_{j,k}}^{\gamma_{j,k}+1})(\sum_{1\le k\le m_j}p_{j,k}\frac{df_{z_{j,k}}}{f_{z_{j,k}}})))}=\Pi_{1\le j\le n}b_j
\end{displaymath}for any $\beta\in I_1$. Denote that $c_0=\Pi_{1\le j\le n}b_j$.

Thus, we prove the necessity of Theorem \ref{thm:prod-finite-point}.

\subsection{Proof of Remark \ref{r:1.2}}\label{sec:pf-r1.2}
\

Following from Corollary \ref{c:linear}, we have the uniqueness, thus it suffices to prove the existence.
 It follows from Section \ref{sec:n-2} that
 $$F=c_0\wedge_{1\le j\le n}\pi_j^*(g_j(P_j)_*(f_{u_j}(\Pi_{1\le k\le m_j}f_{z_{j,k}}^{\gamma_{j,k}+1})(\sum_{1\le k\le m_j}p_{j,k}\frac{df_{z_{j,k}}}{f_{j,k}})))$$
 and
 \begin{displaymath}
  	\begin{split}
  		G(t;\tilde{c}\equiv1 )&=\int_{\{\psi<-t\}}|F|^2e^{-\varphi}\\
  		&=e^{-t}\Pi_{1\le j\le n}(\sum_{1\le k\le m_j}\frac{2\pi|b_{j,k}|^2e^{-r_{j,k}}}{\gamma_{j,k}+1}).
  	\end{split}
  \end{displaymath}
As $(F-f,z_{\beta})\in(\mathcal{O}(K_M)\otimes\mathcal{I}(\psi))_{z_\beta}$ for any $\beta\in I_1$, it follows from \eqref{eq:1128f} and Lemma \ref{l:0} that
$$\Pi_{1\le j\le n}b_{j,\beta_j}=c_{\beta}$$
for any $\beta\in I_1$.
 As
$\Pi_{1\le j\le n}e^{-r_{j,\beta_j}}=\frac{e^{-\varphi(z_{\beta})}}{\Pi_{1\le j\le n}c_{j,\beta_j}^{2(\gamma_{j,\beta_j}+1)}},$
  we obtain that
  \begin{displaymath}
  	\begin{split}
  		G(t;\tilde{c}\equiv1 )&=\int_{\{\psi<-t\}}|F|^2e^{-\varphi}\\
  		&=e^{-t}\sum_{\beta\in I_1}\frac{|c_{\beta}|^2(2\pi)^ne^{-\varphi(z_{\beta})}}{\Pi_{1\le j\le n}(\gamma_{j,\beta_j}+1)c_{j,\beta_j}^{2(\gamma_{j,\beta_j}+1)}}.
  	\end{split}
  \end{displaymath}
Following from equality \eqref{eq:1121a} in Section \ref{sec:n-2} and Corollary \ref{c:linear} that
\begin{displaymath}
	\begin{split}
		G(t;c)&=\int_{\{\psi<-t\}}|F|^2e^{-\varphi}c(-\psi)\\
		&=(\int_t^{+\infty}c(s)e^{-s}ds)\sum_{\beta\in I_1}\frac{|c_{\beta}|^2(2\pi)^ne^{-\varphi(z_{\beta})}}{\Pi_{1\le j\le n}(\gamma_{j,\beta_j}+1)c_{j,\beta_j}^{2(\gamma_{j,\beta_j}+1)}},
	\end{split}
\end{displaymath}
hence $c_0\wedge_{1\le j\le n}\pi_j^*(g_j(P_j)_*(f_{u_j}(\Pi_{1\le k\le m_j}f_{z_{j,k}}^{\gamma_{j,k}+1})(\sum_{1\le k\le m_j}p_{j,k}\frac{df_{z_{j,k}}}{f_{j,k}})))$ is the $(n,0)$ form satisfying the conditions in Remark \ref{r:1.2}.

\section{Proof of Theorem \ref{thm:prod-infinite-point}}
In this Section, we prove Theorem \ref{thm:prod-infinite-point} by contradiction. We assume that $G(h^{-1}(r))$ is linear.

Following from the linearity of $G(h^{-1}(r))$ and Corollary \ref{c:linear}, there exists a holomorphic $(n,0)$ form $F$ on $M$, such that $(F-f,z_{\beta})\in(\mathcal{O}(K_M)\otimes\mathcal{I}(\psi))_{z_{\beta}}$ for any $\beta\in\tilde I_1$ and
\begin{equation}
	\label{eq:1130a}G(t)=\int_{\{\psi<-t\}}|F|^2e^{-\varphi}c(-\psi).
\end{equation}
As $\limsup_{t\rightarrow+\infty}c(t)<+\infty$, if $\tilde{F}$ is a holomorphic $(n,0)$ form on $\{\psi<-t\}$ satisfying $\int_{\{\psi<-t\}}|\tilde{F}|^2e^{-\varphi}<+\infty$, we have $\int_{\{\psi<-t\}}|F|^2e^{-\varphi}c(-\psi)<+\infty$, where $t>0$.
 Thus, it follows from Corollary \ref{c:linear}  that
 $$G(t;\tilde{c}\equiv1)=\int_{\{\psi<-t\}}|F|^2e^{-\varphi}=e^{-t}\frac{G(0)}{\int_0^{+\infty}c(l)e^{-l}dl}.$$
Theorem \ref{thm:general_concave} shows that $G(0,\tilde{c})=\lim_{t\rightarrow0+0}G(t;\tilde{c})$. Thus we have $G(-\log r,\tilde{c})$ is linear with respect to $r\in(0,1].$

For any $\beta\in\tilde I_1$. It follows from Lemma \ref{l:green-sup2} and Lemma \ref{l:G-compact} that there exists $t_{\beta}>0$ such that $\{\psi<-t_\beta\}\cap V_{\beta}\Subset V_{\beta}$ and $\{z\in\Omega_j:2\sum_{1\le k<\tilde m_j}p_{j,k}G_{\Omega_j}(\cdot,z_{j,k})<-t_\beta\}\cap V_{z_{j,\beta_j}}$ is simply connected for any $j\in\{1,2,...,n\}$. Denote\begin{equation*}
\begin{split}
\inf\{\int_{\{\psi<-t\}\cap V_\beta}|\tilde{f}|^{2}e^{-\varphi}:(\tilde{f}-f,z_{\beta})\in&(\mathcal{O}(K_M)\otimes\mathcal{I}(\psi))_{z_{\beta}}\\&\&{\,}\tilde{f}\in H^{0}(\{\psi<-t\}\cap V_{\beta},\mathcal{O}(K_{M}))\},
\end{split}
\end{equation*}
by $G_{\beta}(t)$,  where $t\in[t_\beta,+\infty)$. Note that  $\{\psi<-t\}\cap V_{\beta}=\Pi_{1\le j\le n}(\{z\in\Omega_j:2\sum_{1\le k<\tilde m_j}p_{j,k}G_{\Omega_j}(\cdot,z_{j,k})<-t\}\cap V_{z_{j,\beta_j}})$ for any $t\ge t_\beta$.
Denote\begin{equation*}
\begin{split}
\inf\{\int_{\{\psi<-t\}\backslash V_{\beta}}|\tilde{f}|^{2}e^{-\varphi}:(\tilde{f}-f,z_{\tilde\beta})\in&(\mathcal{O}(K_M)\otimes\mathcal{I}(\psi))_{z_{\tilde\beta}}\mbox{ for any $\tilde\beta\in \tilde{I}_1\backslash\{\beta\}$} \\&\&{\,}\tilde{f}\in H^{0}(\{\psi<-t\}\backslash V_{\beta},\mathcal{O}(K_{M}))\},
\end{split}
\end{equation*}
by $\tilde G_{\beta}(t)$,  where $t\in[t_\beta,+\infty)$.
 By the definition of $G(t;\tilde{c})$, $G_{\beta}(t)$ and $\tilde{G}_{\beta}(t)$, we have $G(t;\tilde{c})=G_{\beta}(t)+\tilde{G}_{\beta}(t)$ for $t\ge t_\beta$, hence
 $$G_{\beta}(t)=\int_{\{\psi<-t\}\cap V_{\beta}}|F|^2e^{-\varphi}$$
 holds for any $t\ge t_{\beta}$.
  Theorem \ref{thm:general_concave} tells us that $G_{\beta}(-\log r)$  and $\tilde{G}_{\beta}(-\log r)$ are concave with respect to $r\in(0,e^{-t_\beta})$. As $G(-\log r;\tilde{c})$ is linear with respect to $r$, we have $G_{\beta}(-\log r)$ is linear with respect to $r\in(0,e^{-t_\beta}]$.

 As $f=w_{\beta^*}^{\alpha_{\beta_*}}dw_{1,1}\wedge dw_{2,1}\wedge...\wedge dw_{n,1}$ on $V_{\beta^*}$ and $\frac{1}{2p_{j,1}}(2\sum_{1\le k<\tilde m_j}p_{j,k}G_{\Omega_j}(\cdot,z_{j,k})+t_{\beta^*})$ is the Green function on $\{z\in\Omega_j:2\sum_{1\le k<\tilde{m}_j}p_{j,k}G_{\Omega_j}(\cdot,z_{j,k})<-t_{\beta^*}\}\cap V_{z_{j,1}}$, where $\beta^*=(1,1,...,1)\in\tilde I_1$. Using Theorem \ref{thm:linear-2d} and Remark \ref{r:1.1}, we obtain that there exists a holomorphic $(1,0)$ form $h_{j,0}$ on $\{z\in\Omega_j:2\sum_{1\le k<\tilde{m}_j}p_{j,k}G_{\Omega_j}(\cdot,z_{j,k})<-t_{\beta^*}\}\cap V_{z_{j,1}}$ for any $j\in\{1,2,...,n\}$ such that $F=\wedge_{1\le j\le n}\pi_{j}^*(h_{j,0})$ on $\{\psi<-t_{\beta^*}\}\cap V_{\beta^*}$.  It follows from Lemma \ref{l:decom} that there exists a holomorphic $(1,0)$ form $h_{j,1}$ on $\Omega_j$ such that
$$F=\wedge_{1\le j\le n}\pi_j^*(h_{j,1})$$
on $M$ and $h_{j,0}=h_{j,1}$ on $\{z\in\Omega_j:2\sum_{1\le k<\tilde{m}_j}p_{j,k}G_{\Omega_j}(\cdot,z_{j,k})<-t_{\beta^*}\}\cap V_{z_{j,1}}$.

Denote that $\gamma_{j,k}:=ord_{z_{j,k}}h_{j,1}$ for any $j\in\{1,2,...,n\}$ and $1\le k<\tilde{m}_j$. As $G_{\beta}(-\log r)$ is linear with respect to $r\in(0,e^{-t_\beta}]$ and $G_{\beta}(t)=\int_{\{\psi<-t\}\cap V_{\beta}}|F|^2e^{-\varphi}>0$ for any $t\ge t_\beta$, it follows from Theorem \ref{thm:linear-2d} and Lemma \ref{l:0} that
$$\sum_{1\le j\le n}\frac{\gamma_{j,\beta_j}+1}{p_{j,\beta_j}}=1$$ and
$$f=(c_{\beta}\Pi_{1\le j\le n}w_{j,\beta_j}^{\gamma_{j,\beta_j}}+g_{\beta})dw_{1,\beta_1}\wedge dw_{2,\beta_2}\wedge...\wedge dw_{n,\beta_n}$$
 on $V_{\beta}$ for any $\beta\in\tilde I_1$, where $c_{\beta}$ is a constant and $g_{\beta}$ is a holomorphic function on $V_{\beta}$ such that $(g_{\beta},z_{\beta})\in\mathcal{I}(\psi)_{z_{\beta}}$.

As  $\sum_{1\le j\le n}\frac{\gamma_{j,\beta_j}+1}{p_{j,\beta_j}}=1$ for any $\beta\in\tilde I_1$, we have
$$\frac{\gamma_{j,1}+1}{p_{j,1}}=\frac{\gamma_{j,k}+1}{p_{j,k}}$$
 for any $j\in\{1,2,...,n\}$ and $1\le k<\tilde m_j$. Denote that $\Psi_j:=2\sum_{1\le k<\tilde m_j}(\gamma_{j,k}+1)G_{\Omega_j}(\cdot,z_{j,k})$. For any $j\in\{1,2,...,n\}$, denote
\begin{equation*}
\begin{split}
\inf\{\int_{\{\Psi_j<-t\}}|\tilde{f}|^{2}e^{-\varphi_j}:(\tilde{f}-h_{j,1},z_{j,k})\in(\mathcal{O}(K_{\Omega_j})\otimes&\mathcal{I}(\Psi_j))_{z_{j,k}}\,\mbox{for any $1\le k<\tilde m_j$}\\&\&{\,}\tilde{f}\in H^{0}(\{\Psi_j<-t\},\mathcal{O}(K_{\Omega_j}))\},
\end{split}
\end{equation*}
by $G_{j}(t)$,  where $t\in[0,+\infty)$. Note that $\psi=\max_{1\le j\le n}\{\pi_j^*(\frac{p_{j,1}}{\gamma_{j,1}+1}\Psi_j)\}$ and $f=(c_{\beta}\Pi_{1\le j\le n}w_{j,\beta_j}^{\gamma_{j,\beta_j}}+g_{\beta})dw_{1,\beta_1}\wedge dw_{2,\beta_2}\wedge...\wedge dw_{n,\beta_n}$ on $V_{\beta}$ for any $\beta\in\tilde I_1$, where $c_{\beta}$ is a constant and $g_{\beta}$ is a holomorphic function on $V_{\beta}$ such that $(g_{\beta},z_{\beta})\in\mathcal{I}(\psi)_{z_{\beta}}$. For any $j\in\{1,2,...,n\}$ and $t\ge 0$, choosing any $\tilde{f}_j\in H^{0}(\{\frac{p_{j,1}}{\gamma_{j,1}+1}\Psi_j<-t\},\mathcal{O}(K_{\Omega_j}))$ satisfying $(\tilde{f}_j-h_{j,1},z_{j,k})\in(\mathcal{O}(K_{\Omega_j})\otimes\mathcal{I}(\Psi_j))_{z_{j,k}}$ for any $1\le k<\tilde m_j$, we have $\wedge_{1\le j\le n}\pi_j^*(\tilde{f}_j)\in H^0(\{\psi<-t\},\mathcal{O}(K_M))$ and $(\wedge_{1\le j\le n}\pi_j^*(\tilde{f}_j)-f,z_{\beta})\in(\mathcal{O}(K_M)\otimes\mathcal{I}(\psi))_{z_{\beta}}$ for any $\beta\in\tilde I_1$, which implies that
\begin{equation}
	\label{eq:1130b}G(t;\tilde{c}\equiv1)=\Pi_{1\le j\le n}G_{j}(\frac{\gamma_{j,1}+1}{p_{j,1}}t)
\end{equation}
holds for any $t\ge0$. Denote that
$$r_{j,k} :=\varphi_j(z_{j,k})+\lim_{z\rightarrow z_{j,k}}(\Psi_j-2(\gamma_{j,k}+1)\log|w_{j,k}|)$$
and
$$b_{j,k}:=\lim_{z\rightarrow z_{j,k}}\frac{h_{j,1}}{w_{j,k}^{\gamma_{j,k}}dw_{j,k}}$$
for any $j\in\{1,2,...,n\}$ and $1\le k<\tilde m_j$. It follows from Proposition \ref{p:exten-pro-finite} that
\begin{equation}
\label{eq:1130c}
	G_j(t)\le e^{-t}\sum_{1\le k<\tilde m_j}\frac{2\pi|b_{j,k}|^2e^{-r_{j,k}}}{\gamma_{j,k}+1}
\end{equation}
for any $j\in\{1,2,...,n\}$. As $\sum_{1\le j\le n}\frac{\gamma_{j,1}+1}{p_{j,1}}=1$, following from equality \eqref{eq:1130b} and inequality \eqref{eq:1130c}, we obtain that
\begin{equation}
	\label{eq:1130d}
	G(t;\tilde{c}\equiv1)\le e^{-t}\Pi_{1\le j\le n}(\sum_{1\le k<\tilde m_j}\frac{2\pi|b_{j,k}|^2e^{-r_{j,k}}}{\gamma_{j,k}+1}).
\end{equation}

As $M$ is a Stein manifold and $\varphi$ is plurisubharmonic function on $M$, there exist smooth plurisubharmonic functions $\Phi_l$ on $M$, which are decreasingly convergent to $\varphi$ with respect to $l$. As $\{\psi<-t_\beta\}\cap V_{\beta}\Subset V_{\beta}$ for any $\beta\in\tilde I_1$, we have
\begin{equation}
	\label{eq:1130e}\begin{split}\lim_{t\rightarrow+\infty}e^{t}G(t;\tilde{c}\equiv1 )&=\lim_{t\rightarrow+\infty}e^{t}\int_{\{\psi<-t\}}|F|^2e^{-\varphi}	\\
	&\ge\sum_{\beta\in\tilde I_1}\liminf_{t\rightarrow+\infty}e^{t}\int_{\{\psi<-t\}\cap {V}_{\beta}}|F|^2e^{-\Phi_l}.
\end{split}\end{equation}
It follows from Lemma \ref{l:green-sup} and Lemma \ref{l:green-sup2} that there exists a local coordinate $\tilde{w}_{j,k}$ on a neighborhood $\tilde{V}_{z_{j,k}}\Subset V_{z_{j,k}}$ of $z_{j,k}$ satisfying $\tilde{w}_{j,k}(z_{j,k})=0$ and $|\tilde{w}_{j,k}|=e^{\frac{1}{p_{j,k}}\sum_{1\le k_1<\tilde m_j}p_{j,k_1}G_{\Omega_j}(\cdot,z_{j,k_1})}$ on $\tilde{V}_{z_{j,k}}$ for any $j\in\{1,2,...,n\}$ and $1\le k<\tilde m_j$. Denote that $\tilde{V}_{\beta}:=\Pi_{1\le j\le n}\tilde V_{z_{j,\beta_j}}$ for any $\beta\in\tilde I_1$.

It follows from Lemma \ref{l:0} that
\begin{equation*}
	(F-(\Pi_{1\le j\le n}b_{j,\beta_j}w_{j,\beta_j}^{\gamma_{j,\beta_j}})d{w}_{1,\beta_1}\wedge d{w}_{2,\beta_2}\wedge...\wedge d{w}_{n,\beta_n},z_{\beta})\in (\mathcal{O}(K_{M})\otimes\mathcal{I}(\psi))_{z_{\beta}}
\end{equation*}
 for any $\beta\in\tilde I_1$, which implies that
 \begin{equation}
 	\label{eq:1130f}F=(\Pi_{1\le j\le n}\tilde{b}_{j,\beta_j}\tilde{w}_{j,\beta_j}^{\gamma_{j,\beta_j}}+\tilde{g}_{\beta})d\tilde{w}_{1,\beta_1}\wedge d\tilde{w}_{2,\beta_2}\wedge...\wedge d\tilde{w}_{n,\beta_n}, \end{equation}
where $\tilde{b}_{j,\beta_j}=b_{j,\beta_j}(\lim_{z\rightarrow z_{j,\beta}}\frac{w_{j,\beta_j}}{\tilde{w}_{j,\beta_j}})^{\gamma_{j,\beta_j}+1}$ and $(\tilde{g}_{\beta},z_{\beta})\in\mathcal{I}(\psi)_{z_\beta}$ for any $\beta\in\tilde I_1$. It follows from    inequality \eqref{eq:1130e}, equality \eqref{eq:1130f} and Lemma \ref{l:m2} that
\begin{equation}
	\label{eq:1130g}\begin{split}
	\lim_{t\rightarrow+\infty}e^tG(t;\tilde{c}\equiv1 )\ge \sum_{\beta\in\tilde I_1}(2\pi)^ne^{-\Phi_l(z_{\beta})}\Pi_{1\le j\le n}\frac{|\tilde{b}_{j,\beta_j}|^2}{\gamma_{j,\beta_j}+1}.		
	\end{split}\end{equation}
	Note that
	\begin{displaymath}
		\begin{split}
			e^{-\varphi(z_\beta)}\Pi_{1\le j\le n}|\tilde{b}_{j,\beta_j}|^2&=			e^{-\varphi(z_\beta)}\Pi_{1\le j\le n}|{b}_{j,\beta_j}|^2\lim_{z\rightarrow z_{j,\beta}}|\frac{w_{j,\beta_j}}{\tilde{w}_{j,\beta_j}}|^{2\gamma_{j,\beta_j}+2}\\
			&=\Pi_{1\le j\le n}|{b}_{j,\beta_j}|^2e^{-(\varphi_j(z_{j,\beta_j})+\lim_{z\rightarrow z_{j,\beta_j}}(\Psi_j-(2\gamma_{j,\beta_j}+2)\log|w_{j,\beta_j}|))}\\
			&=\Pi_{1\le j\le n}|{b}_{j,\beta_j}|^2e^{-r_{j,\beta_j}}.
		\end{split}
	\end{displaymath}
Letting $l\rightarrow+\infty$, inequality \eqref{eq:1130g} shows that
\begin{equation}
	\label{eq:1130h}\begin{split}
	\liminf_{t\rightarrow+\infty}e^{t}G(t;\tilde{c}\equiv1)&\ge\sum_{\beta\in\tilde I_1}(2\pi)^n\Pi_{1\le j\le n}\frac{|{b}_{j,\beta_j}|^2e^{-r_{j,\beta_j}}}{\gamma_{j,\beta_j}+1}\\
	&=\Pi_{1\le j\le n}(\sum_{1\le k<\tilde m_j}\frac{2\pi|b_{j,k}|^2e^{-r_{j,k}}}{\gamma_{j,k}+1}).
	\end{split}
\end{equation}
As $G(-\log r;\tilde{c}\equiv1)$ is linear with respect to $r$, it follows from inequality \eqref{eq:1130d} and equality \eqref{eq:1130h} that
\begin{equation*}
G(t;\tilde{c}\equiv1)= e^{-t}\Pi_{1\le j\le n}(\sum_{1\le k\le m_j}\frac{2\pi|b_{j,k}|^2e^{-r_{j,k}}}{\gamma_{j,k}+1}),
\end{equation*}
which implies that
\begin{equation*}
		G_j(t)= e^{-t}\sum_{1\le k\le m_j}\frac{2\pi|b_{j,k}|^2e^{-r_{j,k}}}{\gamma_{j,k}+1},
\end{equation*}
i.e. $G_j(-\log r)$ is linear with respect to $r\in(0,1]$. As there exists $j_0\in\{1,2,...,n\}$ such that $\tilde m_j=+\infty$, it follows from $\frac{\gamma_{j_0,1}+1}{p_{j_0,1}}=\frac{\gamma_{j_0,k}+1}{p_{j_0,k}}$ for any $1\le k<\tilde m_j$ that $\sum_{1\le k<\tilde m_j}p_{j_0,k}=+\infty$, which contradicts to Proposition \ref{p:infinite}. Hence, we obtain that $G(h^{-1}(r))$ is not linear with respect to $r$.

\section{Proofs of Theorem \ref{thm:2d-jet}, Theorem \ref{thm:prod-finite-jet} and Theorem \ref{thm:prod-infinite-jet}}

In this section, we prove Theorem \ref{thm:2d-jet}, Theorem \ref{thm:prod-finite-jet} and Theorem \ref{thm:prod-infinite-jet}.

\subsection{Proof of Theorem \ref{thm:2d-jet}}
\

As $c(s)e^{-s}$ is decreasing with respect to $s$ and $\Psi\le0$, it follows from Proposition \ref{p:exten-pro-finite} that
there exists a holomorphic $(n,0)$ form $F$ on $M$ satisfying that $(F-f,z_0)\in(\mathcal{O}(K_M)\otimes\mathcal{I}(\max_{1\le j\le n}\{2p_j\pi_j^{*}(G_{\Omega_j}(\cdot,z_j))\}))_{z_0}$ and
\begin{equation}\label{eq:1127a}
	\begin{split}
	\int_{M}|F|^2e^{-\varphi}c(-\psi)&\le \int_{M}|F|^2e^{-\varphi-\Psi}c(-\psi+\Psi)\\
	&\le(\int_0^{+\infty}c(s)e^{-s}ds)\sum_{\alpha\in E}\frac{|d_{\alpha}|^2(2\pi)^ne^{-(\varphi+\Psi)(z_{0})}}{\Pi_{1\le j\le n}(\alpha_j+1)c_{j}(z_j)^{2\alpha_{j}+2}}.	\end{split}	
\end{equation}

In the following, we prove the characterization for the holding of the equality $(\int_0^{+\infty}c(s)e^{-s}ds)\sum_{\alpha\in E}\frac{|d_{\alpha}|^2(2\pi)^ne^{-(\varphi+\Psi)(z_{0})}}{\Pi_{1\le j\le n}(\alpha_j+1)c_{j}(z_j)^{2\alpha_{j}+2}}=\inf\{\int_{M}|\tilde{F}|^2e^{-\varphi}c(-\psi):\tilde{F}$ is a holomorphic $(n,0)$ form on $M$ such that $(\tilde{F}-f,z_0)\in(\mathcal{O}(K_{M})\otimes\mathcal{I}(\psi-\Psi))_{z_0}\}$.

Following from Remark \ref{r:1.1}, we obtain the sufficiency. Now, we prove the necessity.
Inequality \eqref{eq:1127a} shows that $\int_{M}|F|^2e^{-\varphi}c(-\psi)= \int_{M}|F|^2e^{-\varphi-\Psi}c(-\psi+\Psi)$. As  $c(s)e^{-s}$ is decreasing with respect to $s$ and $\Psi\le0$, it follows from Lemma \ref{l:psi=G} that $\Psi\equiv0$, then $\psi=\max_{1\le j\le n}\{\pi_j^*(2p_jG_{\Omega_j}(\cdot,z_j))\}.$

Let $t\ge0$. It follows from proposition \ref{p:exten-pro-finite}  ($M\sim\{\psi<-t\}$, $\psi\sim\psi+t$ and $c(\cdot)\sim c(\cdot+t)$, here $\sim$ means the former replaced by the latter), there exists a holomorphic $(n,0)$ form $F_t$ on $\{\psi<-t\}$ satisfying that $(F-f,z_0)\in(\mathcal{O}(K_M)\otimes\mathcal{I}(\psi))_{z_0}$ and
\begin{equation*}
\int_{\{\psi<-t\}}|F_t|^2e^{-\varphi}c(-\psi)\le(\int_t^{+\infty}c(s)e^{-s}ds)\sum_{\alpha\in E}\frac{|d_{\alpha}|^2(2\pi)^ne^{-\varphi(z_{0})}}{\Pi_{1\le j\le n}(\alpha_j+1)c_{j}(z_j)^{2\alpha_{j}+2}}.
\end{equation*}
By the definition of $G(t)$, then we obtain that inequality
\begin{equation*}
	\frac{G(t)}{\int_t^{+\infty}c(s)e^{-s}ds}\leq	\sum_{\alpha\in E}\frac{|d_{\alpha}|^2(2\pi)^ne^{-\varphi(z_{0})}}{\Pi_{1\le j\le n}(\alpha_j+1)c_{j}(z_j)^{2\alpha_{j}+2}}=\frac{G(0)}{\int_0^{+\infty}c(s)e^{-s}ds}\end{equation*}
holds for any $t\geq0$.
 Using Theorem \ref{thm:general_concave}, we obtain that  $G(\hat{h}^{-1}(r))$ is linear with respect to $r$.
It follows from Theorem \ref{thm:linear-2d} that the three statements in Theorem \ref{thm:2d-jet} hold.

Thus, Theorem \ref{thm:2d-jet} holds.

\subsection{Proof of Theorem \ref{thm:prod-finite-jet}}
\

 As $c(s)e^{-s}$ is decreasing with respect to $s$ and $\Psi\le0$, it follows from Proposition \ref{p:exten-pro-finite} that
there exists a holomorphic $(n,0)$ form $F$ on $M$ satisfying that $(F-f,z_\beta)\in(\mathcal{O}(K_M)\otimes\mathcal{I}(\max_{1\le j\le n}\{\pi_j^*(2\sum_{1\le k\le m_j}p_{j,k}G_{\Omega_j}(\cdot,z_{j,k}))\}))_{z_\beta}$ for any $\beta\in I_1$ and
\begin{equation}\label{eq:1127c}
	\begin{split}
	\int_{M}|F|^2e^{-\varphi}c(-\psi)&\le \int_{M}|F|^2e^{-\varphi-\Psi}c(-\psi+\Psi)\\
	&\le(\int_0^{+\infty}c(s)e^{-s}ds)\sum_{\beta\in I_1}\sum_{\alpha\in E_{\beta}}\frac{|d_{\beta,\alpha}|^2(2\pi)^ne^{-(\varphi+\Psi)(z_{\beta})}}{\Pi_{1\le j\le n}(\alpha_j+1)c_{j,\beta_j}^{2\alpha_{j}+2}}.	\end{split}	
\end{equation}

In the following, we prove the characterization for the holding of the equality $(\int_0^{+\infty}c(s)e^{-s}ds)\sum_{\beta\in I_1}\sum_{\alpha\in E_{\beta}}\frac{|d_{\beta,\alpha}|^2(2\pi)^ne^{-(\varphi+\Psi)(z_{\beta})}}{\Pi_{1\le j\le n}(\alpha_j+1)c_{j,\beta_j}^{2\alpha_{j}+2}}=\inf\{\int_{M}|\tilde{F}|^2e^{-\varphi}c(-\psi):\tilde{F}$ is a holomorphic $(n,0)$ form on $M$ such that $(\tilde{F}-f,z_0)\in(\mathcal{O}(K_{M})\otimes\mathcal{I}(\psi-\Psi))_{z_\beta}$ for any $\beta\in I_1\}$.

Following from Remark \ref{r:1.2}, we obtain the sufficiency. Now, we prove the necessity.
Inequality \eqref{eq:1127c} shows that $\int_{M}|F|^2e^{-\varphi}c(-\psi)= \int_{M}|F|^2e^{-\varphi-\Psi}c(-\psi+\Psi)$. As  $c(s)e^{-s}$ is decreasing with respect to $s$ and $\Psi\le0$, it follows from Lemma \ref{l:psi=G} that $\Psi\equiv0$, then $\psi=\max_{1\le j\le n}\{\pi_j^*(2\sum_{1\le k\le m_j}p_{j,k}G_{\Omega_j}(\cdot,z_{j,k}))\}.$

Let $t\ge0$. It follows from proposition \ref{p:exten-pro-finite}  ($M\sim\{\psi<-t\}$, $\psi\sim\psi+t$ and $c(\cdot)\sim c(\cdot+t)$, here $\sim$ means the former replaced by the latter), there exists a holomorphic $(n,0)$ form $F_t$ on $\{\psi<-t\}$ satisfying that $(F-f,z_\beta)\in(\mathcal{O}(K_M)\otimes\mathcal{I}(\psi))_{z_\beta}$ for any $\beta\in I_1$ and
\begin{equation*}
\int_{\{\psi<-t\}}|F_t|^2e^{-\varphi}c(-\psi)\le(\int_t^{+\infty}c(s)e^{-s}ds)\sum_{\beta\in I_1}\sum_{\alpha\in E_{\beta}}\frac{|d_{\beta,\alpha}|^2(2\pi)^ne^{-\varphi(z_{\beta})}}{\Pi_{1\le j\le n}(\alpha_j+1)c_{j,\beta_j}^{2\alpha_{j}+2}}.
\end{equation*}
By the definition of $G(t)$, then we obtain that inequality
\begin{equation*}
	\frac{G(t)}{\int_t^{+\infty}c(l)e^{-l}dl}\leq	\sum_{\beta\in I_1}\sum_{\alpha\in E_{\beta}}\frac{|d_{\beta,\alpha}|^2(2\pi)^ne^{-\varphi(z_{\beta})}}{\Pi_{1\le j\le n}(\alpha_j+1)c_{j,\beta_j}^{2\alpha_{j}+2}}=\frac{G(0)}{\int_0^{+\infty}c(s)e^{-s}ds}\end{equation*}
holds for any $t\geq0$.
 Using Theorem \ref{thm:general_concave}, we obtain that  $G(\hat{h}^{-1}(r))$ is linear with respect to $r$.
It follows from Theorem \ref{thm:prod-finite-point} that the four statements in Theorem \ref{thm:prod-finite-jet} hold.

Thus, Theorem \ref{thm:prod-finite-jet} holds.

\subsection{Proof of Theorem \ref{thm:prod-infinite-jet}}
\

 As $c(s)e^{-s}$ is decreasing with respect to $s$ and $\Psi\le0$, it follows from Proposition \ref{p:exten-pro-finite} that
there exists a holomorphic $(n,0)$ form $F$ on $M$ satisfying that $(F-f,z_\beta)\in(\mathcal{O}(K_M)\otimes\mathcal{I}(\max_{1\le j\le n}\{2\sum_{1\le k<\tilde m_j}p_{j,k}\pi_j^{*}(G_{\Omega_j}(\cdot,z_{j,k}))\}))_{z_\beta}$ for any $\beta\in\tilde I_1$ and
\begin{equation}\label{eq:1130i}
	\begin{split}
	\int_{M}|F|^2e^{-\varphi}c(-\psi)&\le \int_{M}|F|^2e^{-\varphi-\Psi}c(-\psi+\Psi)\\
	&\le(\int_0^{+\infty}c(s)e^{-s}ds)\sum_{\beta\in\tilde I_1}\sum_{\alpha\in E_{\beta}}\frac{|d_{\beta,\alpha}|^2(2\pi)^ne^{-(\varphi+\Psi)(z_{\beta})}}{\Pi_{1\le j\le n}(\alpha_j+1)c_{j,\beta_j}^{2\alpha_{j}+2}}.	\end{split}	
\end{equation}

In the following, we assume that $(\int_0^{+\infty}c(s)e^{-s}ds)\sum_{\beta\in\tilde I_1}\sum_{\alpha\in E_{\beta}}\frac{|d_{\beta,\alpha}|^2(2\pi)^ne^{-(\varphi+\Psi)(z_{\beta})}}{\Pi_{1\le j\le n}(\alpha_j+1)c_{j,\beta_j}^{2\alpha_{j}+2}}=\inf\{\int_{M}|\tilde{F}|^2e^{-\varphi}c(-\psi):\tilde{F}$ is a holomorphic $(n,0)$ form on $M$ such that $(\tilde{F}-f,z_0)\in(\mathcal{O}(K_{M})\otimes\mathcal{I}(\max_{1\le j\le n}\{2\sum_{1\le k<\tilde m_j}p_{j,k}\pi_j^{*}(G_{\Omega_j}(\cdot,z_{j,k}))\}))_{z_\beta}$ for any $\beta\in \tilde I_1\}$ to get a contradiction.	

Inequality \eqref{eq:1130i} shows that $\int_{M}|F|^2e^{-\varphi}c(-\psi)= \int_{M}|F|^2e^{-\varphi-\Psi}c(-\psi+\Psi)$. As  $c(s)e^{-s}$ is decreasing with respect to $s$ and $\Psi\le0$, it follows from Lemma \ref{l:psi=G} that $\Psi\equiv0$, then $\psi=\max_{1\le j\le n}\{\pi_j^*(2\sum_{1\le k<\tilde m_j}p_{j,k}G_{\Omega_j}(\cdot,z_{j,k}))\}.$
Let $t\ge0$. It follows from proposition \ref{p:exten-pro-finite}  ($M\sim\{\psi<-t\}$, $\psi\sim\psi+t$ and $c(\cdot)\sim c(\cdot+t)$, here $\sim$ means the former replaced by the latter), there exists a holomorphic $(n,0)$ form $F_t$ on $\{\psi<-t\}$ satisfying that $(F-f,z_\beta)\in(\mathcal{O}(K_M)\otimes\mathcal{I}(\psi))_{z_\beta}$ for any $\beta\in\tilde I_1$ and
\begin{equation*}
\int_{\{\psi<-t\}}|F_t|^2e^{-\varphi}c(-\psi)\le(\int_t^{+\infty}c(s)e^{-s}ds)\sum_{\beta\in\tilde I_1}\sum_{\alpha\in E_{\beta}}\frac{|d_{\beta,\alpha}|^2(2\pi)^ne^{-\varphi(z_{\beta})}}{\Pi_{1\le j\le n}(\alpha_j+1)c_{j,\beta_j}^{2\alpha_{j}+2}}.
\end{equation*}
By the definition of $G(t)$, then we obtain that inequality
\begin{equation*}
	\frac{G(t)}{\int_t^{+\infty}c(l)e^{-l}dl}\leq	\sum_{\beta\in\tilde I_1}\sum_{\alpha\in E_{\beta}}\frac{|d_{\beta,\alpha}|^2(2\pi)^ne^{-\varphi(z_{\beta})}}{\Pi_{1\le j\le n}(\alpha_j+1)c_{j,\beta_j}^{2\alpha_{j}+2}}=\frac{G(0)}{\int_0^{+\infty}c(s)e^{-s}ds}\end{equation*}
holds for any $t\geq0$.
 Using Theorem \ref{thm:general_concave}, we obtain that  $G(\hat{h}^{-1}(r))$ is linear with respect to $r$, which contradicts to Theorem \ref{thm:prod-infinite-point}. Thus, we have $\inf\{\int_{M}|\tilde{F}|^2e^{-\varphi}c(-\psi):\tilde{F}$ is a holomorphic $(n,0)$ form on $M$ such that $(\tilde{F}-f,z_0)\in(\mathcal{O}(K_{M})\otimes\mathcal{I}(\psi))_{z_\beta}$ for any $\beta\in \tilde I_1\}<(\int_0^{+\infty}c(s)e^{-s}ds)\sum_{\beta\in\tilde I_1}\sum_{\alpha\in E_{\beta}}\frac{|d_{\beta,\alpha}|^2(2\pi)^ne^{-(\varphi+\Psi)(z_{\beta})}}{\Pi_{1\le j\le n}(\alpha_j+1)c_{j,\beta_j}^{2\alpha_{j}+2}}$, which implies that there exists a holomorphic $(n,0)$ form $F$ on $\Omega$ such that $(\tilde{F}-f,z_0)\in(\mathcal{O}(K_{M})\otimes\mathcal{I}(\psi))_{z_\beta}$ for any $\beta\in \tilde I_1$ and
$$\int_{\Omega}|F|^2e^{-\varphi}c(-\psi)<(\int_0^{+\infty}c(s)e^{-s}ds)\sum_{\beta\in\tilde I_1}\sum_{\alpha\in E_{\beta}}\frac{|d_{\beta,\alpha}|^2(2\pi)^ne^{-(\varphi+\Psi)(z_{\beta})}}{\Pi_{1\le j\le n}(\alpha_j+1)c_{j,\beta_j}^{2\alpha_{j}+2}}.$$

Thus, Theorem \ref{thm:prod-infinite-jet} holds.

\section{Proofs of Theorem \ref{thm:ohsawa}, Theorem \ref{thm:suita} and Theorem \ref{thm:extend}}
In this section, we prove Theorem \ref{thm:ohsawa}, Theorem \ref{thm:suita} and Theorem \ref{thm:extend}.

\subsection{Proof of Theorem \ref{thm:ohsawa}}
\

Let $z_0=(z_1,z_2,...,z_n)\in S\subset M$.
Let $w_j$ be a local coordinate on a neighborhood $V_{z_j}$ of $z_j\in\Omega_j$ satisfying $w_j(z_j)=0$. Denote that $V_0:=\prod_{1\le j\le n}V_{z_j}$, and $w:=(w_1,w_2,...,w_n)$ is a local coordinate on $V_0$ of $z_0\in M$. Let $f=dw_1\wedge dw_2\wedge...\wedge dw_n$ on $V_0$.
 Denote
	\begin{equation*}
\begin{split}
\inf\{\int_{\{\Psi<-t\}}|\tilde{f}|^{2}:(\tilde{f}-f,z_0)\in&(\mathcal{O}(K_M)\otimes\mathcal{I}(\Psi))_{z_0} \\&\&{\,}\tilde{f}\in H^{0}(\{\Psi<-t\},\mathcal{O}(K_{M}))\},
\end{split}
\end{equation*}
by $G_{\Psi}(t)$ for $t\ge0$ and $\Psi\in\Delta(\{z_0\})$. It follows from Proposition \ref{p:pluricomplex} that $G(\cdot,\{z_0\})=\max_{1\le j\le n}\{\pi_j^*(2nG_{\Omega_j}(\cdot,z_j))\}$ and $G(\cdot,S)-G(\cdot,\{z_0\})\le 0$.

Firstly, we prove the necessity. Note that
$$\frac{\kappa_{M}(z_0)}{dw_1\wedge dw_2\wedge...\wedge dw_n\otimes\overline{dw_1\wedge dw_2\wedge...\wedge dw_n}}=\frac{2^n}{G_{G(\cdot,S)}(0)}=\frac{2^n}{G_{G(\cdot,\{z_0\})}(0)}$$ and
\begin{displaymath}
	\begin{split}
		&\frac{\kappa_{M/S}(z_0)}{dw_1\wedge dw_2\wedge...\wedge dw_n\otimes\overline{dw_1\wedge dw_2\wedge...\wedge dw_n}}\\
		=&\frac{1}{2^{-n}\sqrt{-1}^{n^2}\int_{z_0}\frac{dw_1\wedge dw_2\wedge...\wedge dw_n\wedge\overline{dw_1\wedge dw_2\wedge...\wedge dw_n}}{dV_{M}}dV_{M}[G(\cdot,S)]},
	\end{split}
\end{displaymath}
$$$$
which implies that $G_{G(\cdot,S)}(0)=\frac{\pi^n}{n!}\int_{z_0}\frac{|dw_1\wedge dw_2\wedge...\wedge dw_n|^2}{dV_M}dV_{M}[G(\cdot,S)]$.
Note that $dV_{M}[G(\cdot,S)+t]=e^{-t}dV_{M}[G(\cdot,S)]$. It follows from Theorem \ref{gz:L2} that
\begin{equation}
	\label{eq:1201c}G_{G(\cdot,S)}(t)\le e^{-t}\frac{\pi^n}{n!}\int_{z_0}\frac{|dw_1\wedge dw_2\wedge...\wedge dw_n|^2}{dV_M}dV_{M}[G(\cdot,S)].
\end{equation}
Combining Theorem \ref{thm:general_concave}, $G_{G(\cdot,S)}(0)=\frac{\pi^n}{n!}\int_{z_0}\frac{|dw_1\wedge dw_2\wedge...\wedge dw_n|^2}{dV_M}dV_{M}[G(\cdot,S)]$ and inequality \eqref{eq:1201c}, we have
$G_{G(\cdot,S)}(-\log r)$ is linear with respect to $r$. It follows from Thereom \ref{thm:general_concave} and $G(\cdot,S)-G(\cdot,\{z_0\})\le 0$  that $G_{G(\cdot,\{z_0\})}(-\log r)$ is concave with respect to $r$ and $G_{G(\cdot,\{z_0\})}(t)\le G_{G(\cdot,S)}(t)$ for any $t\ge 0$. Note that $G(\cdot,\{z_0\})=\max_{1\le j\le n}\{\pi_j^*(2nG_{\Omega_j}(\cdot,z_j))\}$. As $G_{G(\cdot,\{z_0\})}(0)= G_{G(\cdot,S)}(0)$, we have $G_{G(\cdot,\{z_0\})}(t)=G_{G(\cdot,S)}(t)$ for any $t\ge0$, which implies that $G(\cdot,S)\equiv G(\cdot,\{z_0\})$, hence $S$ is a single point set. As $G_{G(\cdot,\{z_0\})}(-\log r)$ is linear with respect to $r$, it follows from Theorem \ref{thm:linear-2d} that $\chi_{j,z_j}=1$ for any $j\in\{1,2,...,n\}$. There exists a holomorphic function $f_j$ on $\Omega_j$ such that $|f_j|=e^{G_{\Omega_j}(\cdot,z_j)}$, thus $\Omega_j$ is conformally equivalent to the unit disc less a (possible) closed set of inner capacity zero (see \cite{suita72}, see also \cite{Yamada} and \cite{GZ15}) for any $j\in\{1,2,...,n\}$.

Now, we prove sufficiency. As $\Omega_j$ is conformally equivalent to the unit disc less a (possible) closed set of inner capacity zero  for any $j\in\{1,2,...,n\}$, we know $\chi_{j,z_j}=1$ for any $j\in\{1,2,...,n\}$. It follows from Lemma \ref{l:m1} that
\begin{displaymath}
	\begin{split}
		&\frac{\kappa_{M/S}(z_0)}{dw_1\wedge dw_2\wedge...\wedge dw_n\otimes\overline{dw_1\wedge dw_2\wedge...\wedge dw_n}}\\
		=&\frac{1}{2^{-n}\int_{z_0}\frac{|dw_1\wedge dw_2\wedge...\wedge dw_n|^2}{dV_{M}}dV_{M}[G(\cdot,\{z_0\})]}\\
		=&\frac{\Pi_{1\le j\le n}c_{j}(z_j)^2}{n!}.
	\end{split}
\end{displaymath}
It follows from Theorem \ref{thm:2d-jet} that $G_{G(\cdot,\{z_0\})}(0)=\frac{(2\pi)^n}{\Pi_{1\le j\le n}c_j(z_j)^2}$, which implies that $\frac{\pi^n}{n!}\kappa_M(z_0)=\kappa_{M/S}(z_0)$.

Thus, we prove Theorem \ref{thm:ohsawa}.

\subsection{Proof of Theorem \ref{thm:suita}}
\

 Let $f=dw_1\wedge dw_2\wedge...\wedge dw_n$ on $V_0$ and $\psi=\max_{1\le j\le n}\{\pi_j^*(2nG_{\Omega_j}(\cdot,z_j))\}$. It is clear that  $\frac{2^n}{B_M(z_0)}=\inf\{\int_{M}|\tilde{f}|^2:(\tilde{f}-f,z_0)\in(\mathcal{O}(K_M)\otimes\mathcal{I}(\psi))_{z_0}\, \&{\,}\tilde{f}\in H^{0}(M,\mathcal{O}(K_{M}))\}$. If  $\Omega_j$ is conformally equivalent to the unit disc less a (possible) closed set of inner capacity zero, we know $\chi_{j,z_j}=1$. If $\chi_{j,z_j}=1$, there exists a holomorphic function $f_j$ on $\Omega_j$ such that $|f_j|=e^{G_{\Omega_j}(\cdot,z_j)}$, thus $\Omega_j$ is conformally equivalent to the unit disc less a (possible) closed set of inner capacity zero (see \cite{suita72}, see also \cite{Yamada} and \cite{GZ15}). Thus, Theorem \ref{thm:2d-jet} implies that Theorem \ref{thm:suita} holds.

\subsection{Proof of Theorem \ref{thm:extend}}
\

 Let $f=dw_1\wedge dw_2\wedge...\wedge dw_n$ on $V_0$ and $\psi=\max_{1\le j\le n}\{\pi_j^*(2nG_{\Omega_j}(\cdot,z_j))\}$. It is clear that  $\frac{2^n}{B_{M,\rho}(z_0)}=\inf\{\int_{M}|\tilde{f}|^2\rho:(\tilde{f}-f,z_0)\in(\mathcal{O}(K_M)\otimes\mathcal{I}(\psi))_{z_0}\, \&{\,}\tilde{f}\in H^{0}(M,\mathcal{O}(K_{M}))\}$.  Thus, Theorem \ref{thm:2d-jet} implies that Theorem \ref{thm:extend} holds.

\section{Appendix}

Let $\Omega_j$  be an open Riemann surface, which admits a nontrivial Green function $G_{\Omega_j}$ for any  $1\le j\le n$. Let $M=\prod_{1\le j\le n}\Omega_j$ be an $n-$dimensional complex manifold, and let $\pi_j$ be the natural projection from $M$ to $\Omega_j$.
Let $z_j\in\Omega_j$ and $z_0\in M$ such that $\pi_j(z_0)=z_j$ for any $1\le j\le n$.
Let $w_j$ be a local coordinate on a neighborhood $V_{z_j}$ of $z_j\in\Omega_j$ satisfying $w_j(z_j)=0$. Denote that $V_0:=\prod_{1\le j\le n}V_{z_j}$, and $w:=(w_1,w_2,...,w_n)$ is a local coordinate on $V_0$ of $z_0\in M$.

We recall a well-known result of pluricomplex Green functions. For convenience of readers, we give a proof.
\begin{Proposition}\label{p:pluricomplex}
	$\max_{1\le j\le n}\{\pi_j^*(G_{\Omega_j}(\cdot,z_j))\}=\sup\{\psi:{\psi\in\Delta_M^*(z_0)}\}$, where $\Delta_M^*(z_0)$ is the set of negative plurisubharmonic functions on $M$ such that $\psi(z)-\log|w(z)|$ has a locally finite upper bound near $z_0$.
\end{Proposition}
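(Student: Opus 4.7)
My plan is to verify both inclusions in the claimed equality.

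For the inclusion $\max_{1\le j\le n}\{\pi_j^*(G_{\Omega_j}(\cdot,z_j))\}\le\sup\{\psi:\psi\in\Delta_M^*(z_0)\}$, I would show that $v:=\max_{1\le j\le n}\{\pi_j^*(G_{\Omega_j}(\cdot,z_j))\}$ itself belongs to $\Delta_M^*(z_0)$. Each summand $\pi_j^*(G_{\Omega_j}(\cdot,z_j))$ is the pullback of a negative subharmonic function along a holomorphic projection, so it is negative plurisubharmonic on $M$; taking a pointwise max of finitely many such functions preserves both properties. By Lemma~\ref{l:green-sup} the function $G_{\Omega_j}(\cdot,z_j)-\log|w_j|$ is harmonic in a neighborhood of $z_j$, so near $z_0$ we have $v=\log\max_j|w_j|+O(1)$; since $\log\max_j|w_j|$ and $\log|w|$ differ by a bounded quantity, $v-\log|w|$ is locally bounded above near $z_0$.

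For the reverse inclusion, I would fix an arbitrary $\psi\in\Delta_M^*(z_0)$ and aim to prove $\psi\le v$ pointwise on $M$. My plan is to exhaust and then compare: by Lemma~\ref{l:green-approx} I choose exhaustions $\Omega_{j,l}\nearrow\Omega_j$ by relatively compact subdomains with smooth boundary such that $G_{\Omega_{j,l}}(\cdot,z_j)\searrow G_{\Omega_j}(\cdot,z_j)$. On $M_l:=\prod_{j}\Omega_{j,l}$ the function $v_l:=\max_j\pi_j^*(G_{\Omega_{j,l}}(\cdot,z_j))$ is continuous on $\overline{M_l}$, plurisubharmonic, has the required log pole at $z_0$, and vanishes on the topological boundary $\partial M_l$ (because every boundary point has at least one coordinate in $\partial\Omega_{j,l}$, where $G_{\Omega_{j,l}}$ vanishes). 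Since $v_l\searrow v$ pointwise on $M$, it is enough to show $\psi\le v_l$ on $M_l$ for each fixed $l$.

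The central step, and the main obstacle, is this last comparison $\psi\le v_l$ on $M_l$. The key fact is that $v_l$ is the pluricomplex Green function of $M_l$ with pole $z_0$: each $\pi_j^*(G_{\Omega_{j,l}}(\cdot,z_j))$ is pluriharmonic on $M_l\setminus\pi_j^{-1}(z_j)$, so the Bedford--Taylor Monge--Amp\`ere $(dd^c v_l)^n$ is concentrated at $z_0$ with total mass $(2\pi)^n$, which makes $v_l$ maximal on $M_l\setminus\{z_0\}$. Applying Demailly's comparison principle for plurisubharmonic functions with prescribed logarithmic singularities to the pair $(\psi,v_l)$ (using $\psi\le 0=v_l$ on $\partial M_l$ together with matching log poles at $z_0$) yields $\psi\le v_l$ on $M_l$. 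An alternative proof of this step, avoiding Monge--Amp\`ere theory, uses Poletsky's disc formula for $g_{M_l}(\cdot,z_0)$ together with a product disc construction through the universal coverings $p_j\colon\Delta\to\Omega_{j,l}$ and Schwarz-lemma reparametrization, which reduces the bound to the one-variable Lemma~\ref{l:green-sup}. Letting $l\to+\infty$ then completes the proof of $\psi\le v$ and hence the proposition.
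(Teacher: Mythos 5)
Your plan is correct in outline, but it follows a genuinely different route from the paper's. The paper never touches Monge--Amp\`ere theory or Poletsky discs: it derives the proposition by contradiction from its own $L^2$ machinery. Writing $v:=\max_{1\le j\le n}\{\pi_j^*(G_{\Omega_j}(\cdot,z_j))\}$, if the supremum were not equal to $v$ there would exist $\psi\in\Delta_M^*(z_0)$ with $\psi\ge v$ and $\psi\not\equiv v$. Choosing harmonic $u_j$ with $\chi_{j,-u_j}=\chi_{j,z_j}$ (Remark \ref{r:chi}), the weight $\varphi=2\sum_j\pi_j^*(u_j)$ and $f=dw_1\wedge\cdots\wedge dw_n$, the paper compares the minimal $L^2$ integrals $G_{2n\psi}(t)$ and $G_{2nv}(t)$: they coincide at $t=0$ because $\mathcal{I}(2n\psi)_{z_0}=\mathcal{I}(2nv)_{z_0}$ is the maximal ideal in both cases, yet $G_{2n\psi}(t)<G_{2nv}(t)$ for some $t>0$ because $\{2n\psi<-t\}$ is strictly smaller than $\{2nv<-t\}$. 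Since $G_{2nv}(-\log r)$ is linear in $r$ by Theorem \ref{thm:linear-2d} (this is exactly why the characters were matched) while $G_{2n\psi}(-\log r)$ is concave by Theorem \ref{thm:general_concave}, these two facts force $G_{2n\psi}(0)<G_{2nv}(0)$, a contradiction. The paper's argument buys self-containedness and illustrates the strength of the linearity characterization; yours proves the statement as the classical product property of the pluricomplex Green function, which is more standard but imports substantial external machinery (Bedford--Taylor theory and Demailly's domination principle on hyperconvex domains, or Poletsky's disc formula).

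The one place your sketch is thinner than it should be is the claim that $(dd^cv_l)^n$ is concentrated at $z_0$ ``because each $\pi_j^*(G_{\Omega_{j,l}}(\cdot,z_j))$ is pluriharmonic off $\pi_j^{-1}(z_j)$.'' A maximum of maximal plurisubharmonic functions need not be maximal (the function $\max_j\{\log|w_j|\}$ itself is the standard counterexample), so the inference as stated is not valid. To repair it: away from $\bigcup_j\pi_j^{-1}(z_j)$ all entries of the maximum are genuinely pluriharmonic, and a maximum of finitely many pluriharmonic functions is maximal; the remaining set $\bigcup_j\pi_j^{-1}(z_j)\setminus\{z_0\}$ is pluripolar and $v_l$ is locally bounded near it, so $(dd^cv_l)^n$ charges it trivially by Bedford--Taylor. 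With that, Demailly's domination principle on the hyperconvex $M_l$ gives $\psi\le v_l$, and the exhaustion step via Lemma \ref{l:green-approx} is fine. None of this is a fatal gap, but it is where the real work of your route lives, and it is precisely the work the paper's argument avoids.
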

\begin{proof}
	As $\max_{1\le j\le n}\{\log|w_j|\}<\log|w|$, we have $\max_{1\le j\le n}\{\pi_j^*(G_{\Omega_j}(\cdot,z_j))\}\in \Delta_M^*(z_0)$.
	We prove Proposition \ref{p:pluricomplex} by contradiction: if not, there exists a negative plurisubharmonic function $\psi$  on $M$ such that $\psi(z)-\log|w(z)|$ has a locally finite upper bound near $z_0$, $\psi\ge\max_{1\le j\le n}\{\pi_j^*(G_{\Omega_j}(\cdot,z_j))\}$ and $\psi\not\equiv\max_{1\le j\le n}\{\pi_j^*(G_{\Omega_j}(\cdot,z_j))\}$ (as $\max\{\psi_1,\psi_2\}\in\Delta_M^*(z_0)$ for any $\psi_1\in\Delta_M^*(z_0)$ and $\psi_2\in\Delta_M^*(z_0)$). It follows from Remark \ref{r:chi} that there exists a harmonic function $u_j$ on $\Omega_j$ such that $\chi_{j,-u_j}=\chi_{j,z_j}$ for any $j\in\{1,2,...,n\}$. Let $\varphi=\sum_{1\le j\le n}\pi_j^*(2u_j)$, and let $f=dw_1\wedge dw_2\wedge...\wedge dw_n$ on $V_0$. Denote
	\begin{equation*}
\begin{split}
\inf\{\int_{\{\Psi<-t\}}|\tilde{f}|^{2}e^{-\varphi}:(\tilde{f}-f,z_0)\in&(\mathcal{O}(K_M)\otimes\mathcal{I}(\Psi))_{z_0} \\&\&{\,}\tilde{f}\in H^{0}(\{\Psi<-t\},\mathcal{O}(K_{M}))\}
\end{split}
\end{equation*}
by $G_{\Psi}(t)$ for $t\ge0$ and $\Psi\in\Delta_M^*(z_0)$.
 It is clear that there exists a neighborhood $U$ of the origin $o$ in $\mathbb{C}^n$ such that
$$\sup_{w\in U\backslash\{o\}}|\max_{1\le j\le n}\{\log|w_j|\}-\log|w||<+\infty,$$ which implies that
$(f,z_0)\in\mathcal{I}(2n\max_{1\le j\le n}\{\pi_j^*(G_{\Omega_j}(\cdot,z_j))\})_{z_0}$ if and only if $f(z_0)=0$. As $\psi\geq\max_{1\le j\le n}\{\pi_j^*(G_{\Omega_j}(\cdot,z_j))\}$ and $\psi(z)-\log|w(z)|$ has a locally finite upper bound near $z_0$, we have $(f,z_0)\in\mathcal{I}(2n\psi)_{z_0}$ if and only if $f(z_0)=0$. Thus, we have
\begin{equation}
	\label{eq:1201a}G_{2n\psi}(0)=G_{2n\max_{1\le j\le n}\{\pi_j^*(G_{\Omega_j}(\cdot,z_j))\}}(0).\end{equation}
 As $\psi\ge\max_{1\le j\le n}\{\pi_j^*(G_{\Omega_j}(\cdot,z_j))\}$ and $\psi\not\equiv\max_{1\le j\le n}\{\pi_j^*(G_{\Omega_j}(\cdot,z_j))\}$, we obtain that there exists $t>0$ such that
\begin{equation}
	\label{eq:1201b}G_{2n\psi}(t)<G_{2n\max_{1\le j\le n}\{\pi_j^*(G_{\Omega_j}(\cdot,z_j))\}}(t).\end{equation}
It follows from Theorem \ref{thm:linear-2d} that $G_{2n\max_{1\le j\le n}\{\pi_j^*(G_{\Omega_j}(\cdot,z_j))\}}(-\log r)$ is linear with respect to $r$. Theorem \ref{thm:general_concave} shows that $G_{2n\psi}(-\log r)$ is concave with respect to $r$. Inequality \eqref{eq:1201b} implies that $G_{2n\psi}(0)<G_{2n\max_{1\le j\le n}\{\pi_j^*(G_{\Omega_j}(\cdot,z_j))\}}(0)$, which contradicts to equality \eqref{eq:1201a}.
Thus, we get that $\max_{1\le j\le n}\{\pi_j^*(G_{\Omega_j}(\cdot,z_j))\}=\sup\{\psi:{\psi\in\Delta_M^*(z_0)}\}$.
\end{proof}

%%%------------------------------------------------------------------------

\vspace{.1in} {\em Acknowledgements}.
The authors would like to thank Dr. Shijie Bao and Dr. Zhitong Mi for checking the manuscript and  pointing out some typos. The first named author was supported by National Key R\&D Program of China 2021YFA1003100, NSFC-11825101, NSFC-11522101 and NSFC-11431013.
\bibliographystyle{references}
\bibliography{xbib}

\end{document}